\theoremstyle{definition}
\newtheorem{thm}{Theorem}[section]
\newtheorem{defn}[thm]{Definition}
\newtheorem{defn-prop}[thm]{Definition-Proposition}
\newtheorem{rem}[thm]{Remark}
\newtheorem{lem}[thm]{Lemma}
\newtheorem{exmp}[thm]{Example}
\newtheorem{cor}[thm]{Corollary}
\newtheorem{prop}[thm]{Proposition}
\newtheorem{notat}[thm]{Notation}
\newcommand\hcancel[2][black]{\setbox0=\hbox{$#2$}%
	\rlap{\raisebox{.45\ht0}{\textcolor{#1}{\rule{\wd0}{1pt}}}}#2} 
\DeclareMathOperator{\pertL}{\hcancel{\mathcal{L}}}
\newcommand*{\flip}[1]{\scalebox{-1}[1]{\rotatebox[origin = c]{180}{#1}}}
\newcommand{\HMR}{\flip{\textit{W}}}
\newcommand{\hmr}{\textit{HMR}}
\DeclareRobustCommand\widecheck[1]{{\mathpalette\@widecheck{#1}}}
\def\@widecheck#1#2{%
	\setbox\z@\hbox{\m@th$#1#2$}%
	\setbox\tw@\hbox{\m@th$#1%
		\widehat{%
			\vrule\@width\z@\@height\ht\z@
			\vrule\@height\z@\@width\wd\z@}$}%
	\dp\tw@-\ht\z@
	\@tempdima\ht\z@ \advance\@tempdima2\ht\tw@ \divide\@tempdima\thr@@
	\setbox\tw@\hbox{%
		\raise\@tempdima\hbox{\scalebox{1}[-1]{\lower\@tempdima\box
				\tw@}}}%
	{\ooalign{\box\tw@ \cr \box\z@}}}
\DeclareMathOperator{\grad}{grad}
\DeclareMathOperator{\Tr}{Tr}
\DeclareMathOperator{\SO}{SO}
\DeclareMathOperator{\Spin}{Spin}
\DeclareMathOperator{\Hom}{Hom}
\DeclareMathOperator{\Pic}{Pic}
\DeclareMathOperator{\Id}{Id}
\DeclareMathOperator{\su}{\mathfrak{su}}
\DeclareMathOperator{\genus}{\text{genus}}
\DeclareMathOperator{\End}{End}
\DeclareMathOperator{\U}{\text{U}}
\DeclareMathOperator{\frakF}{\mathfrak{F}}
\DeclareMathOperator{\frakq}{\mathfrak{q}}
\DeclareMathOperator{\frakp}{\mathfrak{p}}
\DeclareMathOperator{\frakm}{\mathfrak{m}}
\DeclareMathOperator{\fraks}{\mathfrak{s}}
\DeclareMathOperator{\fraka}{\mathfrak{a}}
\DeclareMathOperator{\frakb}{\mathfrak{b}}
\DeclareMathOperator{\frakc}{\mathfrak{c}}
\DeclareMathOperator{\calD}{\mathcal{D}}
\DeclareMathOperator{\calA}{\mathcal{A}}
\DeclareMathOperator{\calH}{\mathcal{H}}
\DeclareMathOperator{\calT}{\mathcal{T}}
\DeclareMathOperator{\calU}{\mathcal{U}}
\DeclareMathOperator{\calV}{\mathcal{V}}
\DeclareMathOperator{\calQ}{\mathcal{Q}}
\DeclareMathOperator{\calM}{\mathcal{M}}
\DeclareMathOperator{\boldB}{\boldsymbol{\calB}}
\newcommand{\bbZ}{\mathbb{Z}}
\newcommand{\gr}{\textsf{gr}}
\newcommand{\bbF}{\mathbb{F}}
\newcommand{\bbT}{\mathbb{T}}
\newcommand{\bbS}{\mathbb{S}}
\newcommand{\bbJ}{\mathbb{J}}
\newcommand{\bbR}{\mathbb{R}}
\newcommand{\red}{\text{red}}
\newcommand{\HM}{\textit{HM}}
\newcommand{\calB}{\mathcal{B}}
\newcommand{\calC}{\mathcal{C}}
\newcommand{\calE}{\mathcal{E}}
\newcommand{\calG}{\mathcal{G}}
\newcommand{\calJ}{\mathcal{J}}
\newcommand{\calK}{\mathcal{K}}
\newcommand{\calL}{\mathcal{L}}
\newcommand{\calP}{\mathcal{P}}
\newcommand{\calS}{\mathcal{S}}
\newcommand{\reals}{\mathbb R}
\newcommand{\C}{\mathbb{C}}
\newcommand{\Ftwo}{\mathbb{F}_2}
\newcommand{\del}{\ensuremath{\partial}}
\newcommand{\delbar}{\ensuremath{{\bar{\partial}}}}
\renewcommand{\Re}{\text{Re}}
\newcommand{\loc}{\text{loc}}
\renewcommand{\O}{\text{O}}
\newcommand{\spinc}{{\text{spin\textsuperscript{c} }}}
\newcommand{\SpincR}{{\textsf{RSpin\textsuperscript{c}}}}
\newcommand{\RSpin}{{\textsf{RSpin}}}
\newcommand{\Ker}{\ensuremath{{\sf{ker}}}}
\newcommand{\Tait}{\ensuremath{{\sf{Tait}}}}
\newcommand{\vol}{\ensuremath{{\sf{vol}}}}
\newcommand{\Sing}{\ensuremath{{\sf{Sing}}}}
\newcommand{\scal}{\ensuremath{{\sf{scal}}}}
\newcommand{\Ver}{\ensuremath{{\sf{Vertex}}}}
\newcommand{\Edg}{\ensuremath{{\sf{Edge}}}}
\newcommand{\Fix}{\ensuremath{{\sf{Fix}}}}
\newcommand{\Facet}{\ensuremath{{\sf{Facet}}}}
\newcommand{\sfr}{\ensuremath{{\sf{r}}}}
\newcommand{\sfc}{\ensuremath{{\sf{c}}}}
\newcommand{\Rank}{\ensuremath{{\sf{Rank}}}}
\newcommand{\ind}{\ensuremath{{\sf{ind}}}}
\newcommand{\dbcv}{\ensuremath{{\sf{\Sigma}}}}
\newcommand{\la}{\ensuremath{{\langle}}}
\newcommand{\ra}{\ensuremath{{\rangle}}}
\newenvironment{citemize}{
	\setlength{\parskip}{0pt}
	\begin{itemize}
		\setlength{\itemsep}{0.5pt}
		\setlength{\parskip}{0pt}
		\setlength{\parsep}{0pt}
	}
	{\end{itemize}\setlength{\parskip}{11pt}} 
\newcommand{\Addresses}{{
	\bigskip
	\footnotesize
	\textsc{Department of Mathematics, 
Harvard University, 
Cambridge MA, 
United States 02138}\par\nopagebreak
  \textit{E-mail address:} \texttt{jiakaili@math.harvard.edu}
}}
\title{Monopole invariants for webs and foams}
\newcommand\blfootnote[1]{%
	\begingroup
	\renewcommand\thefootnote{}\footnote{#1}%
	\addtocounter{footnote}{-1}%
	\endgroup
}
\begin{document}    
	\blfootnote{This work was partially supported by a Simons Foundation Award \#994330 (Simons Collaboration on New Structures in Low-Dimensional Topology)}
\begin{abstract}
	We develop monopole Floer-theoretic invariants for webs and foams using Seiberg-Witten theory with orbifold singularities, based on Kronheimer and Mrowka's framework of monopole Floer homology.
\end{abstract}
\author{Jiakai Li}
\maketitle
\section{Introduction}
For the scope of this paper, we adopt the following definitions of webs and foams:

\emph{Webs} are, roughly speaking, embedded trivalent graphs in $\bbR^3$.
More precisely, a \emph{web} $K$ in $\bbR^3$ is a compact $1$-dimensional subcomplex, such that the complement $K \setminus \Ver(K)$ is a smooth $1$-dimensional submanifold of $\reals^3$, where $\Ver(K)$ is a finite subset of $K$, consisting of \emph{vertices}.
Each vertex is required to have a neighbourhood in which $K$ is diffeomorphic to three distinct, coplanar rays in $\bbR^3$.
An \emph{edge} of $K$ is a connected component of $K \setminus \Ver(K)$.
In particular, following Kronheimer and Mrowka~\cite{KMweb}, we assume that edges are unoriented, in contrast with Khovanov~\cite{Khovanov2004sl3}.

\emph{Foams} can be thought of as singular cobordisms between webs.
Specifically, a \emph{(closed) foam} $\Sigma$ in $\bbR^4$ is a compact $2$-dimensional subcomplex with the requirement that around each point $x$ on $\Sigma$, the subcomplex has one of the following three local models:
\begin{enumerate}[(i)]
	\item $\reals^2$, i.e. $\Sigma$ is a smooth embedded $2$-manifold near $x$;
	\item $\reals \times K_3$, where $K_3 \subset \bbR^3$ is the union of $3$ distinct rays meeting at the origin; or
	\item a cone in $\reals^3 \subset \reals^4$ whose vertex is $x$ and whose base is the union of $4$ distinct points in $S^2$ joined pairwise by $6$ non-intersecting great-circle arcs, each of length at most $\pi$.
\end{enumerate}
A vertex $x$ of type (iii) is a \emph{tetrahedral point}. 
The union of type (ii) points form a union of arcs called \emph{seams}.
The complement of tetrahedral points and seams is a smoothly embedded 2-manifold, possibly nonorientable, whose components will be referred to as \emph{facets}.

The terms ``webs'' and ``foams'' were coined by Kuperberg~\cite{Kuperberg1996Web}, and
the type of webs and foams considered in this paper goes back to Khovanov and Rozansky \cite{KhovanovRozansky2007LGfoam}.
Evaluations of foams played a central role in Khovanov's definition of $\mathfrak{sl}(3)$-link homology \cite{Khovanov2004sl3}.
An invariant $J^{\sharp}$ for webs and foams from instanton gauge theory was introduced by Kronheimer and Mrowka \cite{KMweb}.
Notably,
Kronheimer and Mrowka proposed an alternative approach \cite{KMweb} to the four-colour theorem using $J^{\sharp}$.
This theorem in combinatorics was first proved by Appel and Haken~\cite{AH4colour}, with computer assistance by J. Koch.

The construction of $J^{\sharp}$ utilizes $\SO(3)$-connections with orbifold singularities.
It is natural to seek an ``orbifold Seiberg-Witten Floer homology'' for webs.
As it turns out, there exists no direct Seiberg-Witten analogue of $J^{\sharp}$.
Roughly speaking, this is due to the lack of an embedding of the Klein four group $\bbZ/2 \times \bbZ/2$ into the structural group $\U(1)$ of Seiberg-Witten theory.
This paper provides an approach to Seiberg-Witten gauge theory for Klein four singularities.

The first step in defining a Seiberg-Witten web invariant is to prescribe ``holonomies'' around the three edges at a vertex.
The key to our construction is to assign both  ``conjugate-linear and complex-linear holonomies''.
Similar types of singularity configurations appeared in Daemi's plane Floer homologies for webs~\cite{DaemiPrivateComm}, and
our approach can be seen as a Seiberg-Witten generalization of Daemi's invariant.


The variant of Floer homology in this paper is based on Kronheimer and Mrowka's framework of monopole Floer homology \cite{KMbook2007}.
The additional technology deployed here is a combination of real and orbifold Seiberg-Witten theory.
The former was developed in the works of \cite{TianWang2009, NNakamura2015, Kato2022, KMT2021, KMT2023}.
In particular, \cite{ljk2022} is a special case of the current setup without orbifold singularities.
The $4$-dimensional orbifold Seiberg-Witten invariant was studied by e.g. W. Chen~\cite{ChenWM2006scob,ChenWM2005Jcurves_in_4orbifolds,ChenWM2020symp_4orbifolds} and C. LeBrun~\cite{LeBrun2015_edges}.
In the $3$-dimensional literature, there are works of S. Baldridge~\cite{Baldridge2001} and W. Chen~\cite{ChenWM2012SW3orbifolds} on orbifold Seiberg-Witten invariants.

\subsection{Structure of the invariants}
\label{sec:intro_str}
This paper is only concerned with the category of  webs in $S^3$ and foams properly embedded punctured $S^4$'s, so as not to distract the readers with excessive notations.
	There are no technical difficulties in extending the definition to webs in more general $3$-manifolds.

Given a web $K \subset S^3$, we define three \emph{monopole web  Floer homologies} of $K$:
\[
	\widecheck{\HMR}_{\bullet}(K), \quad \widehat{\HMR}_{\bullet}(K), \quad
	\overline{\HMR}_{\bullet}(K).
\]
These are graded vector spaces over $\bbF_2$, where ``$\bullet$'' denotes certain completion of some grading.
We will work only over characteristic two.
\begin{rem}
The notations ``$\widecheck{\HMR}_{\bullet}(K), \widehat{\HMR}_{\bullet}(K),
	\overline{\HMR}_{\bullet}(K)$'' (pronounced ``M-to, M-from, M-bar'') follow Kronheimer and Mrowka's \cite{KMbook2007} monopole Floer homology groups $\widecheck{\HM}_{\bullet}(Y),\widehat{\HM}_{\bullet}(Y), \overline{\HM}_{\bullet}(Y)$ of a closed $3$-manifold $Y$.
They are counterparts of Ozsv\'{a}th and Szab\'{o}'s \cite{OzSz2004} Heegaard Floer homology ``$\textit{HF}^+(Y)$,$\textit{HF}^-(Y)$, $\textit{HF}^{\infty}(Y)$''.
For convenience, we will write ``$\HMR^{\circ}_{\bullet}(K)$'' for one of the flavours: ``$\widecheck{\HMR}_{\bullet}(K), \widehat{\HMR}_{\bullet}(K),
	\overline{\HMR}_{\bullet}(K)$.''
\end{rem}

Denote by $\Edg(K)$ the set of edges of $K$. A \emph{$1$-set} $s$ of $K$ is a subset of $\Edg(K)$ which contains exactly one edge at every vertex.
The construction of $\HMR$  requires an input of a $1$-set $s$. 
By definition, $\HMR^{\circ}_{\bullet}(K)$ is a direct sum
\[
	\HMR^{\circ}_{\bullet}(K) = \bigoplus_{s \in \{\text{1-sets of }K\}} \HMR^{\circ}_{\bullet}(K,s).
\] 
\begin{rem}
	The group $\HMR^{\circ}_{\bullet}(K,\emptyset)$ agrees with $\textit{HMR}^{\circ}_{\bullet}(K)$ in \cite{ljk2022} for a link $K$.
\end{rem}

\begin{defn}
A \emph{(foam) cobordism} $\Sigma: K_- \to K_+$ between two webs $K_-$ and $K_+$ is a foam with boundary $\Sigma \subset [a,b] \times \reals^3$ such that near the ends $a$ and $b$, the foam is modelled on $[a,a+\epsilon) \times K_-$ and $(b-\epsilon, b] \times K_+$, respectively.
\end{defn}
Given a foam cobordism $\Sigma: K_- \to K_+$, there is a cobordism map for each of the flavour:
\[
	\HMR^{\circ}(\Sigma):\HMR^{\circ}_{\bullet}(K_-) \to \HMR^{\circ}_{\bullet}(K_+).
\]
Its definition requires a choice of $1$-set $s_{\Sigma}$, now for the foam $\Sigma$.
Let $\Facet(\Sigma)$ be the set of facets.
A \emph{$1$-set} $s_{\Sigma}$ is a subset of $\Facet(\Sigma)$ such that among the three facets around a seam, there is a unique facet in $s_{\Sigma}$.
The cobordism map is a sum
\[
	\HMR^{\circ}(\Sigma) 
	= \bigoplus_{s_{\Sigma} \in \{\text{1-sets of }\Sigma\}} \HMR^{\circ}(\Sigma,s_{\Sigma}).
\]
A \emph{dotted foam} is a foam with marked points on the facets called ``\emph{dots}''.
The definition of a dotted foam cobordism maps involves evaluation of cohomology classes on Seiberg-Witten moduli spaces.
As a special case, a dot $\delta$ on $K \setminus \Ver(K)$ gives rise to an operator
\[
	\upsilon_{\delta}: \HMR^{\circ}_{\bullet}(K) \to \HMR^{\circ}_{\bullet}(K), 
\]
that depends only on the edge where $\delta$ lies. 
Suppose $s$ is a $1$-set and $\delta \in e$.
\begin{prop}
\label{prop:powers_upsilon}
The operator $\upsilon_{\delta}$
satisfies the following properties (see Section~\ref{subsec:foamcob_eval_module}):
\begin{citemize}
	\item $\upsilon_{\delta}=0$, if $e \in s$;
	\item $\upsilon_{\delta}^2 = U$ on $\HMR^{\circ}_{\bullet}(K,s)$, if $e \notin s$, where $U$ is independent of $e$.
\end{citemize}
Suppose $\delta_1,\delta_2,\delta_3$ are on three distinct edges that meet at a common vertex. Then
\begin{citemize}
	\item $\upsilon_{\delta_1} + \upsilon_{\delta_2} + \upsilon_{\delta_3} = 0$,
	\item $\upsilon_{\delta_1}\upsilon_{\delta_2} + \upsilon_{\delta_2}\upsilon_{\delta_3} + \upsilon_{\delta_3}\upsilon_{\delta_1} = U$, 	
	\item $\upsilon_{\delta_1}\upsilon_{\delta_2}\upsilon_{\delta_3}=0$.
\end{citemize}
\end{prop}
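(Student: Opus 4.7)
The plan is to realize each $\upsilon_\delta$ as cap-evaluation with the first Chern class of a universal line bundle $\bbL_e$ naturally attached to the edge $e$ containing $\delta$, and then to derive every identity from the Klein four structure of the holonomy eigenspaces along $K$ together with the characteristic-two arithmetic of elementary symmetric polynomials.

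First I would fix the cohomological interpretation. Following Section~\ref{subsec:foamcob_eval_module}, the dot $\delta$ on an edge $e$ determines a complex line bundle $\bbL_e$ over the based configuration space of orbifold monopoles, realized as the fiber at $\delta$ of an eigenline of the Klein four holonomy along $e$; the operator $\upsilon_\delta$ is cap-evaluation with $c_1(\bbL_e)$ on the relevant parametrized moduli spaces. When $e\in s$, the prescribed asymptotic along $e$ is the canonically trivialized (``complex-linear'') summand of the local Klein four model, so $\bbL_e$ is trivial and $\upsilon_\delta=0$. When $e\notin s$, the local asymptotic is a conjugate-linear eigenline, so $\bbL_e$ carries a canonical real structure, i.e.\ it is the complexification of a real line bundle $\bbL_e^{\bbR}$. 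Over $\bbF_2$ this forces $c_1(\bbL_e)=w_1(\bbL_e^{\bbR})^2$, so $\upsilon_\delta^2$ is cap-evaluation with a square class that I would then identify with the Kronheimer--Mrowka $U$-action on $\HMR^{\circ}_{\bullet}(K,s)$, proving the second bullet.

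For the vertex identities, I would analyze the local model at a vertex $v$ where the Klein four structural group $\bbZ/2\times\bbZ/2$ acts on $\C^2$ through three non-trivial characters $\chi_1,\chi_2,\chi_3$ whose product is trivial (the determinant condition coming from $\SU(2)$). The corresponding eigenline bundles $\bbL_{e_1},\bbL_{e_2},\bbL_{e_3}$ then satisfy that $\bbL_{e_1}\otimes\bbL_{e_2}\otimes\bbL_{e_3}$ is trivial in a neighbourhood of $v$ in the moduli space, and taking $c_1$ yields the first vertex identity $\upsilon_{\delta_1}+\upsilon_{\delta_2}+\upsilon_{\delta_3}=0$. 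Exactly one of the three edges, say $e_1$, lies in $s$, so $\upsilon_{\delta_1}=0$; combined with the sum relation this forces $\upsilon_{\delta_2}=\upsilon_{\delta_3}$ in characteristic two, so the second expression collapses to $\upsilon_{\delta_2}\upsilon_{\delta_3}=\upsilon_{\delta_2}^2=U$ by the square identity, and the third is immediate from $\upsilon_{\delta_1}=0$.

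The main obstacle I anticipate is the square identity $\upsilon_\delta^2=U$ with a genuinely edge-independent right-hand side. This amounts to translating the local formula $c_1(\bbL_e)=w_1(\bbL_e^{\bbR})^2$ into the ambient $U$-action on $\HMR^{\circ}_{\bullet}(K,s)$ of the Kronheimer--Mrowka framework, and then showing that the resulting class agrees across different conjugate-linear edges; both points require a careful gluing argument for the universal line bundles with their real structures across a common vertex, combined with cobordism invariance obtained by sliding $\delta$ to an adjacent real edge. Once this universality step is in place, the remaining identities reduce to the Klein four determinant relation and the elementary symmetric arithmetic indicated above.
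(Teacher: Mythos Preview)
Your proposal rests on a misreading of what $\upsilon_\delta$ is in this paper. You interpret $\upsilon_\delta$ as cap product with a first Chern class $c_1(\bbL_e)$ of a complex eigenline bundle, which would make it a degree-$2$ operator. But in Section~\ref{subsec:foamcob_eval_module}, $\upsilon_\delta$ is defined for $\delta$ on an $\sfr$-facet via the evaluation homomorphism $ev_\delta:\calG(\check W,\upiota_W)\to\{\pm 1\}$ of the \emph{real} gauge group; it is the first Stiefel--Whitney class of a real line bundle $L_\delta$ and has degree~$1$ (compare Example~\ref{exmp:unknot}, where $\deg\upsilon=-1$). For $\delta$ on a $\sfc$-facet the class is simply \emph{declared} to be zero. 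There is no Klein four eigenline construction over the configuration space here; that picture belongs to the $\SO(3)$-instanton story behind $J^\sharp$. Importing it misaligns the degrees: your identity $c_1(\bbL_e)=w_1(\bbL_e^{\bbR})^2$ would make $\upsilon_\delta$ itself a square, so $\upsilon_\delta^2$ would live in degree~$4$, not~$2$, and your proposed identification with $U$ cannot hold.

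The paper's actual argument is far more elementary once the correct definition is in hand. Since $\upsilon_\delta$ depends only on the connected component of $K^{\sfr}$ containing $\delta$, and since at any vertex the two $\sfr$-edges lie on the \emph{same} component of the $1$-manifold $K^{\sfr}$, two of the three $\upsilon_{\delta_i}$ are literally equal and the third vanishes by definition; over $\bbF_2$ the sum relation and the triple-product relation are then immediate, with no need for a tensor-product-of-characters argument. For $\upsilon_\delta^2=U$, one notes that for any two $\sfr$-dots the difference $\upsilon_\delta-\upsilon_{\delta'}$ lies in the exterior-algebra factor coming from the Picard torus of $\boldB^{\sigma}$, whose cup-square vanishes; hence all $\upsilon_\delta^2$ coincide, and this common class is the degree-$2$ class $U$ induced by $ev_p:\calG(\check W,\upiota_W)\to \U(1)$ at any point $p$ off the real locus. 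The gluing arguments and the ``universality step'' you flag as the main obstacle are not needed.
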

The above relations are reminiscent of a deformed version $J^{\sharp}(K,\Gamma)$ of $J^{\sharp}$ introduced in \cite[Proposition~5.7]{KMdefweb2019}.
Similar identities are also satisfied in $\mathfrak{sl}(3)$-homology \cite{Khovanov2004sl3} and $J^{\sharp}$ \cite{KMweb}.

Let $p$ be a point on $K$. 
The mapping cone complex of $\upsilon_{p}$ defines the \emph{framed monopole web homology} based at $p$:
\[
	\widetilde{\HMR}_{\bullet}(K,p,s),
\]
the same way as $\widetilde{\textit{HMR}}$ in~\cite{ljk2024SSKh}.
This algebraic construction is inspired by $\widetilde{\HM}$ in monopole Floer~\cite{Bloom2011} and $\widehat{\textit{HF}}$ in Heegaard Floer theories~\cite{OzSz2004}.
The framed monopole web homology satisfies functoriality with respect to based foam cobordisms.

Suppose now $\Sigma$ is a closed foam, where $\{\delta_1,\dots,\delta_n\}$ a set of dots.
We define a foam evaluation of $\Sigma$ by counting irreducible Seiberg-Witten solutions
\[
\frakm(\Sigma,s_{\Sigma}|\delta_1,\dots,\delta_n) \in \bbF_2.
\]
Note that $\frakm(\Sigma,s_{\Sigma}|\delta_1,\dots,\delta_n)$ is well-defined under certain topological condition on $\Sigma$. See Section~\ref{sec:functoriality}.
This is a special case of the ``$\overrightarrow{\HMR}$'' map.
By considering $\Sigma$ as a cobordism between the empty webs, and the closed $4$-manifold invariant $\overrightarrow{\HM}$, 
we define
\[
	\overrightarrow{\HMR}(\Sigma,s_{\Sigma}):
		\widehat{\HMR}_{\bullet}(\emptyset) \to 
		\widecheck{\HMR}_{\bullet}(\emptyset).
\]

We prove an adjunction inequality in dimension three and a vanishing theorem for webs containing \emph{embedded bridges}.
This property is also enjoyed by $J^{\sharp}$.
\begin{thm}
Suppose a web $K$ has an embedded bridge, that is, there exists an embedded $2$-sphere that intersects $K$ at a single point.
Then $\HMR_{\bullet}(K) = 0$.
\end{thm}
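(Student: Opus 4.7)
The plan is to deduce the vanishing from the three-dimensional adjunction inequality alluded to in the sentence preceding the theorem. Let $S \subset S^3$ be an embedded $2$-sphere meeting $K$ transversely at the single point $p$. Since the local model at a trivalent vertex consists of three coplanar rays, a small sphere around a vertex would meet $K$ at three points; so $p$ must lie in the interior of some edge $e$, and $S$ is a smoothly embedded genus-zero surface whose orbifold intersection number with $K$ equals $1$. (A standard mod $2$ intersection argument also shows that every closed cycle of $K$ avoiding $p$ must lie on one side of $S$, which makes $K$ a wedge of two subwebs at $p$.)

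Next, I would apply the three-dimensional orbifold adjunction inequality to $S$. In Kronheimer-Mrowka style, this should bound the quantity $-\chi(\Sigma) + \tfrac{1}{2}\,\#(\Sigma \cap K)$ from below by the absolute value of an appropriate grading (or $U$-power) on $\HMR^{\circ}_{\bullet}$; for our $S$, with $\chi(S)=2$ and $\#(S \cap K)=1$, the left-hand side equals $-\tfrac{3}{2}$, which is strictly negative and hence cannot bound any nonnegative grading. The standard Floer-theoretic propagation argument then forces the entire $\HMR^{\circ}_{\bullet}(K)$ to vanish rather than just its top-grading part, because the $U$-action on the ``to''/``from'' towers connects all gradings, so once the adjunction inequality kills one generator it kills the whole module.

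The main obstacle will be to ensure that the precise form of the adjunction inequality proved earlier in the paper is sharp enough both to accommodate a single $\bbZ/2$-orbifold intersection and to conclude vanishing of each of the three flavours $\widecheck{\HMR}$, $\widehat{\HMR}$, $\overline{\HMR}$ (the bar version typically requires a separate argument via the commutative diagram relating the three). As a back-up strategy, one could mimic Kronheimer-Mrowka's bridge argument for $J^{\sharp}$ directly: stretch the neck along $S \times \{1/2\}$ in the product cobordism $K \times [0,1]$, and show that the limiting Seiberg-Witten moduli space on the orbifold cylinder $(S^2 \times \bbR,\, \{p\} \times \bbR)$ is empty, because no orbifold spin$^{c}$ structure on an $S^2$ punctured once by a $\bbZ/2$ singularity admits asymptotics compatible with those determined by $K$ (an Euler-number/integrality obstruction). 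Either route factors the identity cobordism on $\HMR^{\circ}_{\bullet}(K)$ through the zero module and yields the stated vanishing.
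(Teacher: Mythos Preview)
Your primary approach has a genuine gap. The adjunction inequalities proved in this paper (Propositions~\ref{prop:adj_orb} and~\ref{prop:adj_orb_real}) carry the standing hypothesis $-\chi(\check F)>0$, so they say nothing about a bifold sphere with one marked point, for which $-\chi=-3/2$. There is no ``grading bound'' being violated; the positive-curvature range is handled by a separate statement, Proposition~\ref{prop:adj_pos}, and that is what actually drives the theorem. Your $U$-action propagation heuristic is also not how the argument runs: the conclusion is that for a suitable metric there are no Seiberg-Witten solutions at all in the relevant \spinc structures, which kills the chain complex outright rather than one grading at a time.

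More importantly, both of your strategies are phrased in $(S^3,K)$, whereas $\HMR_{\bullet}(K,s)$ is by definition the Floer homology of the real double cover $(\check Y,\upiota)$ branched along $K^{\sfr}$. The missing structural step, recorded just before Theorem~\ref{thm:vanish_bridge}, is that the bridge edge must be coloured $\sfc$ for \emph{every} $1$-set: were it $\sfr$, it would lie on a circle of $K^{\sfr}$, and a circle cannot meet the sphere $S$ in a single point mod~$2$. Hence $S$ is disjoint from the branch locus $K^{\sfr}$ and lifts to two disjoint bifold spheres $\check\Sigma_1,\check\Sigma_2\subset\check Y$, swapped by $\upiota$, each carrying exactly one $\bbZ/2$-point. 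Proposition~\ref{prop:adj_pos} then applies verbatim: stretch an $\upiota$-invariant neck $[-T,T]\times(\check\Sigma_1\sqcup\check\Sigma_2)$ with constant positive scalar curvature, extract a limiting solution on $\reals\times\check\Sigma$, observe that the spinor vanishes by the Weitzenb\"{o}ck inequality, and deduce $F_B=0$, contradicting the half-integer value $\langle c_1(\fraks),[\check\Sigma_1]\rangle=\pm 1/2$ forced by the lone bifold point. Your ``integrality obstruction'' in the back-up plan is exactly this last contradiction, but it lives upstairs in $\check Y$, not in $S^3$; without the passage to the double cover and the colouring observation, the argument does not get off the ground.
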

Finally, we prove an excision theorem for cutting along genus $g$ surfaces that intersect the web at points.
As an application, we obtain a nonvanishing theorem for a family of braids of the following form.

Let $\gamma$ be a braid of $n$ strands in the cylinder $[0,1] \times D^2$.
Suppose $\del\gamma$ consists of $n$ points lying on a circle $U_i \subset \{i\} \times D^2$, for $i=0,1$ respectively.
Let $K_{\gamma}$ be the web formed as the union of $\gamma$ and $U_0,U_1$.
Let $s_{\gamma}$ be the $1$-set comprising strands of $\gamma$.
\begin{prop}
	\label{prop:intro_excision_app}
	$\HMR_{\bullet}(K_{\gamma},s_{\gamma}) \ne 0$.
\end{prop}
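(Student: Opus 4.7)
The plan is to apply the excision theorem along a level sphere of the braid cylinder, pair $K_\gamma$ with $K_{\gamma^{-1}}$, and reduce the claim to nonvanishing of $\HMR_\bullet$ for the \emph{trivial} braid $\gamma_0$ on $n$ strands. Inside $S^3 \supset [0,1] \times D^2$, let $\Sigma \subset S^3$ be the $2$-sphere obtained by compactifying the fibre $\{1/2\} \times D^2$ with a disk in the complementary $3$-ball. Then $\Sigma$ is a genus-zero surface meeting $K_\gamma$ transversely in $n$ points, each lying on a strand of $\gamma$ and hence in the $1$-set $s_\gamma$. The same sphere cuts $K_{\gamma^{-1}}$ in $n$ compatible points, where $\gamma^{-1}$ denotes the inverse braid.

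Factor $\gamma = \gamma_a\gamma_b$ with $\gamma_a = \gamma|_{[0,1/2]}$ and $\gamma_b = \gamma|_{[1/2,1]}$, so that $\gamma^{-1} = \gamma_b^{-1}\gamma_a^{-1}$ across the same level. I would cut $(S^3,K_\gamma) \sqcup (S^3,K_{\gamma^{-1}})$ along $\Sigma$ and reglue after swapping the top halves. The two resulting webs realize the composite braids $\gamma_a\gamma_a^{-1}$ and $\gamma_b^{-1}\gamma_b$, both isotopic rel boundary to $\gamma_0$. The excision theorem then yields an isomorphism
\[
\HMR_\bullet(K_\gamma, s_\gamma) \otimes \HMR_\bullet(K_{\gamma^{-1}}, s_{\gamma^{-1}}) \;\cong\; \HMR_\bullet(K_{\gamma_0}, s_{\gamma_0})^{\otimes 2},
\]
reducing the proposition to proving $\HMR_\bullet(K_{\gamma_0}, s_{\gamma_0}) \neq 0$ for the trivial braid.

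To establish nonvanishing for $\gamma_0$, whose web $K_{\gamma_0}$ is the ``prism'' consisting of two parallel circles $U_0,U_1$ joined by $n$ parallel strands with $s_{\gamma_0}$ the set of strands, I would iterate the genus-zero excision along a small sphere $\Sigma'$ that isolates a single strand together with short arcs of $U_0$ and $U_1$. Pairing $K_{\gamma_0}$ with itself and permuting pieces should peel off one strand at a time and reduce inductively to the base case $n=0$, where $K_{\gamma_0} = U_0 \sqcup U_1$ is a $2$-component unlink with $s_{\gamma_0} = \emptyset$, so that $\HMR_\bullet(K_{\gamma_0}, s_{\gamma_0})$ recovers the nonzero unlink invariant of \cite{ljk2022}. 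The main obstacle is this base-case reduction: while the $\gamma \cdot \gamma^{-1}$ excision is formal once the excision theorem is in hand, the inductive reduction of $K_{\gamma_0}$ requires checking that the secondary excision is compatible with the $1$-set and the orbifold singular structure at each step, and that no tensor factor appearing along the way is itself zero.
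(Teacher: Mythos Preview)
Your first reduction—pairing $K_\gamma$ with $K_{\gamma^{-1}}$ and cutting along a level sphere to land on two copies of the trivial-braid web—is essentially the paper's move, though the paper carries it out upstairs on the real double covers $Y_\gamma \sqcup Y_{\gamma^{-1}}$. One correction: the excision theorem (Theorem~\ref{thm:excision}) only yields an isomorphism of the groups $\HMR_\bullet(\,\cdot\,|\Sigma)$ restricted to real spin\textsuperscript{c} structures at the adjunction \emph{equality} on the cutting surface, not of the full $\HMR_\bullet$. So your displayed tensor isomorphism should be read in that restricted sense; nonvanishing of a single summand then gives the proposition.

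The genuine gap is the base case. Your plan to peel off strands of the trivial-braid prism by iterated excision along small spheres $\Sigma'$ cannot work as stated, for two independent reasons. First, the excision theorem requires the cutting surface to have \emph{no real points}: in the double cover the preimage must miss the fixed locus of $\upiota$. A sphere that ``isolates a single strand together with short arcs of $U_0$ and $U_1$'' necessarily meets the $\sfr$-circles, so its lift hits the real locus and the hypothesis fails. Second, even if you arrange $\Sigma'$ to meet only $\sfc$-edges, the theorem requires $\chi_{\mathrm{orb}}(\check\Sigma') < 0$, i.e.\ the bifold sphere must carry at least five $\sfc$-points; a sphere separating off one strand has too few, so excision is unavailable precisely where your induction needs it. (This is not a technicality: for $n=1$ and $n=3$ the group $\HMR_\bullet(L_n,s_n)$ actually vanishes, so no formal reduction to the unlink can succeed.)

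The paper's substitute for your inductive base case is a direct analytic computation: on the real double cover of the trivial-braid web one has $S^1 \times \check F$ with $\check F$ the $n$-pointed bifold sphere, and the Seiberg--Witten solutions there are identified with K\"ahler vortices on $\check F$ (Lemma~\ref{lem:HMR_adj_equality_swap}). At the adjunction-equality spin\textsuperscript{c} structure the moduli space is a single point (the canonical vortex), giving $\HMR_\bullet(K_{\gamma_0},s_{\gamma_0}\,|\,\widetilde\Sigma_1)=\bbF_2$. That computation, not a further excision, is the missing ingredient.
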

\begin{figure}[h!]
	\centering
	\includegraphics[width=0.25\linewidth]{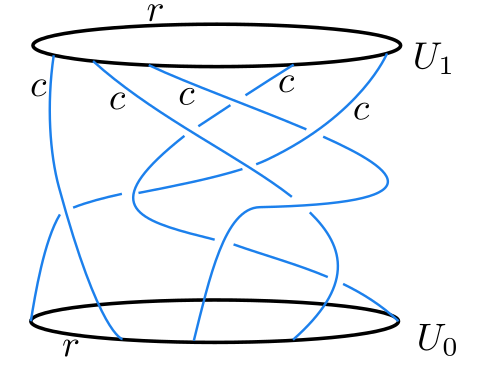}
	\caption{A web $K_{\upgamma}$ from a $5$-braid $\gamma$.}
	\label{fig:5braid}
\end{figure}

\subsection{Organization of the paper}
We describe the topology of the orbifolds of interest in Section~\ref{sec:bifold_singularities} and set up the
bifold Seiberg-Witten theory with involutions in Section~\ref{sec:bifoldspinc}.  Section~\ref{sec:analysis_of_sw_traj} is devoted to the analysis of Seiberg-Witten trajectories.
We define the Floer homologies of webs in Section~\ref{sec:floer_homology} and discuss
their functorial aspects in Section~\ref{sec:functoriality}.
Section~\ref{sec:framed} discusses the framed web homology.
In Section~\ref{sec:adj_excis}, we prove an adjunction inequality and an excision theorem, along with some applications.
We examine some examples in Section~\ref{sec:example}.
We conclude with some observations and remarks in Section~\ref{sec:remarks}.

\subsection{Acknowledgement} 
I would like to thank my advisor, Peter Kronheimer, for his guidance and encouragement during this project.
I appreciate the several conversations that I had with Ali Daemi about plane Floer homology.
I am grateful to Masaki Taniguchi for many discussions on orbifold Seiberg-Witten theory.
I would also like to thank the Simons Foundation and organizers of the ``Instantons and Foams'' workshop at MIT.
This was a satellite event of the Simons collaboration on new structures in low-dimensional topology, which partly motivated this work.

\section{Bifold singularities} 
\label{sec:bifold_singularities}
\subsection{Bifolds as orbifolds}
\label{subsec:bifolds_orbi}
Given a manifold $M$, recall that an orbifold structure $\check M$ consists of a system of charts $\{U\}$ of $M$ and for each chart $U$, a quotient map
\[
	\phi: \tilde U \to U
\]
for an action of finite group $H$. 
For $x \in U$, let $H_x$ be the stabilizer group of a preimage $\tilde x \in \tilde U$ of $x$.
We write $\check M = (M,\Sing)$ where $\Sing$ consists of $x \in H_x$ such that $H_x \ne \{1\}$.
In this paper, $M$ will either be a $3$- or $4$-manifold, and $H_x$ will be trivial, $\bbZ/2$, $V_4$, or $V_8$.
Here, $V_4 \le \SO(3)$ and $V_8 \le \SO(4)$ are subgroups realized by diagonal matrices with entries in $\{\pm 1\}$.

For a $3$-orbifold $\check Y = (Y,K)$, we require that every point $x \in K$ lies in a neighbourhood modelled on $\reals \times (\reals^2/\{\pm 1\})$ or $\reals^3/V_4$.
For a $4$-orbifold $\check X = (X,\Sigma)$, there are $3$ local models: $\reals^2 \times \reals^2/\{\pm 1\}$, $\reals \times \reals^3/V_4$, and $\reals^4/V_8$.
\begin{defn}
	A \emph{bifold} is a $3$- or $4$-dimensional orbifold whose local models belong to the types described above.
\end{defn}
Given a smooth Riemannian metric on the underlying $3$- or $4$-manifold of a bifold $(M,\Sing)$, we can modify the Riemannian metric to an orbifold metric $\check g$  in the manner of \cite[Section~2.2]{KMweb}.
If $M$ is a $3$-manifold and $\Sing$ is a web, the resulting metric will have cone-angle $\pi$ around each edge.
On the other hand, if $M$ is a $4$-manifold and $\Sing$ is a foam, then the bifold metric will have cone-angle $\pi$ along facets of the foam.

\subsection{Cohomology and line bundles}
\label{subsec:cohom_line_bundles}
An orbifold $k$-form is given in a orbifold chart $\tilde U \to U$ by an invariant $k$-form on $\tilde U$.
Let $\Omega^k(\check M; \reals)$ be the space of $\reals$-valued $1$-forms.
The orbifold de Rham complex $(\Omega^*(\check M;\reals),d)$ computes the de Rham cohomology group $H^k(\check M;\reals)$.
By Poincar\'{e} lemma, the $H^k(\check M;\reals)$ is isomorphic to $H^k( M;\reals)$ the cohomology of the underlying manifold.
Let $H^1(\check M;\bbZ)$ be the subgroup of $H^1(\check M;\reals)$ represented by bifold $1$-forms of integral periods.
There is a one-to-one correspondence between elements of $H^1(\check M;\bbZ)$ and orbifold harmonic maps from $\check M$ to $S^1$.

Let $L \to \check M$ be a bifold line bundle over a bifold $\check M$. 
The \emph{bifold first Chern class} $c_1(L) \in H^2(\check M;\bbR)$ of $L$ is represented by the de Rham class of the $2$-form $iF_A/2\pi$, where $A$ is an orbifold connection on $L$.
(In fact this $c_1(L)$ lies in rational cohomology, as $L^{\otimes 2}$ is smooth over the underlying space.)

Let $\check Y$ be a 3-bifold and suppose the singular set $K \subset Y$ has no $V_4$-point.
The isomorphism classes of bifold line bundles can be analyzed as follows.
Let $K_i$ be a component of $K$. Over a tubular neighbourhood of $K_i$, consider the trivial line bundle over its branched double cover
$\C \times (K_i \times D^2) \to (K_i \times D^2)$ 
where the involution acts by
\begin{equation}
	(\lambda,\theta, z) \mapsto (-\lambda,\theta, -z),
\end{equation}
where $z$ is a complex coordinate on the unit disc $D^2$.
Let $E_i$ be the line bundle obtained by gluing the orbifold line bundle on the neighbourhood of $K_i$ and the trivial bundle on $Y \setminus (\cup K_i)$ via the map $(\theta,z) \mapsto z$.

Given any orbifold line bundle $L$, there exists a set of integers $\beta_i \in \{0,1\}$ for which $L \otimes E_1^{\beta_1} \otimes \cdots \otimes E_n^{\beta_n}$ is a smooth line bundle on the underlying 3-manifold.
We refer to $\beta_i$ as the \emph{isotropy representation (or isotropy data)} of $L$ at $K_i$.

Let $\Pic(\check Y)$ be the (topological Picard) group of equivalence classes of bifold line bundles on $\check Y$, equipped with tensor product.
The following theorem is due to Baldridge~\cite[Theorem~4]{Baldridge2001}.
\begin{thm}
	Let the isotropy representations be fixed.
	The isomorphism classes of bifold line bundles on $\check Y$ with  along $K_i$ are in bijective correspondence with classes in $H^2(Y;\bbZ)$.
\hfill \qedsymbol
\end{thm}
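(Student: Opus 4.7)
The plan is to reduce the classification of bifold line bundles with prescribed isotropy data $\beta = (\beta_i)$ to the classical classification of smooth complex line bundles on the underlying $3$-manifold $Y$, using the model bundles $E_i$ introduced just above the theorem.

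First I would verify the two basic properties of isotropy data: (a) the isotropy $\beta_\bullet$ is additive modulo $2$ under tensor product of bifold line bundles, since the local $\bbZ/2$-action on a tensor product of fibers is the product of the two actions; and (b) the bundle $E_i$ has isotropy $(\delta_{ij})_j$ at the components $K_j$, which is built into its construction (trivial away from $K_i$, the standard local orbifold line bundle near $K_i$). Combining (a) and (b), for any bifold line bundle $L$ with isotropy $\alpha$, the tensor product $L \otimes E^{\beta} := L \otimes E_1^{\beta_1} \otimes \cdots \otimes E_n^{\beta_n}$ has isotropy $\alpha_i + \beta_i \pmod 2$ at each $K_i$.

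Next I would argue that a bifold line bundle on $\check Y$ with trivial isotropy everywhere is the same data as a smooth complex line bundle on $Y$. Near a component $K_i$ the orbifold chart is $\reals \times \reals^2$ with the $\bbZ/2$-action $(t,z) \mapsto (t,-z)$, and a bifold line bundle with trivial isotropy at $K_i$ is locally the pullback of a smooth line bundle from $\reals \times (\reals^2/\{\pm 1\})$; so the local bundle descends across $K_i$ and, patching over $Y$, yields a smooth complex line bundle. Combined with the classical bijection $c_1: \Pic^{\mathrm{top}}(Y) \xrightarrow{\sim} H^2(Y;\bbZ)$, this identifies isomorphism classes of bifold line bundles with trivial isotropy with $H^2(Y;\bbZ)$.

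Finally I would define
\[
\Phi : \{\text{bifold line bundles on } \check Y \text{ with isotropy } \beta\}/\!\cong \;\longrightarrow\; H^2(Y;\bbZ), \qquad [L] \mapsto c_1(L \otimes E^{\beta}).
\]
By (a) and (b), $L \otimes E^{\beta}$ has trivial isotropy at every $K_i$, so the right-hand side is a genuine integral class. An inverse is given by $c \mapsto [L_0 \otimes (E^{\beta})^{-1}]$ for any smooth line bundle $L_0$ on $Y$ with $c_1(L_0)=c$; surjectivity and injectivity reduce to the correspondence in the previous paragraph together with the standard classification of smooth line bundles. The only step that requires genuine care is the descent argument — matching equivariant transition functions on overlapping orbifold charts near $K$ — which is the geometric heart of the statement; the remainder is bookkeeping with $\bbZ/2$-valued isotropy.
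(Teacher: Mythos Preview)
Your argument is correct and is precisely the reduction the paper sketches in the paragraph immediately preceding the theorem (tensoring by the model bundles $E_i^{\beta_i}$ to strip off the isotropy, then invoking the smooth classification). The paper itself does not give a proof of this statement: it is stated as a citation of Baldridge~\cite[Theorem~4]{Baldridge2001} and closed with a \qedsymbol. So there is nothing further to compare against; your write-up simply fills in what the paper outsources.
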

In particular, orbifold line bundles over $S^3$ are uniquely determined by their isotropy data.
\begin{cor}
\label{cor:bifold_spinc_S3}
	If $\check Y = (S^3,K)$, then $\Pic(\check Y)$ is isomorphic to $\bbZ_2^n = \{0,1\}^n$, where $n$ is the number of components of $K$.
\hfill \qedsymbol
\end{cor}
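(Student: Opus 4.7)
The plan is to apply Baldridge's theorem directly to the pair $\check Y = (S^3, K)$ and keep careful track of the group structure coming from tensor product.

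First I would note that $H^2(S^3;\bbZ) = 0$. Combined with the theorem quoted just above, this implies that once the isotropy data $(\beta_1,\dots,\beta_n) \in \{0,1\}^n$ is fixed, there is a unique isomorphism class of bifold line bundle over $\check Y$ with this isotropy. In particular, the underlying set of $\Pic(\check Y)$ has exactly $2^n$ elements, one for each tuple in $\{0,1\}^n$.

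Next I would upgrade this set bijection to a group isomorphism. Define the isotropy map
\[
	\iota : \Pic(\check Y) \longrightarrow (\bbZ/2)^n, \qquad L \longmapsto (\beta_1(L),\dots,\beta_n(L)).
\]
Because the isotropy datum $\beta_i$ is read off from the local model near $K_i$ — the $\{\pm 1\}$-representation on the fiber at a branch point — it is a local invariant, and under tensor product the local representations multiply. Since each representation takes values in $\{\pm 1\}$, this multiplication is precisely addition mod $2$ of the $\beta_i$'s, so $\iota$ is a homomorphism. Concretely, $E_i \otimes E_i$ is built from the trivial bundle $\C \times (K_i \times D^2)$ with involution $(\lambda,\theta,z)\mapsto(-\lambda,\theta,-z)$ tensored with itself, where the sign on $\lambda$ squares to the trivial action; hence $E_i^{\otimes 2}$ descends to a smooth line bundle on $S^3$, which is necessarily trivial.

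Surjectivity of $\iota$ follows because the bundle $E_1^{\beta_1} \otimes \cdots \otimes E_n^{\beta_n}$ realizes any prescribed tuple $(\beta_1,\dots,\beta_n)$; injectivity follows because $\ker \iota$ consists of bifold line bundles with trivial isotropy, i.e.\ smooth line bundles on $S^3$, and $H^2(S^3;\bbZ)=0$ forces these to be trivial. There is no real obstacle here beyond bookkeeping — the main substance was already packaged into Baldridge's theorem; the new input is just $H^2(S^3;\bbZ)=0$ and the observation that tensor product restricts to addition mod $2$ on isotropy data.
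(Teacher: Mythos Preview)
Your argument is correct and follows essentially the same approach as the paper, which treats the corollary as an immediate consequence of Baldridge's theorem together with $H^2(S^3;\bbZ)=0$. You have simply filled in more detail than the paper does, in particular the verification that the isotropy map is a group homomorphism; the paper leaves this implicit and marks the corollary with a bare \qedsymbol.
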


\subsection{Bifold connections}
Let $G$ be a compact connected Lie group and let $W$ be a $G$-representation.
The relevant pairs $(G,W)$ for us are
\begin{citemize}
	\item $G = \SO(3)$ and $W = \reals^3$, 
	\item $G = \U(2) = \Spin^c(3)$ and $W = \C^2$, 
	\item $G = \U(1)$ and $W= \C$.
\end{citemize}
Suppose $\check M = (M,\Sing)$ is a bifold. Following \cite{KMweb}, we define a \emph{$C^{\infty}$ orbifold $G$-connection} to be a pair $(E,A)$ over $\check M$, where $E$ is a $W$-bundle over $\check M \setminus \Sing$ and $A$ is a connection over $E$ such that the pull-back of $(E,A)$ under any orbifold chart $\phi: \tilde U \to U$ extends to a smooth pair $(\tilde E, \tilde A)$.
A \emph{bifold connection} on a bifold $\check M$ is a $C^{\infty}$-connection 
such that every order-$2$ stabilizer groups $H_x$ act non-trivially on the corresponding fibre $\tilde E_x$.

Let $K$ be a web.
The idea of Kronheimer and Mrowka is to study bifold $\SO(3)$-connections on $(Y^3,K)$, with order-$2$ holonomy constraints on meridians of $K$.
When three edges meet at a vertex, the three meridianal holonomies (i.e. the action of $H_x$ on the fibre $\tilde E_x$) generate a subgroup conjugate to $V_4 \subset \SO(3)$.

For abelian bifold connections, as observed by Daemi~\cite{DaemiPrivateComm}, it is necessary to extend $\U(1)$ by $\mathbb Z_2$ (generated by conjugation) so that $V_4$ embeds.
At a vertex of $K$,
the holonomy around an edge is either $(-1)$ or complex conjugation $(\pm \sigma)$.
The only possible configuration is to have exactly two conjugate-linear holonomies and one complex-linear holonomy. In other words, the set of \emph{complex-linear edges} is a $1$-set.

For context, a Lie group $G$ with an involution $\sigma: G \to G$ define a $\mathbb Z_2$-extension $G_{\sigma}$ of $G$.
Gauge theory of \emph{singular $G_{\sigma}$-connections} was studied by Daemi~\cite{DaemiThesis},  where the singular loci are links and surfaces with meridianal holonomy $\sigma$.
A $G_{\sigma}$-bundle over $\check M\setminus \Sing$ comes with a data of a double cover, induced by the component homomorphism $G_{\sigma} \to \mathbb Z_2$.
At the singular locus, two fibres of $G_{\sigma}$ are identified by the holonomy. 
A \emph{singular $G_{\sigma}$-bundle} consists of a choice of double branched cover $\tilde M \to M$, a $G$-bundle $P \to \tilde M$, and a lift $\tau: P \to P$ of the covering involution $\tilde M \to \tilde M$.
This lift $\tau$ induces an action $\tau_*$ over the $G$-connections over $\tilde M$, and a \emph{singular $G_{\sigma}$-connection} is a  $\tau_*$-invariant connection.
\subsection{\emph{Real} covers along webs and foams}
The approach in \cite{ljk2022} is a Seiberg-Witten generalization of the singular $\U(1)_{\sigma}$-instanton theory studied by Daemi~\cite{Daemi20165} under the name of \emph{plane Floer homology}.
In addition to an involution on the space of $\U(1)$-connections, a complex anti-linear involutive lift of the covering involution over the spinor bundle must be chosen.
To set up the Seiberg-Witten theory for webs, we begin by rephrasing
the definition of a $1$-set.
\begin{defn}
A \emph{$1$-set} for a web $K$ is a colouring of $\Edg(K)$ by a 2-element set $\{\sfr, \sfc\}$ such that around each vertex there are exactly two edges coloured by $\sfr$ and one edge coloured by $\sfc$.
\end{defn}
This definition can be seen to be equivalent to the one in Section~\ref{sec:intro_str}, by setting $s$ to be the set of edges coloured by $\sfc$.
Let $K^{\sfr}$ and $K^{\sfc}$ be the closures of the union of $\sfr$-edges and $\sfc$-edges in $K \subset S^3$, respectively.
As a consequence of the definition, $K^{\sfr}$ is a $1$-manifold.
We refer to $K^{\sfr}$ as the \emph{real locus} and $K^{\sfc}$ as the \emph{complex locus}.
\begin{figure}[h!]
	\centering
	\includegraphics[width=0.25\linewidth]{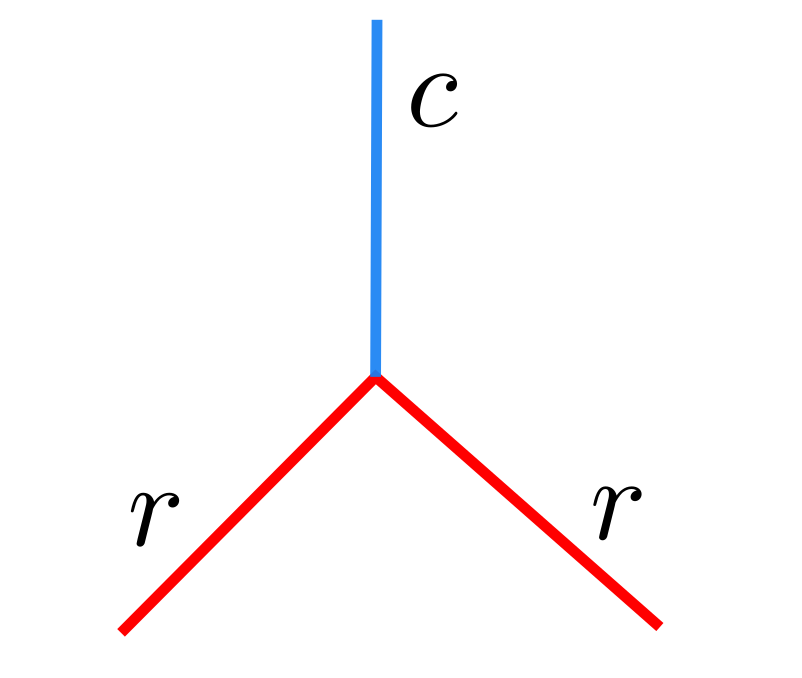}
	\caption{Local picture of a $1$-set. The convention for figures is \textbf{r}ed for \textbf{r}eal singularity and blue for complex singularity.}
\end{figure}
\begin{defn}
	Given a web with a $1$-set $(K,s)$,
	the \emph{(branched) double cover $\check Y$} along $(K,s)$ is a bifold $(Y, \tilde K^{\sfc})$ consisting of the following data:
\begin{itemize}
	\item the underlying manifold $Y$, which is the double cover of $S^3$, branched along the real locus $K^{\sfr}$;
	\item the bifold singular set $\tilde K^{\sfc}$, where $\tilde K^{\sfc}$ is the preimage in $Y$ of the complex locus $K^{\sfc}$, fixed by the covering involution $\upiota: \tilde Y \to \tilde Y$.
\end{itemize}
\end{defn}
Let $\widetilde K^{\sfr}$ be the preimage of $K^{\sfr}$ under $Y \to (S^3,K^{\sfr})$.
The covering involution $\upiota: \check Y \to \check Y$ preserves the bifold structure of $\check Y$, fixing $\widetilde K^{\sfr}$. 
The induced involution on $\tilde K^{\sfc}$, reflects components of $\tilde K^{\sfc}$ along the preimage of $\Ver(K)$, and exchanges pairs of components of $\tilde K^{\sfc}$ that contain no vertices.
We emphasize that the bifold structure of the double cover involves only $\widetilde K^{\sfc}$, while the real part $\widetilde K^{\sfr}$ is recorded in the involution $\upiota$.

To build a category whose objects are webs with $1$-sets, we colour the facets of a foam cobordism $\Sigma$ so that the closure of the union of $\sfr$-facets is a $2$-manifold, possibly with boundaries and nonorientable.
To this end, we make the following definition.
\begin{figure}[th!]
	\centering
	\includegraphics[width=0.6\linewidth]{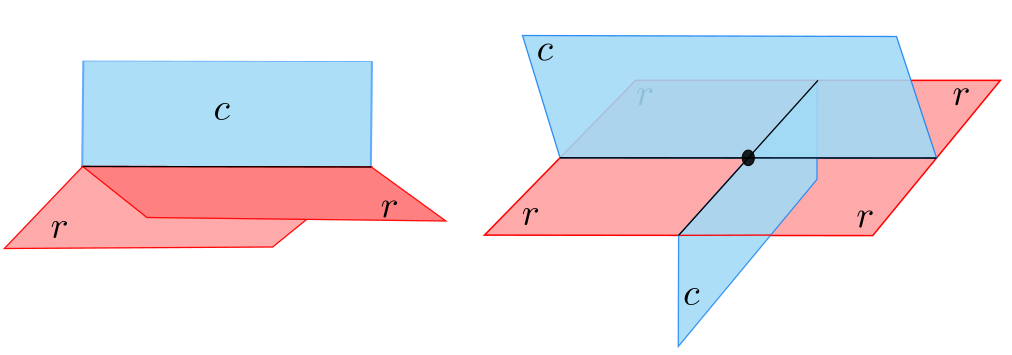}
	\caption{Local pictures around a seam and a tetrahedral point of a $1$-set.}
\end{figure}
\begin{defn}
A \emph{$1$-set of $\Sigma$} is a colouring of the set $\Facet(\Sigma)$ of facets by a $2$-element set $ \{\sfr,\sfc\}$ such that among the three facets around each seam there is precisely one $\sfc$-facet.
\end{defn}
As a consequence, at each tetrahedral point there are exactly two $\sfc$-facet, meeting only at the tetrahedral point.
Let $(\Sigma,s_{\Sigma})$ be a foam with $1$-set $s_{\Sigma}$.
Denote by $\Sigma^{\sfr}, \Sigma^{\sfc}$ the (closures of the) union of $\sfr$-facets and of $\sfc$-facets, respectively.
For practical purposes, we suppose all foams are properly embedded in punctured $S^4$'s.
\begin{defn}
Given a $1$-set $s_{\Sigma}$ of foam $\Sigma$, a \emph{(branched) double cover} $(X, \tilde \Sigma^{\sfc})$ of $(\Sigma,s_{\Sigma})$ has the underlying $4$-manifold as the double cover branched along $\Sigma^{\sfr}$, and singular at the preimage $\tilde\Sigma^{\sfc}$ in $X$ of $\Sigma^{\sfc}$.
If $\Sigma^r = \emptyset$, then the double cover is simply the trivial double cover.
\end{defn}
Due to the presence of $V_4$-singularities and absence of orientability assumptions on the facets, the self-intersection number $\Sigma \cdot \Sigma$, typically defined for smooth oriented embedded $2$-manifolds, needs to be suitably modified.
The following definition is taken from \cite[Definition~2.5]{KMweb}.

A foam $\Sigma \subset X$ can be viewed as an immersion of a surface with corners $i: \Sigma^+ \to X$, injective except on the boundary $\del \Sigma^+$. 
The map $i$ restricted to $\del \Sigma^+$ to the seams as $3$-fold coverings and takes the corners of $\Sigma^+$ to the tetrahedral points.
At each point $q$ on the seams or the tetrahedral points, the tangent spaces of the three branches of $\Sigma$ span a $3$-dimensional subspace $V_q$ of $TX$.

Let $V$ be the resulting $3$-dimensional subbundle of $i^*(TX)|_{\del \Sigma}$.
Since $V$ contains the directions tangent to $i(\Sigma^+)$, there is a $1$-dimensional subbundle $W$ of the normal bundle $N \to \Sigma^+$ to the immersion:
\[
	W \subset N|_{\del \Sigma}.
\]
While $N$ may not be orientable, it has a well-defined square $N^{[2]}$, whose orientation is canonically identified with the orientation bundle of $\Sigma^+$ via the ambient orientation of $X$.
Let $w$ be a section of $N^{[2]}|_{\del \Sigma^+}$ determined by $W$.
\begin{defn}
The \emph{self-intersection number} $\Sigma \cdot \Sigma$ is half of the \emph{relative Euler number}, which in turn is defined to be the following pairing in (co)homology with coefficients in the orientation bundle
\[
e(N^{[2]},w)[\Sigma^+,\del\Sigma^+].
\]
\end{defn}

\subsection{\emph{Real} bifolds}\label{sec:real_bifolds}
From now on the bifolds of interest will arise from double covers of bifolds in Section~\ref{subsec:bifolds_orbi}.
The local models one encounters will be only a subclass of that in Section~\ref{subsec:bifolds_orbi}, but with the addition of global involutions.
In real double covers, the local models in dimension three with covering involutions are:
\vspace{-0.5\baselineskip}
\begin{itemize}
	\setlength{\parskip}{0pt}
	\setlength{\itemindent}{-7pt}
	\item $\reals \times \reals^2/\{\pm 1\}$ where $\upiota$ acts trivially, and
	\item $\reals \times \reals^2/\{\pm 1\}$ where $\upiota$ acts by $(-1) \times \Id$.
\end{itemize}
\vspace{-0.5\baselineskip}
In dimension four, we have the following local models:
\vspace{-0.5\baselineskip}
\begin{itemize}
	\setlength{\parskip}{5pt}
	\setlength{\itemindent}{-7pt}
	\item $\reals^2 \times (\reals^2/\{\pm 1\})$  and $\upiota$ is trivial;
	\item $\reals^2 \times (\reals^2/\{\pm 1\})$ and $\upiota (x_0,x_1,x_2,x_3) = (+x_0,-x_1,-x_2,+x_3)$;
	\item $(\reals^3/V_4) \times \reals$, where $V_4$ is generated by two commuting involutions $\iota_0,\iota_1$ on $\reals^3$:
	\[
	\iota_0(x_0,x_1,x_2) = (-x_0,-x_1,+x_2), \quad  \iota_1(x_0,x_1,x_2)=(+x_0,-x_1,-x_2),
	\]
	 and $\upiota(x_0,x_1,x_2,x_3) = (+x_0,+x_1,-x_2,-x_3)$.
\end{itemize}
(The plus signs are for aesthetic purpose.) 
\begin{notat}
	We stress the difference between upgreek ``$\upiota$'' and ``$\iota$''. 
	The symbol ``$\upiota$'' is reserved for global involutions on bifolds, while ``$\iota$'' is reserved for local involutions in a bifold chart.
\end{notat}
\begin{defn}
	A \emph{real bifold} $(\check M,\upiota)$ is a pair of 3- or 4-orbifold $\check M$ and an involution $\upiota: \check M \to \check M$, with the local models above.
	An involution $\upiota: M \to M$ preserving the bifold structure will be referred to as a \emph{real involution}.
	Given a complex bifold vector bundle $W \to (\check M,\upiota)$,
	a \emph{real structure} on $W$ is a conjugate-linear involutive lift $\uptau:W \to W$ of $\upiota:\check M \to \check M$.
\end{defn}
If $L$ admits a real structure, then $\upiota^*L \cong \bar L$ and so $\upiota$ preserves the isotropy representation of $L$.
Certain converses are true when a $3$-bifold arises as a real double cover of $S^3$.

Recall that given a $3$-bifold $\check Y$ having no $V_4$-points, we defined $\Pic(\check Y)$ to be the group of bifold line bundles with tensor product operation.
If in addition $\check Y$ is a real bifold, then $\upiota$ acts on $\Pic(\check Y)$ by pullback.
Consider the following subgroup
\[
	\Pic(\check Y,-\upiota)=\{L \in \Pic(\check Y): \upiota^*L \cong \overline{L}\}.
\]
In particular, a line bundle in $\Pic(\check Y,-\upiota)$ is required to have isotropy data preserved by $\upiota^*$ and have first Chern class invariant under $(-\upiota^*)$.
As a consequence of the following proposition, for a line bundle over a real cover to be an element $\Pic(\check Y,-\upiota)$ it is enough to ensure $\upiota$ fix the isotropy data.
In this case, the proposition also asserts the existence of a real structure.
\begin{prop}
\label{prop:line_admit_real}
	Let $(\check Y,\upiota)$ be the real cover of $(K,s)$. Let $\tilde K_i^{\sfc}$ be a component of $\tilde K^{\sfc}$.
	A bifold line bundle $L$ on $\check Y$ admits a real structure if and only if $\upiota$ fixes the isotropy data of $L$ on $\tilde K_i^{\sfc}$'s.
	Moreover, a real structure on $L$, if exists, is unique.
\end{prop}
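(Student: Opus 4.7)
The forward direction unwinds the definitions: given a real structure $\uptau:L\to L$, the fibrewise $\mathbb{C}$-antilinear maps $v\mapsto\uptau(v)$ assemble into a $\mathbb{C}$-linear bifold isomorphism $L\cong\upiota^*\bar L$, and since complex conjugation preserves the $\mathbb{Z}/2$-valued isotropy representation of a bifold line bundle, $L$ and $\upiota^*L$ must share isotropy data on every component of $\tilde K^{\sfc}$, i.e.\ $\upiota$ fixes the isotropy data of $L$.

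For the converse, assume $\upiota$ fixes the isotropy data; I would first promote this to a bifold isomorphism $\upiota^*L\cong\bar L$. The candidate bundle $M:=\upiota^*L\otimes\bar L^{-1}$ has vanishing isotropy at each $\tilde K_i^{\sfc}$ (isotropies cancel in $\mathbb{Z}/2$) and is therefore a genuine smooth complex line bundle on the underlying $3$-manifold $Y$, with $c_1(M)=\upiota^*c_1(L)+c_1(L)\in H^2(Y;\mathbb{Z})$. Since $Y/\upiota\cong S^3$, rational cohomology gives $H^2(Y;\mathbb{Q})^{\upiota^*}=H^2(S^3;\mathbb{Q})=0$, so $\upiota^*$ acts as $-\Id$ on $H^2(Y;\mathbb{Q})$ and $c_1(M)$ is torsion; a Smith-sequence argument, combined with the classification theorem preceding Corollary~\ref{cor:bifold_spinc_S3}, lifts this to $c_1(M)=0$ integrally, so that $M$ is trivial and $\upiota^*L\cong\bar L$. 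Having secured an isomorphism $\Phi:L\to\upiota^*\bar L$, I would compose with its natural $\upiota$-conjugate to obtain a $\mathbb{C}$-linear automorphism $c\cdot\Id_L$, verify by a direct calculation with the antilinear structure that $c$ is a positive real, rescale $\Phi$ by $c^{-1/2}$, and read off the rescaled $\Phi$ as the desired conjugate-linear involution $\uptau:L\to L$.

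For uniqueness, two real structures $\uptau_1,\uptau_2$ differ by $g:=\uptau_2\uptau_1^{-1}\in\Aut_{\mathbb{C}}(L)$, i.e.\ multiplication by a function $g:\check Y\to\mathbb{C}^*$. The condition $\uptau_2^2=\Id$ forces $g(\upiota(x))\overline{g(x)}=1$, and after a $U(1)$-gauge adjustment one may arrange $|g|=1$, so $g$ is an $\upiota$-invariant $U(1)$-function. One then checks that any such $g$ admits a factorization $g(x)=h(\upiota(x))\overline{h(x)}^{-1}$ for a gauge $h:\check Y\to U(1)$, exhibiting $\uptau_1$ and $\uptau_2$ as gauge-equivalent and establishing the claimed uniqueness.

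The main obstacle I foresee is the integral lift of the Chern-class computation in the existence step: while the rational identity $\upiota^*=-\Id$ on $H^2(Y;\mathbb{Q})$ is immediate, ruling out $\upiota^*$-invariant integral torsion as an obstruction to triviality of $M$ is where the specific geometry of the branched double cover must enter. Should that route prove delicate, an alternative is to build $\uptau$ directly via complex-antilinear bifold charts on the real-bifold local models of Section~\ref{sec:real_bifolds} and then glue, in which case the obstruction instead lives in $H^1(\check Y;\mathbb{Z}/2)$ and is killed by the same $\upiota$-invariance of the isotropy.
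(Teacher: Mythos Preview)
Your route is cohomological, whereas the paper proceeds by explicit construction: it writes $L\cong L_{smooth}\otimes\bigotimes E_{k}\otimes\bigotimes(E_i\otimes E_j)$ using the local bifold bundles $E_i$ of Section~\ref{subsec:cohom_line_bundles}, builds real structures on the $E$-factors by hand from the bifold local models (treating separately the components of $\tilde K^{\sfc}$ preserved by $\upiota$ and those exchanged in pairs), and then cites \cite[Lemma~3.5 and Section~3]{ljk2022} for existence and uniqueness on the smooth factor $L_{smooth}$.

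Your existence argument has a gap beyond the integral Chern-class issue you already flag. The composite $(\upiota^*\bar\Phi)\circ\Phi$ is an automorphism of $L$, hence multiplication by a \emph{function} $c:\check Y\to\mathbb{C}^*$, not a constant; the relation one actually obtains is $c=\upiota^*\bar c$, which forces $c$ to be real along the fixed locus $\tilde K^{\sfr}$, and after rescaling to $|c|=1$ one is left with a locally constant sign $\pm1$ there. Replacing $\Phi$ by $f\Phi$ changes $c$ to $(f\cdot\upiota^*\bar f)\,c$, which at a fixed point multiplies $c$ by $|f|^2>0$, so this sign on each component of $\tilde K^{\sfr}$ is an invariant of the lift; if it is $-1$ anywhere you have produced a quaternionic structure rather than a real one. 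That this obstruction vanishes in the present setting is exactly what \cite[Lemma~3.5]{ljk2022} supplies for the smooth piece---it is not a direct calculation. Your uniqueness step has the analogous issue: writing an $\upiota$-invariant $g:\check Y\to U(1)$ as $(\upiota^*h)\cdot h$ is a Hilbert-90-type surjectivity statement that needs the topology of the real cover (here, that such $g$ descend to null-homotopic maps on $S^3$) and is not automatic.
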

\begin{proof}
	The necessity of the first statement follows from the fact that conjugation preserves isotropy structures.
	Suppose isotropies of $\tilde K_i^{\sfc}$'s are fixed by $\upiota$. 
	Suppressing the tilde and superscript $\sfc$, we will write instead just ``$K_i$''.
	We construct real structures on the orbifold line bundle $E_i$'s defined locally around $ K_i$ in Section~\ref{subsec:cohom_line_bundles} as follows.
	Since $s$ is a $1$-set, either $K_i$ is preserved by $\upiota$ setwise while fixing two points, or $\upiota(K_i) = K_j$ for some $j \ne i$.
	In the first case, we suppose the tubular neighbourhood $N_k$ of $K_k$ was chosen to be $\upiota$-invariant, and define a real structure $\uptau_k$ by
	\[
	(\lambda,\theta, z) \mapsto (\bar{\lambda},-\theta, \bar z),
	\]
	and conjugation on the complement of $N_k$.
	In the second case, for the tensor product $E_i \otimes E_j$, we choose a neighbourhood $N_i$ of $K_i$ and set $N_j$ to be the $\upiota$-image of $N_i$. 
	Gluing the trivial bundle on $\check Y \setminus (K_i \cup K_j)$ as before, we ensure there is an identification of the fibres of $E_i \otimes E_j$ over $N_i \cup N_j$.
	We then define a real structure $\uptau_{ij}$ by declaring $\uptau_{ij}$ be the conjugation on the fibres.
	Any orbifold line bundle $L$ whose isotropy type is fixed by $\upiota$ can be written as a tensor product
	\[
		L_{smooth} \otimes E_{k_1} \otimes \dots \otimes E_{k_m} \otimes (E_{i_1} \otimes E_{j_1}) \otimes \dots \otimes  (E_{i_{\ell}} \otimes E_{j_{\ell}})
	\]
	of a smooth line bundle $L_{smooth}$ over $Y$, some $E_k$'s supported on components fixed by $\upiota$, and product of pairs of $E_i$ and $E_j$.
	By \cite[Lemma~3.5]{ljk2022}, the bundle $L_{smooth}$ admits a unique real structure. 
	Since the rest of the factors also have real structures, the product $L$ also supports a real structure.
	
	Moreover, since $E_i$'s are trivial outside a neighbourhood of $K^{\sfc}$,  the arguments \cite[Section~3]{ljk2022} apply to $N_k$ and $N_{i} \cup N_{j}$ to show the uniqueness of $\uptau_k$ and $\uptau_{ij}$.
	Since the real structure on $L_{smooth}$ is also unique, $L$ must also have a unique real structure.
\end{proof}

\section{Real Seiberg-Witten theory with bifold singularities}
\label{sec:bifoldspinc}
Suppose $\check M = (M,\Sing)$ is a 3- or 4-orbifold having the singularity types of a real bifold as in Section~\ref{sec:real_bifolds}.
In other words, the 3-bifolds will have no $V_4$-points and the cobordisms between them have no $V_8$-points.
We will define (real) bifold \spinc structures using spinor bundle and Clifford multiplication as in \cite{KMbook2007} via explicit local models.
Bifold \spinc structures can be alternatively defined as lifts of the tangent bundle $T^*\check M$ to orbifold $\Spin^{c}(n)$-bundle, as in e.g. \cite{Baldridge2001}.

Let $S \to \check M$ be a bifold hermitian rank-$2$ bundle.
Suppose $U \subset M$ is a bifold chart around a $\mathbb Z_2$-point and  $\iota: \tilde U \to \tilde U$ be the involution induced by the quotient map $\phi: \tilde U \to U$.
There is a smooth hermitian bundle $\tilde S \to \tilde U$, and an involutive complex-linear bundle isomorphism $\tau:\tilde S \to \tilde S$.
We require the eigenvalues of $\tau$ over $\phi^{-1}(\Sing \cap U)$ to be distinct.
\begin{defn}
A \emph{bifold \spinc structure} $\fraks$ over $\check Y$ is a pair $(S,\rho)$ of a bifold rank-$2$ hermitian  bundle $S$ as above and a Clifford multiplication $\rho: T\check Y \to \End(S)$ with the following properties.
Over a bifold chart $\tilde U \to U$ containing a $\bbZ_2$-point, the Clifford multiplication is given by an orientation-preserving bundle isometry $\tilde \rho: T\tilde U \to \su(\tilde S) \subset \End(\tilde S)$.
Here, $\su(\tilde S)$ is the subbundle of traceless, skew-adjoint endomorphism equipped with the inner product $\frac12\Tr(a^*b)$. 
Also, for any section $\Psi$ of $\tilde S$ and any vector field $\xi$ over $\tilde U$,
\[
	\tau(\tilde\rho(\xi) \Psi) = \tilde\rho(\iota_* \xi)\tau(\Psi).
\]
If a chart $U$ contains no $\bbZ_2$-points, then we require $\rho: T U \to \su(S)$ to be an orientation-preserving bundle isometry only. 
\hfill $\diamond$
\end{defn}
\begin{rem}
Choose a bifold chart  $\tilde U \to U$ and an orthonormal frame $(e_1,e_2,e_3)$ of $T\tilde U$ at a $\bbZ_2$ point $y \in \check Y$, such that $\iota_*(e_1,e_2,e_3) = (+e_1,-e_2,-e_3)$.
There is a basis of $\tilde S_{\tilde y}$ such that the Clifford multiplication can be represented by the Pauli matrices $\sigma_i$
\begin{equation*}
	\rho(e_1) =  \sigma_1 = \begin{bmatrix}
		i & 0\\
		0 &-i
	\end{bmatrix},\quad 
	\rho(e_2) = \sigma_2 = \begin{bmatrix}
		0 & -1\\
		1 & 0
	\end{bmatrix}, \quad 
	\rho(e_3) = \sigma_3 = \begin{bmatrix}
		0 & i\\
		i & 0
	\end{bmatrix}.
\end{equation*}
Over $\iota$-fixed points, there are two possibilities for $\tau$ as a matrix in $\U(2)$:
\[
	\begin{bmatrix}
		\pm 1 & 0\\
		0 & \mp 1
	\end{bmatrix}.
\]

\end{rem}
In dimension four, a spinor bundle $S_X$ is a sum $S^+ \oplus S^-$ of two rank-$2$ hermitian bundles.
We make the following assumption about a $V_4$-point.
Let $\tilde U \to U$ be the $V_4$-quotient map of a bifold chart $U$ and let $\tilde S_{\tilde U} = \tilde S^+ \oplus \tilde S^- \to \tilde U$ be the corresponding spinor bundle.
Then $V_4$ acts on $
\tilde U$ by commuting involutions $\iota_0,\iota_1:\tilde U \to \tilde U$. 
The bundle representation is given by two commuting, involutive lifts $\tau_0,\tau_1: \tilde S^{\pm} \to \tilde S^{\pm}$ of $\iota_0,\iota_1$, respectively.
\begin{defn}
	\label{defn:bifoldspinc4}
A \emph{bifold \spinc structure} over a bifold $\check X$ is a pair $(S_X,\rho)$ of a rank-$4$ bifold hermitian bundle $S_X = S^+ \oplus S^-$ and a Clifford multiplication $\rho: T\check X \to \Hom(S_X, S_X)$ with the following properties.
Let $x$ be a point in $\check X$. Then there exists a bifold chart $\tilde U \to U$, where the Clifford multiplication is represented by $\tilde{\rho} : T\tilde{U} \to \Hom(\tilde S_X,\tilde S_X)$.
Moreover, there is an orthonormal frame $(e_0,\dots,e_3)$ of $T\tilde U_{\tilde x}$ and a basis of $\tilde S_{X,\tilde x}$ such that
\begin{equation*}
	\tilde\rho(e_0) = 
	\begin{bmatrix} 0  & -I_2 \\ I_2 & 0\end{bmatrix},\quad
	\tilde \rho(e_i) = 
	\begin{bmatrix} 0  & -\sigma_i^* \\ \sigma_i & 0\end{bmatrix},
\end{equation*}
where $i = 1,2,3$ and $I_2$ is the identity matrix.
The action of the stabilizer $H_x$ on $\tilde S$ obeys the following compatibility conditions:
\begin{enumerate}[(i)]
	\item If $x$ is a smooth point, then there is further requirement.
	\item If $x$ is a $\mathbb Z_2$-point, then the $\bbZ_2$-action is given by a complex-linear involution $\tau: \tilde S_{\tilde U}\to \tilde S_{\tilde U}$ such that
\[
\tau(\tilde\rho(\xi) \Phi) = \tilde\rho(\iota_* \xi)\tau(\Phi),
\]
for any vector $\xi$ on $\tilde U$ and any spinor $\Phi \in \Gamma(\tilde S^{\pm})$.
Over the fibre $\tilde S_{\tilde U,\tilde x}$, the lift $\tau$ has distinct eigenvalues.
	\item Assume $x$ is a $V_4$-point. 
	There are two commuting involutions $\iota_0,\iota_1: \tilde U \to \tilde U$, which lift to commuting two complex-linear unitary involutions 
	$\tau_0,\tau_1: \tilde S_{\tilde U} \to \tilde S_{\tilde U}$. 
	Both $\tau_i$ are compatible with $\tilde \rho$, in the sense that
\[
	\tau_i(\tilde\rho(\xi) \Phi) = \tilde\rho(\iota_* \xi)\tau_i(\Phi),
\]
for any vector field $\xi$ on $\tilde U$ and any spinor $\Phi \in \Gamma(\tilde S^{\pm})$.
Over the fibre $\tilde S_{\tilde U,\tilde x}$, both $\tau_0$ and $\tau_1$ have distinct eigenvalues.
\end{enumerate}
\end{defn}
When a $3$-bifold $\check Y$ has no $V_4$-point, \cite[Theorem~6 and 7]{Baldridge2001} states:
\begin{thm}
\label{thm:Bal_homogeneous}
	For any 3-bifold $(Y,\tilde K)$ such that $\tilde K$ is a link, there exists a bifold \spinc structure.
	Moreover, the difference of two spin\textsuperscript{c} structure is a bifold line bundle.
	In other words, the set of \spinc structures is a principal homogeneous space (i.e. a torsor) over the Picard group $\Pic(\check Y)$ of bifold line bundles.
	\hfill \qedsymbol
\end{thm}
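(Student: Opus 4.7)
The plan is to prove the two assertions separately: first existence, then the torsor property. Since the bifold $\check Y = (Y,\tilde K)$ has only $\bbZ/2$-singularities along a link, the local models are very explicit and most of the work is assembly.

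For existence, I would construct a bifold \spinc structure by specifying it locally and gluing. Fix disjoint tubular neighbourhoods $N_i \cong \tilde K_i \times D^2$ of the link components. On each $N_i$, the local model has a canonical bifold \spinc structure: take the trivial bundle $\tilde S = \C^2$ on the double cover $\tilde K_i \times D^2$, with Clifford multiplication given by the Pauli matrices in an orthonormal frame $(e_1,e_2,e_3)$ with $e_1$ tangent to $\tilde K_i$, and the involutive lift $\tau = \diag(1,-1)$ of $\iota:(\theta,z)\mapsto(\theta,-z)$. The compatibility $\tau \tilde\rho(\xi) = \tilde\rho(\iota_*\xi)\tau$ follows from the commutation relations $[\tau,\sigma_1]=0$ and $\{\tau,\sigma_2\}=\{\tau,\sigma_3\}=0$. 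On the smooth open manifold $Y \setminus \tilde K$, a smooth \spinc structure exists because every orientable $3$-manifold is \spinc. On each annular region $N_i \setminus \tilde K_i$, the two \spinc structures differ by a line bundle (the smooth torsor statement), and this discrepancy can be absorbed by tensoring the structure on $Y \setminus \tilde K$ with a suitable line bundle, using that the restriction map $H^2(Y \setminus \tilde K;\bbZ) \to \bigoplus_i H^2(N_i \setminus \tilde K_i;\bbZ)$ is surjective modulo the constraints enforced by the local model. Performing this adjustment globally yields a bifold \spinc structure on $\check Y$.

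For the torsor property, let $\fraks_1 = (S_1,\rho_1)$ and $\fraks_2 = (S_2,\rho_2)$ be two bifold \spinc structures. On the smooth locus $Y \setminus \tilde K$, the Clifford-equivariant homomorphism bundle $L = \Hom_{\mathrm{Cl}}(S_1,S_2)$ is a hermitian line bundle, and $S_2 \cong S_1 \otimes L$ as Clifford modules. Around a $\bbZ/2$-point, pass to a common bifold chart $\tilde U \to U$. After conjugating by a unitary change of basis of the fibre $\tilde S_{1,\tilde x}$ one may assume $\tilde\rho_1 = \tilde\rho_2$ in the standard Pauli form on the central fibre; propagating by the Clifford action extends this identification on a neighbourhood. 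Then the involutive lifts $\tilde\tau_i$ both commute with $\sigma_1$ and anticommute with $\sigma_2,\sigma_3$, so both are scalar multiples of $\diag(1,-1)$ on the central fibre, hence $\tilde\tau_2 = c\,\tilde\tau_1$ with $c^2 = 1$ there. The local sign $c \in \{\pm 1\}$ at $\tilde K_i$ is exactly the isotropy datum of $L$ at $\tilde K_i$ described in Section~\ref{subsec:cohom_line_bundles}, and this exhibits $L$ as a bifold line bundle. Conversely, tensoring any bifold \spinc structure with any bifold line bundle produces a new bifold \spinc structure (the Clifford action extends tautologically and $\tau$ tensors with the $\pm 1$ isotropy datum), which gives freeness and transitivity of the $\Pic(\check Y)$-action.

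The main obstacle I expect is the gluing step in the existence argument: one must verify that the obstructions on each annular collar $N_i \setminus \tilde K_i$ can be simultaneously resolved by a single smooth line bundle on $Y \setminus \tilde K$. This is essentially a Mayer--Vietoris computation relating $H^2$ of the pieces to $H^2$ of the bifold, and its resolution parallels the classical existence proof for smooth \spinc structures on $3$-manifolds via $w_3 = 0$, adapted to the bifold setting by incorporating the prescribed local isotropy data along each $\tilde K_i$. By contrast, the torsor step is essentially algebraic once the local model is pinned down, since the uniqueness-up-to-sign of $\tau$ on the central fibre is explicit.
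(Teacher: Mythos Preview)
The paper does not supply its own proof of this theorem. The sentence immediately preceding it reads ``When a $3$-bifold $\check Y$ has no $V_4$-point, \cite[Theorem~6 and 7]{Baldridge2001} states:'', and the \qedsymbol at the end of the statement marks it as quoted from Baldridge without argument. So there is no in-paper proof to compare against; the result is imported wholesale.

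That said, your sketch is a reasonable independent outline. The torsor half is essentially complete: the $\Hom_{\mathrm{Cl}}(S_1,S_2)$ construction works exactly as in the smooth case, and your local computation at a $\bbZ/2$-point correctly identifies the $\pm 1$ ambiguity of $\tau$ with the isotropy datum of the difference line bundle. For the existence half, the gap you flag is real. The restriction map $H^2(Y\setminus\tilde K;\bbZ)\to\bigoplus_i H^2(N_i\setminus\tilde K_i;\bbZ)$ is generally \emph{not} surjective (already for an unknot in $S^3$ the domain vanishes while each summand on the right is $\bbZ$), so you cannot absorb an arbitrary collar discrepancy into a global line bundle. What makes the argument go through is that the discrepancy is not arbitrary: the bifold model on $N_i$ and a spin structure on $Y$ both restrict to torsion \spinc structures on the boundary tori, so the mismatch lives only in $H^1(T^2;\bbZ/2)$, and one must still verify that these finitely many $\bbZ/2$-adjustments can be realised globally. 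Baldridge's published argument bypasses this by treating existence as an obstruction-theory problem (lifting the orbifold frame bundle to $\Spin^c$), which is closer in spirit to the smooth $W_3=0$ argument and avoids the patching bookkeeping entirely.
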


\subsection{\emph{Real} bifold \spinc structures}
\label{subsec:realbifoldstr}
Let $\check M = (M,\Sing^{\sfc})$ be a 3- or 4-bifold with the same assumption as in the beginning of Section~\ref{sec:bifoldspinc}.
Let $\upiota: (M,\Sing^{\sfc}) \to (M,\Sing^{\sfc}) $ be a (global) involution that preserves the bifold structure.
Denote the fixed point set in $M$ of $\upiota$ by $\Sing^{\sfr}$.
Note that $\Sing^r$ can be nonempty,
and $M$ can be disconnected.
We are particularly interested in the case when $M$ comprises two identical copies of a connected bifold.

\begin{defn}
	A \emph{real bifold \spinc structure} over a real 3-bifold $(\check Y,\upiota)$ is a pair $(\fraks, \uptau)$ of a bifold \spinc structure $\mathfrak s = (S,\rho)$ and a complex anti-linear involutive lift $\uptau:S \to S$ of $\upiota$ satisfying the following conditions.
	For any bifold chart $\tilde U \to U$, where the $\mathbb Z_2$-action $\tau: \tilde S \to \tilde S$ and the Clifford multiplication $\tilde \rho$ are defined as in Section~\ref{sec:bifoldspinc}, we have
	\[
	\tau \uptau = \uptau \tau.
	\] 
Moreover,
	for any vector field $\xi$ over $\tilde U$, and any spinor $\Psi$, we have
	\[
	\uptau(\tilde\rho(\xi) \Psi) = \tilde\rho(\upiota_* \xi)\uptau(\Psi).
	\]
A \spinc isomorphism $u:(S,\rho) \to (S',\rho')$ is an \emph{isomorphism of real bifold \spinc structures} if  $u\uptau = \uptau'u$.
An anti-linear lift $\uptau$ satisfying the above assumptions is \emph{compatible} with $\fraks$.
\end{defn}
\begin{notat}
The ``$\uptau$'' denotes an anti-linear global lift,
while  ``$\tau$'' always denotes a locally defined lift.
\end{notat}

\begin{defn}
	A \emph{real bifold \spinc structure} over a real $4$-bifold $(\check X,\upiota_X)$ is a pair $(\fraks_X, \uptau_X)$ of a bifold \spinc structure $\mathfrak s_X = (S_X,\rho)$ and a complex anti-linear involutive lift $\uptau_X:S_X \to S_X$ of $\upiota_X$ that respects the decomposition $S_X = S^+ \oplus S^-$, and satisfies the following compatibility conditions.
	\begin{itemize}
		\itemindent=-13pt
	\item In a $\bbZ_2$-chart $\tilde U$, with associated $(\tilde \rho, \tau)$, we have $\tau \uptau_X = \uptau_X \tau$, and for any vector field $\xi$ and any spinor $\Phi$,
	\[
	\uptau_X(\tilde\rho(\xi) \Phi) = \tilde\rho((\upiota_X)_* \xi)\uptau_X(\Phi).
	\]
	\item In a $V_4$-chart $\tilde U$, with associated ($\tilde \rho, \tau_0$, $\tau_1$), we have 
	$
	\uptau_X(\tilde\rho(\xi) \Phi) = \tilde\rho((\upiota_X)_* \xi)\uptau_X(\Phi)
	$ as in Definition~\ref{defn:bifoldspinc4},
	and
	\[\tau_i \uptau_X = \uptau_X \tau_i.\]
	\end{itemize}
A \spinc isomorphism $u:(S_X,\rho) \to (S_X',\rho')$ is an \emph{isomorphism of real bifold \spinc structures} if  $u\uptau_X = \uptau_X'u$.
An anti-linear lift $\uptau_X$ satisfying the above assumptions is said to be \emph{compatible} with $\fraks$.
\end{defn}
Given a real $3$- or $4$-bifold $(\check M, \upiota)$,  denote by $\SpincR(\check M, \upiota)$ the equivalence classes of real bifold \spinc structures.
A line bundle $L$ with a real structure $\tau^L$ acts on $\SpincR(\check M, \upiota)$ via tensor product $(\fraks, \uptau) \mapsto (\fraks \otimes L, \uptau \otimes \uptau^L)$.
\begin{prop}
	Let $(\check Y,\upiota)$ be the real double cover of a web with $1$-set $(K,s)$.
	Then there exists at least one real \spinc structure on $(\check Y,\upiota)$.
	Furthermore, $\SpincR(\check Y, \upiota)$ is a principal homogeneous space over the group $\Pic(\check Y,-\upiota)$.
\end{prop}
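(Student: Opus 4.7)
The plan is to handle the two assertions separately, beginning with the principal-homogeneous-space property, since it is formal given existence together with Theorem~\ref{thm:Bal_homogeneous} and Proposition~\ref{prop:line_admit_real}. I would then tackle existence by explicit construction.

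For the torsor statement, I would define the action of $\Pic(\check Y, -\upiota)$ on $\SpincR(\check Y, \upiota)$ by tensoring with the unique real structure from Proposition~\ref{prop:line_admit_real}: $(\fraks, \uptau) \cdot L := (\fraks \otimes L, \uptau \otimes \uptau^L)$. The local compatibility conditions in the definition of a real bifold \spinc structure are preserved by this tensor product, since $\uptau^L$ commutes with $\tau$ fiberwise and tensoring a Clifford module by a line bundle preserves Clifford-equivariance. Freeness reduces, via Theorem~\ref{thm:Bal_homogeneous}, to the uniqueness clause of Proposition~\ref{prop:line_admit_real}. For transitivity, given $(\fraks_j, \uptau_j)$ for $j=1,2$, Theorem~\ref{thm:Bal_homogeneous} produces $L \in \Pic(\check Y)$ with $\fraks_2 \cong \fraks_1 \otimes L$; under this identification one factors $\uptau_2 = \uptau_1 \otimes \uptau^L$ for a unique anti-linear lift $\uptau^L$ of $\upiota$ on $L$, and involutivity of $\uptau_1, \uptau_2$ forces $(\uptau^L)^2 = \Id$. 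Hence $\uptau^L$ is a real structure on $L$, and in particular $L \in \Pic(\check Y, -\upiota)$.

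For existence, I would start with any bifold \spinc structure $\fraks_0$ from Theorem~\ref{thm:Bal_homogeneous}; the conjugate pullback $\upiota^* \bar{\fraks}_0$ is another bifold \spinc structure, differing from $\fraks_0$ by some $L_0 \in \Pic(\check Y) \cong \bbZ_2^n$ (Corollary~\ref{cor:bifold_spinc_S3}), where $n$ is the number of components of $\tilde K^{\sfc}$. After modifying $\fraks_0$ by tensoring with a line bundle $M$ that solves the norm equation $L_0 = M \otimes \upiota^* \bar M$ in $\Pic(\check Y)$, we may arrange $\upiota^* \bar{\fraks} \cong \fraks$. A choice of such isomorphism provides an anti-linear lift $\uptau$ of $\upiota$; after a constant rescaling we have $\uptau^2 = +\Id$. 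Local compatibility with the Clifford multiplication and the bifold $\bbZ_2$-action is then a routine verification in the Pauli-matrix model of Section~\ref{sec:bifoldspinc}: take $\tau = \diag(1,-1)$ and set $\uptau$ to be complex conjugation in the chosen basis of $\tilde S$.

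The main obstacle is solving the norm equation $L_0 = M \otimes \upiota^* \bar M$. Since $L_0$ is automatically fixed by the conjugate-pullback involution (in the $2$-torsion group $\Pic(\check Y)$, $\upiota^* \bar L_0 = L_0^{-1} = L_0$), it lies in $\Pic(\check Y, -\upiota)$, but not every such element is a priori a norm. The key input is that, because $s$ is a $1$-set, Section~\ref{sec:real_bifolds} shows that $\upiota$ either swaps two components of $\tilde K^{\sfc}$ in a disjoint pair, or preserves one component setwise with two fixed points. In the swapping case the isotropy contribution to $L_0$ is realized as a norm by taking $M$ supported on only one component of the pair; in the fixed case the contribution is handled by the same smooth-line-bundle trick from the proof of Proposition~\ref{prop:line_admit_real}. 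Assembling these local choices yields the required $M$ and completes the existence step.
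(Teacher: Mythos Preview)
Your torsor argument is correct and essentially identical to the paper's. The existence argument, however, has two genuine gaps.

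First, you invoke Corollary~\ref{cor:bifold_spinc_S3} to assert $\Pic(\check Y)\cong\bbZ_2^n$, but that corollary applies to bifolds with underlying space $S^3$. Here $\check Y=(Y,\tilde K^{\sfc})$ with $Y=\Sigma_2(S^3,K^{\sfr})$, which is $S^3$ only when $K^{\sfr}$ is a single unknot; in general $H^2(Y;\bbZ)$ is nontrivial, so $\Pic(\check Y)$ is not $2$-torsion and your norm-equation bookkeeping is incomplete.

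Second, and more seriously, the sentence ``after a constant rescaling we have $\uptau^2=+\Id$'' is not justified. The square $\uptau^2$ is a priori a gauge transformation $u$ in the real gauge group, not a constant. Even after correcting $\uptau$ by an arbitrary $g:\check Y\to S^1$, one computes $(g\uptau)^2 = g\cdot\overline{\upiota^*g}\cdot u$, and at any fixed point $x_0$ of $\upiota$ this equals $|g(x_0)|^2 u(x_0)=u(x_0)\in\{\pm1\}$. Thus the sign of $\uptau^2$ along $\widetilde{K}^{\sfr}=\Fix(\upiota)$ is an invariant that no rescaling or gauge modification can change; this is exactly the Real-versus-Quaternionic obstruction, and you offer no reason for it to vanish. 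Your suggested local model (``set $\uptau$ to be complex conjugation'') describes a fibrewise map but does not explain why such choices patch globally with $\uptau^2=+1$.

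The paper sidesteps both issues by a cut-and-glue construction rather than obstruction theory: it excises a tubular neighbourhood of $\tilde K^{\sfc}$, uses an $\upiota$-invariant \emph{spin} structure on the complement (which exists because $Y$ is a branched double cover of $S^3$, and which furnishes a real \spinc structure directly), builds real structures by hand on the disc-bundle pieces, and then absorbs the gluing discrepancy via surjectivity of the symmetrization map $(1+\upiota^*)$ on $H^1$ of the boundary tori. The spin-structure input is precisely what pins down the sign of $\uptau^2$.
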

\begin{proof}
To see $\SpincR(\check Y, \upiota)$ is a principal homogeneous space over $\Pic(\check Y,-\upiota)$, fix a pair of real \spinc structures $(\fraks,\uptau)$ and $(\fraks',\uptau')$.
By Theorem~\ref{thm:Bal_homogeneous}, the \spinc structures $\fraks$ and $\fraks'$ must differ by a bifold line bundle $L$.
In addition, $\uptau$ and $\uptau'$ defines difference real structure $\uptau^L$ on $L$.
By Proposition~\ref{prop:line_admit_real}, the real structure $\uptau^L$ is uniquely determined by $L$, which lies in $\Pic(\check Y,-\upiota)$.

To show there exists a real \spinc structure, cut $\check Y$ along an $\upiota$-invariant tubular neighbourhood $\textsf{Nbhd}(\tilde K^{\sfc})$ of the complex locus $\tilde K^{\sfc}$ to obtain 
\[
	\check Y = Y_+ \cup_{T} Y_-,
\]
where $Y_+ = \check Y \setminus \textsf{Nbhd}(\tilde K^{\sfc})$ and $Y_- \cong \check D \times \tilde K^{\sfc}$, where $\check D$ is the bifold disc singular at the origin.
Let $\fraks_0$ be the unique torsion \spinc structure on $T$.
As in \cite[Lemma~2.2]{ChenWM2012SW3orbifolds}, a bifold \spinc structure $\fraks$ on $\check Y$ can be described by \spinc structures $\fraks_{\pm}$ on $Y_{\pm}$, glued using isomorphisms $h_{\pm}:\fraks_{\pm}|_T \to \fraks_{0}$.

On $Y_+$ we choose a real \spinc structure $(\fraks_+,\uptau_+)$, induced by restriction of an $\upiota$-invariant spin structure on $\check Y$, which exists because $Y$ is a double cover of $S^3$.
Over each component of $Y_-$, there are two bifold \spinc structures corresponding to spinor bundles $\underline{\C} \oplus K_{\check D}$ and $K_{\check D} \oplus \underline{\C}$, where $K_{\check D} \to \check D$ is the bifold canonical bundle.
Choose a \spinc structure $\fraks_-$ on $Y_-$ for which, by arranging the \spinc structures on $\upiota$-pairs of components to agree, there is a compatible real structure $\uptau_-$.
Each real structure $\uptau_{\pm}$ induces a real structure on $
\fraks_0$ as $\sigma_{\pm} = h_{\pm} \circ \uptau_{\pm} \circ h_{\pm}^{-1}$.
The difference $\sigma_-^{-1} \sigma_+ $ is an $\upiota$-invariant map from $T$ to $S^1$.
But since $T/\upiota$ is a union of tori (when $\upiota$ swaps components) and 2-spheres (when $\upiota$ fixes components), the symmetrizing homomorphism
\[
(1+\upiota^*):H^1(T,\bbZ) \to H^1(T,\bbZ)^{\upiota^*} 
\]
is surjective.
In particular, $\sigma_+ \sigma_-^{-1} = \upiota^*(u)u$
 for some $u:T \to S^1$. 
Setting $\tilde h_- = uh_-: \fraks_-|_T \to \fraks_0$ to be the new gluing isomorphism on $Y_-$, we obtain a global real structure on $\fraks$
from $\uptau_-$ and $\uptau_+$.
This argument is reminiscent of \cite[Section~3.1]{ljk2022}.
\end{proof}
\subsection{Seiberg-Witten configuration spaces}
\label{subsec:SW_config}
\begin{defn}
Let $\check Y$ be a 3-bifold equipped with a metric $\check g$ and a bifold \spinc structure $(S,\rho)$.
A \emph{(bifold) \spinc connection} $B$ is a unitary bifold connection on $S$ such that, for any vector field $\xi$ and any section $\Psi$ of $S$, we have
\[
	\nabla_B(\rho(\xi)\Psi) = \rho(\nabla \xi)\Psi + \rho(\xi)\nabla_B \Psi.
\]
Similarly, assume $\check X$ is a $4$-bifold equipped with a metric $\check g_X$ and a \spinc structure $(S_X,\rho)$. 
Then a \emph{(bifold) \spinc connection} $A$ is a unitary bifold connection on $S_X$ that preserves the decomposition $S_X = S^+ \oplus S^-$. Furthermore, for any vector field $\xi$ and any section $\Phi$ of $S^{\pm}$, we have
\[
	\nabla_A(\rho(\xi)\Phi) = \rho(\nabla \xi)\Phi + \rho(\xi)\nabla_A \Phi.
\]
\end{defn}

Let $\check M$ be a $3$- or $4$-bifold (possibly with boundary) as above. 
Suppose $\fraks$ is a \spinc structure on $\check M$
 and $W$ is $S$ or $S^+$.
Denote by $\underline{\calA}(\check M, \mathfrak s)$ the space of bifold connections.
Then $\underline{\calA}(\check M, \mathfrak s)$ is, by Schur's lemma, an affine space modelled on  $\Omega^1(\check M; i\reals)$, consisting of imaginary (bifold) $1$-forms on $\check M$.
\begin{defn}
The \emph{bifold Seiberg-Witten configuration space} $\underline{\calC}(\check M,\mathfrak s)$ is the product of  $\underline{\calA}(\check M, \mathfrak s)$ and the space $\Gamma(W)$ of (bifold) sections of the spinor bundle $W$.	
\end{defn}
Assume $\upiota:\check M \to \check M$ is an involutive isometry.
Let $\uptau$ be a real structure on $W$ compatible with $\mathfrak s$.
In other words, $(\fraks, \uptau)$ is a real \spinc structure.
Then $\uptau$ induces an involution on $\underline{\calA}(\check M, \mathfrak s)$ by pullback $\nabla_A \mapsto \uptau\nabla_A(\uptau \cdot )$. 
Once a $\uptau$-invariant reference \spinc connection $A_0$ is chosen,  $\uptau$ acts on $\underline{\calA}(\check M, \mathfrak s)$ by
\[
	A_0 + a \otimes 1 \mapsto A_0 - \upiota^*a \otimes 1.
\]
This action induces an involution on the real vector space of $1$-forms $\Omega^1(\check M; i\reals)$.
The set of $\uptau$-invariant \spinc connections will be denoted by $\calA(\check M,\fraks,\uptau)$.
Over the complex vector space $\Gamma(\check M;W)$ of bifold sections, there is an anti-linear involution $\uptau$:
\[\uptau(\Phi)_x = \uptau(\Phi_x)_{\upiota(x)}.
\]
\begin{defn}
Given a real bifold $(\check M,\upiota)$ and a real \spinc structure $(\fraks,\uptau)$,
the \emph{real bifold Seiberg-Witten configuration space} $\calC(\check M,\fraks,\uptau)$ is the product of $\uptau$-invariant \spinc connections and spinors:
\[
\calC(\check M,\fraks,\uptau) = \calA(\check M, \fraks, \uptau) \times \Gamma(\check M;W)^{\uptau}.
\]
\end{defn}
\begin{notat}
	In this paper, the symbol ``$\fraks$'' will often be dropped from $(\fraks,\uptau)$, since ``$\uptau$'' implicitly depends on ``$\fraks$''.
	For example, instead of writing $\calA(\check M, \fraks,\uptau)$, we will simply write $\calA(\check M,\uptau)$.
\end{notat}
\begin{rem}
	The class of real \spinc connections on determinant line bundles corresponds to the bifold $\O(2)$-connections in Daemi's plane Floer homology for webs \cite{DaemiPrivateComm}.
\end{rem}
The \emph{ordinary bifold gauge group} $\underline{\mathcal G}(\check M)$ is the group of $S^1$-valued \emph{bifold} maps over $\check M$.
In particular, the smoothness of a gauge transformation is defined using its smoothness at local uniformizing chart.
The \emph{continuous} bifold gauge transformations on $\check M$ can be identified with the \emph{continuous} gauge transformations on the underlying manifold $M$.

The \emph{real gauge group} $\mathcal G(\check M,\upiota)$ is the subgroup of $\underline{\mathcal G}(\check M)$ consisting of $\upiota$-(anti)-invariant gauge transformations:
\[
	\mathcal G(\check M,\upiota) = \{u:\check M \to S^1 \bigg\vert u(\upiota(x)) = u^{-1}(x)\}.
\]
An element $u \in \mathcal G(\check M,\upiota)$ 
acts on $\calC(\check M,\uptau)$ by $u(A,\Phi) = (A - u^{-1}du, u \Phi).$
We define $\mathcal B(\check M, \uptau)$ to be the space of $\mathcal G(\check M,\upiota)$-equivalence classes of configurations $\mathcal C(\check M,\uptau)/\mathcal G(\check M,\upiota)$.

A configuration $(A,\Phi) \in \calC(\check M, \uptau)$ is \emph{reducible} if $\Phi = 0$. 
If $\check M$ is connected, then $(B,0)$ has a $\{\pm 1\}$-stabilizer by the gauge group action.
If $\Sing^{\sfr} = \emptyset$ and $\check M$ contains two connected components, then the stabilizer is isomorphic to $S^1$, consisting of locally constant functions on $\check M$ whose values are anti-invariant under $\upiota$.

Over a \spinc $3$-bifold $(\check Y,\fraks)$
the \emph{Chern-Simons-Dirac (CSD) functional} is a $\reals$-valued function on $\underline{\mathcal C}(\check Y, \fraks)$, defined as
\[
	\calL(B,\Psi) = 
	\frac18 \int_{\check Y} (B^t-B_0^t) \wedge (F_{B^t} + F_{B^t_0}) + 
	\frac12 \int_{\check Y} \la D_B\Psi,\Psi\ra d\vol_{\check Y},
\]
where $B_0$ is a $\uptau$-invariant reference \spinc connection, and $B^t$ is, for example, the induced connection on the determinant line bundle $\det(S)$.
The CSD functional is invariant under the identity component of $\underline{\calG}$, but only circle-valued in general:
\[
	\calL(B,\Psi) - \calL(u(B,\Psi))
	= 
	2\pi^2([u] \cup c_1(S))[\check Y].
\]
The $L^2$-gradient of $\calL$ is the 3-dimensional Seiberg-Witten operator, taking values in $\Gamma(\check Y;iT^*\check Y \oplus S)$, is given by
\[
	\grad \calL
	= 
	\left( (\frac12 \ast F_{B^t} + \rho^{-1}(\Psi\Psi^*)_0) \otimes 1_S,
	D_B \Psi
	\right),
\]
where ``$*$'' is the $3$-dimensional Hodge star operator. 
By the compatibility of $\uptau$ with $\fraks$, the gradient $\grad \calL$ commutes with $\uptau$, so over real configuration space $\calC(\check Y,\uptau)$, it can be viewed as a section of a trivial vector bundle $\calT(\check Y,\uptau) \to \calC(\check Y,\uptau)$, having fibre
\[
\Gamma(\check Y;iT^*\check Y \oplus S)^{\uptau}.
\]

Suppose $(\check X,\fraks_X)$ is a \spinc 4-bifold.
Let $A$ be a \spinc connection and $\Phi$ be a section of $S^+$.
The $4$-dimensional Seiberg-Witten operator is 
\[
\frakF(A,\Phi) = \left(\frac12 \rho(F_{A^t}^+) - (\Phi\Phi^*)_0, D^+_A\Phi\right).
\]
Here, $\rho: \Lambda^+ \to \su(S^+) \subset \End(S^+)$ is the Clifford multiplication, and $(\Phi\Phi^*)_0$ is the endomorphism $\Phi\Phi^* - \frac12|\Phi|^2$.
Suppose $\upiota_X: \check X \to \check X$ is a real involution and $\uptau_X: S_X \to S_X$ is a real structure compatible with $\fraks_X$.
The Seiberg-Witten operator is
is equivariant with respect to $\uptau_X$ and therefore descends to a map
\[
	\frakF: \calC(\check X,\uptau_X) \to \Gamma(\check X;i\su(S^+) \oplus S^-)^{\uptau_X}.
\]
The Seiberg-Witten operators can be viewed as sections of vector bundle $\calV(\check X,\uptau_X) \to \calC(\check X,\uptau_X)$, where $\calV(\check X,\uptau_X)$ is a trivial bundle having fibre
\[
	\Gamma(\check X;i\su(S^+)\oplus S^-)^{\uptau_X}
\]

\subsection{Disconnected real bifolds}
\label{subsec:case_real_loci}
Over a real bifold $(\check M,\upiota)$ where $\upiota$ exchanges two connected components of $\check M$, the $\uptau$-invariant objects can be identified with objects on a (non-real) bifold.

Let $(\check M,\upiota)$ be a real bifold such that $\check M = \check M_1 \sqcup \check M_2$ is the disjoint union of two copies of the same bifold $M_o$. 
We fix an identification of $\check M_{i} \cong \check M_o$, for $i = 1,2$.
Suppose the involution $\upiota$ exchanges the two component in a way that it acts as identity on $M_o \to M_o$.
This involution will referred to as the \emph{swapping involution} for $\check M_{i} \cong \check M_o$.
The prototype is the real double cover of a link $K$ with a $1$-set such that every component is coloured by $\sfc$. 

The set of real bifold \spinc structures over $\check M$ can be naturally identified with the set of bifold \spinc structures on $\check M_o$ as follows.
Suppose $(\fraks,\uptau)$ is a real \spinc structure on $\check M$.
Let 
$\fraks_i = (W_i,\rho_i)$ be the restriction of $\fraks_i$ onto $\check M_i$.
We view each $\fraks_i$ over $\check M_i$ as a \spinc structure over $\check M_o$ via $\check M_{i} \cong \check M_o$.
Since $\fraks$ admits a real structure, $\fraks_1 = \bar{\fraks}_2$ on $\check M_o$.
Then $W_1$ and $W_2$ can be set to be $W_o \to \check M_o$ as having the same underlying real vector bundle, but are equipped with opposite complex structures.
Furthermore, the lift $\uptau_0$ of $\upiota$ on $\fraks$, induced by the identity map on $W_o$, is a compatible real structure.
By the uniqueness arguments in \cite[Section~3.1]{ljk2022},
there is at most one compatible real structure on $\fraks$, so a posteriori, $\uptau$ is isomorphic to $\uptau_0$.
Consequently, there is a one-to-one correspondence between real bifold \spinc structures on $\check M$, and bifold \spinc structures on $\check M_o$.

The reasoning also gives an identification of the bifold Seiberg-Witten configuration space on $(\check M_o, \fraks_1)$ with the real bifold configuration space on $(\check M, \fraks, \uptau)$.
Indeed, a $\uptau$-invariant configuration $(A,\Phi)$ on $\check M$ is precisely determined by its restriction 
\[(A|_{M_1}, \Phi|_{M_1}).\] 
Similarly, there is a canonical isomorphism of the real gauge group on $\check M$ and bifold gauge group on $\check M_o$:
\[
\underline{\calG}(\check M_o) \cong \calG(\check M, \upiota).
\]

\section{Analysis of Seiberg-Witten trajectories}
\label{sec:analysis_of_sw_traj}
The setup of real Seiberg-Witten moduli spaces (without bifold singularities) in terms of invariant configurations was written down in \cite{ljk2022}, which are built on of their counterparts in \cite{KMbook2007}.
The results in this section are proved by inserting the adjective ``bifold'' everywhere to the proofs in \cite{KMbook2007} or \cite{ljk2022}.

Let $(\check M,\upiota)$ be a real $3$- or $4$-bifold possibly with boundary, equipped with an $\upiota$-invariant bifold metric $\check g$.
We set the Sobolev spaces $L^p_k(\check M)$ to be the completion of the $C^{\infty}$-bifold functions on $\check M$, with respect to $\|\cdot\|_{L^p_k}$ defined by the $\check g$-Levi-Civita connection.
Assume $E \to \check M$ is a bifold vector bundle with an inner product.
We introduce Sobolev norms $L^p_k$ on $\Gamma(E)$ in the same manner as \cite[Section~5.1]{KMbook2007}, using the Levi-Civita connection and a $C^{\infty}$-bifold connection on $E$. 
The Sobolev space $L^p_k(\check M;E)$
is the resulting completion of the $C^{\infty}$-bifold sections of $E$.

Fix a real \spinc structure $(\mathfrak s, \uptau)$ on $(\check M,\upiota)$.
Suppose $k$ is an integer or a half integer, such that $2(k+1) > \dim M$.
We define the spaces of real connections $\calA_k(\check M,\mathfrak s,\uptau)$ as modelled on $L^2_k(\check M; iT^*\check M)$.
Similarly, $\calC_k(\check M,\mathfrak s, \uptau)$ and $\mathcal G_k(\check M, \upiota)$ are the respective Sobolev completions of $\calC(\check M,\mathfrak s, \uptau)$ and $\mathcal G(\check M, \upiota)$.
The assumption on $k$ ensures $\mathcal G_{k+1}$ is a Hilbert Lie group.

\subsection{Blowing up}
In~\cite{KMbook2007}, Kronheimer and Mrowka applied blow-ups to deal with reducibles, converting the problem of computing equivariant homology to Morse theory on manifolds with boundary.
We review the basic notions below.

In this section, $W$ will stand for the spinor bundle $S$ if $\dim M = 3$ and $S^+$ if $\dim M= 4$.
The $\sigma$-blowup $\calC^{\sigma}_k$  is defined as
\[
	\calC^{\sigma}_k(\check M, \uptau)
	=
	\{(A,s,\phi) \in  \mathcal A(\check M,\uptau) \times \reals_{\ge 0} \times L^2_k(\check M;W)^{\uptau}:\|\phi\|_{L^2(M)} = 1\}.
\]
In other words, $\calC^{\sigma}_k$ replaces the reducible locus of $\calC_k$ with the $L^2$-unit sphere of sections of $W$.
The natural projection map
\[
	\boldsymbol{\pi}:\calC^{\sigma}_k(\check M,  \uptau) \to \calC_k(\check M,  \uptau), \quad 
	\boldsymbol{\pi}(A,s,\phi) =(A,s\phi),
\]
has fibre a $L^2$-unit sphere in $L^2_k(\check M;W)^{\uptau}$ over an element of the reducible locus $ \mathcal A(\check M,\uptau) \times \{0\}$, and is injective otherwise.
Since $2(k+1) > \dim \check M$, the $\sigma$-blowup $\calC^{\sigma}_k(\check M, \uptau)$ is a Hilbert manifold on which the gauge group $\mathcal G_{k+1}$ acts smoothly and freely.
Denote by $\calB^{\sigma}(\check M,\uptau)$ the quotient $\mathcal C^{\sigma}_k(\check M,\uptau)/\mathcal G_{k+1}(\check M,\upiota)$.

For $j \le k$, we define the \emph{$j$-th tangent bundle} $\mathcal T^{\sigma}_j \to \calC^{\sigma}_k(\check M,\uptau)$ by declaring its fibre at $(A_0,s_0,\phi_0)$ to be
\[
	\left\{
	(a,s,\phi) \in L^2_j(\check M; iT^*\check M)^{-\upiota^*} \times \reals \times L^2_j(\check M;W)^{\uptau} \bigg| \Re \la \phi_0, \phi \ra = 0
	\right\}.
\]
When $j = k$, this recovers tangent bundle of $\calC^{\sigma}_k$ in the usual sense.
The tangent bundle admits a smooth decomposition~\cite[Proposition~9.3.5]{KMbook2007}:  
\[\mathcal T^{\sigma}_{j,\upgamma} = \calJ^{\sigma}_{j,\upgamma} \oplus \calK^{\sigma}_{j,\upgamma}\]
as follows.
Given a configuration $\upgamma \in \calC^{\sigma}_k(\check M,\uptau)$,
we linearize the gauge group action to obtain 
a map from $T_e\calG_{k+1}(\check M,\upiota) = L^2_{k+1}(\check M;i\reals)^{-\upiota^*}$ to $\mathcal T^{\sigma}_{k,\upgamma}$, given by
\begin{equation*}
	\mathbf{d}^{\sigma}_{\upgamma}(\xi) = (-d\xi, 0, \xi\phi_0).
\end{equation*}
Extending $\mathbf{d}^{\sigma}_{\upgamma}$ to lower regularities, we define $\mathcal T^{\sigma}_{j,\upgamma}$ as the image of $\mathbf{d}^{\sigma}_{\upgamma}$ in $\mathcal T^{\sigma}_{j,\upgamma}$.
The complementary subspace $\mathcal K_{j,\upgamma}^{\sigma}$ to the gauge action is
\begin{equation*}
	\mathcal K_{j,\upgamma}^{\sigma}
	=
	\{(a,s,\phi) \big| -d^*a + i\Re\langle i\Phi_0, \phi \rangle = 0, \ \langle a, \mathbf{n} \rangle = 0 \text{ at } \del \check M,\text{ and } \Re\langle i\phi_0,\phi \rangle_{L^2(\check M)} = 0\},
\end{equation*} 
where $\mathbf{n}$ is the outward normal at the boundary $\del \check M$, if $\del \check M \neq \emptyset$.
The subspace $\mathcal K_{j,\upgamma}^{\sigma}$ is not an orthogonal complement of $\calJ^{\sigma}_{j,\upgamma}$ with respect to any natural metric.

At each configuration $\upgamma$, a slice $\calS^{\sigma}_{k,\upgamma}$ to the $\calG_{k+1}$-action can be constructed as in \cite[Section~5.6]{ljk2022}, \cite[Defintion~9.3.6]{KMbook2007}. 
Such slices are closed Hilbert submanifolds of $\mathcal C^{\sigma}_k(\check M,\uptau)$, whose tangent spaces are $\calK^{\sigma}_{j,\upgamma}$.
In particular, they provide local charts of $\calB^{\sigma}(\check M,\uptau)$, and thus endow $\calB^{\sigma}(\check M,\uptau)$ with the structure of a Hilbert manifold with boundary.
See~\cite[Coroallary~9.3.8]{KMbook2007}.

There is a decomposition $\mathcal T_{j,\upgamma} = \calJ_{j,\upgamma} \oplus \calK_{j,\upgamma}$ downstairs in $\calC_k(\check M, \uptau)$ where $\calJ_{j,\upgamma}$ is defined as the image of
\[
	\textbf{d}_{\upgamma}:L^2_{j+1}(\check M;i\bbR)^{-\upiota^*} \to \calT_{j,\upgamma}, \quad
	\textbf{d}_{\upgamma}(\xi) = (-d\xi,\xi\Phi_0),
\]
while the orthogonal complement to $\calJ_{j,\upgamma}$ is 
\[
	\mathcal K_{j,\upgamma}
	=
	\{(a,s,\phi) \big| -d^*a + i\Re\langle i\Phi_0, \phi \rangle = 0, \text{ and } \ \langle a, \mathbf{n} \rangle = 0 \text{ at } \del \check M\}.	
\]

Consider the cylinder $(\check Z,\upiota) = (I \times \check Y, \Id_I \times \upiota)$ for a compact interval $I$.
Pulling back the real \spinc structure $(\fraks,\uptau)$ from $(\check Y,\upiota)$, we obtain a real \spinc structure $(\fraks_Z,\uptau)$ on $\check Z$.
The abuse of notations should not cause confuse the readers as the structures are pulled back from the $3$-bifold.
Every configuration on $(\check Z,\uptau)$ can be viewed as a path of configurations in $(\check Y,\upiota)$.
Blowing up $\calC(\check Y_t, \uptau)$ on each time slice $t \in I$,
we introduce the \emph{$\tau$-blowup} of configuration spaces on $\check Z$ as 
\[
	\calC^{\tau}_k(\check Z,\uptau) = \left\{
	(A,s,\phi) \in \mathcal A_k(\check Z,\uptau) \times L^2(I;\reals) \times 
		L^2_k(\check Z;S^+)^{\uptau}\bigg|
		s(t) \ge 0, \|\phi(t)\|_{L^2(\check Y)} = 1
	\right\}.
\]
This is not a Hilbert manifold, but a closed subspace of a Hilbert manifold $\tilde C^{\tau}_k(\check Z,\uptau)$, where $\tilde C^{\tau}_k(\check Z,\uptau)$ is defined by dropping the $s \ge 0$ condition:
\[
\tilde C^{\tau}_k(\check Z,\uptau) = \left\{
	(A,s,\phi) \in \mathcal A_k(\check Z,\uptau) \times L^2(I;\reals) \times 
		L^2_k(\check Z;S^+)^{\uptau}\bigg|
		\|\phi(t)\|_{L^2(\check Y)} = 1
	\right\}.
\]
Indeed, $\calC^{\tau}(\check Z,\uptau)$ is the fixed point set of the involution $\textbf{i}:(A,s,\phi) \mapsto (A,-s,\phi)$.
We denote the spaces of gauge equivalence classes by $\calB^{\tau}(\check Z,\uptau)$ and $\tilde{\calB}^{\tau}(\check Z,\uptau)$ analogously.
Moreover, $\tilde{\calB}^{\tau}(\check Z,\uptau)$ admits the structure of a Hilbert manifold~\cite[Corollary~9.4.4]{KMbook2007}.
The proof goes by introducing slices $\tilde{\calS}^{\tau}_{k,\upgamma}$, as in~\cite[Section~9.4]{KMbook2007}.

Suppose $(\check X,\upiota_X)$ is a real $4$-bifold and $(\fraks_X,\uptau_X)$ is a real \spinc structure.
We define $\calV^{\sigma}_j(\check X,\uptau_X) \to \calC_k^{\sigma}(\check X,\uptau_X)$ to be the pullback by $\boldsymbol{\pi}$ of the bundle $\calV_j(\check X,\uptau_X) \to \calC_k(\check X,\uptau_X)$ in Section~\ref{subsec:SW_config}.
The Seiberg-Witten operator $\frakF^{\sigma}$ on the $\sigma$-blowup is a section of $\calV_j^{\sigma}(\check X,\uptau_X)$, given by
\[
	\frakF^{\sigma}(A,s,\phi) = \left( \frac{1}{2}\rho(F^+_{A^t}) - s^2(\phi\phi^*)_0, D^+_A\phi\right).
\]
Given a cylinder $(\check Z,\upiota)$ as above,
the $\tau$-blowup version $\calV^{\tau}_j \to \calC_k^{\tau}(\check Z,\uptau)$ has fibres defined by
\[
	\calV^{\tau}_{j,\upgamma} =
	\left\{
		(\eta,r,\psi) \in L^2_k(\check Z;i\su(S^+))^{\uptau} \times L^2_j(I;\reals)\times L^2_j(\check Z;S^-)^{\uptau}\bigg| \Re\langle \check\phi(t),\check\psi(t)\rangle_{L^2(\check Y)} = 0 \text{ for all $t$}
	\right\}.
\]
The Seiberg-Witten operator $\frakF^{\tau}$ is a section of $\calV^{\tau}_j \to \calC_k^{\tau}(\check Z,\uptau)$, having the shape of a gradient flow:
\[
	\frakF^{\tau}(A,s,\phi) = \left(
	\frac{1}{2}\rho_Z(F^+_{A^t}) - s^2(\phi\phi^*)_0, \frac{d}{ds} + \Re\langle D^+_A\phi,\phi\rangle_{L^2(\check Y)}s,
	D^+_A\phi - \Re\langle D^+_A\phi,\phi\rangle_{L^2(\check Y)}\phi\right).
\]

\subsection{Homotopy types of configuration spaces}
Let $(\check M, \upiota)$ be a real bifold and $(\fraks,\uptau)$ be a real \spinc structure.
First, assume $\check M$ is connected.
If $\upgamma_0 = (B_0,0) \in \calC_k(\check M, \upiota)$ is a reducible configuration, then there is a diffeomorphism 
\[
\calG^{\perp}_{k+1} \times \calK_{k,\upgamma_0} \to \calC_k, \text{ where }
\calG^{\perp}_{k+1} =
\{e^{\xi} \in \calG_{k+1}(\check M,\upiota)| \int_{\check M} \xi = 0\}.
\]
This follows from the
uniqueness and existence of the (bifold) Coulomb-Neumann problem over $\check M$:
\begin{align*}
	\Delta \xi &= d^*a,\\
	 \la d\xi, n\ra &= \la a, n\ra \text{ at }\del \check M.
\end{align*}
The subgroup of harmonic gauge transformations $\mathcal G^h$ fits into a short exact sequence of groups 
\[
	 0 \to \{\pm 1\} \to \calG^h \to H^1(\check M;\bbZ)^{-\upiota^*} \to 0.
\]
The (non-blown-up) space $\calB_k(\check M,\uptau)$ of gauge equivalence classes satisfies the following homotopy equivalences
\[
\calB_k(\check M,\uptau) \cong \frac{\iota(\calK)}{\calG^h}
\cong
\frac{H^1(\check M;\reals)^{-\upiota^*}}{H^1(\check M;\bbZ)^{-\upiota^*}}
\cong 
\frac{H^1(M;\reals)^{-\upiota^*}}{H^1(M;\bbZ)^{-\upiota^*}}
\]
Hence 
\begin{equation}
\label{eqn:nonempty_config_hE}
	\calB^{\sigma}(\check M,\uptau) \cong \mathbb{RP}^{\infty}\times  H^1(\check M;\reals)^{-\upiota^*}/H^1(\check M;\bbZ)^{-\upiota^*},
\end{equation}
by Kuiper's theorem \cite{KUIPER196519} of contractibility of unitary groups of Hilbert spaces and the following fibration
\[
	\frac{L^2_k(\check M;S)^{\uptau} \setminus 0}{\bbZ_2} \to \calB^{\sigma}(\check M,\uptau) \to \frac{H^1(\check M;\reals)^{-\upiota^*}}{H^1(\check M;\bbZ)^{-\upiota^*}}.
\]
The homotopy equivalence~\eqref{eqn:nonempty_config_hE} is non-canonical. 
Note also that the topology of $\calB^{\sigma}(\check M,\uptau)$ does not depend on the complex locus.

Choose a splitting of the exact sequence $v:H^1(\check M;\bbZ)^{-\upiota^*} \to \calG^h$ and define $\calG^{h,o}$ to be the image of $v$.
For example, $\calG^{h,o}$ can be chosen to be harmonic gauge transformations that evaluates to $1$ at a point $y_0$.
We have $\calG(\check M,\upiota) = S^1 \times \calG^{h,o} \times \calG^{\perp}(\check M,\upiota)$.
The \emph{framed configuration space} $\calB^o_k(\check M,\uptau)$ is the quotient
\[
	\calB^o_k(\check M,\uptau) = \calC_k(\check M,\uptau)/\calG^o_{k+1}(\check M,\upiota).
\]
$\calB^o_k(\check M,\uptau)$ is a branched double cover of $\calB_k(\check M,\uptau)$ along the reducible locus where the projection is quotient by the $\pm 1$-action on spinors.

Assume now $\check M = \check M_1 \sqcup \check M_2$ and $\upiota: \check M \to \check M$ is the swapping involution for fixed identifications $\check M_i \cong \check M_o$, $i = 1,2$. 
The group of harmonic gauge transformations $\mathcal G^h$ fits into a new short exact sequence 
\[
0 \to S^1 \to \calG^h \to H^1(\check M;\bbZ)^{-\upiota^*} \to 0.
\]
Furthermore, there is a new fibration
\[
\frac{L^2_k(\check M;S)^{\uptau} \setminus 0}{S^1} \to \calB^{\sigma}(\check M,\uptau) \to \frac{H^1(\check M;\reals)^{-\upiota^*}}{H^1(\check M;\bbZ)^{-\upiota^*}}
\cong 
\frac{H^1(\check M_o;\reals)}{H^1(\check M_o;\bbZ)}
.
\]
The arguments as before give rise to a homotopy equivalence 
\[
\calB^{\sigma}(\check M,\uptau) \cong \mathbb{CP}^{\infty}\times  H^1(\check M_o;\reals)/H^1(\check M_o;\bbZ).
\]
Similar to the connected case, we define a splitting $v:H^1(\check M_o;\bbZ) \to \calG^{h}$, a subgroup $\calG^{h,o}$, and the framed configuration space $\calB^o_k(\check M,\uptau)$. 
This framed space is an $S^1$-bundle over the non-framed version, which in turn can be identified with a configuration space on $\check M_o$ by Section~\ref{subsec:case_real_loci}:
\[\calB_k(\check M,\uptau) = \calB_k(\check M_o).\] 
By Leray-Hirsch theorem, the cohomology ring of $\calB_k^{\sigma}(\check M,\uptau)$ can be described as follows.
\begin{prop}
	If the real locus of $(\check M,\upiota)$ is nonempty, then there is an isomorphism of cohomology rings
	\[
		H^*(\calB^{\sigma}_k(\check M,\uptau)) \cong
		\Lambda^*(H_1(\check M;\bbZ)^{-\upiota_*}/\text{torsion}) \otimes \bbZ_2[\upsilon],
	\]
	where $\Lambda$ stands for exterior algebra and $\deg \upsilon = 1$. 
	On the other hand, suppose the real locus of $(\check M,\upiota)$ is empty, and $\check M = \check M_1 \sqcup \check M_2$ where $\upiota$ is the swapping involution for an identification $M_i \cong M_o$, $i=1,2$.
	Then we have an isomorphisms of rings
	\[
		H^*(\calB^{\sigma}_k(\check M,\uptau)) \cong
		\Lambda^*(H_1(\check M_o;\bbZ)^{-\upiota_*}/\text{torsion}) \otimes \bbZ_2[U],
	\]
	where $U$ has degree $2$. \qed 
\end{prop}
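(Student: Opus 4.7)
My plan is to apply the Leray-Hirsch theorem to the two fibrations exhibited immediately before the proposition, working throughout with $\bbF_2$-coefficients. The cohomology of each base is that of a finite-dimensional torus and the cohomology of each fibre is a polynomial ring on a single generator, so the answer will follow once the hypotheses of Leray-Hirsch are verified.

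First I would treat the case of nonempty real locus, using the fibration
\[
	\frac{L^2_k(\check M;S)^{\uptau}\setminus 0}{\bbZ_2} \;\longrightarrow\; \calB^{\sigma}_k(\check M,\uptau) \;\longrightarrow\; T^b, \quad T^b = \frac{H^1(\check M;\reals)^{-\upiota^*}}{H^1(\check M;\bbZ)^{-\upiota^*}}.
\]
Kuiper's theorem identifies the fibre with $\mathbb{RP}^{\infty}$ up to homotopy. To apply Leray-Hirsch I need a class $\upsilon\in H^1(\calB^{\sigma}_k(\check M,\uptau);\bbF_2)$ restricting to the generator of $H^1(\mathbb{RP}^{\infty};\bbF_2)$ on each fibre. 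I would construct $\upsilon$ as the first Stiefel-Whitney class of the real line bundle associated to the $\{\pm 1\}$-factor in the splitting $\calG(\check M,\upiota) = \{\pm 1\} \times \calG^{h,o} \times \calG^{\perp}$ of the gauge group; equivalently, this is the classifying class of the principal $\{\pm 1\}$-cover $\calB^{o,\sigma}_k\to\calB^{\sigma}_k$ coming from the framed configuration space. Leray-Hirsch then gives
\[
	H^*(\calB^{\sigma}_k(\check M,\uptau);\bbF_2) \;\cong\; H^*(T^b;\bbF_2) \otimes \bbF_2[\upsilon],
\]
and $H^*(T^b;\bbF_2) = \Lambda^*(H^1(T^b;\bbF_2))$. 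A standard universal-coefficients and duality argument then identifies $H^1(T^b;\bbZ)$ with $H_1(\check M;\bbZ)^{-\upiota_*}/\text{torsion}$, using that $\upiota_*$ and $\upiota^*$ are adjoint under the Kronecker pairing, which yields the stated formula after tensoring with $\bbF_2$.

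The second case, in which the real locus is empty and $\upiota$ swaps the two components of $\check M$, is formally identical: the fibre is now $(L^2_k(\check M;S)^{\uptau}\setminus 0)/S^1\simeq\mathbb{CP}^{\infty}$, the role of $\upsilon$ is played by $U\in H^2$, the first Chern class of the complex line bundle associated to the $S^1$-factor in the corresponding splitting $\calG(\check M,\upiota) = S^1\times\calG^{h,o}\times\calG^{\perp}$, and the base is the torus $H^1(\check M_o;\reals)/H^1(\check M_o;\bbZ)$. The main obstacle is not the formal Leray-Hirsch bookkeeping but rather the construction of the extension class: checking that the tautological class on each fibre extends to a global class on $\calB^{\sigma}_k$ whose restriction remains a fibrewise generator. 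This is handled by the associated-bundle construction above, which produces the line bundle on $\calB^{\sigma}_k$ directly from the harmonic gauge subgroup, so its restriction to each fibre is tautologically the universal bundle on the corresponding projective space.
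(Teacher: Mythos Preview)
Your proposal is correct and follows the paper's own approach: the proposition is stated with a \qed and prefaced only by the sentence ``By Leray-Hirsch theorem,'' so the paper intends exactly the argument you describe, applied to the two fibrations it has just displayed. Your construction of the extension classes $\upsilon$ and $U$ from the $\{\pm 1\}$- (respectively $S^1$-) factor of the gauge-group splitting is the natural way to supply the one detail the paper leaves implicit.
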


\subsection{Perturbations}
We set up our perturbations in a way that all definitions are essentially identical to \cite[Section~10.1 and 11]{KMbook2007} or their $\uptau$-invariant counterparts in \cite[Section~6]{ljk2022}, with the addition of the adjective ``bifold''.
This allows us to appeal to the results from \cite[Section~10.6-9 and 11.3-6]{KMbook2007}.

The \emph{perturbed gradient} of the Chern-Simons-Dirac functional is of the form
\[
	\grad \pertL  = \grad \calL + \frakq,
\]
 where $\frakq$ commutes with $\uptau$ and is a \emph{formal gradient}  of a $\calG(\check Y,\upiota)$-invariant continuous function $f:\calC(\check Y,\uptau) \to \reals$, in the sense that
\[
	f \circ \upgamma(1) - f \circ \upgamma(0) =
	\int_0^1 \la \dot{\upgamma},\frakq\ra ,
\] 
for any path $\upgamma:[0,1] \to \mathcal C(\check Y,\uptau)$.

Over a cylinder $(\check Z,\upiota_Z) = ([t_1,t_2] \times \check Y, \Id \times \upiota)$, a continuous path of slice-wise perturbations $\frakq(t)$ can be pullback to obtain a map
\[
\hat{\frakq}:\calC(\check Z,\uptau) \to L^2(Z;i\su(S^+)\oplus S^-)^{\uptau},
\]
by identifying the bundle $iT^*\check Y \oplus S$ with $i\su(S^+)\otimes S^-$ via the Clifford multiplication (see \cite[Page~153]{KMbook2007}).

To define perturbed gradients on the $\sigma$-blowup $(\grad \pertL)^{\sigma}$,
we write a formal gradient $\frakq$ as $(\frakq^0,\frakq^1)$ where $\frakq^0 \in L^2(\check Y;iT^*\check Y)^{-\upiota^*}$, $\frakq^1 \in L^2(\check Y;S)^{\uptau}$, and set
\[
	\tilde{\frakq}^1(B,r,\psi) = \int^1_0 \calD_{(B,sr\psi)}\frakq^1(0,\psi)ds,
	\quad \text{and}\ \ 
	\Lambda_{\frakq}(B,r,\psi) = \Re \la 
	\psi, D_B\psi + \tilde{\frakq}^1(B,r,\psi)
	\ra_{L^2}.
\]
In coordinate form, the perturbation $\frakq^{\sigma}$ is
\[
	\frakq^{\sigma} = \left(\frakq^0(B,r,\psi), \quad
	\la \tilde{\frakq}^1(B,r,\psi), 
	\psi\ra_{L^2(\check Y)}r, \quad 
	\tilde{\frakq}^1(B,r,\psi)^{\perp}\right).
\]

\begin{prop}
	Let $(B,r,\psi)$ be an element of $\calC_k^{\sigma}(\check Y,\uptau)$ and $(B,r\psi)$ be its projection in $\calC_k(\check Y,\uptau)$.
	Then $(B,r,\psi)$ is a critical point of $(\grad \pertL)^{\sigma}$ if and only if either:
	\begin{itemize}
		\item $r \ne 0$ and $(B,r\psi)$ is a critical point of $\grad \pertL$; or
		\item $r = 0$, the point $(B,0)$ is a reducible critical point of $\grad \pertL$, and $\psi$ is an eigenvector of $\phi \mapsto D_B\phi + \calD_{(B,0)}\frakq^1(0,\phi)$.
	\end{itemize}
\end{prop}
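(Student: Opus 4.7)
The plan is to compare the defining equations of $(\grad\pertL)^{\sigma}(B,r,\psi)=0$ component by component with $(\grad\pertL)(B,r\psi)=0$ under the blow-down map $\boldsymbol{\pi}$, using the explicit coordinate formula for $\frakq^{\sigma}$ given just above the statement together with the three-component decomposition of the unperturbed $(\grad\calL)^{\sigma}$ on $\calC^{\sigma}_k(\check Y,\uptau)$. Concretely, combining these, the three components of $(\grad\pertL)^{\sigma}(B,r,\psi)$ are the connection piece
\[
\tfrac12\!\ast\! F_{B^t}+r^{2}\rho^{-1}(\psi\psi^{*})_{0}\otimes 1_{S}+\frakq^{0}(B,r,\psi),
\]
a scalar piece proportional to $r\,\Lambda_{\frakq}(B,r,\psi)$, and the $L^{2}$-orthogonal spinor piece $D_{B}\psi+\tilde{\frakq}^{1}(B,r,\psi)-\Lambda_{\frakq}(B,r,\psi)\psi$. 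I would set all three to zero and do a case analysis on whether $r=0$ or not.

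The key bookkeeping identity is the one relating $\tilde{\frakq}^{1}$ to $\frakq^{1}$: since $\frac{d}{ds}\frakq^{1}(B,sr\psi)=\calD_{(B,sr\psi)}\frakq^{1}(0,r\psi)$, the fundamental theorem of calculus yields
\[
r\,\tilde{\frakq}^{1}(B,r,\psi)=\int_{0}^{1}\!\calD_{(B,sr\psi)}\frakq^{1}(0,r\psi)\,ds=\frakq^{1}(B,r\psi),
\]
using that $\frakq^{1}$ vanishes on reducibles (a general property of the formal gradient of a gauge-invariant function). In the case $r\ne 0$, I would multiply the orthogonal spinor equation by $r$, add the $\Lambda_{\frakq}$-multiple of $\psi$ coming from the scalar equation, and invoke the identity above to rewrite the result as $D_{B}(r\psi)+\frakq^{1}(B,r\psi)=0$; the connection equation then coincides with the reducible-free Seiberg–Witten equation at $(B,r\psi)$, so $\boldsymbol{\pi}(B,r,\psi)$ is a critical point of $\grad\pertL$. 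Conversely, given a critical point $(B,\Phi)$ downstairs with $\Phi\ne 0$, setting $r=\|\Phi\|_{L^{2}}$ and $\psi=\Phi/r$ and reversing the algebra produces a critical point upstairs.

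In the case $r=0$, the connection equation collapses to $\tfrac12\!\ast\!F_{B^{t}}+\frakq^{0}(B,0)=0$, which is precisely the condition that $(B,0)$ be a reducible critical point of $\grad\pertL$ (the quadratic spinor term and the $\psi$-dependence of $\frakq^{0}$ at $r=0$ both drop out, the latter because $\frakq^{0}$ evaluated on the reducible locus does not see the auxiliary $\psi$). The scalar equation $r\,\Lambda_{\frakq}=0$ is automatic. The remaining orthogonal spinor equation becomes $D_{B}\psi+\calD_{(B,0)}\frakq^{1}(0,\psi)=\Lambda_{\frakq}(B,0,\psi)\,\psi$, i.e. $\psi$ is an eigenvector of the perturbed linearised Dirac operator $\phi\mapsto D_{B}\phi+\calD_{(B,0)}\frakq^{1}(0,\phi)$ with eigenvalue $\Lambda_{\frakq}(B,0,\psi)$. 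The converse direction is immediate from the same display.

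The main technical obstacle I expect is verifying that the identity $r\,\tilde{\frakq}^{1}(B,r,\psi)=\frakq^{1}(B,r\psi)$ and the vanishing $\calD_{(B,0)}\frakq^{0}(0,\psi)=0$ behave correctly for the class of admissible $\uptau$-invariant perturbations considered here — i.e. that the compatibility between the blown-up and non-blown-up formal gradients survives the passage to the real, bifold setting. This is where one must cite the analogous linearisation statements from \cite[Section~10.1]{KMbook2007} (or the real analogue in \cite[Section~6]{ljk2022}) and check that the addition of the adjective ``bifold'' does not affect the pointwise algebraic manipulation, since the identities are local in the base and equivariant under $\uptau$.
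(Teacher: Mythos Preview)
Your proof is correct and is essentially the standard argument (cf.\ \cite{KMbook2007}, Section~10.3). The paper itself does not supply a proof of this proposition; it is stated without proof, in keeping with the remark opening Section~\ref{sec:analysis_of_sw_traj} that the results there are obtained by inserting the adjective ``bifold'' into the proofs of \cite{KMbook2007} or \cite{ljk2022}.
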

The family of perturbations will be built from two classes of functions on $\calC(\check Y,\uptau)$ as follows.
Given a coclosed $1$-form $c \in \Omega^1(\check Y;i\reals)^{-\upiota^*}$, we set
\[
	r_c(B_0 + b\otimes 1, \Psi) = \int_{\check Y} b \wedge *\bar c = \la b,c \ra_{\check Y}.
\] 
This function is invariant under the identity component of $\calG$ and fully gauge invariant if $c$ is also coexact, as
\[
r_c(u(B_0 + b \otimes 1, \Psi)) = r_c(B_0 + b\otimes 1,\Psi) + (h \cup [*\bar c])[\check Y].
\]
Set 
\[\mathbb{T} =
\bbT(\upiota)= H^1(\check Y;i\reals)^{-\upiota^*}/(2\pi H^1(\check Y;i\bbZ)^{-\upiota^*})\]
which can be interpreted as the space of $(-\upiota^*)$-invariant imaginary harmonic $1$-forms, modulo periods in $2\pi i\bbZ$.
Choose an integral basis $(\omega_1,\dots,\omega_t)$ of $H^1(\check Y;i\reals)^{-\upiota^*}$ to identify $\bbT$ with $\bbR^t/2\pi\bbZ^t$.
The $\calG(\check Y,\upiota)$-invariant map $(B_0 + b\otimes 1,\Psi) \mapsto b_{harm}$ that projects $b$ onto its harmonic part $b_{harm}$ can be identified as, modulo periods, 
\[
	(B,\Psi) \mapsto (r_{\omega_1}(B,\Psi), \dots, r_{\omega_t}(B,\Psi)).
\]
Let $\bbS \to \bbT \times \check Y$ be the quotient of $H^1(\check Y;i\reals)^{-\upiota^*} \times S$ by $\calG^{h,o}$.
A smooth $\uptau$-invariant section $\Upsilon$ of $\bbS$ lifts to a $\uptau$-invariant section $\tilde\Upsilon$ of $H^1(\check Y;i\reals)^{-\upiota^*} \times S \to H^1(\check Y;i\reals)^{-\upiota^*} \times \check Y$ with  the following quasi-periodicity:
for any $\kappa \in H^1(\check Y;i\bbZ)^{-\upiota^*}$, there exists a unique $u = v(\kappa) \in \calG^{h,o}$, such that
\[
	\tilde\Upsilon_{\alpha + \kappa} (y) = u(y)\tilde\Upsilon_{\alpha}(y).
\]
Given a $\uptau$-invariant section of $\bbS$, we define a $\calG^o(\check Y,\upiota)$-equivariant map
\[
	\Upsilon^{\dag}:\calC(\check Y,\uptau)\to C^{\infty}(S)^{\uptau}
\]
by
\[
	\Upsilon^{\dag}(B_0+b\otimes 1,\Psi) = 
	e^{-Gd^*b}\tilde\Upsilon_{b_{harm}},
\]
where $b_{harm}$ is the harmonic part of the $1$-form $b$.
Pairing sections of $S$ with $\Upsilon^{\dag}$ gives rise to a $\calG^o(\check Y,\upiota)$-invariant map $q_{\Upsilon}:\calC(\check Y,\uptau) \to \C$,
\[
	q_{\Upsilon}(B,\Psi) = \int_{\check Y} \la \Psi,\Upsilon^{\dag}(B,\Psi)\ra.
\]

Finally, choose a finite collection of $(-\upiota^*)$-invariant coclosed $1$-forms $c_1,\dots,c_{n+t}$ where the first $n$ are coexact, and the last $t$ terms $\omega_1,\dots,\omega_t$ constitute a basis for the invariant harmonic forms.
Choose also a collection of $\uptau$-invariant smooth sections $\Upsilon_1,\dots,\Upsilon_m$ of $\bbS$.

We define $p:\calC(\check Y,\uptau) \to \reals^n \times \bbT(\upiota) \times \C^m$ as
\[
	p(B,\Psi)=
	\left(r_{c_1}(B,\Psi),\dots, r_{c_n}(B,\Psi), 
	[r_{c_{n+1}}], \dots,[r_{c_{n+t}}],
	q_{\Upsilon_1}(B,\Psi),\dots,q_{\Upsilon_m}(B,\Psi)\right).
\]
\begin{defn}
	A function $f:\calC(\check Y,\uptau) \to \reals$ is a \emph{real cylinder function} if it is of the form $g \circ p$ where the map $p:\calC(\check Y,\uptau) \to \reals^n \times \mathbb{T} \times \C^m$ is defined as above, using $n$ coexact forms $c_1,\dots, c_n$ and $m$ sections $\Upsilon_1,\dots,\Upsilon_m$; and the function $g$ on $\reals^n \times \mathbb{T} \times \C^n$ is $S^1$-invariant, smooth, and compactly-supported.
\end{defn}
\begin{rem}
	The maps $p$'s in the definition of \emph{real} cylinder functions can be viewed as functions 
	on the ordinary configuration space $\underline{\calC}(\check Y,\fraks)$ as follows.
	By pairing with all $1$-forms rather than $(-\upiota^*)$-invariant forms, the domain of the $r_i$'s can be extended from the subspace $\calC(\check Y,\uptau)$ to $\underline{\calC}(\check Y,\fraks)$.
	Let the \emph{ordinary Picard torus} be denoted as
	\[\underline{\bbT} = H^1(\check Y;i\reals)/2\pi H^1(\check Y;i\bbZ),\] 
	which contains $\bbT(\upiota)$ as a subtorus. 
	Hence the image of $([r_{c_{n+1}}], \dots,[r_{c_{n+t}}])$ lies in $\underline{\bbT}$.
	Furthermore, extending by quasi-periodicity, a section $\Upsilon$ of $\bbS \to \bbT(\upiota) \times \check Y$ defines a section of the bundle $\underline{\bbS}$ over $\underline{\bbT} \times \check Y$, as a quotient of the ordinary harmonic framed gauge group $\underline{\calG}^{h,o}$: 
	\[
	\underline{\bbS} = \left(H^1(\check Y;i\reals) \times S \to H^1(\check Y;i\reals) \times \check Y\right)/\underline{\calG}^{h,o}.
	\]
	Consequently, a real cylinder function can be thought of as a cylinder function in~\cite[Definition~11.1.1]{KMbook2007} for bifold configurations, with the exception that $g$ is not a function on the entire $\underline{\bbT}$-factor in $\reals^n \times \underline{\bbT} \times \C^n$.
\end{rem}
The analytic results in \cite[Section~11.3-4]{KMbook2007} about perturbations from cylinder functions therefore carry over to our setting.
In particular, the real cylinder functions are \emph{$k$-tame}, where
$k$-tameness comprises a number of constraints to retain analytic properties of solutions such as compactness.
See \cite[Definition~10.5.1]{KMbook2007} and \cite[Definition~6.1]{ljk2022}.
\begin{rem}
	In the case when $\check Y = \check Y_1 \sqcup \check Y_2$ with  a given identification $\check Y_i \cong \check Y_o$ such that $\upiota: \check Y \to \check Y$ is the swapping involution, the real cylinder functions on $\calC(\check Y,\uptau)$ are precisely the cylinder functions on $\underline{\calC}(\check Y_o,\fraks)$.
\end{rem}
The key proposition for establishing transversality is the following.
\begin{prop}
\label{prop:embedding}
Given a compact subset $K$ of a finite-dimensional $C^1$-submanifold $M \subset \calB^o_k(\check Y,\uptau)$, and both $K,M$ are invariant under $\bbZ_2$ if $\Fix(\upiota) = \emptyset$ or $S^1$ if $\Fix(\upiota) = \emptyset$.
Then there exists a collection of coclosed forms $c_{\nu}$, sections $\Upsilon_{\mu}$ of $\bbS$, and a neighbourhood $U$ of $K$ in $M$, such that the corresponding map
\[
	p:\calB^o_k(\check Y,\uptau) \to \reals^n \times \bbT(\upiota) \times \C^m
\]	
induces an embedding of $U$. 
\end{prop}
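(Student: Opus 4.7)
The plan is to closely follow the proof of the corresponding embedding statement in \cite[Section~11.6]{KMbook2007} (and its real adaptation in \cite[Section~6]{ljk2022}), modifying the local constructions so that all chosen data are $(-\upiota^*)$-invariant or $\uptau$-invariant. Two things must be verified for the map $p$: its differential is injective at every $[\upgamma] \in K$ (immersivity), and $p$ is injective on some neighbourhood $U$ of $K$ in $M$.

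For immersivity I would argue pointwise. Fix $[\upgamma] \in K$ and a nonzero tangent vector $v=(a,\phi)\in\calK_{k,\upgamma}$ (passing via the $\sigma$-blow-up if $\upgamma$ is reducible). Decompose $a=a_{harm}+a^\perp$. If $a_{harm}\neq 0$, then choosing the basis $\omega_1,\dots,\omega_t$ of $H^1(\check Y;i\reals)^{-\upiota^*}$ appropriately makes the $[r_{\omega_i}]$-components of $p$ already detect $v$. If $a^\perp\neq 0$, I would produce a smooth $(-\upiota^*)$-invariant coexact $1$-form $c$, supported in a bifold chart where $a^\perp$ is nonzero, with $\la a^\perp,c\ra_{\check Y}\neq 0$; such $c$ is obtained by symmetrizing an arbitrary local coexact form under $\upiota$ and projecting to coexact forms via bifold Hodge theory. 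If $\phi\neq 0$, Proposition~\ref{prop:line_admit_real} supplies a local $\uptau$-invariant section $\Upsilon$ of $\bbS$ whose differential $D\Upsilon^{\dag}$ pairs nontrivially with $\phi$. Combining the three cases gives a finite collection of data making $dp$ injective at $[\upgamma]$.

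Openness of the immersion condition together with compactness of $K$ then produce a finite common collection $\{c_\nu\},\{\Upsilon_\mu\}$ such that the associated $p$ is an immersion on an open neighbourhood $U_0\supset K$ in $M$. To upgrade from immersion to embedding on some possibly smaller neighbourhood $U$, I would argue by contradiction: a failure would produce two sequences of distinct points in $U_0$ whose $p$-images coincide and which converge to a common point of $K$, contradicting injectivity of $dp$ in the Hilbert-manifold slice. The required $\bbZ_2$- or $S^1$-equivariance of $p$ is automatic from the construction, since each $r_c$ is invariant under the residual framed gauge action and each $q_{\Upsilon}$ transforms by the expected character, which is compatible with the symmetry on the target factor $\C^m$.

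The main obstacle is guaranteeing that $\uptau$-invariant sections of $\bbS$ are abundant enough to separate the spinor direction near the bifold-singular locus $\tilde K^{\sfc}$, where $\uptau$-invariance must be compatible with the local $\tau$-actions of the bifold structure. This is controlled by the explicit local models in Proposition~\ref{prop:line_admit_real}: once one fixes a local $\uptau$-invariant section on a tubular neighbourhood of each component of $\tilde K^{\sfc}$, multiplication by smooth $\upiota$-invariant real-valued scalar functions on $\check Y$ yields a family of $\uptau$-invariant sections that is dense enough in the relevant $L^2$-topology to realize any prescribed pairing with a given $\phi$. With this density statement in place, the remaining arguments carry over from \cite[Section~11.6]{KMbook2007} without essential modification.
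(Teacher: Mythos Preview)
Your approach is essentially identical to the paper's: both defer to \cite[Proposition~12.2.1]{KMbook2007} and its real adaptation in \cite{ljk2022}, observing that $(-\upiota^*)$-invariant forms and $\uptau$-invariant spinors suffice to separate real configurations. Two minor points: the relevant statement in \cite{KMbook2007} is Proposition~12.2.1 (Section~12), not Section~11.6; and your appeal to Proposition~\ref{prop:line_admit_real} for producing $\uptau$-invariant sections of $\bbS$ is misplaced, since that proposition concerns real structures on line bundles rather than spinor sections---the density of $\uptau$-invariant sections follows more directly by symmetrization $\Phi \mapsto \Phi + \uptau\Phi$.
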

\begin{proof}
	The proof of \cite[Propostion~12.2.1]{KMbook2007} can be adapted to the $\uptau$-invariant bifold version the same way as in \cite[Proposition~5.5]{ljk2022}.
	Roughly speaking, to separate real connections or spinors it suffices to use cylinder functions defined by real configurations.
\end{proof}
\begin{cor}
\label{cor:embed_tangent}
Given any $[B,\Psi]$ in $\calB^*_k(\check Y,\uptau)$ and any nonzero tangent vector $v$ to $\calB^*_k(\check Y,\uptau)$, there exists a cylinder function $f$ whose differential $\calD_{[B,\Psi]}f(v)$ is nonzero.	\hfill \qedsymbol
\end{cor}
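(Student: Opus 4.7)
The plan is to deduce this pointwise statement as a direct corollary of Proposition~\ref{prop:embedding}: if a map $p$ built from cylinder-function data embeds a small arc through $[B,\Psi]$ tangent to $v$, then $\calD p(v)\neq 0$, and pulling back an appropriate coordinate on the target yields a cylinder function whose differential detects $v$.

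First I would lift $[B,\Psi]$ to a point $[\tilde B,\tilde\Psi]\in\calB^o_k(\check Y,\uptau)$ and lift $v$ to a tangent vector $\tilde v$ there, which is possible because the covering map $\calB^o_k\to\calB_k$ is a local diffeomorphism on the irreducible locus and cylinder functions descend from framed to unframed invariance. Next, I would choose a short $C^1$-arc $\gamma:(-\epsilon,\epsilon)\to\calB^o_k(\check Y,\uptau)$ through $[\tilde B,\tilde\Psi]$ with $\dot\gamma(0)=\tilde v$, take $M$ to be the saturation of $\gamma$ under the residual $\bbZ_2$- or $S^1$-action (depending on whether $\Fix(\upiota)\ne\emptyset$), and let $K$ be the compact orbit of $[\tilde B,\tilde\Psi]$. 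Applying Proposition~\ref{prop:embedding} to this $K\subset M$ yields a map $p:\calB^o_k\to\reals^n\times\bbT(\upiota)\times\C^m$ embedding a neighbourhood of $[\tilde B,\tilde\Psi]$ in $M$, so in particular $\calD_{[\tilde B,\tilde\Psi]}p(\tilde v)\ne 0$ and some coordinate projection on the target detects $\tilde v$.

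The final step is to promote such a coordinate projection to an $S^1$-invariant, smooth, compactly-supported function $g$ on the target, so that $f=g\circ p$ is a bona fide cylinder function. If the detecting coordinate sits in the $\reals^n$ or $\bbT(\upiota)$ factor, on which the residual $S^1$ acts trivially, I would simply take $g$ to be a bump function times a linear coordinate or a coordinate function such as $\sin(\theta_j)$. If the detecting coordinate sits in the $\C^m$ factor, I would enlarge the family of sections $\{\Upsilon_\mu\}$ so that $p([\tilde B,\tilde\Psi])$ has a nonzero entry in the relevant $\C$-factor, and then use a bump times $|z_j|^2$, which has nonvanishing differential at any nonzero $z_j$.

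The main point of care will be the $\C^m$-case, where the $S^1$-invariance of $g$ forbids linear coordinates; however, since $\tilde\Psi\ne 0$ on the irreducible locus $\calB^*_k$, a generic choice of $\Upsilon$ makes $q_\Upsilon([\tilde B,\tilde\Psi])\ne 0$, and the $S^1$-invariant function $|z|^2$ then has nonvanishing differential, resolving the issue.
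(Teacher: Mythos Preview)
Your approach is sound and reaches the goal, but it is a detour compared with the paper. The paper does not deduce the corollary from the \emph{statement} of Proposition~\ref{prop:embedding}; it extracts it from the \emph{proof}. In the standard argument (as in \cite[Section~12.2]{KMbook2007} and \cite[Proposition~5.5]{ljk2022}), the embedding proposition is proved by first establishing two pointwise facts---cylinder functions separate points, and cylinder functions separate tangent vectors---and then invoking compactness. The corollary is exactly the second of these ingredients. So the paper's ``proof'' is simply a pointer: the tangent-vector separation is already done inside the proof of Proposition~\ref{prop:embedding}. Your route instead takes the finished embedding result as a black box and recovers the immersion property from it, which is logically fine but circuitous.

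There is also a small technical slip in your last paragraph. You write that $|z_j|^2$ ``has nonvanishing differential at any nonzero $z_j$,'' which is true as a covector, but what you need is that its differential is nonzero \emph{on the specific direction} $w=\calD p(\tilde v)$. That is $2\Re(\bar z_j w_j)$, which vanishes precisely when $w_j$ is a real multiple of $iz_j$. You can rule this out by noting that $p$ is $S^1$-equivariant and $\tilde v$ was chosen transverse to the orbit, so $w$ is not a real multiple of the orbit direction $iz$ in $\C^m$; hence some $S^1$-invariant polynomial (one of the $\Re(\bar z_j z_k)$ or $\Im(\bar z_j z_k)$, not necessarily $|z_j|^2$ alone) detects $w$. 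With that correction your argument goes through.
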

\begin{proof}
	This is a consequence of proof of Proposition~\ref{prop:embedding}.
	See references therein.
\end{proof}
To ensure a large supply of perturbations for the Sard-Smale theorem, we choose a countable collection of real cylinder functions $(c_{\mu}, \Upsilon_{\nu}, K, g)$ satisfying suitable density conditions as follows (cf. \cite[Page~192-193]{KMbook2007}).

Given a pair $(n,m)$, pick a countable collection of $(c_i,\Upsilon_{\nu})$ that are dense in the subspace of such $(n+m)$-tuples, in the $C^{\infty}$ topology. 
Choose a countable set of compact subsets $K$ of $\reals^n \times \bbT \times \C^m$ that are dense in the Hausdorff topology. 
Once $K$ is chosen, let $g_{\alpha} = g(n,m,K)_{\alpha}$ be a countable collection of $S^1$-valued functions with the properties that $g_{\alpha}$ is supported in $K$,
and $\{g_{\alpha}\}$ is dense in the $C^{\infty}$ topology of smooth, $S^1$-invariant functions supported in $K$.
In order to deal with reducibles, we require also the functions $g_{\alpha}$'s that vanish on the subset
\[
K_0 = K \cap (\reals^n \times \bbT\times \{0\})
\]	
are dense among the smooth circle-valued functions that are supported in $K$ and vanish on $K_0$, in the $C^{\infty}$ topology.

With such a collection of $(c_{i}, \Upsilon_{\nu}, K, g_{\alpha})$ specified, one can construct a separable Banach space $\calP$ together with 
a linear map 
\[
\mathfrak{O}: \calP \to C^0(\calC(\check Y,\uptau),\calT_0).
\] 
A pair $(\mathcal{P},\mathfrak{O})$ as above will be referred to as
a \emph{large Banach space of tame perturbations}.
See \cite[Theorem~11.6.1, Definition~11.6.3]{KMbook2007} and also \cite[Section~6]{ljk2022} for the precise definitions.

\subsection{Transversality}
In dimension three, 
a critical point $\fraka \in \calC^{\sigma}_k$ of $(\grad \pertL)^{\sigma}$ is \emph{non-degenerate} if the smooth section $(\grad \pertL)^{\sigma}$ is transverse to the subbundle $\calJ^{\sigma}_{k-1}$.
\begin{thm}
\label{thm:transv_crit}
Let $\calP$ be a large Banach space of tame perturbations.
Then there is a residual subset of $\calP$ such that for any $\frakq$ in  this subset, all zeros of the section $(\grad \pertL)^{\sigma}$ of $\calT^{\sigma}_{k-1} \to \calC^{\sigma}_k(\check Y,\uptau)$ are non-degenerate.
For such a perturbation, the image of the zeros in $\calC_k(\check Y,\uptau)$ consists of a finite set of gauge orbits. 
\end{thm}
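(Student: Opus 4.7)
The plan is a standard Sard--Smale argument on the $\sigma$-blowup, adapted from \cite[Chapter~12]{KMbook2007} and \cite[Section~6]{ljk2022}, with the two cases of irreducible and reducible critical points handled separately, as the non-degeneracy condition takes different shapes on the two strata.

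First, I would form the parametrized section
\[
\mathfrak{F}^{\sigma}:\calC^{\sigma}_{k}(\check Y,\uptau)\times \calP\to \calT^{\sigma}_{k-1},\qquad
\mathfrak{F}^{\sigma}(\fraka,\frakp)=(\grad\calL)^{\sigma}(\fraka)+\mathfrak O(\frakp)^{\sigma}(\fraka),
\]
and show that this section is transverse to the subbundle $\calJ^{\sigma}_{k-1}$. Once this is established, the universal zero set $\calZ\subset \calC^{\sigma}_{k}\times \calP$ (modulo $\calG_{k+1}$) is a Banach manifold, and Sard--Smale applied to its projection to $\calP$ yields a residual subset of perturbations for which every zero of the section $(\grad\pertL)^{\sigma}$ over $\calC^{\sigma}_{k}(\check Y,\uptau)$ is cut out transversely, i.e.\ non-degenerate in the stated sense.

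The transversality of the universal section splits according to the stratum. On the irreducible stratum $r\ne 0$, non-degeneracy reduces to that of the non-blown-up gradient at $(B,r\phi)$; the relevant linearization factors through the derivatives $\calD_{\upgamma}p$ of the cylinder-function maps, and Proposition~\ref{prop:embedding} together with Corollary~\ref{cor:embed_tangent} guarantees that $\mathfrak O(\calP)$ separates tangent vectors to $\calB^o_k(\check Y,\uptau)$ at every irreducible point. This is precisely the input needed to surject onto $\calK^{\sigma}_{k-1}$ after modding out the gauge action. On the reducible stratum $r=0$, the critical point equations decouple: the connection equation is $\frac12*F_{B^t}+\frakq^{0}(B,0)=0$, and the spinor equation demands that $\phi$ be a (unit) eigenvector of the perturbed Dirac operator $D_{B,\frakq}:=D_B+\calD_{(B,0)}\frakq^{1}(0,\cdot)$. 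Non-degeneracy here requires both that the reducible connection be cut out transversely (handled by perturbations built from the coclosed forms $c_i$) and that the eigenvalue of $\phi$ for $D_{B,\frakq}$ be simple, with no other coincidence among the spectrum. The second condition is achieved by varying the sections $\Upsilon_{\mu}$ of $\bbS$ to perturb the Dirac operator at every reducible connection, using the density requirement on the $g_{\alpha}$'s that vanish on $K_0$ to preserve the reducible locus while moving the spectrum.

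The hard part is precisely this reducible eigenvalue surjectivity: I need that for a fixed reducible $B$ the map $\calP\to \End_{sa}^{\uptau}(L^2(\check Y;S)^{\uptau})$ sending $\frakp$ to $\calD_{(B,0)}\mathfrak O(\frakp)^{1}(0,\cdot)$ has dense image among symmetric bifold operators commuting with $\uptau$, so that a generic perturbation leaves no repeated eigenvalues and no spurious kernel elements on the real invariant subspace. This is where the $\Upsilon_{\mu}$ family is used: $\Upsilon^{\dag}(B_0+b\otimes 1,\Psi)$ depends on $b$ only through its harmonic part, so its differential at a reducible is a multiplication-type operator that, as $\Upsilon$ ranges over a dense set of $\uptau$-invariant sections of $\bbS$, produces a dense family of $\uptau$-compatible self-adjoint perturbations of $D_B$. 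Combined with the fact that the non-real (non-swapping) structural features of the reducibles are handled by the invariant $1$-form part $\frakq^{0}$, this gives the needed transversality on the reducible locus.

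Finally, for the finiteness of gauge orbits, I would invoke $k$-tameness and the compactness results imported from \cite[Section~10.7-9]{KMbook2007}: the set of critical points of $(\grad\pertL)^{\sigma}$ is compact in $\calB^{\sigma}(\check Y,\uptau)$, and non-degeneracy implies it is discrete, hence finite. The only adaptations are the systematic replacement of smooth with bifold, and of $\underline{\calG}$ with the real gauge group $\calG(\check Y,\upiota)$, together with the dichotomy between connected $\check Y$ and the swapped disconnected case (where the reducible stabilizer becomes $S^1$, but the argument is unchanged after reinterpretation via Section~\ref{subsec:case_real_loci}).
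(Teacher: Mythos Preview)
Your proposal is correct and follows essentially the same Sard--Smale strategy as the paper, which simply defers to \cite[Section~12]{KMbook2007} and \cite[Section~7]{ljk2022} and notes that the bifold adjustments are cosmetic. One small point of divergence: the paper attributes Proposition~\ref{prop:embedding} to the irreducible case and Corollary~\ref{cor:embed_tangent} to the reducible case, whereas you invoke both for irreducibles and then argue the reducible spectral simplicity directly from the $\Upsilon_{\mu}$ family; this is only an organizational difference, and your more explicit treatment of the Dirac-spectrum perturbation (including the role of the $g_{\alpha}$'s vanishing on $K_0$) is in fact what underlies the paper's citation of Corollary~\ref{cor:embed_tangent} in that case.
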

\begin{proof}
The proof of for case $\Fix(\upiota) = \emptyset$ without bifold singularities is given in \cite[Section~12]{KMbook2007}.
The case $\Fix(\upiota) \ne \emptyset$ without bifold singularities is done in \cite[Section~7]{ljk2022}, which is the $\uptau$-equivariant version of \cite[Section~12]{KMbook2007}.
The proofs with bifold singularity undergo only cosmetic changes.
It is worth mentioning that the transversality proof at irreducible critical points uses crucially Proposition~\ref{prop:embedding}, and in the reducible case, Corollary~\ref{cor:embed_tangent}.
\end{proof}
To state the transversality results over (infinite) cylinders, we introduce some more definitions.
Let $I$ be a subset of $\reals$, possibly unbounded.
Define $\tilde{\calC}^{\tau}_{k,loc}(I \times \check Y,\uptau)$ as the subspace of $\calA_{k,loc}(I \times \check Y,\uptau) \times L^2_{k,loc}(I;\reals) \times L^2_{k,loc}(I \times \check Y;S^+)^{\uptau}$ consisting of triples $(A,s,\phi)$, where $\check \phi(t)$ has unit $L^2$-norm on $\{t\} \times Y$ for any $t \in I$. 
Also, define $\calC_{k,loc}(I \times \check Y, \uptau)\subset \tilde{\calC}_{k,loc}(I \times \check Y, \uptau)$ to be the closed subset subject to $s(t) \ge 0$.
The gauge group $\calG_{k+1,loc}(I \times \check Y,\uptau)$ comprises $L^2_{k+1,loc}$-maps $\check Y \to S^1$.
Denote the quotient spaces as $\calB_{k,loc}$ and
$\tilde{\calB}_{k,loc}$, respectively.

Choose $\frakq \in \calP$ so that are critical points of $(\grad \pertL)^{\sigma}$ are non-degenerate, by Theorem~\ref{thm:transv_crit}.
The \emph{perturbed Seiberg-Witten equations} on the $\tau$-blow up is a section
\[
	\frakF^{\tau}_{\frakq}:
	\tilde{\calC}^{\tau}_{k,loc}(I \times \check Y,\uptau) \to
	\calV^{\tau}_{k-1,loc}(I \times \check Y,\uptau),
\]
where the fibre of $\calV^{\tau}_{k-1,loc}$ at $\upgamma = (A_0,s_0,\phi_0)$ is the subspace
of $L^2_{k,loc}(I\times \check Y; i\mathfrak{su}(S^+) \oplus L^2_{k,loc}(I;\reals) \oplus L^2_{k,loc}(I \times \check Y)$ consisting of triples $(a,s,\phi)$ with $\Re\la \check\phi_0(t),\check\phi(t)\ra_{L^2(Y)} = 0$ for all $t$.

Let 
\[
	(\check Z,\upiota) = (I \times \check Y, \Id_I \times \upiota),
\]
and $\uptau$ denote also the pullback of $\uptau$ on $Z$.
Given a critical point $\frakb$, there is a translation-invariant element $\upgamma_{\frakb}$ in $\calC^{\tau}_{k,loc}(\check Z,\uptau)$ that solves $\frakF^{\tau}_{\frakq}(\upgamma_{\frakb}) = 0$.
A configuration $[\upgamma] \in \tilde{\calB}^{\tau}_{k,loc}(\check Z,\uptau)$ is \emph{asymptotic} to $[\frakb]$ as $t\to \pm\infty$ if
\[
	[\tau_t^*\upgamma] \to [\upgamma_{\frakb}] \quad
	\text{ in } \tilde{\calB}_{k,loc}(\check Z,\uptau_Z)
\]
as $t \to \pm\infty$,
where $\tau_t:\check Z \to \check Z$ is the translation map $(s,y)\mapsto (s+t,y)$.
We write $\lim_{\rightarrow} [\upgamma]= [\frakb]$ and $\lim_{\leftarrow} [\upgamma] = [\frakb]$ if $[\upgamma]$ is asymptotic to $[\upgamma]$ as $t \to +\infty$ or $t \to -\infty$, respectively.
\begin{defn}
	The \emph{moduli space of trajectories} $M([\fraka],[\frakb])$ on the cylinder $(\check Z,\upiota)$  is the space of configurations $[\upgamma]$ in $\calB^{\tau}_{k,loc}(\check Z,\uptau)$ which are asymptotic to $[\fraka]$ as $t \to -\infty$ and to $[\frakb]$ as $t \to +\infty$, and solves the perturbed Seiberg-Witten equations:
	\[
M([\fraka],[\frakb]) =
\left\{
[\upgamma] \in \calB^{\tau}_{k,loc}(\check Z,\uptau):
\frakF^{\tau}_{\frakq}(\upgamma) = 0,
\lim_{\leftarrow}[\upgamma] = [\fraka],
\lim_{\rightarrow}[\upgamma] = [\frakb]
\right\}.
	\]
\end{defn}
Alternatively (as shown to be equivalent by \cite[Theorem~13.3.5]{KMbook2007}), the moduli space can be defined as a quotient of a subset of Hilbert space by a Hilbert group, modelled on $L^2_k$ instead of $L^2_{k,loc}$.

Given two critical points $[\fraka]$ and $[\frakb]$ in $\calB^{\sigma}_k(\check Y,\uptau)$, represented by $\fraka, \frakb \in \calC^{\sigma}_k(\check Y,\uptau)$, respectively.
Choose a smooth configuration $\upgamma_0 = (A_0,s_0,\phi_0) \in \calC^{\tau}_{k,loc}(\check Z,\uptau)$ which agrees near $\pm\infty$ with the corresponding translation-invariant configurations $\upgamma_{\fraka}$ and $\upgamma_{\frakb}$.
Note that one can arrange $[\check\upgamma_0]$ to belong to any homotopy class $z$.
Set
\[
\calC^{\tau}_k(\fraka,\frakb)=
\left\{
	\upgamma \in \calC^{\tau}_{k,loc}(\check Z,\uptau)\big|
	\upgamma - \upgamma_0 \in
	L^2_k(\check Z;iT^*\check Z) \times 
	L^2_k(\reals;\reals) \times L^2_{k,A_0}(\check Z;S^+)
\right\}.
\]
The Hilbert group $\calG_{k+1}(\check Z,\upiota) = \{u:\check Z \to S^1| \bar u(z) = u(\upiota(z)), 1 - u \in L^2_{k+1}(\check Z;\C)\}$ acts on $\calC^{\tau}_k(\fraka,\frakb)$, where the resulting quotient space will be denoted as  $\calB^{\tau}_k(\fraka,\frakb)$.

Let $\calT_{j,\upgamma}$ be the subset of $L^2_j(\check Z;iT^*\check Z) \oplus L^2_j(\reals;\reals) \oplus L^2_{j,A_0}(\check Z;S^+)^{\uptau}$ comprising triples $(a,s,\phi)$ for which $\Re\la \phi_0|_t, \phi|_t\ra_{L^2(\check Y)} = 0$ for all $t$.
The $j$-tangent bundle decomposes smoothly as $\calJ^{\tau}_j \oplus \calK^{\tau}_j$  \cite[Proposition~14.3.2]{KMbook2007}.
The adjoint of the derivative of the gauge action, coupled with the Coulomb gauge condition, gives an operator
\[
	\mathbf{d}^{\tau,\dag}_{\upgamma}(a,s,\phi)
	= -d^*a + is_0^2 \Re\la i\phi_0,\phi\ra + i|\phi_0|^2\Re \mu_{\check Y}\la i\phi_0,\phi\ra.
\]
Let $\calQ_{\upgamma_0}$ be sum of linearization of the Seiberg-Witten operator on the $\tau$-blowup $\frakF^{\tau}$ and $\mathbf{d}^{\tau,\dag}$:
\begin{equation}
\label{eqn:Q_operator}
	\calQ_{\upgamma_0} = \calD\frakF^{\tau}_{\frakq} \oplus \mathbf{d}^{\tau,\dag}_{\upgamma} 
	:\calK^{\tau}_{j,\upgamma} \to \calV^{\tau}_{j-1,\upgamma} \oplus L^2_{j-1}(\check Z;i\reals)^{-\upiota^*}.
\end{equation}
By the tameness of the perturbation $\frakq$, it can be shown that $\calQ_{\upgamma_0}$ is Fredholm and satisfies a G\r{a}rding inequality~\cite[Theorem~14.4.2]{KMbook2007}.
The index of $\calQ_{\upgamma_0}$, equal to the restriction $\calD\frakF^{\tau}_{\frakq} :\calK^{\tau}_{j,\upgamma} \to \calV^{\tau}_{j-1,\upgamma}$, depends on only on $\fraka$ and $\frakb$, by \cite[Proposition~14.4.3]{KMbook2007}.

In fact by~\cite[Theorem~14.4.2]{KMbook2007}, $\ind(\calQ_{\upgamma_0})$ is equal to the spectral flow of the corresponding extended Hessian operators along $\check\upgamma(t)$:
\[
	\widehat{\text{Hess}}_{\frakq,\fraka}^{\sigma}:
	\calT^{\sigma}_{k,\alpha} \oplus L^2(\check Y;i\reals)^{-\upiota^*} \to \calT^{\sigma}_{k-1,\fraka}\oplus L^2_{k-1}(\check Y;i\reals)^{-\upiota^*},
\]
given by
\[
	\begin{bmatrix}
		\calD_{\fraka}(\grad \pertL)^{\sigma} & \mathbf{d}_{\fraka}^{\sigma}\\
		\mathbf{d}_{\fraka}^{\sigma,\dag} & 0
	\end{bmatrix}.
\]
\begin{defn}
For critical points $\fraka,\frakb \in \calC^{\sigma}_k(\check Y,\uptau)$, the \emph{relative grading} $\gr(\fraka,\frakb)$ of $\fraka$ and $\frakb$ is the index of $\calQ_{\upgamma}$, where $\upgamma$ is any element of $\calC^{\sigma}_k(\frak a,\frakb)$.
For a relative homotopy class $z$ of the path $\boldsymbol{\pi}\circ \check \upgamma$, we write $\gr_z([\fraka],[\frakb])$.
Here, $[\fraka]$ and $[\frakb]$ are the gauge orbits of the $\fraka$ and $\frakb$, respectively.
\end{defn}
It follows from the spectral flow interpretation that relative grading is additive: $\gr(\fraka,\frakc) = \gr(\fraka,\frakb)+\gr(\frakb,\frakc)$.
For a closed loop $z_u$ based at $[\fraka]$ in $\calB^{\sigma}_k(\check Y,\uptau)$, where $u: \check Y \to S^1$ is a real gauge transformation, the proof of \cite[Lemma~8.16]{ljk2022} shows that
\[
	\gr_{z_u}([\fraka],[\fraka]) =
	\frac{1}{2}([u]\cup c_1(S))[\check Y].
\]
The formula differs from \cite[Lemma~14.4.6]{KMbook2007} by a factor of $(1/2)$.
In particular, the grading $\gr_z([\fraka],[\frakb])$ does not depend on the homotopy class $z$ if $(\fraks,\uptau)$ is torsion.

The moduli spaces of trajectories $M([\fraka],[\frakb])$, for some $([\fraka],[\frakb])$, are never transversely cut out in the usual sense.
The failure is due to the presence of \emph{boundary-obstructed cases}, where the operator $\calD\frakF^{\tau}_{\frakq}$ can not be made surjective.
To this end, let us modify the definition of regular moduli space as follows.

A reducible critical point $\fraka \in \calC^{\sigma}_k(Y,\uptau)$ is \emph{boundary-stable} if $\Lambda_{\frakq}(\fraka)>0$ and \emph{boundary-unstable} if $\Lambda_{\frakq} < 0$.
The operator $\calQ_{\upgamma}$ has ``boundary'' and ``normal'' parts
\[
\calQ_{\upgamma} = \calQ_{\upgamma}^{\del}\oplus \calQ_{\upgamma}^{\nu},
\]
where the former $\calQ^{\del}_{\upgamma} = (\calD_{\upgamma}\frakF_{\frakq}^{\tau})^{\del} \oplus \mathbf{d}_{\upgamma}^{\tau,\dag}$ acts on the domain and codomain where the involution $\mathbf{i}$ is trivial. 
The latter is defined as
\[
	\calQ_{\upgamma}^{\nu}:L^2_k(\reals;i\reals) \to L^2_{k-1}(\reals;i\reals), 
	\quad
	\calQ^{\nu}_{\upgamma}(s) = ds/dt + \Lambda_{\frakq}(\check\upgamma)s.
\]
A moduli space $M([\fraka],[\frakb])$ is \emph{boundary-obstructed} if $[\fraka]$ is boundary-stable and $[\frakb]$ is boundary-unstable.
In the boundary-obstructed case, the cokernel of $\calQ_{\upgamma}^{\nu}$ is $1$-dimensional therefore the cokernel of $\calD_{\upgamma}\frakF^{\tau}$ is at least $1$-dimensional.
\begin{defn}
	Let $[\upgamma]$ be a solution in $M_z([\fraka],[\frakb])$. 
	If the moduli space is not boundary-obstructed, then $\upgamma$ is \emph{regular} if $\calQ_{\upgamma}$ is surjective.
	If the moduli space is boundary-obstructed, then $\upgamma$ is \emph{regular} if $\calQ^{\del}_{\upgamma}$ is surjective.
	The space $M_z([\fraka],[\frakb])$ is \emph{regular} if all of its elements are regular.
\end{defn}
There are four possibilities of regular $M_z([\mathfrak a],[\mathfrak b])$ according to the type of endpoints.
\begin{prop}
\label{prop:class_regular_mod_space}
	Suppose $M_z([\mathfrak a],[\mathfrak b])$ is regular, and let $d = \gr_z([\mathfrak a],[\mathfrak b])$. 
	Then $M_z([\mathfrak a],[\mathfrak b])$ is
	\begin{itemize} 
		\item a smooth $d$-manifold containing only  irreducible solutions if either $\mathfrak a$ or $\mathfrak b$ is irreducible;
		\item a smooth $d$-manifold with boundary if $\mathfrak a$, $\mathfrak b$ are respectively boundary-unstable and -stable (in this case, the boundary comprises reducible elements);
		\item a smooth $d$-manifold containing only  reducibles if $\mathfrak a,\mathfrak b$ are either both boundary-stable or both boundary-unstable;
		\item a smooth $(d+1)$-manifold containing only reducibles, in the boundary-obstructed case. 
	\end{itemize}
\end{prop}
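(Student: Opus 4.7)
The plan is to realize $M_z([\fraka],[\frakb])$ as the zero locus of the Seiberg--Witten section $\frakF^{\tau}_{\frakq}$ inside a Coulomb slice of the Banach manifold $\tilde{\calB}^{\tau}_k(\fraka,\frakb)$, and to feed the Fredholm operator $\calQ_{\upgamma}$ from \eqref{eqn:Q_operator} into the implicit function theorem. Since the analytic framework --- slices, Fredholm property, tameness of $\frakq$ --- has already been set up in parallel with \cite[Chapter~14]{KMbook2007} and its real analogue in \cite[Section~8]{ljk2022}, the essential remaining work is the case-by-case analysis of how the reducible stratum $\{s=0\}$ interacts with each type of trajectory.

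First I would dispense with the irreducible-endpoint case. Since $[\fraka]$ or $[\frakb]$ is irreducible, any $[\upgamma]\in M_z([\fraka],[\frakb])$ has $s(t)>0$ near the irreducible end; by the unique continuation property for Seiberg--Witten solutions (the bifold analogue of \cite[Section~10.8]{KMbook2007}, whose proof uses only local elliptic estimates and is therefore insensitive to the orbifold structure), $s(t)>0$ for all $t$. The full operator $\calQ_{\upgamma}$ is then surjective by the regularity hypothesis, so the implicit function theorem supplies a chart of dimension $d=\gr_z([\fraka],[\frakb])$, and these charts patch to a smooth $d$-manifold of irreducibles.

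Next I would treat the two same-type reducible cases. When $\fraka,\frakb$ are both boundary-stable (resp.\ both boundary-unstable), the normal operator
\[
\calQ^{\nu}_{\upgamma}s = ds/dt + \Lambda_{\frakq}(\check\upgamma)s
\]
has positive (resp.\ negative) asymptotic weights at both ends, so a standard ODE computation shows it is an $L^2$-isomorphism. Hence no solution with $s\not\equiv 0$ can be asymptotic to these endpoints, every element of $M_z$ is reducible, and regularity of $\calQ^{\del}_{\upgamma}$ together with the implicit function theorem applied inside the reducible stratum produces a smooth $d$-manifold. In the mixed case where $\fraka$ is boundary-unstable and $\frakb$ is boundary-stable, $\calQ^{\nu}_{\upgamma}$ acquires a one-dimensional kernel and trivial cokernel; the constraint $s(t)\geq 0$ then cuts out a half-space in the local linear model, producing a smooth $d$-manifold with boundary whose boundary stratum consists of the reducible trajectories $s\equiv 0$.

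The main obstacle, as expected, is the boundary-obstructed case ($\fraka$ boundary-stable, $\frakb$ boundary-unstable). Here the asymptotic weights of $\Lambda_{\frakq}$ are arranged so that $\calQ^{\nu}_{\upgamma}$ has trivial kernel but a one-dimensional cokernel, and the full operator $\calQ_{\upgamma}$ is therefore never surjective. All solutions are forced to be reducible, and regularity must be weakened to surjectivity of the boundary part $\calQ^{\del}_{\upgamma}$ alone. Under this weakened condition, the implicit function theorem applied inside the reducible stratum yields a smooth manifold of dimension $\ind(\calQ^{\del}_{\upgamma})=\ind(\calQ_{\upgamma})-\ind(\calQ^{\nu}_{\upgamma})=d-(-1)=d+1$. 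The delicate point to verify is that the bifold extension of $\calQ^{\nu}$ retains the same scalar ODE structure, since $\calQ^{\nu}$ depends only on the pointwise quantity $\Lambda_{\frakq}$ and not on the orbifold charts; once that is checked, the remainder of the argument follows verbatim from \cite[Section~14.5]{KMbook2007} and the real Fredholm analysis in \cite[Section~8]{ljk2022}.
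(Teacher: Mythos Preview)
Your proposal is correct and follows exactly the approach of \cite[Proposition~14.5.7]{KMbook2007}, which is precisely what the paper cites as its proof. You have reconstructed in detail the case-by-case analysis of the normal operator $\calQ^{\nu}_{\upgamma}$ that underlies that reference, so there is nothing to add.
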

\begin{proof}
	The proof is the same as the proof of \cite[Proposition~14.5.7]{KMbook2007}.
\end{proof}
\begin{thm}
\label{thm:transv_cylinder}
	Let $\calP$ be a large Banach space of tame perturbations.
	Then there is a $\frakq \in \calP$ such that all critical points $\frakq \in \calC^{\sigma}_k(\check Y,\uptau)$ are non-degenerate.
	Moreover,
		for each pair of critical points $\fraka,\frakb$, and each relative homotopy class $z$, the moduli space $M_z([\fraka],[\frakb])$ is regular.
\end{thm}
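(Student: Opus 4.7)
The plan is to combine Theorem~\ref{thm:transv_crit} with a standard parametrized Sard--Smale argument over the large Banach space $\calP$, making only cosmetic modifications to the scheme carried out in \cite[Chapter~15]{KMbook2007}, but taking care at the two places where our setup differs: the $\uptau$-(anti)-invariance constraint on configurations and the bifold singularities. First I would invoke Theorem~\ref{thm:transv_crit} to fix a residual subset $\calP_0\subset\calP$ for which all zeros of $(\grad\pertL)^{\sigma}$ are non-degenerate, and then refine it inside $\calP_0$ to achieve trajectory regularity.

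For each ordered pair of critical points $[\fraka],[\frakb]$ and each relative homotopy class $z$, form the universal moduli space
\[
\boldsymbol{M}_z([\fraka],[\frakb]) = \left\{([\upgamma],\frakq) \in \calB^{\tau}_k(\fraka,\frakb)\times\calP_0 \ \middle|\ \frakF^{\tau}_{\frakq}(\upgamma)=0\right\}
\]
in the non-boundary-obstructed case, and its boundary analogue using $\calQ^{\del}_{\upgamma}$ in the boundary-obstructed case. The linearization at $([\upgamma],\frakq)$ is the sum of the Fredholm operator $\calQ_{\upgamma}$ from \eqref{eqn:Q_operator} and the derivative $\calD\mathfrak{O}(\cdot)(\upgamma)$ in the perturbation direction. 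Fredholmness of $\calQ_{\upgamma}$ and the G\r{a}rding inequality are already available in the $\uptau$-invariant bifold setting, and the constant-index property is preserved within a fixed homotopy class $z$, so the standard implicit function theorem setup applies once surjectivity is established.

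The main obstacle is therefore proving surjectivity of the universal linearization, and this is where the bifold $\uptau$-equivariant features enter. The argument is the usual one: suppose a nonzero $L^2$-element $(V,\lambda)$ of the cokernel; by elliptic regularity and the formal $L^2$-adjoint equation, $(V,\lambda)$ is smooth and $\uptau$-invariant on the open set where $\upgamma$ is irreducible. Now one exploits the freedom in $\frakq$: using a cutoff in the $t$-direction together with a cylinder-function perturbation built from coclosed $(-\upiota^*)$-invariant forms $c_{\nu}$ and sections $\Upsilon_{\mu}$ of $\bbS$ supplied by Proposition~\ref{prop:embedding} and Corollary~\ref{cor:embed_tangent}, one can construct, for any prescribed smooth tangent vector at a generic time slice of the irreducible part of $\upgamma$, a perturbation whose infinitesimal variation pairs nontrivially with $(V,\lambda)$. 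Unique continuation for the (extended) Seiberg--Witten operator on $\check Z$, which transfers from the smooth case to the bifold case since the ellipticity and the Aronszajn-type argument of \cite[Sections~7.1 and~10.8]{KMbook2007} are both local outside the singular set (and the singular set is nowhere dense), then forces $(V,\lambda)\equiv 0$. In the reducible and boundary-obstructed cases, the normal/boundary splitting $\calQ=\calQ^{\del}\oplus\calQ^{\nu}$ lets one run the same argument on $\calQ^{\del}$ alone, and the $1$-dimensional excess cokernel of $\calQ^{\nu}$ is absorbed into the definition of regularity, yielding a smooth $(d+1)$-manifold as in Proposition~\ref{prop:class_regular_mod_space}.

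Having established that each $\boldsymbol{M}_z([\fraka],[\frakb])$ is a Banach manifold (resp.\ a Banach manifold with boundary in the reducible case), the projection to $\calP_0$ is Fredholm of the index $d$ (or $d+1$), and Sard--Smale gives a residual set $\calP_z([\fraka],[\frakb])\subset\calP_0$ of regular perturbations. Because the set of critical points is finite and the homotopy classes $z$ form a countable collection, intersecting over all choices yields a residual, hence nonempty, subset of $\calP$ on which the conclusion holds. The final check is that the Banach-manifold chart pieces glue compatibly across the four cases of Proposition~\ref{prop:class_regular_mod_space}; this is purely formal and identical to \cite[Proposition~15.1.3 and Theorem~15.1.1]{KMbook2007}.
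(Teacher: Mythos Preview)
Your approach is essentially the same as the paper's: both defer to the scheme of \cite[Chapter~15]{KMbook2007}, citing Proposition~\ref{prop:embedding}, Corollary~\ref{cor:embed_tangent}, and unique continuation as the key inputs that survive in the $\uptau$-invariant bifold setting.

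One point deserves tightening. Your universal moduli space $\boldsymbol{M}_z([\fraka],[\frakb])$ is parametrized by $\frakq\in\calP_0$, but the critical points $\fraka,\frakb$ themselves depend on $\frakq$, so the fibre $\calB^{\tau}_k(\fraka,\frakb)$ is not constant over $\calP_0$. The standard remedy, which the paper alludes to with ``additional assumptions on the choices of cylinder function as in \cite[Page~193]{KMbook2007}'', is to first fix a single $\frakq_0\in\calP_0$ and then vary only within perturbations of the form $\frakq_0+\frakq'$ where $\frakq'$ comes from cylinder functions vanishing on the (finite) critical set; the density condition imposed in the construction of $\calP$ guarantees enough such $\frakq'$ remain to run the surjectivity argument. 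With that adjustment your outline is complete.
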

\begin{proof}
The theorem can be proved by the same method in \cite[Section~15]{KMbook2007}, appealing to the same kind of unique continuation results in  \cite[Section~11]{KMbook2007}.
The ingredients include Proposition~\ref{prop:embedding}, Corollary~\ref{cor:embed_tangent}, and additional assumptions on the choices of cylinder function as in \cite[Page~193]{KMbook2007}.
\end{proof}

\subsection{Compactification}
\label{subsec:compactification}
Let $\fraka,\frakb$ be two critical points and $z$ be the corresponding homotopy class of paths joining $[\fraka]$ to $[\frakb]$.
A trajectory $[\upgamma]$ in $M_z([\fraka],[\frakb])$ is \emph{non-trivial} if it is not invariant under translation on $\check Z = \reals \times \check Y$.
An \emph{unparametrized} trajectory is an equivalence class of nontrivial trajectories in $M_z([\fraka],[\frakb])$.
We denote by $\check M_z([\fraka],[\frakb])$ the space of unparametrized trajectories.
To compactify, we add broken trajectories.
\begin{defn}
	An \emph{unparametrized broken trajectory} joining $[\mathfrak a]$ to $[\mathfrak b]$ is given by the following data:
	\begin{itemize}
		\item (\emph{number of components}) an integer $n \ge 0$ of components;
		\item (\emph{the restpoints}) an $(n+1)$-tuple of critical points $[\mathfrak a_0],\dots,[\mathfrak a_n]$ with $[\mathfrak a_0] = \mathfrak a$ and $[\mathfrak a_n] = [\mathfrak b]$ ;
		\item (\emph{the $i$th component of the broken trajectory}) an unparametrized trajectory $[\check{\upgamma}_i]$ in $\check M_{z_i}([\mathfrak a_{i-1}],[\mathfrak a_i])$, for each $i$ with $1 \le i \le n$.
	\end{itemize}
	The \emph{homotopy class} of the broken trajectory is the class of the path obtained by adjoining representatives of the classes $z_i$, or the constant path at $[\mathfrak a]$ if $n=0$.
	We denote by $\check M^+_z([\mathfrak a],[\mathfrak b])$ the space of unparametrized broken trajectories in the homotopy class $z$, and denote a typical element by $[\boldsymbol{\check{\upgamma}}] = ([\check{\upgamma}_1],\dots,[\check{\upgamma}_n])$. 
	\hfill $\diamond$
\end{defn}
Topologizing the space of unparametrized broken trajectories as in \cite[Page~276]{KMbook2007}, we obtain the following counterpart of \cite[Theorem~16.1.3]{KMbook2007}.
\begin{thm}
\label{thm:compactness_cylinder}
	The space $\check M_z^+([\mathfrak a],[\mathfrak b])$ of unparametrized broken trajectories is compact.
\end{thm}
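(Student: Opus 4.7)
The plan is to follow the strategy of Kronheimer--Mrowka \cite[Chapter~16]{KMbook2007}, whose proof rests on (a) uniform a priori bounds for Seiberg--Witten trajectories on finite cylinders, (b) local compactness modulo gauge, and (c) an analysis of the ends that identifies subsequential limits as broken trajectories. All three ingredients go through essentially verbatim in our setting provided one inserts the adjective ``bifold/real'' and invokes the $\uptau$-equivariant analogues already developed in \cite{ljk2022} and in the preceding sections. The perturbation $\frakq$ is $k$-tame, so the machinery of \cite[Chapter~10--11]{KMbook2007} applies.

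First I would establish the \emph{energy bound}. For any $[\upgamma] \in \check M_z([\fraka],[\frakb])$, the analytic energy of $\upgamma$ on any subinterval is controlled by the drop of the perturbed CSD functional $\pertL$ between the corresponding slices, which in turn is uniformly bounded by $\pertL(\fraka) - \pertL(\frakb)$ plus tame perturbation contributions. This follows from the usual integration-by-parts computation, now performed over the bifold cylinder $\check Z = \reals \times \check Y$; the argument is insensitive to the bifold singularities because the orbifold Stokes theorem is available and because $\uptau$-invariance is preserved by the flow. Next I would derive pointwise bounds on the spinor via a Weitzenböck argument. The Weitzenböck formula on the bifold has the same shape as on a smooth manifold (the curvature term now involves the bifold scalar curvature, which is locally $L^p$ for suitable $p$), so a maximum-principle estimate yields a uniform $C^0$-bound on $|\Phi|$, and by bootstrapping with the Coulomb gauge, bounds in $L^2_k$ on compact subsets of $\check Z$.

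Second I would carry out \emph{local compactness}. Given a sequence $[\upgamma_n]$, I choose representatives in a local Coulomb--Neumann slice around a reference configuration $\upgamma_0$; the Sobolev bounds just established, combined with the elliptic regularity for the extended Hessian (the Coulomb-gauge-fixed linearized Seiberg--Witten operator), yield $L^2_k$-bounds on every compact sub-cylinder. Rellich compactness then extracts a subsequence converging weakly in $L^2_k$ and strongly in $L^2_{k-1}$ on compacta to a solution $\upgamma_\infty \in \calC^{\tau}_{k,\loc}(\check Z,\uptau)$ of the perturbed equations. The real structure $\uptau$ is preserved in the limit because the subspace of $\uptau$-invariant configurations is weakly closed.

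Third I would analyze the ends to produce the broken trajectory. Translating $\upgamma_n$ so that $\pertL$ drops by a fixed threshold before time $0$, one of three things happens as $n \to \infty$: (i) the translates converge to a non-trivial trajectory; (ii) they drift off to $+\infty$; or (iii) they drift off to $-\infty$. In case (i), after passing to the limit, one subtracts off the energy of the emerging component and repeats, producing an a priori bounded number of components by finiteness of the total energy and nondegeneracy of critical points (Theorem~\ref{thm:transv_crit}). The restpoints of the resulting broken trajectory must be critical points of $(\grad \pertL)^{\sigma}$ by the standard asymptotic-convergence argument, which uses the exponential decay of the extended Hessian near a nondegenerate critical point; this decay was verified in the real bifold context via the spectral analysis behind Theorem~\ref{thm:transv_cylinder}. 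Matching the homotopy classes $z_i$ so that their concatenation is $z$ is automatic since concatenation of paths respects the relative grading and isotropy data.

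\textbf{Main obstacle.} The delicate step is ensuring that the pointwise spinor bound and the Coulomb-slice elliptic regularity both survive uniformly across bifold singularities, in particular across $V_4$-points on the foam direction at infinity (should any arise in the pulled-back structure on $\check Z$). The local uniformizing charts convert the problem to a smooth equivariant one, so the estimates of \cite[Chapter~5]{KMbook2007} apply, but one has to check that the constants can be chosen uniform in the sequence --- this follows because the bifold metric and the reference connection are fixed, and the $\uptau$-invariance constrains the spinors to lie in a closed subspace on which the same Weitzenböck inequality holds. With this technicality verified, the rest of the argument is the verbatim $\uptau$-equivariant, bifold version of \cite[Theorem~16.1.3]{KMbook2007}.
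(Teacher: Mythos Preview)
Your proposal is correct and follows essentially the same approach as the paper: the paper's own proof simply invokes \cite[Sections~8, 10.7, 16]{KMbook2007}, observing that $\uptau$-invariant trajectories are ordinary monopole trajectories (with bifold singularities) by construction of the perturbation space, so the book's compactness arguments apply verbatim. Your expanded outline of the energy bound, local compactness, and end analysis is exactly what underlies that citation; note however that your ``main obstacle'' concerning $V_4$-points is vacuous here, since the cylinder $\check Z = \reals \times \check Y$ over a $3$-bifold $\check Y$ (which by hypothesis has no $V_4$-points) has only $\bbZ_2$-singularities.
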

\begin{proof}
	The analytic inputs needed for the compactness theorems come from \cite[Section~8,10.7,16]{KMbook2007}. 
	By our choice of the large Banach space of tame perturbations, the $\uptau$-invariant trajectories are naturally trajectories in ordinary monopole Floer homology except with bifold singularities.
	The arguments and results in the book apply equally well to bifold Seiberg-Witten configurations.
\end{proof}
\begin{defn}
	A space $N^d$ is a \emph{$d$-dimensional space stratified by manifolds} if there is a nested sequence of closed subsets
	\begin{equation*}
		N^d \supset N^{d-1} \supset \cdots \supset N^0 \supset N^{-1} = \emptyset
	\end{equation*}	
	such that $N^d \ne N^{d-1}$ and each $N^e \setminus N^{e-1}$ (for $0 \le e \le d)$ is either empty or homeomorphic to a manifold of dimension $e$.
	The difference $N^e \setminus N^{e-1}$ is the \emph{$e$-dimensional stratum}.
	The term \emph{stratum} will also stand for any union of path components of $N^e \setminus N^{e-1}$.
\end{defn}
\begin{prop}
	Suppose that $M_z([\mathfrak a],[\mathfrak b])$ is nonempty and of dimension $d$.
	Then the compactification $\check M^+_z([\mathfrak a],[\mathfrak b])$ is a $(d-1)$-dimensional space stratified by manifolds.
	If $M_z([\mathfrak a],[\mathfrak b])$ contains irreducible trajectories, then the $(d-1)$-dimensional stratum of $\check M^+_z([\mathfrak a],[\mathfrak b])$ consists of the irreducible part of $\check M^+_z([\mathfrak a],[\mathfrak b])$.
\end{prop}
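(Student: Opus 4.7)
The plan is to mirror the stratification argument of Kronheimer--Mrowka \cite[Chapter~16]{KMbook2007}, making only the cosmetic bifold-and-involution substitutions already employed throughout Sections~\ref{sec:bifoldspinc} and \ref{sec:analysis_of_sw_traj}. The strategy splits into (a) describing the strata set-theoretically, (b) counting their dimensions, and (c) verifying they fit into a nested sequence of closed subsets.

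First I would index the strata of $\check M^+_z([\fraka],[\frakb])$ by the combinatorial data of a broken trajectory: an integer $n \ge 1$, an $(n+1)$-tuple of critical points $[\fraka] = [\fraka_0], [\fraka_1], \dots, [\fraka_n] = [\frakb]$, and a decomposition $z = z_1 * \dots * z_n$ of the homotopy class. The corresponding stratum is the product
\[
\check M_{z_1}([\fraka_0],[\fraka_1]) \times \dots \times \check M_{z_n}([\fraka_{n-1}],[\fraka_n]),
\]
where each factor is the quotient of a regular $\reals$-translation action on the $d_i$-dimensional factor $M_{z_i}$ described by Proposition~\ref{prop:class_regular_mod_space}. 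Using the additivity $\sum \gr_{z_i}([\fraka_{i-1}],[\fraka_i]) = d$ of relative gradings established earlier in this section, I would compute the total dimension of each stratum.

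For the dimension count, a non-boundary-obstructed factor of grading $d_i$ contributes unparametrized dimension $d_i - 1$, while a boundary-obstructed factor (smooth parametrized dimension $d_i + 1$) contributes $d_i$. The decisive combinatorial input is that a boundary-obstructed piece begins boundary-stable and ends boundary-unstable, so two consecutive pieces cannot both be boundary-obstructed; hence the number $k$ of obstructed pieces in an $n$-component broken trajectory satisfies $k \le \lceil n/2 \rceil$. Summing, the corresponding stratum has dimension $d - n + k \le d - \lceil n/2 \rceil$, which is at most $d-1$, with equality forced when $n=1$ and $k=0$, i.e.\ on the unbroken, non-boundary-obstructed stratum. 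If $M_z([\fraka],[\frakb])$ contains irreducible trajectories, Proposition~\ref{prop:class_regular_mod_space} identifies this top stratum with $M_z([\fraka],[\frakb])^{\text{irr}}/\reals$, a smooth manifold of dimension $d-1$.

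Finally I would verify the stratification axioms by setting $N^e$ to be the union of strata of dimension $\le e$. Closedness of $N^e$ follows from Theorem~\ref{thm:compactness_cylinder} together with the observation that under convergence in the topology of $\check M^+_z$, the number of components can only increase at the limit, strictly lowering the dimension by the count above. The main obstacle I anticipate is establishing that each stratum is homeomorphic, in the subspace topology, to a manifold of the stated dimension; this requires local gluing/smoothing models near broken points showing that a neighborhood of a broken trajectory $(\check\upgamma_1,\dots,\check\upgamma_n)$ in $\check M^+_z$ is homeomorphic to the product of small pieces of the factors with a half-open interval $[0,\epsilon)^{n-1}$ of gluing parameters. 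The gluing construction from \cite[Sections~18--19]{KMbook2007} carries over to the present $\uptau$-equivariant bifold setting without change, since by the remarks in the proof of Theorem~\ref{thm:compactness_cylinder} and by Theorem~\ref{thm:transv_cylinder} the linearized operator $\calQ_{\upgamma}$ of \eqref{eqn:Q_operator} is Fredholm and its cokernel obeys the same boundary-stratified description as in the ordinary monopole Floer setting.
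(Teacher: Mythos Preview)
Your proposal is correct and follows essentially the same route as the paper, which simply invokes \cite[Proposition~16.5.2]{KMbook2007} with Proposition~\ref{prop:class_regular_mod_space} substituted for \cite[Proposition~14.5.7]{KMbook2007}; you have spelled out the dimension count and stratification argument that the paper's one-line proof leaves implicit. One small arithmetic slip: from $k \le \lceil n/2\rceil$ one obtains $d-n+k \le d-\lfloor n/2\rfloor$ rather than $d-\lceil n/2\rceil$, and the identity $\sum_i \gr_{z_i} = d$ should read $\sum_i \gr_{z_i} = \gr_z$ (which equals $d$ precisely when $M_z$ is not boundary-obstructed, i.e.\ in the case relevant to the second assertion).
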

\begin{proof}
	The proof is identical to \cite[Proposition~16.5.2]{KMbook2007}, replacing the use of \cite[Proposition~14.5.7]{KMbook2007} by Proposition~\ref{prop:class_regular_mod_space}
\end{proof}
Consider a stratum of the form
\begin{equation}
\label{eqn:productmodspace}
	\check M_{z_1}([\mathfrak a_0],[\mathfrak a_1]) \times \cdots \times
	\check M_{z_{\ell}}([\mathfrak a_{\ell-1}],[\mathfrak a_{\ell}]).
\end{equation}
Denote $d_i$ the dimension of $M_{z_i}([\mathfrak a_{i-1}],[\mathfrak a_i])$, and $\epsilon_i$ such that $d_i - \epsilon_i = \gr_{z_i}([\mathfrak a_{i-1}],[\mathfrak a_i])$.
The vector $(d_i-\epsilon_i)$ will be called the \emph{grading vector}, and $(\epsilon_i)$ will be called the \emph{obstruction vector}.
\begin{prop}
	Let $M_z([\mathfrak a],[\mathfrak b])$ be a $d$-dimensional moduli space that contains irreducibles.
In other words, $\check M_z^+([\mathfrak a],[\mathfrak b])$ is a compact $(d-1)$-dimensional space stratified by manifolds, whose top stratum is the irreducible part of $\check M_z([\mathfrak a],[\mathfrak b])$.
Then the $(d-2)$-dimensional stratum of $\check M_z^+$ comprise three types of strata:
	\begin{itemize}
	\itemindent=-13pt
		\item the top stratum is of the product form~\eqref{eqn:productmodspace} with grading vector $(d_1,d_2)$ and obstruction vector $(0,0)$;
		\item the top stratum is of the form~\eqref{eqn:productmodspace} with grading vector $(d_1,d_2-1,d_3)$ and obstruction vector $(0,1,0)$;
		\item the intersection of $\check M_z([\mathfrak a],[\mathfrak b])$ with the reducibles, if there are both reducibles and irreducibles.
	\end{itemize}
	The third case occurs only when $[\mathfrak a]$ is boundary-unstable and $[\mathfrak b]$ is boundary-stable.
 \end{prop}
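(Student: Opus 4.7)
The proof mirrors the argument in \cite[Section~16.5]{KMbook2007}: it is a dimension count against the obstruction vector, followed by a structural argument on the types of critical points that appear at intermediate breaking points. All ingredients have real bifold analogues established in Sections~\ref{sec:bifoldspinc}--\ref{sec:analysis_of_sw_traj}, so the proof proceeds by direct transcription.

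First, I would consider an $\ell$-component stratum of the form~\eqref{eqn:productmodspace}. By Proposition~\ref{prop:class_regular_mod_space}, each factor $M_{z_i}([\fraka_{i-1}],[\fraka_i])$ has parametrized dimension $d_i = \gr_{z_i} + \epsilon_i$, with $\epsilon_i \in \{0,1\}$ equal to $1$ precisely when the factor is boundary-obstructed. Since $M_z([\fraka],[\frakb])$ contains irreducibles it is not itself boundary-obstructed, so $d = \gr_z$. Additivity of the relative grading (via the spectral flow interpretation recalled before Proposition~\ref{prop:class_regular_mod_space}) gives $\sum_i \gr_{z_i} = d$, whence
\[
\dim \left( \check M_{z_1} \times \cdots \times \check M_{z_\ell} \right) = \sum_i (d_i - 1) = d + \sum_i \epsilon_i - \ell.
\]
Setting this equal to $d - 2$ forces the identity $\sum_i \epsilon_i = \ell - 2$.

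Next I would carry out a case analysis on $\ell$. For $\ell = 2$, $\sum \epsilon_i = 0$, giving the first listed type. For $\ell = 3$, exactly one factor is boundary-obstructed; I would rule out obstruction vectors $(1,0,0)$ and $(0,0,1)$ by noting that the endpoint-type restrictions forced by Proposition~\ref{prop:class_regular_mod_space}, combined with the hypothesis that $M_z([\fraka],[\frakb])$ contains irreducibles, are incompatible with the obstructed factor sitting at the outer position. For $\ell \geq 4$, $\sum \epsilon_i \geq 2$, so at least two factors are boundary-obstructed; between any two such factors, at least one intervening factor must connect a boundary-unstable critical point to a boundary-stable one, but by Proposition~\ref{prop:class_regular_mod_space} the only regular, non-obstructed factors doing so must contain irreducibles and therefore contribute nonnegative unparametrized dimension, and an accounting of the contributions contradicts the required total. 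Finally, the third type arises when $[\fraka]$ is boundary-unstable and $[\frakb]$ is boundary-stable: in that situation $M_z([\fraka],[\frakb])$ is a $d$-manifold with reducible boundary by Proposition~\ref{prop:class_regular_mod_space}, and $\check M_z$ inherits a $(d-2)$-dimensional stratum from this boundary.

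The main obstacle is the structural argument ruling out $\ell \geq 4$ and the off-middle cases for $\ell = 3$. This rests on the compatibility principle that a boundary-obstructed component must be preceded and followed by components containing irreducibles, since two consecutive boundary-obstructed components would require an intermediate reducible trajectory joining a boundary-unstable critical point to a boundary-stable one, which does not exist by the classification in Proposition~\ref{prop:class_regular_mod_space}. Once this combinatorial constraint is verified---and it transfers verbatim from the classical framework because Proposition~\ref{prop:class_regular_mod_space} has the same shape as its non-real, non-bifold counterpart---the remaining bookkeeping is identical to the Kronheimer--Mrowka argument.
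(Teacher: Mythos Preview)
Your approach matches the paper's, which simply cites \cite[Proposition~16.5.5]{KMbook2007} as a corollary of the proof of \cite[Proposition~16.5.2]{KMbook2007}; your dimension count $\sum_i \epsilon_i = \ell - 2$ and the cases $\ell = 2,3$ are correct.

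However, the justification in your final paragraph for why two consecutive boundary-obstructed factors are impossible is wrong. There is no ``intermediate reducible trajectory'' between consecutive factors---they share a single rest point---and in any case a reducible trajectory from a boundary-unstable to a boundary-stable critical point \emph{does} exist (it is precisely the boundary in the second case of Proposition~\ref{prop:class_regular_mod_space}). The correct reason is simply that the shared rest point $[\fraka_i]$ would have to be simultaneously boundary-unstable (as the endpoint of the $i$th obstructed factor) and boundary-stable (as the start of the $(i{+}1)$st), which is impossible. Combined with the observation that $\epsilon_1 = \epsilon_\ell = 0$ (else $[\fraka]$ is boundary-stable or $[\frakb]$ is boundary-unstable, contradicting the existence of irreducibles in $M_z([\fraka],[\frakb])$), this forces all $\ell-2$ middle factors to be obstructed when $\ell \geq 4$, whence two of them are consecutive---contradiction. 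This direct combinatorial argument replaces your vaguer ``accounting of the contributions'', and also shows that your stated ``compatibility principle'' (that an obstructed factor must be flanked by factors containing irreducibles) is both unnecessary and, as stated, not quite true: the preceding factor could in principle be a reducible-only stable-to-stable trajectory.
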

 \begin{proof}
 	This corresponds to \cite[Proposition~16.5.5]{KMbook2007}, whose proof is essentially a corollary of the proof of \cite[Proposition~16.5.2]{KMbook2007}.
 \end{proof}
\begin{prop}
	Suppose $M^{\red}_z([\mathfrak a],[\mathfrak b])$ is non-empty and of dimension $d$.
	Then the space of unparametrized, broken reducible trajectories $\check M^{\red +}_z([\mathfrak a],[\mathfrak b])$ is a compact $(d-1)$-dimensional space stratified by manifolds.
	The top stratum comprise precisely $\check M^{\red}_z([\mathfrak a],[\mathfrak b])$.
	The $(d-\ell)$-dimensional stratum are formed the spaces of unparametrized broken trajectories with $\ell$-factors:
	\begin{equation*}
		\check M_z^{\red}([\mathfrak a_0],[\mathfrak a_1]) \times
		\cdots \times 
		\check M_z^{\red}([\mathfrak a_{\ell-1}],[\mathfrak a_{\ell}]). 
	\end{equation*}
\end{prop}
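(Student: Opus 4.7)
The plan is to adapt the compactness and stratification arguments already used for the (possibly irreducible) moduli spaces, restricting attention throughout to the reducible locus. The key observation is that the reducible subset of $\calC^{\tau}_{k,loc}(\check Z,\uptau)$, defined by $s(t) = 0$ for all $t$, is closed and gauge-invariant, and on this locus the Seiberg-Witten equations decouple into an Abelian curvature equation for $A$ together with a linear family of eigenvalue problems for $\phi$. Consequently reducibility is preserved under the convergence considered in Theorem~\ref{thm:compactness_cylinder}.

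First I would establish compactness of $\check M^{\red +}_z([\fraka],[\frakb])$. Given a sequence $[\upgamma_n]$ of unparametrized reducible trajectories, apply the general compactness result Theorem~\ref{thm:compactness_cylinder} to extract a convergent subsequence with limit a broken trajectory $([\check{\upgamma}_1],\dots,[\check{\upgamma}_\ell])$ joining $[\fraka] = [\fraka_0]$ to $[\frakb] = [\fraka_\ell]$. Since the blown-up reducible locus is closed, each component $[\check{\upgamma}_i]$ lies in $\check M^{\red}_{z_i}([\fraka_{i-1}],[\fraka_i])$; in particular, each intermediate restpoint $[\fraka_i]$ is automatically reducible. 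This shows $\check M^{\red +}_z([\fraka],[\frakb])$ is closed inside $\check M^+_z([\fraka],[\frakb])$, hence compact.

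Second I would identify the stratification. By the classification in Proposition~\ref{prop:class_regular_mod_space} restricted to the reducible locus, each regular reducible parametrized moduli space $M^{\red}_{z_i}([\fraka_{i-1}],[\fraka_i])$ is a smooth manifold whose dimension equals $\gr_{z_i}([\fraka_{i-1}],[\fraka_i])$, increased by one in the boundary-obstructed case where $[\fraka_{i-1}]$ is boundary-unstable and $[\fraka_i]$ is boundary-stable. Additivity of the relative grading, combined with the usual quotient by the free $\reals$-translation on each nontrivial factor, gives the expected dimension $d - \ell$ for the stratum of broken reducible trajectories with $\ell$ factors. Taking $\ell = 1$ recovers that the top stratum equals $\check M^{\red}_z([\fraka],[\frakb])$ and has dimension $d - 1$.

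Finally I would verify that the nested sequence of closed subsets obtained by stratifying according to the number of factors indeed has the properties of a space stratified by manifolds, and that the local structure near a broken trajectory is as stated. This is a verbatim repeat, confined to the reducible stratum, of the gluing and neighbourhood arguments underlying Proposition~\ref{prop:class_regular_mod_space} and the preceding proposition for the full moduli space; the only change is that one works consistently with the restriction of $\calQ_{\upgamma}$ to the $\mathbf{i}$-invariant part, i.e., with $\calQ^{\del}_{\upgamma}$. The main subtle point (and the primary obstacle) is to ensure that the boundary-obstructed reducible moduli spaces, whose expected dimension differs by one from the Fredholm index, are correctly accounted for in the dimension count for each multi-factor stratum; once one tracks the obstruction vector $(\epsilon_i)$ alongside the grading vector as in the preceding proposition, the additivity of dimensions works out and the claimed $(d-\ell)$ holds uniformly.
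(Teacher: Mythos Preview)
Your proposal is correct and follows the same route as the paper, which simply says the proof can be adapted from the non-reducible cases above. Your outline supplies the details the paper omits: closedness of the reducible locus to get compactness, then the dimension count for each stratum.

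One clarification on your final paragraph: the worry about boundary-obstruction is misplaced in the purely reducible setting. The term ``boundary-obstructed'' refers to the failure of surjectivity of the full operator $\calQ_{\upgamma}$ coming from its normal part $\calQ^{\nu}_{\upgamma}$, which acts on the $s$-direction. When you restrict to reducibles ($s \equiv 0$) you are working entirely with $\calQ^{\del}_{\upgamma}$, and regularity is defined as surjectivity of $\calQ^{\del}_{\upgamma}$ in all cases. The index of $\calQ^{\del}$ is exactly $\bar{\gr}_z([\fraka],[\frakb])$, which is additive along concatenations, so each factor $M^{\red}_{z_i}([\fraka_{i-1}],[\fraka_i])$ has dimension $\bar{\gr}_{z_i}([\fraka_{i-1}],[\fraka_i])$ on the nose, and the $\ell$-factor unparametrized stratum has dimension $\sum_i (\bar{\gr}_{z_i} - 1) = d - \ell$ directly. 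No obstruction vector needs to be tracked.
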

\begin{proof}
	The proof can be adapted from the proofs of the non-reducible cases above.
\end{proof}
\subsection{Gluing and boundary stratification}
\label{subsec:gluing}
Section~19 of \cite{KMbook2007} sets up a general framework for gluing trajectories.
This framework was adapted in \cite[Section~10]{ljk2022} to real Seiberg-Witten trajectories. 
The case when the configurations have bifold singularities undergoes no changes.
\begin{defn}
	Let $(Q,q_0)$ be a (pointed) topological space, let $\pi: S \to Q$ be continuous, and let $S_0 \subset \pi^{-1}(q_0)$.
	Then $\pi$ is a \emph{topological submersion} along $S_0$, if for all $s_0 \in S_0$ there is a neighbourhood $U$ of $s_0 \in S$, a neighbourhood $Q'$ of $q_0$ in $Q$, and a homeomorphism $(U \cap S_0) \times Q' \to U$ such that the diagram
\[\begin{tikzcd}
	{(U \cap S_0) \times Q'} && U \\
	{Q'} && {Q'}
	\arrow[Rightarrow, no head, from=2-1, to=2-3]
	\arrow[from=1-1, to=2-1]
	\arrow[from=1-3, to=2-3]
	\arrow[from=1-1, to=1-3]
\end{tikzcd}\]
	commutes.
\end{defn}
\begin{defn}
\label{defn:delta2str}
	Let $N^d$ be a $d$-dimensional space stratified by manifolds.
	Let $M^{d-1} \subset N$ be the union of components of the $(d-1)$-dimensional stratum.
	Then $N$ has a \emph{codimension-$c$ $\delta$-structure} along $M^{d-1}$ if $M^{d-1}$ is smooth and there are additional data:
	\begin{itemize}
		\item an open set $W \subset N$ containing $M^{d-1}$,
		\item an embedding $j:W \to EW$ of $W$ into a topological space $EW$, and
		\item a map
		\begin{equation*}
			\mathbf{S} = (S_1,S_2): EW \to (0,\infty]^{c+1}
		\end{equation*}
	\end{itemize}
	such that:
	\begin{enumerate}
	\itemindent=-7pt
		\item[(i)] the map $\mathbf{S}$ is a topological submersion along $\mathbf S^{-1}(\infty,\infty)$;
		\item[(ii)] the fibre $\mathbf S^{-1}(\infty,\infty)$ is $j(M^{d-1})$;
		\item[(iii)] the subset $j(W) \subset EW$ is the zero set of a map $\delta:EW \to \Pi^c$, where $\Pi^c \subset \reals^{c+1}$ is the hyperplane $\Pi^c = \{\delta \in \reals^{c+1}| \sum \delta_i = 0\}$;
		\item[(iv)] if $e \in EW$ has $S_{i_0}(e) = \infty$ for some $i_0$, then $\delta_{i_0}(e) \le 0$, with equality only if $S_{i}(e) = \infty$ for all $i$;
		\item[(v)] $\delta$ is smooth and transverse to zero on the subset of $EW$ where all $S_i$ are finite. 
	\end{enumerate}
\end{defn}

We state the counterpart of \cite[Theorem~19.5.4]{KMbook2007} about structure of codmension-1 strata in terms of $\delta$-structures.
\begin{thm}
	Suppose the moduli space $M_z([\mathfrak a],[\mathfrak b])$ is $d$-dimensional and contains irreducible trajectories, so that the moduli space $\check M^+_z([\mathfrak a],[\mathfrak b])$ is a $(d-1)$-dimensional space stratified by manifolds.
	Let $M' \subset \check M^+([\mathfrak a],[\mathfrak b])$ be any component of the codimension-1 stratum.
	Then along $M'$, the moduli space $\check M^+([\mathfrak a],[\mathfrak b])$ either is a $C^0$-manifold with boundary, or has a codimension-1 $\delta$-structure.
	The latter only occurs when $M'$ comprises 3-component broken trajectories, with the middle component boundary-obstructed. \qed 
\end{thm}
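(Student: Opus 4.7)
The plan is to analyze the local structure of $\check M^+_z([\mathfrak a],[\mathfrak b])$ in a neighborhood of each point of $M'$ via gluing theory, and to invoke the classification of the codimension-$1$ stratum from the preceding proposition. Since the gluing framework of \cite[Section~19]{KMbook2007} carries over to the real bifold setting with only cosmetic changes (as discussed in Section~\ref{subsec:gluing}), the argument reduces to treating the three possible types of codimension-$1$ strata identified in that proposition.

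First, suppose a component of $M'$ consists of $2$-component broken trajectories $([\check{\upgamma}_1],[\check{\upgamma}_2])$ with obstruction vector $(0,0)$. Then the linearized operators $\calQ_{\upgamma_i}$ are surjective on both components, and the standard pre-gluing plus Newton iteration in a Coulomb slice produces a $1$-parameter family of solutions indexed by a single gluing parameter $T \in (T_0,\infty]$, converging to the broken trajectory as $T \to \infty$. This exhibits a collar of $M'$ inside $\check M^+_z$, so along this piece $\check M^+_z$ is a $C^0$-manifold with boundary $M'$. The remaining possibility, $M' = \check M^{\red}_z([\mathfrak a],[\mathfrak b])$ with $[\mathfrak a]$ boundary-unstable and $[\mathfrak b]$ boundary-stable, is already a $C^0$-manifold with boundary by Proposition~\ref{prop:class_regular_mod_space}, so no further gluing is required.

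The main obstacle is the case of $3$-component broken trajectories $([\check{\upgamma}_1],[\check{\upgamma}_2],[\check{\upgamma}_3])$ with obstruction vector $(0,1,0)$, where the middle component is boundary-obstructed so that $\calQ^{\nu}_{\upgamma_2}$ has a $1$-dimensional cokernel while $\calQ^{\del}_{\upgamma_2}$ is surjective. Pre-gluing at the two junctions introduces two gluing parameters $(T_1,T_2)$, and Newton iteration yields an approximate solution modulo a single obstruction living in the $1$-dimensional cokernel of the middle component. Packaging this as a map $\mathbf{S} = (S_1,S_2)\colon EW \to (0,\infty]^2$, together with an obstruction map $\delta \colon EW \to \Pi^1 \subset \reals^2$, gives conditions (i)--(iii) of Definition~\ref{defn:delta2str} essentially by construction. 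Condition (v), transversality of $\delta$ at finite $T_i$, follows from the regularity of each component in the sense of Proposition~\ref{prop:class_regular_mod_space} combined with the non-degeneracy of the normal operator along the interior. The hardest ingredient is condition (iv): one must show that as $S_{i_0} \to \infty$, the $\delta_{i_0}$-component of the obstruction is non-positive, with equality iff both $S_i = \infty$. This is an analysis of the leading-order contribution of the Seiberg-Witten residual of $\calQ^{\nu}_{\upgamma_2}$ at the boundary-obstructed endpoints, which in \cite[Section~19]{KMbook2007} is read off from the sign of $\Lambda_{\frakq}$ at the boundary-stable/unstable critical points; I would follow the same computation here, verifying at each step that the $\uptau$-invariance and bifold structure do not affect the sign.

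The presence of bifold singularities and the $\uptau$-invariance constraint play no role in the obstruction analysis: pre-gluing, Newton iteration, and the decomposition $\calQ = \calQ^{\del} \oplus \calQ^{\nu}$ all respect $\uptau$, and the tameness of $\frakq$ combined with Theorem~\ref{thm:compactness_cylinder} supplies the required uniform estimates in bifold Sobolev norms. Consequently the analysis of \cite[Section~19]{KMbook2007} transports to our setting and yields the theorem.
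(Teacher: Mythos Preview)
Your proposal is correct and follows the same approach as the paper: both defer to the gluing framework of \cite[Section~19]{KMbook2007}, noting that the real structure and bifold singularities require only cosmetic changes. The paper in fact states this theorem without proof (it carries a \qedsymbol), relying on the remark at the start of Section~\ref{subsec:gluing} that the adaptation to real bifolds ``undergoes no changes''; your outline is a faithful expansion of what that deferred argument actually entails.
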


\begin{thm}
\label{thm:str_cod1_cylinder}
	Suppose that the moduli space $M^{\red}_z([\mathfrak a],[\mathfrak b])$ is $d$-dimensional and non-empty, so that the compactified moduli space of broken reducible trajectories $\check M^{\red +}([\mathfrak a], [\mathfrak b])$ is a $(d-1)$-dimensional space stratified by manifolds.
	Let $M' \subset \check M^{\red +}([\mathfrak a], [\mathfrak b])$ be a component of the codimension-1 stratum.
	Then along $M'$ the moduli space $\check M^{\red +}([\mathfrak a],[\mathfrak b])$ is a $C^0$-manifold with boundary.
	\qed 
\end{thm}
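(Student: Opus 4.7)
The plan is to reduce to the standard gluing machinery of \cite[Chapter~19]{KMbook2007}, as adapted to the $\uptau$-invariant setting in \cite[Section~10]{ljk2022}, and to observe that the only extra complication over the non-bifold case, namely the appearance of $\delta$-structures, cannot occur when one restricts attention to reducible trajectories.

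First I would identify the codimension-$1$ stratum $M' \subset \check M^{\red +}_z([\fraka],[\frakb])$. By the stratification result for $\check M^{\red +}$ (the reducible analogue of Proposition~16.5.2), every component of the codimension-$1$ stratum is the product
\[
\check M^{\red}_{z_1}([\fraka],[\fraka_1]) \times \check M^{\red}_{z_2}([\fraka_1],[\frakb]),
\]
where $[\fraka_1]$ is a reducible critical point and the sum $\dim \check M^{\red}_{z_1} + \dim \check M^{\red}_{z_2} = d-2$. Both $[\fraka]$ and $[\frakb]$ are reducible, and since reducible trajectories exist between them only when they share the same boundary type (Proposition~\ref{prop:class_regular_mod_space}, cases 3), the intermediate critical point $[\fraka_1]$ is forced to be of the same boundary type as $[\fraka]$ and $[\frakb]$: either all three are boundary-stable, or all three are boundary-unstable.

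The second and crucial observation is that the boundary-obstructed phenomenon, which is the sole reason a $\delta$-structure rather than an honest boundary appears in the irreducible case (see Definition~\ref{defn:delta2str} and the discussion preceding it), requires a middle component whose initial endpoint is boundary-stable while its terminal endpoint is boundary-unstable. Such a three-component configuration lives entirely in the irreducible locus and never shows up inside $\check M^{\red +}$. Consequently, along $M'$ one is always in the unobstructed regime: the operator $\calQ^{\del}_{\upgamma}$ on each factor is surjective by the regularity hypothesis, and the normal operator $\calQ^{\nu}_{\upgamma}$ does not participate at all in the reducible gluing problem.

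I would then invoke the standard unobstructed gluing theorem (the reducible specialisation of \cite[Theorem~19.1.4]{KMbook2007}, whose real and bifold versions go through with only cosmetic modifications by the same principle used throughout this section). This produces, for each $[\check{\boldsymbol{\upgamma}}] \in M'$, a neighbourhood in $\check M^{\red +}_z([\fraka],[\frakb])$ that is homeomorphic to $U \times [0,\epsilon)$, where $U$ is a neighbourhood of $[\check{\boldsymbol{\upgamma}}]$ in $M'$ and the $[0,\epsilon)$-coordinate is the gluing parameter $1/T$ associated to the length of the neck. The main point to verify, and the only mildly delicate one, is the $C^0$-compatibility of these gluing charts along overlaps, i.e.\ uniqueness of the pre-gluing up to the slice condition; this is handled verbatim as in \cite[Sections~19.4--19.5]{KMbook2007}, using the bifold Coulomb-Neumann solvability recorded earlier in this section. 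Piecing these half-space charts together yields the $C^0$-manifold-with-boundary structure along $M'$, completing the proof.
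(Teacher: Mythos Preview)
Your overall strategy matches the paper's: the statement carries a bare \qed\ and is meant to follow directly from the unobstructed gluing package of \cite[Chapter~19]{KMbook2007} (in its $\uptau$-invariant bifold incarnation), together with the stratification result for $\check M^{\red+}$ recorded in the proposition immediately preceding the theorem. Your third paragraph, where you observe that the normal operator $\calQ^{\nu}_{\upgamma}$ plays no role in the reducible gluing problem and hence the gluing is governed by the surjective $\calQ^{\del}_{\upgamma}$, is exactly the right point and is all that is needed.

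However, two of your supporting claims are incorrect and should be removed. First, it is not true that reducible trajectories exist only between critical points of the same boundary type: Proposition~\ref{prop:class_regular_mod_space} cases (ii) and (iv) both describe moduli spaces consisting entirely of reducibles, with endpoints of mixed type. Second, a boundary-obstructed trajectory (boundary-stable to boundary-unstable) is itself reducible, not irreducible, so the sentence ``such a three-component configuration lives entirely in the irreducible locus'' is false. The correct reason that no three-factor strata appear in codimension~$1$ is already contained in the preceding proposition: the $(d-\ell)$-dimensional stratum of $\check M^{\red+}_z$ consists of $\ell$-factor broken reducible trajectories, so codimension~$1$ forces exactly two factors. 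You should simply cite that proposition rather than re-derive it. With those two sentences excised and replaced by a reference to the stratification result, your argument is correct and complete.
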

\begin{lem}
\label{lem:bound_multiplicities_0}
	Let $N^1$ be a compact 1-dimensional space stratified by manifolds, so that $N^0$ is a finite number of points.
	If $N^1$ has a codimension-1 $\delta$-structure along $N^0$,
	then the number of points of $N^0$ is zero modulo two. 
\end{lem}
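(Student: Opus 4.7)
The plan is to combine a local analysis at each point of $N^0$ via the $\delta$-structure with the standard parity argument for compact $1$-manifolds. First, I would unwind the local model near a point $p\in N^0$: since $N^0$ is zero-dimensional, the topological submersion condition on $\mathbf S$ makes a neighbourhood of $p$ in $EW$ homeomorphic to a neighbourhood $Q'$ of $(\infty,\infty)$ in $(0,\infty]^2$, and switching to coordinates $u=1/S_1$, $v=1/S_2$ identifies $Q'$ with a neighbourhood of the origin in the closed quadrant $[0,\infty)^2$. Since $\Pi^1\subset\reals^2$ is the line $\{\delta_1=-\delta_2\}$, I would view $\delta$ as a single smooth real function, still denoted $\delta_1$; condition (iv) of Definition~\ref{defn:delta2str} then specialises to $\delta_1\le 0$ on $\{u=0\}$ and $\delta_1\ge 0$ on $\{v=0\}$, with equality only at the origin.

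Next, I would count the local arms of $N^1$ at $p$. Condition (v) says $\delta_1$ is transverse to zero on the open quadrant, so $N^1\cap\{u,v>0\}$ is a smooth embedded $1$-submanifold there. For a sufficiently small, generic $r>0$, the quarter-circle
\[
C_r=\{(u,v):u^2+v^2=r^2,\ u,v>0\}
\]
meets this $1$-manifold transversely in finitely many points; because $\delta_1$ is strictly negative near the $\{u=0\}$-end of $C_r$ and strictly positive near the $\{v=0\}$-end, the intermediate value theorem forces an odd number of transverse zeroes of $\delta_1|_{C_r}$. Consequently, an odd number $k_p$ of arcs of $N^1\setminus N^0$ accumulate at $p$.

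Finally, I would close the argument globally. The top stratum $N^1\setminus N^0$ is a smooth $1$-manifold, so its connected components are either circles (which do not meet $N^0$) or open intervals. By compactness of $N^1$ and the local model above, each open-interval component accumulates at one or two points of $N^0$ and contributes exactly one arc-end to each of its limit points (so a loop both of whose ends limit to the same $p$ contributes $2$ to $k_p$). Counting arc-ends in two ways yields
\[
\sum_{p\in N^0}k_p\;=\;2\cdot\#\bigl\{\text{open-interval components of }N^1\setminus N^0\bigr\},
\]
which is even. Since each $k_p$ is odd, reducing modulo $2$ forces $|N^0|\equiv 0\pmod 2$.

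The main technical subtlety is showing that, for $r$ small enough, intersections of $C_r$ with $\delta_1^{-1}(0)$ are in bijection with the local arms of $N^1$ at $p$: components of $\delta_1^{-1}(0)$ not accumulating at the origin lie outside the disc of radius $r$ once $r$ is sufficiently small, so only the arms approaching $(0,0)$ contribute, and transversality of $C_r$ with the $1$-manifold makes the count unambiguous. This reduction is standard in gauge-theoretic gluing.
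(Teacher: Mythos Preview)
Your argument is correct and is essentially an unwinding, in the $1$-dimensional case, of the machinery the paper invokes. The paper does not prove this lemma directly but defers to the \v{C}ech--Stokes framework of \cite[Section~21]{KMbook2007}: one first shows (as stated in the last lemma of Section~\ref{subsec:stokes}) that a codimension-$1$ $\delta$-structure forces the boundary multiplicity $\delta_\beta$ of each point to be $1$, and then applies the \v{C}ech Stokes identity with the constant $0$-cochain $v=1$ to conclude $\sum_\beta \delta_\beta = 0$ in $\bbF_2$. Your local odd-arm count is exactly the $d=1$ content of the boundary-multiplicity lemma, and your global arc-end count is the $d=1$ Stokes theorem done by hand. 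The advantage of your route is that it is entirely elementary and avoids setting up \v{C}ech covers transverse to strata; the advantage of the paper's route is that the same framework handles the higher-dimensional identities (e.g.\ those in Proposition~\ref{prop:identities_m(uW)}) uniformly.

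One small technical wrinkle worth flagging: the homeomorphism $U\cong Q'$ furnished by the topological submersion is only $C^0$, so the quarter-circle $C_r$ need not be smooth in the intrinsic smooth structure on $EW$ in which condition~(v) asserts transversality of $\delta$. Your mod-$2$ intersection count survives this, since it only uses continuity of $\delta_1$, the sign conditions from (iv) near the two boundary rays, and that $\delta_1^{-1}(0)$ is a $1$-manifold in the open quadrant; but the phrase ``for generic $r$, $C_r$ meets the $1$-manifold transversely'' should be replaced by a topological mod-$2$ intersection argument or by choosing a smooth separating arc in the intrinsic structure.
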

Lemma~\ref{lem:bound_multiplicities_0} follows from a version of Stokes' theorem in \cite[Section~21]{KMbook2007}, proved using \v{C}ech models.
Since their setup will be used to define pairing of (co)homology classes over compactified moduli spaces, we recall the necessary definitions, all taken from \cite[Section~21]{KMbook2007}.
\subsection{\v{C}ech cohomology and Stokes' theorem}
\label{subsec:stokes}
A metric space has \emph{covering dimension $\le d$} if every open cover \emph{has covering order $\le d+1$}, that is, there is a refinement $\calU$ such that every $(d+2)$-fold intersection of the form
\[
	U_0 \cap U_1 \cap \dots \cap U_{d+1}, \quad U_i \in \calU \text{ distinct}
\]
is empty.
Each open covering $\calU$ can be assigned a simplicial complex $K(\calU)$ called \emph{nerve}.
The \v{C}ech cohomology of $B$ is the limit
\[
	\check H(B;\bbF_2) =
	\lim_{\rightarrow} H^n_{Simp}(K(\calU);\bbF_2),
\]
over all open coverings $\calU$.
For a subcover $\calU' \subset \calU$ and a subset $B' \subset B$ such that $\{U \cap B': U \in \calU'\}$ covers $B'$, there is a corresponding subcomplex $K(\calU'|B')$.
The relative cohomology of $(B,B')$ is defined as the limit
\[
	\check H(B, B';\bbF_2) =
	\lim_{\rightarrow} H^n_{Simp}(K(\calU),K(\calU'|B');\bbF_2).
\]
Suppose $N^d$ is a compact $d$-dimensional space stratified by manifolds.
We have
\[
	\check H^d(N^d,N^{d-1};\bbF_2) = H^d_c(M^d;\bbF_2),
\]
which is a free abelian group generated by $\mu_{\alpha}^d$, corresponding to a component $M^d_{\alpha}$ of $M^d = N^{d} \setminus N^{d-1}$.
Let 
\[
	I_{\alpha}: \check H^d(N^d,N^{d-1};\bbF_2) \to \bbF_2
\]
take value $1$ on $\mu_{\alpha}^d$ and zero otherwise.

Let $\delta_*$ be the coboundary map
\[
	\delta_*: 
	H^{d-1}_c(M^{d-1};\bbF_2) =
	\bigoplus_{\beta} H^{d-1}_c(M^{d-1}_{\beta};\bbF_2) \to
	\bigoplus_{\alpha} H^d_c(M^d_{\alpha};\bbF_2).
	=  H^d_c(M^d;\bbF_2)
\]
associated to the long exact sequence of the triple $(N^d,N^{d-1},N^{d-2})$.
For each pair $(M^d_{\alpha}, M^{d-1}_{\beta})$, the entry $\delta_{\alpha\beta}$ is the multiplicity of $M^{d-1}_{\beta}$ appearing in the boundary of $M^d_{\alpha}$.
In other words,
\[
	\delta_{\alpha\beta} = I_{\alpha} \delta_* \mu_{\beta}^{d-1}.
\]
The \emph{boundary multiplicity} of the component $M_{\beta}^{d-1}$ in the stratified space $N^d$ is the sum
\[
	\delta_{\beta} = \sum_{\alpha} \delta_{\alpha\beta}.
\]
Suppose $N^d$ is embedded in a metric space $B$.
An open cover $\calU$ of $B$ is \emph{transverse} to the strata if $\calU|N^e$ has covering order $\le e+1$ for all $e \le d $.
Given a collection of stratified spaces, a covering can be always be refined to become transverse to the strata, by the following lemma.
(The prototype of such stratified spaces for us are moduli spaces of trajectories.)
\begin{lem}[Lemma~21.2.1 of \cite{KMbook2007}]
\label{lem:exist_refine}
Let $N^{d_k}_k$ be a countable, locally finite collection of spaces stratified 	by manifolds.
Then every open cover $\calU$ of $B$ has a refinement $\calU'$ that is transverse the strata in every $N^{d_k}_k$. 
\qed 
\end{lem}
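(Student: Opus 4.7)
The plan is to combine paracompactness of $B$, local finiteness of $\{N_k\}$, and the classical dimension-theoretic fact that any open cover of a space stratified by manifolds of top dimension $e$ admits a refinement of covering order at most $e+1$ (since such a space has covering dimension at most $e$).

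First I would apply paracompactness of the metric space $B$ to choose a locally finite open refinement $\calV$ of the given cover $\calU$, and then use local finiteness of $\{N_k\}$ to shrink each $V \in \calV$ so that it meets only finitely many $N_k$, say $N_{k_1(V)},\dots,N_{k_{m(V)}(V)}$. By normality, fix a closed shrinking $\{F_V\}$ with $F_V \subset V$ still covering $B$.

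Next, for each $V$ I would construct an open cover $\calV_V$ of $V$ by subsets of $V$ that is simultaneously transverse to all the strata of the finitely many $N_{k_i(V)}$ meeting $V$. Since each stratum has covering dimension bounded by its manifold dimension, this can be done by a finite iteration over the pairs $(k_i(V), e)$ with $e \le d_{k_i(V)}$: at each step, refine \emph{within} each element of the previous cover to impose the next order bound, which preserves the previously imposed order bounds because covering order is measured by local overlaps. Setting $\calU' = \bigcup_V \{W \in \calV_V : \overline{W} \subset V\}$ then yields a refinement of $\calU$ which still covers $B$ (thanks to the closed shrinking $\{F_V\}$).

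The main obstacle, and the technically delicate step, is the global transversality check: elements of $\calU'$ drawn from distinct $\calV_V$'s can meet at a common point $p \in N_k^e$, and a priori this could raise the covering order above $e+1$. I would address this by arranging at the outset that $\calV$ itself already has order bounded along each $N_k^e$ (applying the classical dimension-theoretic fact once to $B$ with respect to the locally finite collection of strata), so that only a controlled number of $V$'s contribute elements of $\calU'$ through any given $p$, and then using the iterative refinements within each $V$ to bring the final order down to $e+1$. Balancing the initial order of $\calV$ against the internal orders of the $\calV_V$'s --- and ensuring the refinements inside neighbouring $V$'s do not add orders across their overlaps --- is handled by the standard machinery of Engelking-type dimension theory for paracompact metric spaces with locally finite stratified subspaces.
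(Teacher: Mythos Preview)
The paper does not supply its own proof of this lemma; it is stated with a terminal \qedsymbol\ and attributed to \cite[Lemma~21.2.1]{KMbook2007}. So there is nothing in the paper to compare against, and your proposal must stand on its own.

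There is a genuine gap, and it appears twice. The central difficulty is producing a \emph{single} refinement that is simultaneously transverse to all strata, and your argument does not achieve this. You assert that ``at each step, refine within each element of the previous cover to impose the next order bound, which preserves the previously imposed order bounds because covering order is measured by local overlaps.'' This is false: if a point $p$ lies in $m$ members of the current cover and each of those members is replaced by a sub-cover having order $n$ at $p$, the new cover has order up to $mn$ at $p$, not $\le m$. So your finite iteration over the pairs $(k_i(V),e)$ inside each $V$ destroys the transversality to the strata already handled, and the local step is already broken.

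The same defect recurs in the gluing step, as you yourself flag. But your proposed fix is circular: if $\calV$ is arranged ``at the outset'' to have order $\le e+1$ along every $N_k^e$, then $\calV$ is already the desired transverse refinement and the rest of the construction is superfluous; if $\calV$ only satisfies some weaker bound, combining it with the internal $\calV_V$'s multiplies orders rather than capping them. The closing appeal to ``standard machinery of Engelking-type dimension theory'' is not a proof. One correct route is to replace arbitrary refinements by \emph{shrinkings}: pass first to a locally finite refinement $\{U_\alpha\}$ of $\calU$, set $Z^e=\bigcup_k N_k^e$ (closed in $B$ by local finiteness, with $\dim Z^e\le e$), and then inductively shrink $\{U_\alpha\}$---same index set, each new set contained in the old---so that the restriction to $Z^e$ has order $\le e+1$. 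The point is that a shrinking can only \emph{decrease} covering order at every point, so the bound on $Z^{e-1}$ survives the shrink for $Z^e$; this monotonicity is exactly what your ``refine within each element'' step lacks.
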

Using open coverings that are transverse the stratification of a stratified space $N^d$, one
can compute the \v{C}ech cohomology of $B$ as the limit
\[
	\check H(B;\bbF_2) = \lim_{\rightarrow}H^n(K(\calU);\bbF_2).
\]
Given an open cover $\calU$ satisfying the transversality condition and a class $u \in \check C^d(\calU|N^d;\bbF_2)$, there is a well-defined class $[u]$ in $\check H(N^d,N^{d-1};\bbF_2)$.
Define a pairing with $[M_{\alpha}]$ by
\[
	\langle -, [M_{\alpha}] \rangle :
	\check C^d(\calU;\bbF_2) \to \bbF_2,\quad
	\langle u, [M_{\alpha}] \rangle = I_{\alpha}[u|_{N^d}].
\]
Let $\delta: \check C^{d-1}(\calU;\bbF_2) \to \check C^d(\calU;\bbF_2)$ be the \v{C}ech coboundary map.
The Stokes' theorem follows by definition: 
for any $v \in \check C^{d-1}(\calU;\bbF_2)$,
\[
	\sum_{\beta} \delta_{\alpha\beta} \langle v, [M_{\beta}^{d-1}] \rangle = \langle \delta v, [M^d_{\alpha}]\rangle.
\]
The following is the unoriented version of \cite[Lemma~21.3.1]{KMbook2007}.
\begin{lem}
	Let $N^d$ be a compact $d$-dimensional space stratified by manifolds, and let $M^{d-1}_{\beta}$ be a component of $M^{d-1}$.
	If $N^{d}$ has a codimension-$1$ $\delta$-structure along $M^{d-1}_{\beta}$, then the boundary multiplicity $\delta_{\beta}$ of $M^{d-1}_{\beta}$ in $N^d$ is $1$. \qed
\end{lem}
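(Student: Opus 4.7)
The plan is to reduce the statement to a local two-dimensional model around a point of $M^{d-1}_{\beta}$, where the sign constraint (iv) of the $\delta$-structure combined with the intermediate value theorem will force an odd $\bbF_2$-count of local sheets of $N^d$ approaching $M^{d-1}_{\beta}$.

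First, I would localize and trivialize the $\delta$-structure. Around any point $p \in M^{d-1}_{\beta}$, condition (i) (topological submersion along $\mathbf{S}^{-1}(\infty,\infty)$) lets me pick a neighborhood in $EW$ homeomorphic to $M^{d-1}_{\beta} \times (0,\infty]^2$, with $p$ corresponding to $(\infty,\infty)$. A reparametrization $x_i = 1/S_i$ rewrites the local model as $M^{d-1}_{\beta} \times [0,\varepsilon)^2$, with $M^{d-1}_{\beta}$ sitting as $M^{d-1}_{\beta} \times \{(0,0)\}$. Identifying $\Pi^1 \cong \reals$ via $(t,-t) \mapsto t$, the map $\delta$ becomes a real-valued function on this neighborhood. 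Condition (iv) translates into $\delta \le 0$ on the face $\{x_1 = 0\}$ and $\delta \ge 0$ on $\{x_2 = 0\}$, with equality on either face only at the corner $\{(0,0)\}$. Condition (v) gives smoothness and transversality of $\delta$ to $0$ on the interior $\{x_1, x_2 > 0\}$, so $N^d \cap W \cong \delta^{-1}(0)$ is a smooth $d$-manifold there.

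Second, I would count the local sheets of $N^d$ approaching $M^{d-1}_{\beta}$ by slicing. For small $c > 0$, consider the hypersurface $\Sigma_c = \{x_1 + x_2 = c\}$ inside $EW$, whose fiber over each $y \in M^{d-1}_{\beta}$ is the segment $\{(x_1,x_2) : x_1,x_2 \ge 0,\ x_1+x_2 = c\}$. At the endpoint $(c,0)$ of this segment condition (iv) gives $\delta > 0$ strictly, while at $(0,c)$ we have $\delta < 0$ strictly. For generic small $c$, a Sard argument (using smoothness of $\delta$ on the interior, where $\Sigma_c$ lies) makes $\delta$ transverse to $0$ along this segment; then any smooth real-valued function on a closed interval that changes sign, with only regular zeros, must vanish at an odd number of points. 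Fibering over $M^{d-1}_{\beta}$, the intersection $N^d \cap \Sigma_c$ is a $(d-1)$-dimensional manifold covering $M^{d-1}_{\beta}$ with odd degree modulo $2$.

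Finally, I would convert this odd local degree into the asserted boundary multiplicity using the \v{C}ech machinery of Section~\ref{subsec:stokes}. Applying Lemma~\ref{lem:exist_refine} I would pick open covers of $B$ transverse to the strata and compute $\delta_{\beta} = \sum_{\alpha} \delta_{\alpha\beta} = \sum_{\alpha} I_{\alpha}\delta_{*}\mu_{\beta}^{d-1}$ via the Stokes pairing. Under the local model, the coboundary $\delta_{*}\mu_{\beta}^{d-1}$ is represented by the class dual to $N^d \cap \Sigma_c$, and the total pairing with $\sum_\alpha \mu^d_\alpha$ is the mod-$2$ degree computed above, which is $1$. The main obstacle is the potential wildness of $\delta^{-1}(0)$ near the corner: its zero set could a priori split into many arcs accumulating at $M^{d-1}_{\beta}$, but the sign constraint (iv) combined with the parity argument on generic slices ensures only the mod-$2$ count matters, which is odd and equal to $1$.
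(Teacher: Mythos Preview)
Your proposal is correct and follows essentially the same route as the proof of \cite[Lemma~21.3.1]{KMbook2007}, which the paper cites without reproducing: localize via the submersion property, use the sign constraint (iv) on the two codimension-one faces to force an odd mod-$2$ count of zeros of $\delta$ on a generic transverse arc, and identify that count with the boundary multiplicity. The only place to tighten is the final paragraph, where the identification of $\sum_\alpha \delta_{\alpha\beta}$ with the mod-$2$ intersection number on $\Sigma_c$ deserves one more sentence (each local sheet of $\delta^{-1}(0)$ near $M^{d-1}_\beta$ lies in some $M^d_\alpha$ and contributes exactly once to $\delta_{\alpha\beta}$), but the idea is right.
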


\section{Floer homologies for web and foams}
\label{sec:floer_homology}
\subsection{Floer homologies for \emph{real} bifolds}
Let $(\check Y,\upiota)$ be a real bifold, $\check g$ be an $\upiota$-invariant bifold metric, and $(\mathfrak s, \uptau)$ be a real bifold spin\textsuperscript{c} structure.
Suppose $\mathcal P$ is a large Banach space of tame perturbations.
By Theorem~\ref{thm:transv_cylinder}, choose
$\mathfrak q \in \mathcal P$ so that all critical points of $(\grad \pertL)^{\sigma}$ in $\mathcal B^{\sigma}_k(\check Y,\uptau)$ are nondegenerate, and all moduli spaces $M([\mathfrak a],[\mathfrak b])$ are regular.
In addition, if $c_1(S)$ is not torsion, then assume $\frakq$ is chosen so that there are no reducible critical points. 
\begin{defn}
A tame perturbation $\mathfrak q$ is \emph{admissible} if all critical points of the associated $(\grad \pertL)^{\sigma}$ are non-degenerate, all moduli spaces $M([\fraka],[\frakb])$ of trajectories are regular, and there are no reducible critical points unless $c_1(S)$ is torsion.
\end{defn}

Let $\mathfrak C \subset \mathcal B^{\sigma}_k(\check Y,\uptau)$ be the set of critical points.
In terms of irreducible $(o)$, boundary-stable $(s)$, and boundary-unstable critical points $(u)$:
\begin{equation*}
	\mathfrak C = \mathfrak C^o \cup \mathfrak C^s \cup \mathfrak C^u.
\end{equation*}
Define the chain groups over $\mathbb F_2$ as
\[
	\check C = C^o \oplus C^s, \quad
	\hat C = C^o \oplus C^u, \quad
	\bar C = C^s \oplus C^u.
\]
where we set
\[
	C^o = \bigoplus_{[\mathfrak a] \in \mathfrak C^o} \mathbb F_2[\mathfrak a], \quad 
	C^s = \bigoplus_{[\mathfrak a] \in \mathfrak C^s} \mathbb F_2[\mathfrak a], \quad 
	C^u = \bigoplus_{[\mathfrak a] \in \mathfrak C^u} \mathbb F_2[\mathfrak a].
\]
The differential $\bar\del:\bar C \to \bar C$ is given by
\begin{equation*}
	\bar\del [\mathfrak a] = \sum_{[\mathfrak b],z} \# \check M_z^{\text{red}}([\mathfrak a],[\mathfrak b]) \cdot \mathfrak [\mathfrak b],
	\quad 
	\bar\del = \begin{pmatrix}
		\bar\del^s_s && \bar\del^u_s\\
		\bar\del^s_u && \bar\del^u_u
	\end{pmatrix} \text{ over } C^o \oplus C^s
\end{equation*}
where $\check M_z^{\text{red}}([\mathfrak a],[\mathfrak b])$'s are unparametrized moduli spaces of \emph{reducible} trajectories with dimension zero.
Similarly, we define operators
\begin{align*}
	\del^o_o: C^o \to C^o \quad [\mathfrak a] \mapsto \sum_{[\mathfrak b] \in \mathfrak C^o} \#\check M_z([\mathfrak a],[\mathfrak b]) \cdot [\mathfrak b],\\
	\del^o_s: C^o \to C^s \quad [\mathfrak a] \mapsto \sum_{[\mathfrak b] \in \mathfrak C^s} \#\check M_z([\mathfrak a],[\mathfrak b]) \cdot [\mathfrak b],\\
	\del^u_o: C^u \to C^o \quad [\mathfrak a] \mapsto \sum_{[\mathfrak b] \in \mathfrak C^o} \#\check M_z([\mathfrak a],[\mathfrak b]) \cdot [\mathfrak b],\\
	\del^u_s: C^u \to C^s \quad [\mathfrak a] \mapsto \sum_{[\mathfrak b] \in \mathfrak C^s} \#\check M_z([\mathfrak a],[\mathfrak b]) \cdot [\mathfrak b],
\end{align*}
by counting points in zero-dimensional moduli spaces.
Let the boundary operators 
 $\check\del : \check C \to \check C$ and 
  $\hat\del : \hat C \to \hat C$ be	\begin{equation*}
		\check\del = 
		\begin{pmatrix}
			\del^o_o && -\del^u_o\bar\del^s_u\\
			\del^o_s && \bar\del^s_s - \del^u_s\bar\del^s_u
		\end{pmatrix},
\quad 
		\hat\del = \begin{pmatrix}
			\del^o_o && \del^u_o\\
			-\bar\del^s_u\del^o_s &&
			-\bar\del^u_u\del^u_s
		\end{pmatrix}
\end{equation*}
respectively.
\begin{prop}
	$\check \del^2 = 0$, $\hat \del^2 =0$, and $\bar\del^2 = 0$.
\end{prop}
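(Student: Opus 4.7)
The proof proceeds exactly as in the classical monopole Floer setup of Kronheimer--Mrowka~\cite{KMbook2007}, since all the analytical ingredients (transversality, compactification, gluing, $\delta$-structures, \v{C}ech Stokes) have already been established in Sections~\ref{subsec:compactification}--\ref{subsec:stokes} for real bifold configurations. The plan is to interpret each matrix entry of $\check\partial^2$, $\hat\partial^2$, $\bar\partial^2$ as the total boundary multiplicity count of a suitable $1$-dimensional compactified moduli space, and then invoke Lemma~\ref{lem:bound_multiplicities_0}.

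First I would handle $\bar\partial^2=0$, which is the easiest case. Fix critical points $[\fraka], [\frakc]$ with $\gr([\fraka],[\frakc])=1$. By Theorem~\ref{thm:str_cod1_cylinder}, $\check M^{\red,+}_z([\fraka],[\frakc])$ is a compact $1$-dimensional $C^0$-manifold with boundary, and its boundary consists of exactly the $2$-component broken reducible trajectories $\check M^{\red}_{z_1}([\fraka],[\frakb]) \times \check M^{\red}_{z_2}([\frakb],[\frakc])$ with the middle point $[\frakb]$ ranging over $\mathfrak C^s \cup \mathfrak C^u$. Summing over all $z$ and applying the mod-$2$ Stokes lemma, the total count vanishes, which is precisely the vanishing of each matrix entry of $\bar\partial^2$ in the $2\times 2$ block decomposition over $C^s\oplus C^u$.

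Next I would treat $\check\partial^2$ and $\hat\partial^2$ in parallel. Expanding the matrix product, each entry is an alternating sum of compositions such as $\partial^o_o\partial^o_o + \partial^u_o\bar\partial^s_u\partial^o_s$ (for $\check\partial^2|_{C^o\to C^o}$), and similarly for the other three entries. The plan is to identify each such sum with the total boundary multiplicity of a $1$-dimensional moduli space $\check M^+_z([\fraka],[\frakc])$ containing irreducibles. By the boundary stratification proposition (codim-$2$ stratum of $\check M^+_z$), the codimension-$1$ strata of a $2$-dimensional $M_z([\fraka],[\frakc])$ fall into three classes: (a) ordinary $2$-component breakings, contributing $\partial\circ\partial$ terms; (b) $3$-component breakings whose middle component is boundary-obstructed, contributing compositions of the form $\partial^u_o\bar\partial^s_u\partial^o_s$, etc.; and (c) when $[\fraka]$ is boundary-unstable and $[\frakc]$ boundary-stable, the reducible locus inside $M_z([\fraka],[\frakc])$ itself, accounting for the mismatch between irreducible counting and reducible factorisations.

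The main obstacle is class (b): near those strata $\check M^+_z$ is not a manifold with boundary but only carries a codimension-$1$ $\delta$-structure, so I would invoke the codim-$1$ $\delta$-structure lemma to conclude that each such stratum still contributes boundary multiplicity exactly $1$, so it enters the mod-$2$ count once. Granted this, the compatibility of the $\delta$-structure along every codimension-$1$ stratum together with the Stokes formula
\[
\sum_{\beta} \delta_{\alpha\beta}\,\la v, [M_{\beta}^{d-1}]\ra = \la \delta v,[M_{\alpha}^d]\ra
\]
applied to the constant cochain $v=1$ yields that the total mod-$2$ count of codimension-$1$ strata of $\check M^+_z([\fraka],[\frakc])$ is zero. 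Matching this with the expansion of $(\check\partial^2)^{\bullet}_{\bullet}[\fraka]$ and $(\hat\partial^2)^{\bullet}_{\bullet}[\fraka]$ entry by entry (using $\bar\partial^2=0$ to simplify the contributions from class (c) reducible strata, precisely as in the derivation at~\cite[Section~22]{KMbook2007}) gives $\check\partial^2=0$ and $\hat\partial^2=0$. Since the bifold singular structure only enters through the replacement of smooth connections by bifold connections and this has no effect on the compactness, gluing, or stratification statements, this classical argument carries over verbatim.
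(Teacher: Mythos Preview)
Your proposal is correct and follows essentially the same approach as the paper's proof: expand $\check\partial^2$, $\hat\partial^2$, $\bar\partial^2$ into matrix-entry identities, interpret each as a mod-$2$ boundary count of a $1$-dimensional compactified moduli space using the stratification results of Sections~\ref{subsec:compactification}--\ref{subsec:gluing}, and appeal to Lemma~\ref{lem:bound_multiplicities_0} (with the codimension-$1$ $\delta$-structure lemma handling the boundary-obstructed triple products), exactly as in \cite[Proposition~22.1.4]{KMbook2007}. One minor slip: for $\bar\partial^2$ you want the $1$-dimensional \emph{unparametrized} reducible space, so $M^{\red}_z([\fraka],[\frakc])$ should be $2$-dimensional rather than $\gr([\fraka],[\frakc])=1$; this does not affect the argument.
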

\begin{proof}
	In terms of the matrix entries, the statement is equivalent to a series of identities:
	\begin{align*}
		-\del^o_o\del^o_o+\del^u_o\bar\del^s_u\del^o_s &= 0, 
		&\bar\del^s_s\bar\del^s_s  + \bar\del^u_s\bar\del^s_u = 0,\\
		-\del^o_s\del^o_o-\bar\del_s^s\del^o_s + \del^u_s\bar\del^s_u\del^o_s &= 0, 
		&\bar\del^s_s\bar\del^u_s + \bar\del^u_s\bar\del^u_u = 0,\\
		-\del^o_o\del^u_o + \del^u_o\bar\del^s_u\del^u_s &= 0 ,
		&\bar\del^s_u\bar\del^s_s + \bar\del^u_u\bar\del^s_u = 0,\\
		-\bar\del^u_s - \del^o_s \del^u_o - \bar\del^s_s\del^u_s + \del^u_s\bar\del^u_u + \del^u_s\bar\del^s_u\del^u_s &= 0,
		&\bar\del^s_u\bar\del^u_s + \bar\del^u_u\bar\del^u_u = 0.
	\end{align*}
Each of the identity can be derived by enumerating (with multiplicity) boundary points of some $1$-dimensional moduli spaces and then appealing to Lemma~\ref{lem:bound_multiplicities_0}.
The proof depends only on the descriptions of the boundary stratifications of moduli spaces in Section~\ref{subsec:compactification} and \ref{subsec:gluing}.
Hence the arguments in \cite[Proposition~22.1.4]{KMbook2007} applies to our situation verbatim.

To get a taste of the arguments, let us look at the first identity $-\del^o_o\del^o_o+\del^u_o\bar\del^s_u\del^o_s = 0 $.
Suppose $[\fraka]$ and $[\frakb]$ are two irreducible critical points such that $\dim \check M_z([\fraka],[\frakb]) = 1$.
By Theorem~\ref{thm:str_cod1_cylinder} , the boundary degeneration can either be of the form
\[
	\check M_{z_1}([\fraka],[\fraka_1]) \times
	\check M_{z_2}([\fraka_1],[\frakb]),
\]
whose neighbourhood has a structure of a $C^0$-manifold with boundary, or of the form
\[
	\check M_{z_1}([\fraka],[\fraka_1]) \times
	\check M_{z_2}([\fraka_1],[\fraka_2]) \times 
	\check M_{z_3}([\fraka_2],[\frakb])
\]
where the middle is boundary-obstructed. 
This triple product has a codimension-$1$ $\delta$-structure in $M_z([\fraka],[\frakb])$ and contributes $\del^u_o\bar\del^s_u\del^o_s$.
\end{proof}
\begin{defn}
	The \emph{real bifold monopole Floer homology} groups for the real bifold $(\check Y, \upiota)$ and real \spinc structure $(\fraks,\uptau)$ are the the homologies of the respective chain complexes:
\begin{equation*}
	\widecheck{\HMR}_*(\check Y,\upiota; \fraks, \uptau) = H_*(\check C,\check \del), \quad
	\widehat{\HMR}_*(\check Y,\upiota;\fraks, \uptau) = H_*(\hat C,\hat \del), \quad
	\overline{\HMR}_*(\check Y,\upiota;\fraks, \uptau) = H_*(\overline C,\bar \del).
\end{equation*}
We will write $\HMR^{\circ}_*$, where $\circ \in \{\vee,\wedge,-\}$.
Whenever we do not specify $(\fraks,\uptau)$, the group will be the sum over all real \spinc structures:
\[
	\HMR^{\circ}_*(\check Y,\upiota) = 
	\bigoplus_{(\fraks,\uptau) \in \SpincR(\check Y,\upiota)}\HMR^{\circ}_*(\check Y,\upiota; \fraks, \uptau).
\]
\end{defn}
\begin{notat}
	We will continues to suppress ``$\upiota$'' and ``$\fraks$'' if $\uptau$ is given.
	For example, $\HMR^{\circ}_*(\check Y,\uptau)$ stands for $\HMR^{\circ}_*(\check Y,\upiota;\fraks,\uptau)$.
\end{notat}
The chain maps 
\begin{equation*}
	i: \bar C \to \check C,\quad
	j: \check C \to \hat C, \quad
	p: \check C \to \bar C,
\end{equation*}
given by
\begin{equation*}
	i = \begin{pmatrix}
		0 & \del^u_o \\
		1 & -\del^u_s
	\end{pmatrix},
	\quad 
	j = \begin{pmatrix}
		1 & 0 \\
		0 & -\delbar^u_s
	\end{pmatrix}, 
	\quad 
	p = \begin{pmatrix}
		\del^o_s & \del^u_s \\
		0 & 1
	\end{pmatrix}.
\end{equation*}
induce a long exact sequence
	\begin{equation*}
	\begin{tikzcd}
		\cdots \ar[r,"i_*"] 
		& \widecheck{\HMR}_*(\check Y,\uptau)	 \ar[r,"j_*"] 
		& \widehat{\HMR}_*(\check Y,\uptau) \ar[r,"p_*"]
		& \overline{\HMR}_*(\check Y,\uptau) \ar[r,"i_*"]
		& \widehat{\HMR}_*(\check Y,\uptau) \ar[r]
		& \cdots 
	\end{tikzcd}
\end{equation*}
\begin{defn}
	We define the \emph{reduced} real bifold monopole Floer homology $\HMR_{*}(\check Y,\upiota,\uptau)$ as the image of the map
	\[
		j_*:\widecheck{\HMR}_*(\check Y,\upiota,\uptau) \to \widehat{\HMR}_*(\check Y,\upiota,\uptau).
	\]
\end{defn}
The grading $* \in \mathbb J(\uptau)$ on the Floer homology groups will be defined as follows.
Given two elements $([\mathfrak a],\mathfrak q_1,n)$ and $([\mathfrak b],\mathfrak q_2,m)$ in 
\[
\mathcal B^{\sigma}_k(\check Y,\uptau) \times \mathcal P \times \mathbb Z.
\]
We declare $([\mathfrak a],\mathfrak q_1,n) \sim ([\mathfrak b],\mathfrak q_2,m)$ if there exists a path $\zeta$ and a $1$-parameter family of perturbations $\mathfrak p$ such that the index of operator $P_{\upgamma,\mathfrak p}$, defined in \cite[Equation~(20.6)]{KMbook2007} is equal to $(n-m)$.
(Roughly speaking, $P_{\upgamma,\mathfrak p}$ generalizes operator the $\calQ_{\upgamma}$ in \eqref{eqn:Q_operator} where the perturbations can vary on a cylinder.)
In particular, if $[\mathfrak a]$ and $[\mathfrak b]$ are critical points of the same perturbation, the index of $P_{\upgamma}$ is equal to $\gr_z([\mathfrak a],[\mathfrak b])$.

The set $\bbJ(\uptau)$ consists of $\sim$-equivalence classes:
	\begin{equation*}
		\mathbb J(\uptau) = (\mathcal B^{\sigma}_k(\check Y,\uptau)\times \mathcal P \times \mathbb Z)/\sim.
	\end{equation*}
The \emph{grading} of a critical point $[\mathfrak a]$ is the equivalence class
	\begin{equation*}
		\gr[\mathfrak a] = ([\mathfrak a],\mathfrak q,0)/\sim.
	\end{equation*}
This grading is additive in the sense that $
		\gr[\mathfrak a]= \gr[\mathfrak b] + \gr_z([\mathfrak a],[\mathfrak b])$.
For the reducibles, we also define:
	\begin{equation*}
		\bar\gr[\mathfrak a] = \begin{cases}
			\gr[\mathfrak a], & [\mathfrak a]\in \mathfrak C^s\\
			\gr[\mathfrak a]-1, & [\mathfrak a] \in \mathfrak C^u
		\end{cases}.
	\end{equation*}
The chain complexes are graded by $\mathbb J(\uptau)$, where the differentials lower gradings by $1$.
For $j \in \bbJ(\uptau)$, we write $C^o_j, C^u_j, C^s_j$ and
\[		\check{C}_j = C^o_j \oplus C_j^s, \quad 
		\hat{C}_j = C^o_j \oplus C^u_j, \quad
		\bar{C}_j = C^s_j \oplus C^u_j.
\]

There is a natural $\bbZ$-action on $\bbJ(\uptau)$, where $n \in \bbZ$ sends $j=([\mathfrak a],\mathfrak q,m)$ to the element $([\mathfrak a],\mathfrak q,m+n)$, also denoted as $j+n$. 
The action of $\mathbb Z$ on is transitive, where
the stabilizer is given by the image of the map 
\[
H_2(\check Y;\mathbb Z)^{-\upiota^*} 
		\to \frac{1}{2}\mathbb Z, \quad 
[\sigma]\mapsto
		\frac{1}{2}\langle c_1(\mathfrak s),[\sigma]\rangle.
\]
In particular, the action is free if and only if $c_1(\mathfrak s)$ is torsion.
\begin{rem}
The extra factor of $\frac12$ is significant, in contrast with the ordinary monopole formula \cite[Lemma~14.4.6]{KMbook2007}.
This factor prevents $\HMR$ from having an absolute $\bbZ/2$-grading.
\end{rem}

Cobordism maps in $\HMR_*^{\circ}$ can have infinitely many entries along the negative direction.
We complete the grading of $\HMR_*^{\circ}$ as follows.
In general, let $G_*$ be an abelian group graded by a set $\mathbb J$.
Let $O_{\alpha}$, $\alpha \in A$ be the free $\mathbb Z$-orbits in $\mathbb J$, and choose an element $j_{\alpha} \in O_{\alpha}$ for each $\alpha$.
Let $G_*[n] \subset G_*$ be the subgroup
\begin{equation*}
	G_*[n] = \bigoplus_{\alpha}\bigoplus_{m \ge n} G_{j_{\alpha} - m},
\end{equation*}
which defines a decreasing filtration for $G_*$.
Let $G_{\bullet} \supset G_*$ be the topological group obtained by completing with respect to this filtration.
We apply this negative completion procedure to $\mathbb J = \mathbb J(\check Y,\uptau)$ to obtain
\[\HMR^{\circ}_{\bullet}(\check Y,\uptau).\]

\subsection{Floer homologies for webs}
Suppose $(K,s)$ is a web with a $1$-set.
Let $(\check Y,\upiota) = (Y,\tilde K^{\sfc},\upiota)$ be the real double of $(K,s)$.
We start with a bifold metric on $(S^3,K)$ as in Section~\ref{subsec:bifolds_orbi} and pull it back to $(\check Y,\upiota)$ to obtain an $\upiota$-invariant bifold metric.
We can apply $\HMR^{\circ}_*$  to the real double covers of $K$ for all $1$-sets.
\begin{defn}
The \emph{monopole web Floer homology of} $(K,s)$ is the monopole Floer homology of $(\check Y,\upiota)$, summed over all real \spinc structures:
\[
	\HMR^{\circ}_{\bullet}(K,s) = \bigoplus_{(\mathfrak s, \uptau) \in \RSpin^{\sfc}(\check Y,\upiota)} \HMR^{\circ}_{\bullet}(\check Y,\upiota;\fraks,\uptau).
\] 
If we do not specify the $1$-set $s$, then 
\[
	\HMR^{\circ}_{\bullet}(K) = \bigoplus_{s \in \{\text{1-sets of } K\}} \HMR^{\circ}_{\bullet}(K,s).
\]
\end{defn}
\section{Funtoriality and foam evaluation}
\label{sec:functoriality}
In this section, we will define cobordism maps and module structures in $\HMR^{\circ}_*$.
Also, we define an invariant for closed real bifolds and an evaluation for dotted closed foams.
The new feature, compared to \cite{ljk2022}, is that a cobordism can have boundary components with empty real loci.
\subsection{Moduli spaces over 4-bifolds with boundaries}
\label{subsec:mod_space_over_bifold_boundary}
Let $(\check X,\upiota)$ be a compact, connected, oriented real $4$-bifold, and $\check g_X$ be an $\upiota_X$-invariant bifold metric.
We write $\upiota: \check Y \to \check Y$ for the restriction of the involution on the boundary $\del \check X = \check Y$.
Assume $\check X$ contains an isometric copy of $(I \times \check Y, \Id_I \times \upiota)$ for some interval $I = (-C,0]$ with $\del \check X$ identified with $\check Y \times \{0\}$.
Let $(\fraks_X,\uptau_X)$ be a real bifold \spinc structure, and $(\fraks,\uptau)$ on $(\check Y,\upiota)$ be its restriction \spinc structure on the boundary.

For each component $\check Y^{\alpha}$ of the boundary,
choose a large Banach space of tame perturbations $\calP_k(\check Y,\fraks^{\alpha},\uptau^{\alpha})$.
Let
\begin{equation*}
	\mathcal B^{\sigma}_k(Y,\mathfrak s,\uptau) =
	\prod \mathcal B^{\sigma}_k(Y^{\alpha},\mathfrak s^{\alpha},\uptau^{\alpha}),
	\quad
	\mathcal P^{\sigma}_k(Y,\mathfrak s,\uptau) 
	=\prod 
	\mathcal P^{\sigma}_k(Y^{\alpha},\mathfrak s^{\alpha},\uptau^{\alpha}).
\end{equation*}
Let us again suppress ``$\fraks$'' whenever ``$\uptau$'' is present.

The perturbations of the Seiberg-Witten map $\mathfrak F^{\sigma}$ occur on the cylindrical regions.
Let $\beta$ be a cut-off function, equal to $1$ near $t = 0$ and to $0$ near $t = -C$.
Let $\beta_0$ be a bump function with compact support in $(-C,0)$.
Let $\mathfrak q$ and $\mathfrak p_0$ be two elements in $\mathcal P(\check Y,\uptau)$.
Define a section $\hat{\mathfrak p}$ of $\mathcal V_k \to \calC_k(\check X,\uptau_X)$ over the non-blown-up space as
\begin{equation*}
	\hat{\mathfrak p} = \beta \hat{\mathfrak q} + \beta_0 \hat{\mathfrak p}_0.
\end{equation*} 
Over the blown-up configuration space $\mathcal C_k^{\sigma}(\check X,\uptau_X)$, define $\hat{\mathfrak p}^{\sigma}$ by
\[\hat{\mathfrak p}^{0,\sigma}(A,s,\phi) = \hat{\mathfrak p}^0(A,s\phi), \quad 
 	\hat{\mathfrak p}^{1,\sigma}(A,s,\phi) =
 		(1/s)\hat{\mathfrak p}^1(A,s\phi).\]
The \emph{perturbed Seiberg-Witten operator} $\mathfrak F^{\sigma}_{\mathfrak p}$ is a section:
\[
\mathfrak F^{\sigma}_{\mathfrak p}
= \frakF^{\sigma} + \hat{\mathfrak p}^{\sigma}:
\calC^{\sigma}(\check X, \uptau_X) \to \calV^{\sigma}_{k-1}(\check X, \upiota_X)
\] 
with corresponding moduli space of solutions
\begin{equation*}
	M(\check X,\uptau_X) = \{(A,s,\phi)|\mathfrak F^{\sigma}_{\mathfrak p}(A,s,\phi) = 0\}/\mathcal G_{k+1}(\check X,\upiota_X).
\end{equation*}
There is a larger space $\tilde M(\check X, \uptau_X) \subset \tilde B_k^{\sigma}(\check X,\uptau_X)$ defined by dropping the condition $s \ge 0$.

Adjoining an infinite cylinder $(\check Z,\upiota) = ([0,\infty) \times \check Y, \Id \times \upiota)$, we define the configuration space on the end-cylindrical manifold $\check X^* = \check X \cup_{\check Y} \check Z$:
\begin{equation*}
	\mathcal C^{\sigma}_{k,\loc}(\check X^*,\uptau_X) =
	\left\{(A,\reals^{\ge}\phi,\Phi) 
	\in \mathcal A_{k,\loc} \times \mathbb S \times
	L^2_{k,\loc}(\check X^*;S^+)^{\uptau_X} 
	\bigg| \Phi \in \reals^{\ge}\phi \right\}.
\end{equation*}
Instead of taking the unit $L^2$-sphere, we will model the sphere $\bbS$ as the equivalence classes of $\reals^{\ge}\phi$ under positive scaling, as a spinor on $\check X^*$ can have infinite $L^2$-norm.

The perturbed Seiberg-Witten operator $\mathfrak F^{\sigma}_{\mathfrak p} = \frakF^{\sigma} + \hat{\mathfrak p}^{\sigma}$ can be defined over $\check X^*$. 
In particular, the perturbation is set to be $\hat{\mathfrak q}^{\sigma}$ on the cylindrical end $\check Z$, extending the perturbation previously defined on the compact part $\check X$.
Let $
		M(\check X^*,\uptau_X;[\mathfrak b]) \subset
		\mathcal B^{\sigma}_{k,\loc}(\check X^*,\uptau_X)$
be the set of all $[\upgamma]$ such that $\mathfrak F^{\sigma}_{\mathfrak p}(\upgamma) = 0$ and such that the restriction of $[\upgamma]$ is asymptotic to $[\mathfrak b]$ on the cylindrical end $\check Z$. 

The above definitions extend to families.
Let $P$ be a smooth manifold, possibly with boundary.
Suppose $\{g^p:p \in P\}$ is a family of $\upiota_X$-invariant bifold metrics that all contain isometric copies of a collar $(I \times \check Y, \upiota)$.
Assume $\mathfrak p_0^P \in \mathcal P(\check Y,\uptau)$ is a smooth family of perturbations. 
Define a family of perturbations on $(\check X^*,\uptau_X)$:
\begin{equation*}
	\mathfrak p^p = \beta(t)\mathfrak q + \beta_0(t)\mathfrak p^p_0.
\end{equation*}
An individual moduli space is denoted as $M(\check X^*,\uptau_X; [\mathfrak b])_p$ and the total space is a subset of $P \times \mathcal B^{\sigma}_{k,loc}(\check X^*, \uptau_X)$, as the union
\[
	M(\check X^*,\uptau_X; [\mathfrak b])_P
	= 
	\bigcup_p \{p\}
 \times M(\check X^*, \uptau_X; [\mathfrak b])_p	
\]	
It is convenient to describe $M(\check X^*,\uptau_X;[\mathfrak b])$ as the fibre product $\textsf{Fib}(R_+,R_-)$
of the restriction maps $R_{\pm}$ to $\check Y \times \{0\}$:
\begin{align*}
	R_+: M(\check X,\uptau_X) \to \calB^{\sigma}_{k-1/2}(\check Y,\uptau),\quad
	R_-:M(\check Z,\uptau_Z;[\frakb]) \to \calB^{\sigma}_{k-1/2}(\check Y,\uptau)
\end{align*}
where $M(\check X,\uptau_X)$ and $M(\check Z,\uptau_Z;[\frakb])$ 
are infinite-dimensional Hilbert manifolds with boundaries by~\cite[Proposition~24.3.1]{KMbook2007}, regardless of perturbations.
The restriction map 
\[\rho:M(\check X^*,\uptau_X)
	\to M(\check X,\uptau_X) \times M(\check Z,\uptau_Z;[\frakb])
	\] 
is a homeomorphism onto $\textsf{Fib}(R_+,R_-)$, by~\cite[Lemma~24.4.2]{KMbook2007}.
Moreover, the linearization 
\begin{equation}
\label{eqn:linearized_res}
	\calD_{[\upgamma_1]} R_+ + \calD_{\upgamma_2}R_-:
T_{[\upgamma_1]}M(\check X,\uptau_X) \oplus
T_{[\upgamma_2]}M(\check Z,\uptau_Z;[\frakb]) \to T_{[\frakb]}\calB^{\sigma}_{k-1/2}(\check Y,\uptau)
\end{equation} 
is Fredholm~\cite[Lemma~24.4.1]{KMbook2007}. 

Given an element $[\upgamma]$ of the moduli space $M(\check X^*,\uptau_X;[\mathfrak b])$. 
If $[\upgamma]$ is irreducible, then the space $M(\check X^*, \uptau_X;[\mathfrak b])$ is \emph{regular} at $[\upgamma]$, if $R_+$ and $R_-$ are transverse at $\rho[\upgamma]$. 
Suppose $[\upgamma]$ is reducible. 
Then the moduli space is \emph{regular} at $[\upgamma]$, if the restrictions
\[
		R_+: M^{\red}(\check X,\uptau_X) 
		\to \prod_{\alpha} \mathcal B^{\sigma}_{k-1/2} (\check Y^{\alpha}, \uptau^{\alpha}), \quad
		R_-: M^{\red}(\check Z,\uptau_Z; [\mathfrak b]) 
		\to \prod_{\alpha} \mathcal B^{\sigma}_{k-1/2} (\check Y^{\alpha},\uptau^{\alpha})
\]
are transverse at $\rho[\upgamma]$. 
The moduli space $M(\check X^*,\uptau;[\mathfrak b])$ is \emph{regular} if it is regular at all points.
The following regularity theorem can be proved the same way as \cite[Proposition~24.4.7]{KMbook2007}.
\begin{prop}
	Let $\frakq$ be an admissible perturbation for $(\check Y,\uptau)$ and $\frakb$ be a critical point. 
	Let $\hat{\frakp} = \beta(t)\hat{\frakq} + \beta_0(t)\hat{\frakp}_0$ on the $I \times \check Y$ collar of $\check X$. 
	Then there exists a residual subset of $\calP(\check Y,\uptau)$ such that for all $\frakp_0$ in this subset, the moduli space $M(\check X^*,\uptau_X;[\frakb])$ is regular.
	\hfill \qedsymbol
\end{prop}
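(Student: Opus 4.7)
The plan is to adapt the Sard--Smale argument of \cite[Proposition~24.4.7]{KMbook2007} to the $\uptau$-invariant bifold setting, using the perturbation machinery of Proposition~\ref{prop:embedding} and Corollary~\ref{cor:embed_tangent} to supply enough $\calG(\check Y,\upiota)$-invariant variations. Concretely, I would work with the parametrized moduli space
\[
	\mathbf{M}(\check X^*,\uptau_X;[\frakb]) = \bigl\{\,(\frakp_0, [\upgamma]) \in \calP(\check Y,\uptau) \times \calB^{\sigma}_{k,\loc}(\check X^*,\uptau_X) : \frakF^{\sigma}_{\frakp^{\frakp_0}}(\upgamma)=0,\ \lim_{\to}[\upgamma]=[\frakb]\,\bigr\},
\]
using the fibre-product description $M(\check X^*,\uptau_X;[\frakb]) \cong \textsf{Fib}(R_+,R_-)$ that was recalled just before the statement. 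Since $M(\check Z,\uptau_Z;[\frakb])$ is already a Hilbert manifold for an admissible choice of $\frakq$ (by Theorem~\ref{thm:transv_cylinder}), the content of the proposition is that for a residual set of $\frakp_0$, the restriction $R_+$ from the compact piece $M(\check X,\uptau_X)_{\frakp_0}$ becomes transverse to $R_-$.

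First I would verify that $\mathbf{M}$ is a smooth Banach manifold by showing that the universal linearization
\[
	\calD \frakF^{\sigma}_{\frakp} + \mathfrak{O}\bigl(-\bigr) : \calK^{\sigma}_{k,\upgamma} \oplus T_{\frakp_0}\calP(\check Y,\uptau) \to \calV^{\sigma}_{k-1,\upgamma}
\]
is surjective at every solution. The Fredholm part $\calD \frakF^{\sigma}_{\frakp}$ already has closed range, so surjectivity reduces to showing that any cokernel element paired against the image of $\mathfrak{O}$ must vanish. Here I would distinguish the irreducible and the reducible cases. For irreducible solutions, Proposition~\ref{prop:embedding} and its consequence Corollary~\ref{cor:embed_tangent} produce, on any compact transverse slice in the collar $I\times \check Y$, cylinder-function perturbations whose differentials span the fibre of $\calV^{\sigma}$; together with unique continuation for the perturbed Seiberg--Witten equations on $\check X^*$ (the analogue of \cite[Section~11]{KMbook2007} in the $\uptau$-invariant bifold setting, which carries over verbatim by the remarks following Theorem~\ref{thm:compactness_cylinder}), this rules out nonzero cokernel elements. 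For reducible solutions I would apply the same argument to the boundary operator $\calQ^{\del}$ acting on the $\mathbf{i}$-invariant part, which is where the perturbations from coclosed $1$-forms dominate; the key observation is that the reducible loci inside $M(\check X,\uptau_X)$ and $M(\check Z,\uptau_Z;[\frakb])$ are themselves cut out transversely by the $(\bar\del)$-complex, and the same cylinder-function construction separates tangent vectors there by Corollary~\ref{cor:embed_tangent}.

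Once $\mathbf{M}$ is smooth, the projection $\pi:\mathbf{M} \to \calP(\check Y,\uptau)$ is Fredholm (of the same index as~\eqref{eqn:linearized_res}), and the Sard--Smale theorem applied separately on the irreducible stratum and on each reducible stratum of $\mathbf{M}$ yields a residual subset $\calP^{\reg} \subset \calP(\check Y,\uptau)$ of regular values. For $\frakp_0 \in \calP^{\reg}$, the fibre $\pi^{-1}(\frakp_0) = M(\check X^*,\uptau_X;[\frakb])$ is regular in the sense defined before the proposition. Since there are only countably many strata (indexed by the type---irreducible, boundary-stable reducible, boundary-unstable reducible---of $[\frakb]$ and by the relative homotopy classes, which are discrete), the countable intersection of residual sets remains residual.

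The main obstacle is the reducible stratum: the linearization of $\frakF^{\sigma}$ at a reducible configuration no longer surjects onto the full codomain because of the boundary-obstruction phenomenon recorded in Proposition~\ref{prop:class_regular_mod_space}. One must therefore split $\calQ_{\upgamma}$ into its boundary and normal parts $\calQ^{\del}_{\upgamma} \oplus \calQ^{\nu}_{\upgamma}$, verify that the perturbations $\mathfrak{O}(\calP)$ coming from coclosed $(-\upiota^*)$-invariant $1$-forms and $\uptau$-invariant sections of $\bbS$ surject onto $\coker \calQ^{\del}$ (which is the only piece that enters the definition of regularity in the boundary-obstructed case), and leave $\calQ^{\nu}$ alone. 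This is exactly the point where the $\uptau$-invariance of the cylinder-function perturbations in Section~4 pays off and where the argument would fail if one allowed arbitrary perturbations that broke the real structure. Everything else is a faithful translation of \cite[Chapter~24]{KMbook2007} with ``bifold'' and ``$\uptau$-invariant'' inserted.
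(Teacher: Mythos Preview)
Your proposal is correct and is precisely the argument the paper has in mind: the paper gives no proof at all beyond the sentence ``can be proved the same way as \cite[Proposition~24.4.7]{KMbook2007}'' and the \qedsymbol in the statement, and what you have written is a faithful unpacking of that reference adapted to the $\uptau$-invariant bifold setting, with the correct inputs (Proposition~\ref{prop:embedding}, Corollary~\ref{cor:embed_tangent}, unique continuation) identified.
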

A regular moduli space is a manifold possibly with boundary. More precisely, we have the following counterpart~\cite[Proposition~24.4.3]{KMbook2007}, with identical proofs.
\begin{prop}
	Let $[\mathfrak b]$ be a critical point that restricts to $[\mathfrak b^{\alpha}]$ on the $\alpha$-component of $\check Y$.
Suppose moduli space $M(\check X^*,\uptau_X;[\mathfrak b])$ is regular and nonempty.
Then $M(\check X^*,\uptau_X;[\mathfrak b])$ is one of the following:
\begin{enumerate}
		\item[(i)] a smooth manifold containing only irreducibles, if any $[\mathfrak b^{\alpha}]$ is irreducible;
		\item[(ii)] a smooth manifold containing only reducibles, if any $[\mathfrak b^{\alpha}]$ is reducible and boundary-unstable;
		\item[(iii)] a smooth manifold with possibly nonempty boundary, if all $[\mathfrak b^{\alpha}]$ are reducible and boundary-stable.
		The boundary is given by the reducible  elements.
	\end{enumerate}
If case~(ii) occurs and more than one of the $[\mathfrak b^{\alpha}]$ is boundary-unstable, then the solution $[\upgamma]$ is \emph{boundary-obstructed}.
If $c+1$ of the $[\mathfrak b^{\alpha}]$ are boundary-unstable, then $[\upgamma]$ is boundary-obstructed with \emph{corank} c. \hfill \qed
\end{prop}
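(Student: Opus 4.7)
The plan is to analyze $M(\check X^*,\uptau_X;[\frakb])$ via its description as the fibre product $\textsf{Fib}(R_+,R_-)$ of the restriction maps $R_{\pm}$ from $M(\check X,\uptau_X)$ and $M(\check Z,\uptau_Z;[\frakb])$ to the boundary configuration space $\calB^{\sigma}_{k-1/2}(\check Y,\uptau)$. Since both factors are Hilbert manifolds with boundary (the boundary being the reducible locus $\{s=0\}$), and the linearized sum~\eqref{eqn:linearized_res} is Fredholm, the strategy is to combine the transversality assumption built into the definition of regularity with an implicit function theorem argument to obtain the finite-dimensional manifold structure, and then to use the reducible/irreducible dichotomy on the cylindrical end, together with unique continuation, to determine which of the three cases arises.

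First, I would observe that reducibility propagates: by the standard unique continuation theorem for Seiberg-Witten solutions on $4$-bifolds (which, as noted, only requires the ``bifold'' modifier to be inserted everywhere in the proof of~\cite[Section~10.7]{KMbook2007}), if a solution on $\check X^*$ has $\Phi \equiv 0$ on some open subset, then it is reducible globally. On the cylindrical end asymptotic to $[\mathfrak b^{\alpha}]$, the blow-up picture of Section~\ref{subsec:compactification} tells us that a solution is reducible on the end if and only if $s \equiv 0$ along the end. If $[\mathfrak b^{\alpha}]$ is irreducible, then the asymptotic value of $s$ is positive, forcing an irreducible neighbourhood of infinity and hence, by unique continuation, an irreducible solution globally; this establishes case (i). If $[\mathfrak b^{\alpha}]$ is reducible and boundary-unstable, then $\Lambda_{\frakq}(\mathfrak b^{\alpha})<0$, so the normal linear operator $\calQ^{\nu}$ forces any solution with $s>0$ to diverge from the critical set rather than converge to it; hence asymptotic convergence to $[\mathfrak b^{\alpha}]$ forces $s\equiv 0$ on the end, and unique continuation gives reducibility everywhere, establishing case (ii). In case (iii), where every $[\mathfrak b^{\alpha}]$ is boundary-stable, both reducible and irreducible solutions can occur, and the reducible ones are precisely those with $s\equiv 0$ on the closed part $\check X$, hence on $\check X^*$.

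To upgrade these subsets to smooth (possibly bordered) manifolds, I would invoke the regularity hypothesis: the map $R_+\times R_-$ is transverse to the diagonal at every point of the moduli space (restricting to the reducible stratum in case $[\upgamma]$ is reducible). In cases (i) and (ii) the solutions lie in the open interior $\{s>0\}$ or entirely in the boundary $\{s=0\}$ of the constituent moduli spaces, respectively, so the fibre product is modelled locally on a transverse intersection of two Hilbert submanifolds in $\calB^{\sigma}_{k-1/2}(\check Y,\uptau)$ or inside its reducible locus, yielding a smooth manifold without boundary. In case (iii) the boundary arises precisely from the locus where the reducible boundaries of $M(\check X,\uptau_X)$ and $M(\check Z,\uptau_Z;[\frakb])$ meet transversally under $R_{\pm}$; the transversality in the definition of regularity for reducibles guarantees this intersection is cut out cleanly, endowing $M(\check X^*,\uptau_X;[\frakb])$ with the structure of a manifold with boundary whose boundary is exactly the reducible locus.

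The main obstacle is a bookkeeping issue rather than a conceptual one: keeping track of which components of $\del\check X$ contribute reducible-stable, reducible-unstable, or irreducible asymptotic data, and checking that the two notions of regularity (for irreducible solutions transverse intersection of $R_{\pm}$ as maps to $\calB^{\sigma}_{k-1/2}(\check Y,\uptau)$, and for reducible solutions transverse intersection as maps into the reducible stratum) really do combine to give the claimed local model. This is exactly what the proof of~\cite[Proposition~24.4.3]{KMbook2007} does, and since the only modification in our setting is the insertion of the adjective ``bifold'' (together with the $\uptau$-invariance carried over from~\cite{ljk2022}), the argument transfers without essential change.
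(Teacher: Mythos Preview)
Your proposal is correct and follows the same approach as the paper, which simply records that the statement is the counterpart of \cite[Proposition~24.4.3]{KMbook2007} ``with identical proofs'' and gives no further argument. You have in fact supplied more detail than the paper does, correctly identifying unique continuation for the reducibility dichotomy and the fibre-product transversality for the manifold structure.
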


Suppose we are in the setup of parametrized moduli spaces above.
Given $(p,[\upgamma])$ in $M(\check X^*,\uptau_X;[\frakb])_P$, set $\rho[\upgamma] = ([\upgamma_0],[\upgamma_1])$. 
If $[\upgamma]$ is irreducible, then the moduli space $M(\check X^*,\uptau_X;[\frakb])_P$ is \emph{regular} at $(p,[\upgamma])$ if the maps
\[
	R_+:M(\check X^*,\uptau_X;[\frakb])_P \to \calB^{\sigma}_{k-1/2}(\check Y,\uptau), \quad
	R_-:M(\check Z,\uptau_Z;[\frak b]) \to \calB^{\sigma}_{k-1/2}(\check Y,\uptau)
\]
are transverse at $(p,[\upgamma]),[\upgamma_1])$.
On the other hand, assume $[\upgamma]$ is reducible.
Then the moduli space is \emph{regular} at $[\upgamma]$ if the restrictions
\[
	R_+:M^{\red}(\check X,\uptau_X)_P
	\to
	\prod_{\alpha} \del \calB^{\sigma}_{k-1/2}(\check Y^{\alpha},\uptau^{\alpha}),
	\quad
	R_-:M^{\red}(\check Z,\uptau_Z;[\frakb]) \to \prod_{\alpha} \del \calB^{\sigma}_{k-1/2}(\check Y^{\alpha},\uptau^{\alpha})
\]
are transverse at $(p,[\upgamma],[\upgamma_1])$.
The arguments in \cite[Propositon~24.4.10]{KMbook2007} applies verbatim to the family transversality theorem below.
\begin{prop}
	Let $\frakq$ be an admissible perturbation for $(\check Y,\upiota)$ and $\hat{\frakp}^p$ be a family of perturbations, so that $\frakp^{p} = \beta\frakq + \beta_0\frakp^p_0$ as above.
	Let $P_0 \subset P$ be a closed subset such that $M(\check X^*,\uptau_X;[\frakb])_P$ is regular at points $(p_0,[\upgamma])$ for $p_0 \in P_0$.
	Then there is a new family of perturbations $\tilde \frakp^p$ satisfying
	$\tilde\frakp^p = \frakp^p$, for all $p\in P_0$,
	such that the moduli space $M(\check X^*,\uptau_X;[\frakb])_P$ for $\tilde \frakp^p$ is regular everywhere. 
	\qed 
\end{prop}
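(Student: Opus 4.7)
The plan is to follow the strategy of \cite[Proposition~24.4.10]{KMbook2007}, adapting it to the $\uptau$-invariant bifold setting. The key principle is that the transversality analysis is essentially local in nature: the surjectivity of the linearized universal Seiberg-Witten map at an irreducible (or the corresponding boundary-stratum operator at a reducible) can be established pointwise via perturbations supported in small neighbourhoods, and the only modifications needed from \cite{KMbook2007} are the ones already made for earlier transversality results in this section.

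First, I would set up a \emph{universal moduli space} parametrized by $P$ together with an auxiliary Banach space $\calP^{rel}$ of allowable deformations of the family $\frakp^p$. The space $\calP^{rel}$ is constructed from the large Banach space $\calP(\check Y,\uptau)$ so that every $\varphi \in \calP^{rel}$ vanishes identically on $P_0$ (using a cutoff function on $P$ that vanishes on a neighbourhood of $P_0$ and equals $1$ outside a slightly larger neighbourhood). This guarantees that any perturbation of the form $\frakp^p + \varphi^p$ agrees with $\frakp^p$ on $P_0$, so regularity there is automatically preserved. Form the universal moduli space
\[
	\mathcal{M}^{univ} = \{(p, \varphi, [\upgamma]) : [\upgamma] \in M(\check X^*,\uptau_X;[\frakb])_{p,\varphi}\},
\]
realized as a fibre product as in Section~\ref{subsec:mod_space_over_bifold_boundary}.

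Next, I would show that $\mathcal{M}^{univ}$ is cut out transversely. At an irreducible point $(p_0,\varphi_0,[\upgamma_0])$, the universal linearization is the map in~\eqref{eqn:linearized_res} augmented by the derivative in the $\varphi$ direction; surjectivity reduces to showing that the range of the perturbation derivative is dense in a certain $L^2$-space of sections on $\check X^*$. This is precisely where Proposition~\ref{prop:embedding} and Corollary~\ref{cor:embed_tangent} enter: they ensure that cylinder functions built from $(-\upiota^*)$-invariant forms and $\uptau$-invariant sections of $\bbS$ separate directions in $\calC(\check Y,\uptau)$, which via the cylindrical structure on $\check X^*$ translates into density of the perturbation image. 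At reducibles one instead works with the boundary operator, again using the fact (Proposition~\ref{prop:embedding}) that the tangent directions to the reducible locus can be detected by such cylinder-type functionals.

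After universal transversality, I would apply the Sard-Smale theorem to the projection $\mathcal{M}^{univ} \to \calP^{rel}$ (separately on the irreducible stratum and each boundary-stratum type to handle case (iii) and boundary-obstructed solutions of various coranks), obtaining a residual subset of $\calP^{rel}$ for which the corresponding moduli space is regular in the sense of Section~\ref{subsec:mod_space_over_bifold_boundary}. Picking any $\varphi$ in this residual set that is sufficiently small so as not to destroy admissibility of $\frakq$ at infinity, set $\tilde{\frakp}^p = \frakp^p + \varphi^p$. By construction $\tilde{\frakp}^p = \frakp^p$ on $P_0$, and the resulting moduli space is regular everywhere on $P$.

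The main obstacle I anticipate is the reducible/boundary-obstructed case: one needs to verify that the transversality of the \emph{boundary} operators $R_{\pm}$ on $M^{\red}$ can be achieved using only $\uptau$-invariant, bifold-type perturbations vanishing on $P_0$. This is handled by the $\uptau$-invariant version of Proposition~\ref{prop:embedding} exactly as in the proof of Theorem~\ref{thm:transv_crit}, together with the factor-$\frac12$ spectral-flow bookkeeping from \cite[Lemma~8.16]{ljk2022}, which has no effect on the surjectivity arguments but must be respected when stratifying by critical-point type.
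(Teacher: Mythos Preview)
Your proposal is correct and is precisely the approach the paper takes: the paper's entire proof is the one-line remark that the arguments of \cite[Proposition~24.4.10]{KMbook2007} apply verbatim, and your outline is an accurate unpacking of that proposition in the $\uptau$-invariant bifold setting. The only superfluous item is the final remark about the $\tfrac12$ spectral-flow factor, which, as you yourself note, plays no role in the surjectivity argument and can be dropped.
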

Let $\boldsymbol{\calB}^{\sigma}_k(\check X, \upiota_X)$ and $\boldsymbol{\calB}^{\sigma}_{k,loc}(\check X, \upiota_X)$ be the union of $\mathcal B^{\sigma}_k(\check X^*, \fraks_X, \uptau_X)$, $\mathcal B^{\sigma}_{k,loc}(\check X^*, \fraks_X,\uptau_X)$ over all real bifold \spinc structures $(\fraks_X,\uptau_X)$, respectively.
Moreover, let $M_k(\check X, \upiota_X;[\frakb])$ be the union over all $M(\check X^*,\fraks_X,\uptau_X;[\frakb])$ for which the restriction of $(\fraks_X,\uptau_X)$ to $\check Y$ is $(\fraks,\uptau)$, as a subset of $\boldsymbol{\calB}^{\sigma}_{k,loc}(\check X, \upiota_X)$.
Define $\boldB^{\sigma}(\check X, \upiota_X;[\frakb])$ to be the fibre over $[\frakb]$ of the restriction map
\[
	\boldB^{\sigma}(\check X,\upiota_X) 
	\dashrightarrow 
	\boldB^{\sigma}(\check Y,\uptau).
\]
We can partition $M(\check X, \upiota_X;[\frakb])$
according to homotopy classes $z \in \pi_0(\boldB^{\sigma}(\check X, \upiota_X;[\mathfrak b]))$:
\[
M(\check X, \upiota_X;[\frakb])=
\bigcup_z M_z(\check X, \upiota_X;[\frakb]).
\]
Fix such a $z$.
Let $[\upgamma] \in \mathcal B^{\sigma}(\check X,\upiota_X;[\mathfrak b])$ be represented by $\upgamma$, and
let $[\upgamma_{\mathfrak b}]$ be the constant trajectory corresponding to $[\frakb]$ in $\mathcal B^{\tau}(\check Z,\uptau_Z)$.
Then there is a Fredholm operator $\calQ^{\sigma}_{\upgamma} = \calD_{\upgamma}\frakF^{\sigma}_{\frakp} \oplus  \mathbf{d}^{\sigma,\dag}_{\upgamma}$ on $\check X$, analogous to \eqref{eqn:Q_operator} (see also~\cite[Equation~(24.7)]{KMbook2007}).
Also, there is a translation invariant operator $\mathcal Q_{\upgamma_{\mathfrak b}}$ on $\check Z$.
Consider the restriction maps on the respective kernels of $\calQ^{\sigma}_{\upgamma}$ and $\mathcal Q_{\upgamma_{\mathfrak b}}$:
\[r_+ : \Ker(\mathcal Q^{\sigma}_{\upgamma}) 
	\to L^2_{k-1/2}(Y; iT^*\check Y \oplus S \oplus i\reals)^{-\uptau} 
\text{ and }
r_- : \Ker(\mathcal Q_{\upgamma_{\mathfrak b}}) 
	\to L^2_{k-1/2}(Y; iT^*\check Y \oplus S \oplus i\reals)^{-\uptau}.
\]
	
We define $\gr_z(\check X,\upiota_X;[\mathfrak b])$ to be the index of 
\begin{equation*}
	r_+ - r_-:\ker(\mathcal Q^{\sigma}_{\upgamma}) \oplus \ker(\mathcal Q_{\upgamma_{\mathfrak b}})
	\to L^2_{k-1/2}(Y; iT^*Y \oplus S \oplus i\reals)^{-\uptau}.
\end{equation*}
If a nonempty regular moduli space boundary-unobstructed, then its dimension is equal to $\gr_z(\check X,\upiota_X;[\mathfrak b])$.
If $M_z(\check X,\upiota_X;[\mathfrak b])$ is boundary-obstructed of corank $c$, then its dimension is $\gr_z(\check X,\upiota_X;[\mathfrak b]) + c$.
\begin{defn}
	Given a critical point $[\mathfrak b]$,
	a \emph{broken $\check X$-trajectory asymptotic to $[\mathfrak b]$} comprises the data of
	\begin{itemize}
		\item an element $[\upgamma_0]$ in a moduli space $M_{z_0}(\check X^*,\upiota_X;[\mathfrak b_0])$; and
		\item for each component $\check Y^{\alpha}$, an unparametrized broken trajectory $[\check{\upgamma}^{\alpha}]$ in a moduli space $\check M_{z^{\alpha}}^+([\mathfrak b_0^{\alpha}],[\mathfrak b^{\alpha}])$, where $[\mathfrak b_0^{\alpha}]$ is the restriction of $[\mathfrak b_0]$ to $\check Y^{\alpha}$.	
	\end{itemize}
In practice, we denote a broken $\check X$-trajectory by $([\upgamma_0],[\boldsymbol{\check{\upgamma}}])$, where $[\boldsymbol{\check{\upgamma}}]$ represent the possibly empty collection $[\check\upgamma^{\alpha}_i]$ of unparametrized trajectories on the components $\check Y^{\alpha}$, $1 \le i \le n^{\alpha}$.
Moreover, if $z_1$ is the homotopy class of paths from $[\mathfrak b_0]$ to $[\mathfrak b]$ whose $\alpha$-th component is $z^{\alpha}$, then the \emph{homotopy class} of the broken $\check X$-trajectory is the element
	\begin{equation*}
		z = z_1 \circ z_0 \in \pi_0(\boldB^{\sigma}(\check X,\upiota_X;[\mathfrak b])).
	\end{equation*} 
\end{defn}
\begin{defn}
Let $M^+_z(\check X^*,\upiota_X;[\mathfrak b])$ be the space of $X$-trajectories in the homotopy class $z$, topologized in the same way as~\cite[Page~485-486]{KMbook2007}.
Furthermore, let $\bar M_z(\check X^*,\upiota_X;[\mathfrak b])$ be the image of  $M^+_z(\check X^*,\upiota_X;[\mathfrak b])$ under the map
\[
r: M^+_z(\check X^*,\upiota_X;[\mathfrak b]) \to
\boldB^{\sigma}_{k,loc}(\check X^*,\upiota_X), \quad
([\upgamma_0],[\check{\boldsymbol{\upgamma}}])
\mapsto
[\upgamma_0].
\]
In particular, $\bar M_z(\check X^*,\upiota_X;[\mathfrak b])$ is a smaller (coarser) compactification.
For a family over $P$, the compactification is defined fibrewise:
\begin{equation*}
	M^+(\check X^*,\upiota_X;[\mathfrak b])_P
	= 
	\bigcup_{p} \{p\} \times M^+(\check X^*,\upiota_X;[\mathfrak b])_p.
\end{equation*}
Similarly, we define $\bar M_z(X^*,\upiota_X;[\mathfrak b_0])_P$ as the image of the map
\[
M^+_z(\check X^*,\upiota_X;[\mathfrak b])_P \to P \times
\boldsymbol{\calB}^{\sigma}_{k,loc}(\check X^*,\upiota_X), \quad
(p,[\upgamma_0],[\check{\boldsymbol{\upgamma}}])
\mapsto
(p,[\upgamma_0]).
\]
\end{defn}
We state the compactness theorem for families, which contains the case when $P = \text{point}$.
\begin{thm}
If $M(\check X^*,\upiota_X;[\mathfrak b])_p$ is regular for every $[\mathfrak b]$ and $p$, then for each $[\frakb]$, the map $M_z^+(\check X^*,\upiota_X;[\mathfrak b])_P \to P$ is proper.
\end{thm}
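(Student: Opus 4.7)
The plan is to adapt the properness argument of \cite[Theorem~24.6.2]{KMbook2007} to our real bifold setting, essentially by inserting the adjective ``bifold'' everywhere and checking that $\uptau$-invariance is preserved under the limiting procedure. Fix a critical point $[\frakb]$ and a homotopy class $z$, let $K \subset P$ be compact, and take a sequence $(p_n,[\upgamma_0^n],[\check{\boldsymbol{\upgamma}}^n])$ in $M_z^+(\check X^*,\upiota_X;[\frakb])_P$ with $p_n \to p_\infty \in K$. I want to extract a subsequence converging, in the topology of $M_z^+$, to a broken $\check X$-trajectory over $p_\infty$. Since the topological energy (i.e.\ the change in the perturbed Chern-Simons-Dirac functional along the cylindrical end plus the curvature-plus-spinor energy on $\check X$) is determined by $z$ and $[\frakb]$, the sequence is uniformly bounded in energy, using the bounds for $k$-tame real cylinder functions exactly as in \cite[Section~24.5]{KMbook2007}.

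The argument then proceeds in two stages. First, working on the compact core $\check X$, I would apply the interior bifold Seiberg-Witten compactness results (the orbifold analog of \cite[Section~10.7, 24.5]{KMbook2007}, whose proofs undergo only cosmetic modifications by the remark preceding Section~\ref{subsec:compactification} and Theorem~\ref{thm:compactness_cylinder}) to pass to a subsequence converging in $C^\infty_{\loc}$ up to real gauge to a limit $[\upgamma_0^\infty]$ over $p_\infty$. The $\uptau_X$-invariance is preserved under $C^\infty_{\loc}$ convergence, so the limit lies in $\calB^\sigma_{k,\loc}(\check X^*,\upiota_X)$. Second, on the cylindrical end $[0,\infty)\times \check Y$, I apply the cylinder compactness result of Theorem~\ref{thm:compactness_cylinder} together with the non-degeneracy of the critical points (guaranteed by Theorem~\ref{thm:transv_crit}) to extract a limiting unparametrized broken trajectory $[\check{\boldsymbol{\upgamma}}^\infty_\alpha]$ on each boundary component $\check Y^\alpha$, joining the restriction $[\frakb_0^\alpha]$ of $[\upgamma_0^\infty]$ to the asymptotic $[\frakb^\alpha]$. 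The standard bound on the number of breakings by the total energy ensures each $[\check{\boldsymbol{\upgamma}}^\infty_\alpha]$ has finitely many components.

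To verify that the resulting data $([\upgamma_0^\infty],[\check{\boldsymbol{\upgamma}}^\infty])$ has homotopy class $z$, one combines the $C^\infty_{\loc}$ convergence on $\check X$ with the stratified convergence on each end, exactly as in \cite[Theorem~24.6.2]{KMbook2007}. The family parameter $p$ is handled by a local trivialization near $p_\infty$: since $\{g^p\}$ and $\{\frakp^p\}$ depend smoothly on $p$, one can pull back all configurations to the fixed metric $g^{p_\infty}$ and apply the above argument to the modified, $p$-dependent Seiberg-Witten operator, whose coefficients remain $C^\infty_{\loc}$-bounded as $p_n \to p_\infty$.

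The main obstacle is purely bookkeeping rather than analytic: one must track that reducibles, boundary-obstructed strata, and the possibility of jumps in the real \spinc structure (or in which facet data is complex vs.\ real) do not produce extra limit points outside the target space. The reality constraint and bifold singularities contribute no new phenomena because the relevant unique-continuation, Coulomb-slice, and Garding-type inequalities already hold in the $\uptau$-invariant bifold category by the results quoted in Section~\ref{sec:analysis_of_sw_traj}; and the possible asymptotic \spinc structures compatible with $(\fraks,\uptau)$ and homotopy class $z$ form a finite set, so a further subsequence lands in a single component $M_z^+(\check X^*,\upiota_X;[\frakb])_P$, establishing properness.
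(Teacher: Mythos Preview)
Your proposal is correct and follows the same approach as the paper: both reduce the family compactness to the single-metric compactness and then to the cylinder compactness (Theorem~\ref{thm:compactness_cylinder}), deferring to \cite[Section~24.5--6]{KMbook2007} with only notational adjustments for the real bifold setting. Your outline is in fact more detailed than the paper's proof, which simply cites the relevant sections of \cite{KMbook2007} and asserts that the arguments carry over.
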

\begin{proof}
The proof for families can be adapted from compactness of a single moduli space, which in turn, from the compactness on a cylinder, i.e. Theorem~\ref{thm:compactness_cylinder}.
The proof in the ordinary monopole setting is in \cite[Section~24.5-6]{KMbook2007}, which requires merely notational adjustments for real bifolds.
\end{proof}
The compactification $M_z^+(\check X^*,\upiota_X;[\mathfrak b])_P$ is a disjoint union of subspaces of the form
\begin{equation}
\label{eqn:elt_in_famil_cpct}
	M_{z_0}(\check X^*,\upiota_X;[\mathfrak b_0])_P
	\times
	\prod_{\alpha}
	\check M^+_{z^{\alpha}}([\mathfrak b^{\alpha}_0],[\mathfrak b^{\alpha}]),
\end{equation}
where each of the factors is stratified by manifolds.
We write a typical element $([\upgamma_0],[\boldsymbol{\upgamma}])$ of $M_z^+(\check X^*,\upiota_X;[\mathfrak b])_P$ as an element in  \eqref{eqn:elt_in_famil_cpct} above, where the $\alpha$-th factor of $[\boldsymbol{\upgamma}]$
consists of $n^{\alpha}$ components, such that each component is of the form
\[
	[\check\upgamma_i^{\alpha}] \in \check M_{z_i^{\alpha}}([\frakb^{\alpha}_{i-1}], [\frakb^{\alpha}_i]),
\] 
for $1 \le i \le n^{\alpha}$.

The structure near the codimension-$1$ strata in compactified moduli spaces is summarized in the following Proposition and Theorem.
They correspond to Proposition~24.6.10 and Theorem~24.7.2 in \cite{KMbook2007}, whose proofs can be translated into out setup.
\begin{prop}
\label{prop:d1strata_X}
Assume $M_z(\check X^*, \upiota_X;[\frakb])_P$ is a $d$-dimensional moduli space that contains irreducible solutions.
	Then both $M_z^+(\check X^*,\upiota_X;[\mathfrak b])_P$ and $\bar M_z(\check X^*,\upiota_X;[\mathfrak b])_P$ are $d$-dimensional spaces stratified by manifolds, where $M_z(\check X^*,\upiota_X;[\frakb])_P$ constitute the top stratum. 
Then the elements of the $(d-1)$-dimensional stratum in $M^+_z(\check X^*, \upiota_X;[\frakb])_P$ satisfies one of the following properties.
\begin{enumerate}[(i)]
\item There is a unique component $\alpha_*$ for which $n^{\alpha_*} = 1$, while all other $n^{\alpha} = 0$. Neither $[\check\upgamma_1^{\alpha_*}]$ nor $[\upgamma_0]$ can be boundary-obstructed.
\item There is a unique component $\alpha_*$ for which $n^{\alpha_*} = 2$, while all other $n^{\alpha} = 0$.
	In this case, $[\check\upgamma_1^{\alpha_*}]$ is boundary-obstructed while $[\check\upgamma_2^{\alpha_*}]$  and $[\upgamma_0]$ are not.
\item The solution $[\upgamma_0]$ is boundary-obstructed of corank-$c$, and $n^{\alpha_*}$ for exactly $c+1$ components, and other $n^{\alpha}$ are zero. All such $\check\upgamma_1^{\alpha_*}$ cannot be boundary-obstructed.
\item Unbroken and reducible. In this case $M_z(\check X^*,\upiota_X;[\frakb])$ contains both irreducibles and reducibles.
\item Unbroken, irreducible, and lies above $\del P$.
\end{enumerate}
In the first three cases, if any of the moduli spaces contains both reducibles and irreducibles, then only the irreducible contribute to the $(d-1)$-dimensional stratum.
The $(d-1)$-dimensional strata in $\bar M_z(\check X^*,\upiota_X;[\mathfrak b_0])_P$ are image under $r$ of the strata described above with the additional constraint that, in the first three cases, each of the $[\upgamma_i^{\alpha_*}]$ belongs to an $1$-dimensional moduli space. \qed
\end{prop}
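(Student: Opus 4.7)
The plan is to identify the codimension-one strata of $M_z^+(\check X^*,\upiota_X;[\mathfrak b])_P$ by a dimension count performed stratum by stratum, then invoke the gluing and regularity theorems already transported from Kronheimer--Mrowka to the real bifold setting in Section~\ref{subsec:gluing}. The natural stratification of $M_z^+(\check X^*,\upiota_X;[\mathfrak b])_P$ is given by the product decomposition in \eqref{eqn:elt_in_famil_cpct}; each stratum is a product of a moduli space on $\check X^*$ with families of unparametrized broken trajectories on the cylinders over the boundary components $\check Y^{\alpha}$, indexed by the integers $n^{\alpha}$ and by the intermediate rest points.

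First I would record the dimension formulas needed. By Proposition~\ref{prop:class_regular_mod_space}, an unparametrized cylindrical factor $\check M_{z_i^{\alpha}}([\mathfrak b^{\alpha}_{i-1}],[\mathfrak b^{\alpha}_i])$ has dimension $\gr_{z_i^{\alpha}}([\mathfrak b^{\alpha}_{i-1}],[\mathfrak b^{\alpha}_i])-1$ in the unobstructed case and $\gr_{z_i^{\alpha}}([\mathfrak b^{\alpha}_{i-1}],[\mathfrak b^{\alpha}_i])$ in the boundary-obstructed case. The $\check X^*$-factor $M_{z_0}(\check X^*,\upiota_X;[\mathfrak b_0])_P$ has dimension $\gr_{z_0}(\check X,\upiota_X;[\mathfrak b_0])+\dim P$ in the unobstructed case and that value plus the corank $c$ in the boundary-obstructed case. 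Using additivity of $\gr_z$ under concatenation ($z=z_1\circ z_0$ with $z_1$ assembled from the $z_i^{\alpha}$), the dimension of the stratum \eqref{eqn:elt_in_famil_cpct} is
\[
 d \;-\; \sum_{\alpha}n^{\alpha} \;+\; (\text{corank of }[\upgamma_0]) \;+\; \#\{\text{boundary-obstructed cylindrical factors}\},
\]
where $d=\gr_z(\check X,\upiota_X;[\mathfrak b])+\dim P$ is the dimension of the top stratum $M_z(\check X^*,\upiota_X;[\mathfrak b])_P$.

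Second, I would enumerate when this quantity equals $d-1$. The contribution from parameter-space boundary accounts for case~(v) and the stratification of the interior by reducibles accounts for case~(iv), exactly as in the closed-end proof. The remaining three cases correspond to the combinatorial solutions of $\sum n^{\alpha}=1+(\text{number of obstructed cylindrical factors})+(\text{corank of }[\upgamma_0])$ with the constraint that all adjacencies are compatible with the boundary-stable/unstable decomposition $\mathfrak C^s\sqcup \mathfrak C^u$. In case~(i) there is a single unobstructed cylindrical segment on one component $\alpha_*$; in case~(ii) the cylindrical broken trajectory on $\alpha_*$ has length two with the middle piece boundary-obstructed (and the extra $+1$ from this obstruction exactly cancels one of the two $-1$'s from $n^{\alpha_*}=2$); in case~(iii) the $\check X$-piece is boundary-obstructed of corank $c$, and the $c+1$ unstable boundary endpoints must each be capped off by one length-one unobstructed cylindrical trajectory, yielding $\sum n^{\alpha}=c+1$ and net drop one. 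In each case the requirement that the configuration ``make sense'' as an endpoint of the unobstructed stratum forces the stated boundary-stability conditions on the intermediate rest points, following the same matching argument as in~\cite[Proposition~24.6.10]{KMbook2007}.

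Finally, to promote this dimension bookkeeping to an actual stratification I would appeal to the gluing framework of Section~\ref{subsec:gluing} together with the family regularity proposition to assert that every configuration of type (i)--(v) is realized as a genuine codimension-one stratum, and that no other strata occur. The analytical input (Fredholm property and transversality of the linearization \eqref{eqn:linearized_res}, unique continuation, and the compactness theorem for $M^+_z(\check X^*,\upiota_X;[\mathfrak b])_P\to P$) has been stated or directly imported in Sections~\ref{sec:analysis_of_sw_traj} and \ref{subsec:mod_space_over_bifold_boundary}; only the notational adjustments for the bifold singularity and for the $\uptau$-invariance are needed, and those are purely cosmetic. The one genuinely delicate point, which I expect to be the main obstacle, is case~(iii): one must verify that at a corank-$c$ boundary-obstructed $[\upgamma_0]$ the $c$-dimensional cokernel of $\calQ^{\sigma}_{\upgamma_0}$ is precisely spanned by obstruction classes localized at the $c+1$ boundary-unstable ends, so that gluing in one length-one cylindrical trajectory at any $c+1$ of those ends cancels the obstruction transversely. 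This is exactly the obstructed-gluing analysis of~\cite[Section~24.7]{KMbook2007}, which carries over once we have verified (as in Section~\ref{sec:real_bifolds}--\ref{subsec:SW_config}) that the real bifold Dirac operator splits the normal cokernel into the expected boundary contributions.
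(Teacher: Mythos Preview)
Your proposal is correct and follows the same route the paper implicitly takes: the paper gives no proof beyond the terminal \qedsymbol, simply transporting \cite[Proposition~24.6.10]{KMbook2007} to the real bifold setting, and your dimension count is exactly the argument underlying that reference. One small overreach: the ``delicate point'' you flag in case~(iii) is not needed for this proposition, which is purely a dimension-bookkeeping statement identifying which strata have codimension one; the obstructed-gluing analysis you describe belongs to the subsequent $\delta$-structure theorem (the analogue of \cite[Theorem~24.7.2]{KMbook2007}), not here.
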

\begin{thm}
	Suppose the moduli space $M_z(\check X^*,\upiota_X;[\mathfrak b])_P$ is $d$-dimensional and contains irreducibles, so that $M^+(\check X,\upiota_X;[\mathfrak b])$ is a $d$-dimensional space stratified by manifolds having $M_z(\check X^*,\upiota_X;[\mathfrak b])_P$ as its top stratum.
	Let $M' \subset M_z^+(\check X^*,\upiota_X;[\mathfrak b])_P$ be any component of the codimension-1 stratum. Then along $M'$, the moduli space $M^+(\check X^*,\upiota_X;[\mathfrak b])_P$ is either a $C^0$-manifold with boundary, or has a codimension-$c$ $\delta$-structure.
	\qed 
\end{thm}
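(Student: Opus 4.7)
The plan is to run the gluing analysis case-by-case against the classification of codimension-one strata given in Proposition~\ref{prop:d1strata_X}. Since the compactification $M^+_z(\check X^*,\upiota_X;[\frakb])_P$ is locally built from the factored models in~\eqref{eqn:elt_in_famil_cpct}, the task near a stratum $M'$ reduces to producing a gluing parametrization that either extends $M'$ to an honest $C^0$-collar or exhibits the Definition~\ref{defn:delta2str} data $(EW, j, \mathbf{S}, \delta)$.

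First I would dispose of the three ``unobstructed'' cases of Proposition~\ref{prop:d1strata_X}: type (i) (one two-component break whose cylindrical factor is not boundary-obstructed), together with types (iv) and (v) (unbroken reducible strata where reducibles border irreducibles, and unbroken irreducibles lying over $\del P$). In each of these, the linearized restriction map~\eqref{eqn:linearized_res} is surjective on both sides of the break, so the standard Fredholm gluing construction from \cite[Section~19]{KMbook2007}, in the form adapted to the real invariant setting in \cite[Section~10]{ljk2022}, produces a gluing parameter $T \in (T_0,\infty]$ (or $[0,\epsilon)$ for the boundary-of-$P$ case) identifying a neighborhood of $M'$ with $M' \times [0,\epsilon)$. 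This yields the $C^0$-manifold-with-boundary structure. The only wrinkle is the family parameter $p \in P$, handled by the parametrized transversality proposition already quoted, and the reducible cases of (iv), which are treated exactly as the $\bar\del$-versus-$\del$ matching in Proposition~22.1.4 of \cite{KMbook2007}, with no new difficulty coming from the bifold singular set since the gluing is supported away from it in the cylindrical region.

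The substantive content is in cases (ii) and (iii), where a boundary-obstructed factor appears. For case (ii) a single cylindrical factor $[\check\upgamma_1^{\alpha_*}]$ is boundary-obstructed and sandwiched between unobstructed pieces; the relevant cokernel of $\calD\frakF^{\tau}_{\frakq}$ is one-dimensional, supplied by $\calQ^{\nu}_{\upgamma}$ acting on the normal $\reals$-factor of the $\tau$-blowup, just as in the cylinder case handled in Section~\ref{subsec:compactification}. Two gluing parameters $(T_1,T_2) \in (T_0,\infty]^2$ appear, and the obstruction gives rise to a smooth function $\delta(T_1,T_2)$ on the gluing chart whose zero locus parametrizes actual solutions. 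I would verify the five axioms of Definition~\ref{defn:delta2str} with $c=1$ exactly as in the proof of \cite[Theorem~19.5.4]{KMbook2007}, checking in particular that the sign of $\delta$ at the infinity ends is controlled by $\Lambda_{\frakq}$ at the intermediate critical point (axiom~(iv)) and that $\delta$ is transverse to zero on the interior (axiom~(v)). For case (iii), $[\upgamma_0]$ itself is boundary-obstructed of corank $c$; each of the $c+1$ unstable boundary components produces one gluing parameter $T_{\alpha_*}$ and one entry of a $(c+1)$-vector obstruction map, and the hyperplane condition $\sum \delta_i = 0$ in Definition~\ref{defn:delta2str} arises naturally because the sum of the normal cokernel contributions vanishes by the index count packaged into $\gr_z(\check X,\upiota_X;[\frakb]) + c$.

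The main obstacle I anticipate is the verification of axiom~(iv) of the $\delta$-structure in case (iii)--specifically, showing that the components $\delta_{i_0}$ are each nonpositive on the face $S_{i_0}=\infty$ with equality only at the deepest stratum. This requires a careful identification of the higher-order term in the Kuranishi obstruction with the spectral data at the boundary-unstable critical points on each $\check Y^{\alpha_*}$, as in the corank-$c$ gluing model of \cite[Section~19.4]{KMbook2007}. Because all the relevant cokernels live in the $\mathbf{i}$-invariant part $\calQ^{\nu}_{\upgamma}$ of $\calQ_{\upgamma}$, the $\uptau$-invariance and the bifold singularities play no role in the obstruction calculation: they enter only in restricting the allowed domain of gauge transformations, and the gluing is purely cylindrical-end analysis away from $\Sing^{\sfc}$. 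Once axioms (iv) and (v) are in hand, axioms (i)--(iii) follow from the submersion property of the gluing parameters and the fact that $j$ embeds $W$ as the zero set of $\delta$ by construction. Combining all cases gives the dichotomy claimed in the theorem.
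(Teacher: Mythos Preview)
Your proposal is correct and follows the approach of \cite[Theorem~24.7.2]{KMbook2007}, which is the source the paper is invoking. Note that the paper itself gives no proof: the statement ends with a bare \qedsymbol, in keeping with the blanket remark at the start of Section~\ref{sec:analysis_of_sw_traj} that all results there are proved by inserting the word ``bifold'' into the arguments of \cite{KMbook2007} and \cite{ljk2022}. Your case-by-case sketch against Proposition~\ref{prop:d1strata_X}, distinguishing the unobstructed strata (i), (iv), (v) from the boundary-obstructed strata (ii), (iii), and identifying the $\delta$-structure data with the normal obstruction $\calQ^{\nu}_{\upgamma}$, is exactly what one finds in \cite[Section~19 and Theorem~24.7.2]{KMbook2007}, so there is nothing to add.
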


\subsection{Moduli spaces over cobordisms}
We switch notations.
Suppose $(\check W, \upiota_W)$ is an cobordism with oriented boundaries $\del \check W = - (\check Y_-,\upiota_-) \sqcup (\check Y_+,\upiota_+)$.
Fix perturbations $(\mathfrak p_-,\mathfrak p_+)$ supported over the collars of $\check Y_{\pm}$.
The end-cylindrical manifolds are formed by adding $(-\infty,0] \times \check Y_-$ and $\check Y_+ \times [0,\infty)$.
Denote the disjoint union of configuration spaces, over real bifold spin\textsuperscript{c} structures, as
\[\boldsymbol{\mathcal B}^{\sigma}(\check M, \upiota) 
	= \coprod_{(\mathfrak s,\uptau) \in \SpincR(\check M,\upiota)}
	\mathcal B^{\sigma}(\check M, \upiota;\mathfrak s, \uptau),\] 
where $\check M = \check W,\check Y_-,\check Y_+$.
Let $[\fraka]$ and $[\frakb]$ be critical points on $\check Y_-$ and $\check Y_+$, respectively.
Let $M([\fraka],\check W^*,\upiota_W,[\frakb])$ be the moduli space of Seiberg-Witten solutions.

A \emph{$\check W$-path} $z$ from $[\mathfrak a]$ to $[\mathfrak b]$ is an element $[\upgamma]$ in $\boldsymbol{\mathcal B}^{\sigma}(\check W,\upiota_W)$ which under the  partially defined restriction map
$r: \boldsymbol{\mathcal B}^{\sigma}(\check W,\upiota_W)
	\to 
	\boldsymbol{\mathcal B}^{\sigma}(\check Y_-,\upiota_-)
	\times
	\boldsymbol{\mathcal B}^{\sigma}(\check Y_+,\upiota_+)$
is the given pair: $r([\upgamma]) = ([\mathfrak a],[\mathfrak b])$. Two $\check W$-paths are \emph{homotopic} if they belong to the same path component of the fibre $r^{-1}([\mathfrak a],[\mathfrak b])$. 
We write
$\boldsymbol{\pi}([\mathfrak a], \check W, \upiota_W, [\mathfrak b])$
for the set of homotopic classes of $\check W$-paths.

The moduli space $M_z([\fraka],\check W,\upiota_W, [\frakb])$ is \emph{boundary-obstructed (with corank-1)} if $[\mathfrak a]$ is boundary-stable and $[\mathfrak b]$ is boundary-unstable.
Given $z \in \boldsymbol{\pi}([\mathfrak a], \check W, \upiota_W, [\mathfrak b])$, the 
\emph{grading} $\gr_z([\fraka],\check W, \upiota_W,[\frakb])$ is as the notation $\gr_z(\check W, \upiota_W;[\frakb])$ before.

The compactification $M_z^+([\mathfrak a], \check W^*,\upiota_W , [\mathfrak b])$ is formed by adding \emph{broken trajectory}, consisting of triples of the form $([\boldsymbol{\check\upgamma}_-],[\check\upgamma_0],[\boldsymbol{\check\upgamma}_+])$, where
\[
[\boldsymbol{\check\upgamma}_-] \in \check M^+([\fraka],[\fraka_0]), \quad
[\boldsymbol{\check\upgamma}_+] \in \check M^+([\frakb_0],[\fraka_0]), \quad
[\upgamma_0] \in M([\fraka_0],\check W^*,\upiota_W,[\frakb_0]).
\]
\begin{prop}
\label{prop:W_cod1_strata}
	Assume $M_z([\mathfrak a], \check W^*,\upiota_W , [\mathfrak b])$ has dimension $d$ and contains irreducible solutions.
	The compactifications $M_z^+([\mathfrak a], \check W^*,\upiota_W, [\mathfrak b])$ and $\bar M_z([\mathfrak a], \check W^*,\upiota_W, [\mathfrak b])$ are $d$-dimensional space stratified by manifolds, whose top stratum is the irreducible part of $M_z([\mathfrak a], \check W^*,\upiota_W, [\mathfrak b])$.
	The $(d-1)$-dimensional stratum in $M^+_z([\mathfrak a], \check W^*,\upiota_W, [\mathfrak b])$ comprises elements of the types:
\begin{align*}
	\check M_{-1} &\times M_0	\\
	M_0 &\times \check M_1\\
	\check M_{-2} \times &\check M_{-1} \times M_0\\
	\check M_{-1} \times &M_0 \times \check M_1\\
	M_0 \times &\check M_1 \times \check M_2,
\end{align*}
and finally
\begin{equation*}
	M_z^{\red}([\mathfrak a], \check W^*, \upiota_W, [\mathfrak b])
\end{equation*}
in the case that the moduli space contains both reducibles and irreducibles.
We used $M_0$ to denote a moduli space on $(\check W^*,\upiota_W)$. 
Moreover, $\check M_{-n}$ and $\check M_n$ $(n > 0)$ indicate typical unparametrized moduli spaces on $(\check Y_-,\upiota_-)$ and $(\check Y_+,\upiota_+)$, which changes from line to line.
In the strata with three factors, the middle factor is boundary-obstructed.
Suppose in a stratum the unparametrized moduli spaces on the cylinder $\check M_{-1},\check M_1$ are all $1$-dimensional.
Then this stratum has codimension $1$ in the compactification $\bar M_z([\mathfrak a], \check W^*,\upiota_W, [\mathfrak b])$.

The reducible moduli space $M_z^{\red}([\fraka],\check W^*,\upiota_W,[\frakb])$ can be compactified similarly to a space $M_z^{\red+}([\fraka],\check W^*,\upiota_W,[\frakb])$, stratified by manifolds, whose codimension-$1$ strata are of the forms
\begin{align*}
	M_0^{\red} &\times \check M^{\red}_1, \\
	M_{-1}^{\red} &\times \check M^{\red}_0. 
\end{align*}
\end{prop}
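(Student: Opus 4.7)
The plan is to extend the boundary-stratification analysis that has already been set up for the closed-end case (Proposition~\ref{prop:d1strata_X}) and for the cylindrical case (Proposition~\ref{prop:class_regular_mod_space} together with the results of Section~\ref{subsec:compactification}) to the cobordism setting. Conceptually, a broken trajectory on $(\check W^*,\upiota_W)$ is a triple $([\boldsymbol{\check\upgamma}_-],[\check\upgamma_0],[\boldsymbol{\check\upgamma}_+])$ where the first and third factors are broken cylindrical trajectories on $(\check Y_-,\upiota_-)$ and $(\check Y_+,\upiota_+)$ respectively, and $[\check\upgamma_0]$ is an $\check W$-trajectory. So the entire analysis reduces to computing the codimensions of products of strata already understood, and the proof is essentially the one given for \cite[Proposition~24.7.2]{KMbook2007}, with the adjective ``bifold'' and the $\uptau$-invariance inserted throughout.

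First I would establish that $M_z^+([\fraka],\check W^*,\upiota_W,[\frakb])$ is compact and that $M_z([\fraka],\check W^*,\upiota_W,[\frakb])$ constitutes a top stratum of dimension $d$. Compactness follows by combining the compactness theorem on cylinders (Theorem~\ref{thm:compactness_cylinder}) with the compactness statement for $M_z^+(\check X^*,\upiota_X;[\frakb])_P$, taking $P$ to be a point and formally splitting $\check W^*$ into its two cylindrical ends. The topology on $M_z^+$ is defined the same way as in \cite[Page~485--486]{KMbook2007}. Regularity assumptions on the parameters ensure the top stratum is a smooth manifold of dimension $\gr_z([\fraka],\check W,\upiota_W,[\frakb])$, by the same Fredholm argument used for the linearized restriction map \eqref{eqn:linearized_res}.

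Next I would enumerate the codimension-$1$ strata by case analysis on the broken trajectory data $([\boldsymbol{\check\upgamma}_-],[\check\upgamma_0],[\boldsymbol{\check\upgamma}_+])$. By additivity of grading, $\gr_z = \gr_{z_-} + \gr_{z_0} + \gr_{z_+}$, up to corrections of $+1$ for each boundary-obstructed cylindrical factor (as in Proposition~\ref{prop:class_regular_mod_space}) and for a boundary-obstructed $\check W$-trajectory. Codimension-one then forces exactly one nontrivial extra factor except when the extra cylindrical pieces come in adjacent pairs with a boundary-obstructed middle, which accounts for the three-factor strata $\check M_{-2}\times\check M_{-1}\times M_0$, $\check M_{-1}\times M_0\times \check M_1$, $M_0\times \check M_1\times \check M_2$. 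The reducible unbroken stratum $M_z^{\red}$ appears exactly when both reducibles and irreducibles occur, contributing $\calQ^{\nu}_{\upgamma}$ a one-dimensional cokernel at the interface, by the same argument as in the cylindrical case.

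The main obstacle, and the one requiring the most care, is the case of a three-factor stratum $\check M_{-1}\times M_0\times \check M_1$ where the middle is boundary-obstructed: the middle factor's actual dimension exceeds its expected grading by one, so what looks like codimension $2$ is in fact codimension $1$, and this must be matched with the $\delta$-structure analysis of Section~\ref{subsec:gluing} to see that such strata do contribute. The claim that all other enumerated strata have the stated codimension in $\bar M_z$ only when the cylindrical factors are one-dimensional follows because $\bar M_z$ collapses the cylindrical factors via $r$: any higher-dimensional cylindrical component would be absorbed into the image and produce a stratum of codimension $\geq 2$. For the reducible compactification $M_z^{\red+}$, the two-factor strata are the only possibilities since there is no boundary-obstruction phenomenon among purely reducible trajectories; this case follows by restricting each of the preceding arguments to the reducible locus and appealing to Theorem~\ref{thm:str_cod1_cylinder}.
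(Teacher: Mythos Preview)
Your proposal is correct and follows the same route as the paper: the paper's proof simply notes that this proposition is a special case of Proposition~\ref{prop:d1strata_X} (with boundary $\partial\check X = -\check Y_- \sqcup \check Y_+$, so two components $\alpha$) and refers to \cite[Proposition~25.1.1]{KMbook2007}. Your enumeration of the strata and the role of boundary-obstructed middle factors is exactly what specializing Proposition~\ref{prop:d1strata_X} to two ends yields; the only minor slip is the citation---\cite[Proposition~24.7.2]{KMbook2007} is the $\delta$-structure theorem, whereas the stratification statement you want is \cite[Proposition~24.6.10]{KMbook2007} or, more directly, the cobordism-specific \cite[Proposition~25.1.1]{KMbook2007}.
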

\begin{proof}
	This proposition is special case of Proposition~\ref{prop:d1strata_X}.
	See also \cite[Proposition~25.1.1]{KMbook2007}.
\end{proof}

Let $u \in H^*(\boldB^{\sigma}(\check W, \upiota_W))$ be a cohomology class. 
Following the approach in \cite[Section~25]{KMbook2007}, we aim to define maps of the form
\begin{equation*}
	\HMR^{\circ}(u|\check W, \upiota_W):
	\HMR^{\circ}(\check Y_-,\upiota_-) \to \HMR^{\circ}(\check Y_+,\upiota_+)
\end{equation*}
which evaluates the class $u$ while applying the $\HMR^{\circ}$ cobordism map. 

Let $d_0 > 0$.
Suppose $(z,[\mathfrak a],[\mathfrak b])$ is such that $\bar M_z([\mathfrak a],\check W^*,\upiota_W,[\mathfrak b])$ or $\bar M_z^{\text{red}}([\mathfrak a],\check W^*,\upiota_W,[\mathfrak b])$ has at most dimension $d_0$.
Their compatifications 
\[\bar M_z([\mathfrak a],\check W^*,\upiota_W,[\mathfrak b])\text{ and }\bar M_z^{\text{red}}([\mathfrak a],\check W^*,\upiota_W,[\mathfrak b])
\] 
form a locally finite collection of closed subsets of $\boldB^{\sigma}_{k,\loc}(\check W^*,\upiota_W)$.
By Lemma~\ref{lem:exist_refine}, every open cover of $\boldB^{\sigma}_{k,\loc}$ has a refinement that is transverse to all strata in all compactified moduli spaces $\bar M$ and $\bar M^{\red}$ of dimension at most $d_0$.
Let $\mathcal U$ be such an open cover transverse to these moduli spaces and  $u \in C^d(\mathcal U;\Ftwo)$ be a \v{C}ech cochain with $d \le d_0$.
If $M_z([\mathfrak a],\check W^*,\upiota_W,[\mathfrak b])$ has dimension $d$, there is a well-defined evaluation in the sense of Section~\ref{subsec:stokes}:
\begin{equation*}
	\langle u, [M_z([\mathfrak a], \check W^*, \upiota_W,[\mathfrak b])]\rangle \in \Ftwo,
\end{equation*}
where the evaluation is zero if the dimension of the moduli space is not $d$.

\begin{defn}
	\label{defn:entry_m}
We define a number of chain maps as follows.
\begin{align*}
	m^o_o:C^d(\mathcal U;\Ftwo) \otimes C^o_\bullet(\check Y_-, \upiota_-) &\to C^o_\bullet(\check Y_+, \upiota_+), \quad
u \otimes [\mathfrak a]\mapsto \sum_{[\mathfrak b] \in \mathfrak C^o(\check Y_+, \upiota_+)} \sum_z 
	\langle u, [M_z([\mathfrak a], \check W^*,\upiota_W,[\mathfrak b])]\rangle [\mathfrak b],\\
	m^o_s:C^d(\mathcal U;\Ftwo) \otimes C^o_\bullet(\check Y_-, \upiota_-) &\to C^s_\bullet(\check Y_+, \upiota_+), \quad
u \otimes [\mathfrak a]~\mapsto \sum_{[\mathfrak b] \in \mathfrak C^s(\check Y_+, \upiota_+)} \sum_z 
	\langle u, [M_z([\mathfrak a], \check W^*,\upiota_W,[\mathfrak b])]\rangle [\mathfrak b],\\
	m^u_o:C^d(\mathcal U;\Ftwo) \otimes C^u_\bullet(\check Y_-, \upiota_-) &\to C^o_\bullet(\check Y_+, \upiota_+), \quad
u \otimes [\mathfrak a]\mapsto \sum_{[\mathfrak b] \in \mathfrak C^o(\check Y_+, \upiota_+)} \sum_z 
	\langle u, [M_z([\mathfrak a], \check W^*,\upiota_W,[\mathfrak b])]\rangle [\mathfrak b],\\
	m^u_s:C^d(\mathcal U;\Ftwo) \otimes C^u_\bullet(\check Y_-, \upiota_-) &\to C^s_\bullet(\check Y_+, \upiota_+), \quad
u \otimes [\mathfrak a]\mapsto \sum_{[\mathfrak b] \in \mathfrak C^s(\check Y_+, \upiota_+)} \sum_z 
	\langle u, [M_z([\mathfrak a], \check W^*,\upiota_W,[\mathfrak b])]\rangle [\mathfrak b].
\end{align*}
Also, using reducible trajectories we define:
\begin{align*}
	\bar m^s_s :C^d(\mathcal U;\Ftwo) \otimes \bar C^s_\bullet(\check Y_-, \upiota_-) 
	&\to C^s_\bullet(\check Y_+, \upiota_+), \quad
	u \otimes [\mathfrak a] 
	\mapsto \sum_{[\mathfrak b] \in \mathfrak C^s(\check Y_+, \upiota_+)} \sum_z 
	\langle u, [M_z^{\red}([\mathfrak a], \check W^*,\upiota_W,[\mathfrak b])]\rangle [\mathfrak b],
	\\
	\bar m^u_u : C^d(\mathcal U;\Ftwo) \otimes C^u_\bullet(\check Y_-, \upiota_-) 
	&\to C^u_\bullet(\check Y_+, \upiota_+), \quad
	u \otimes [\mathfrak a] 
	\mapsto \sum_{[\mathfrak b] \in \mathfrak C^u(\check Y_+, \upiota_+)} \sum_z 
	\langle u, [M_z^{\red}([\mathfrak a], \check W^*,\upiota_W,[\mathfrak b])]\rangle [\mathfrak b],
	\\
	\bar m^s_u :C^d(\mathcal U;\Ftwo) \otimes C^s_\bullet(\check Y_-, \upiota_-) 
	&\to C^u_\bullet(\check Y_+, \upiota_+), \quad
	u \otimes [\mathfrak a] 
	\mapsto \sum_{[\mathfrak b] \in \mathfrak C^u(\check Y_+, \upiota_+)} \sum_z 
	\langle u, [M_z([\mathfrak a], \check W^*,\upiota_W,[\mathfrak b])]\rangle [\mathfrak b],
	\\
	\bar m^u_s:C^d(\mathcal U;\Ftwo) \otimes C^u_\bullet(\check Y_-, \upiota_-) 
	&\to C^s_\bullet(\check Y_+, \upiota_+), \quad
	u \otimes [\mathfrak a] 
	\mapsto \sum_{[\mathfrak b] \in \mathfrak C^s(\check Y_+, \upiota_+)} \sum_z 
	\langle u, [M_z([\mathfrak a], \check W^*,\upiota_W,[\mathfrak b])]\rangle [\mathfrak b].
\end{align*}
\end{defn}
\begin{defn}
The map $\bar m: C^d(\mathcal U;\Ftwo) \oplus \bar C_\bullet(\check Y_-, \upiota_-) \to \bar C_\bullet(\check Y_+, \upiota_+)$ with respect to the decomposition $\bar C_\bullet = C^s_{\bullet} \oplus C^u_{\bullet}$ is given by
\begin{equation*}
	\bar m = \begin{pmatrix}
		\bar m^s_s && \bar m^u_s\\
		\bar m^s_u && \bar m^u_u
	\end{pmatrix}.
\end{equation*}
Over $\check C_\bullet = C^o_\bullet \oplus C^s_\bullet$, we define $\check m: C^d(\mathcal U;\Ftwo) \otimes
	\check C_\bullet(\check Y_-, \upiota_-) \to
	\check C_\bullet(\check Y_+, \upiota_+)$
for $d \le d_0$ to be
\begin{equation*}
	\check m = \begin{pmatrix}
		m^o_o && -m^u_o \bar\del^s_u(\check Y_-) - \del^u_o(\check Y_+)\bar m^s_u\\
		m^o_s && \bar m^s_s - m^u_s\bar\del^s_u(\check Y_-,) -\del^u_s(\check Y_+)\bar m^s_u
	\end{pmatrix},
\end{equation*}
where $\del^u_o(\check Y_+)$ denotes an operator on $\check Y_+$.

Finally  with respect to $\hat C_\bullet = C^o_\bullet \oplus C^u_\bullet$, we define $\hat m:C^d(\mathcal U;\Ftwo) \otimes
	\hat C_\bullet(\check Y_-, \upiota_-) \to
	\hat C_\bullet(\check Y_+, \upiota_+)$ by 
\begin{equation*}
	\hat m = \begin{pmatrix}
		m^o_o && m^u_o\\
		\bar m^s_u \del^o_s(\check Y_-)\sigma  - \bar\del^s_u(\check Y_+)m^o_s 
		&& 
			\bar m^u_s \sigma + \bar m^s_u \del^u_s (\check Y_-) \sigma - \bar\del^s_u(\check Y_+)m^u_s
	\end{pmatrix}.
\end{equation*}
\end{defn}
\begin{prop}
\label{prop:identities_m(uW)}
	The operators $\check m, \hat m,$ and $\bar m$ satisfy the following identities:
\begin{align*}
	(-1)^d\check\del(\check Y_+, \upiota_+) \check m(u \otimes \check \xi)
	&= -\check m(\delta u \otimes \check\xi) + \check m(u \otimes \check\del(\check Y_-, \upiota_-)\check\xi),\\
	(-1)^d\hat\del(\check Y_+, \upiota_+) \hat m(u \otimes \hat \xi) 
	&= -\hat m(\delta u \otimes \hat\xi) + \hat m(u \otimes \hat\del(\check Y_-, \upiota_-)\hat\xi),\\
	(-1)^d\bar\del(\check Y_+, \upiota_+) \bar m(u \otimes \bar \xi) 
	&= -\bar m(\delta u \otimes \bar\xi) + \bar m(u \otimes \bar\del(\check Y_-, \upiota_-)\bar\xi),
\end{align*}
where $\delta$ is the \v{C}ech coboundary map,  $u \in C^d(\mathcal U;\Ftwo)$, and $d \le d_0-1$.
Moreover, $\check\xi \in \check C_\bullet(\check Y_-,\upiota_-)$, and similarly for $\hat\xi$ and $\bar\xi$.
(Of course, over $\bbF_2$ the signs are meaningless - 
they are kept to align with \cite{KMbook2007}.)
Consequently, $\check m, \hat m,$ and $\bar m$ descend to maps
\begin{align*}
	\check m : \check H^d(\mathcal U; \Ftwo) \otimes
	\widecheck{\HMR}_j(\check Y_-, \upiota_-) 
	&\to \widecheck{\HMR}_{k-d}(\check Y_+, \upiota_+),\\
	\hat m : \check H^d(\mathcal U; \Ftwo) \otimes
	\widehat{\HMR}_j(\check Y_-, \upiota_-) 
	&\to \widehat{\HMR}_{k-d}(\check Y_+, \upiota_+),\\
	\bar m : \check H^d(\mathcal U; \Ftwo) \otimes
	\overline{\HMR}_j(\check Y_-, \upiota_-) 
	&\to \overline{\HMR}_{k-d}(\check Y_+, \upiota_+),
\end{align*}
for any open cover $\mathcal U$ of $\boldB^{\sigma}_{k,\loc}(\check W^*,\upiota_W)$ transverse to all the moduli spaces of dimension $d_0$ or less.
\end{prop}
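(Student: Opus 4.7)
The plan is to run the standard Kronheimer--Mrowka argument: combine the \v{C}ech-Stokes formula of Section~\ref{subsec:stokes} with the codimension-one boundary description in Proposition~\ref{prop:W_cod1_strata}. Fix a class $u \in C^d(\calU;\bbF_2)$ with $d \le d_0-1$, a generator $[\fraka]$ in a chain group on $(\check Y_-,\upiota_-)$, and a critical point $[\frakb]$ on $(\check Y_+,\upiota_+)$ together with a homotopy class $z$ for which the relevant moduli space on $(\check W^*,\upiota_W)$ has dimension exactly $d+1$. The basic output of \v{C}ech-Stokes is the identity
\[
\sum_{M'} \delta_{M,M'}\,\la u,[M']\ra = \la \delta u,[M]\ra,
\]
where $M$ ranges over top strata and $M'$ over codimension-one strata of the compactification $\bar M_z([\fraka],\check W^*,\upiota_W,[\frakb])$ (or $\bar M^{\red}_z$ in the reducible case), and $\delta_{M,M'}$ is the boundary multiplicity, which equals $1$ along each true boundary face and along each codimension-one $\delta$-structure face by Lemma~\ref{lem:bound_multiplicities_0}.

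First I would tabulate the codimension-one strata from Proposition~\ref{prop:W_cod1_strata} and match each to an entry of the right-hand side of the identities. The two-factor strata $\check M_{-1}\times M_0$ and $M_0 \times \check M_1$, where neither factor is boundary-obstructed, produce exactly the terms $m(u\otimes\del[\fraka])$ and $\del\,m(u\otimes[\fraka])$; the cylinder-side face contributes $\del(\check Y_-,\upiota_-)$ on the input and $\del(\check Y_+,\upiota_+)$ on the output. The interior Stokes contribution $\la \delta u,[M]\ra$ gives the $\delta u$-term. Summing these over all $[\frakb]$ and $z$ yields the asserted identity in the ``easy'' diagonal entries $m^o_o, m^o_s, \bar m^s_s, \bar m^s_u, \bar m^u_u$. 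The case $d=d_0-1$ is the only new constraint on $\calU$, accommodated by the choice of $d_0$ via Lemma~\ref{lem:exist_refine}.

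The main obstacle, as always, is bookkeeping the boundary-obstructed strata, which account for the off-diagonal ``correction'' entries in the matrices defining $\check m$ and $\hat m$. These come from two sources. Three-factor strata of the form $\check M_{-2}\times\check M_{-1}\times M_0$, $\check M_{-1}\times M_0\times\check M_1$, or $M_0\times\check M_1\times\check M_2$ in which the middle factor is boundary-obstructed carry a codimension-one $\delta$-structure (not genuine boundary); Lemma~\ref{lem:bound_multiplicities_0} still gives multiplicity $1$, so their count contributes compositions like $\del^u_o(\check Y_+)\,\bar m^s_u$ and $m^u_o\,\bar\del^s_u(\check Y_-)$ in $\check m$, and the analogous $\bar\del^s_u(\check Y_+)\,m^o_s$, $\bar m^s_u\,\del^o_s(\check Y_-)$ in $\hat m$. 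Second, if $[\fraka]$ is boundary-stable and $[\frakb]$ is boundary-unstable then $M_0$ itself is boundary-obstructed of corank one, and the reducible locus appears as a codimension-one genuine boundary stratum $M^{\red}_z([\fraka],\check W^*,\upiota_W,[\frakb])$, producing the $\bar m^{\bullet}_{\bullet}$ pieces sitting inside $\check m$ and $\hat m$. A careful case-by-case check shows these contributions assemble into precisely the matrix entries displayed in the definitions of $\check m$ and $\hat m$, and the sum of all boundary multiplicities from \v{C}ech-Stokes then matches $\check m(u\otimes\check\del\check\xi) - \check m(\delta u\otimes\check\xi) - (-1)^d\check\del\,\check m(u\otimes\check\xi)=0$, and likewise for $\hat m$ and $\bar m$.

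Finally, since the right-hand side of each identity lies in the image of $\del$ or $\delta$ applied to something, descent to homology is automatic: $\check m$, $\hat m$ and $\bar m$ descend to maps out of $\check H^d(\calU;\bbF_2) \otimes \HMR^{\circ}_j$ and, passing to the limit over refinements $\calU$, to maps out of the \v{C}ech cohomology of $\boldB^{\sigma}_{k,\loc}(\check W^*,\upiota_W)$. The whole argument parallels \cite[Proposition~25.3.4]{KMbook2007} and \cite[Section~10]{ljk2022}; the addition of bifold singularities and the real involution changes none of the combinatorics, only the underlying analytic setup used to produce Proposition~\ref{prop:W_cod1_strata} and the $\delta$-structures along boundary-obstructed faces.
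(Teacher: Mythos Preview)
Your approach is essentially the paper's: apply \v{C}ech--Stokes to the $(d+1)$-dimensional compactified moduli spaces and read off the identities from the codimension-one strata of Proposition~\ref{prop:W_cod1_strata}, exactly as in \cite[Proposition~25.3.4, Lemma~25.3.6--7]{KMbook2007}. The paper's own proof is briefer than yours---it only displays the sample identity $m^o_o \del^o_o - \del^o_o m^o_o - m^u_o\bar\del^s_u\del^o_s - \del^u_o\bar\del^s_u m^o_s = m^o_o(\delta\otimes 1)$ and refers the reader to \cite{KMbook2007}---but the content is the same.

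One bookkeeping slip to fix: you write that when $[\fraka]$ is boundary-stable and $[\frakb]$ is boundary-unstable the moduli space is boundary-obstructed \emph{and} the reducible locus appears as a codimension-one boundary stratum. These are two different cases. In the boundary-obstructed case ($[\fraka]$ stable, $[\frakb]$ unstable) the moduli space consists only of reducibles; the reducible locus is a genuine codimension-one boundary precisely in the opposite case ($[\fraka]$ unstable, $[\frakb]$ stable), where the moduli space contains both irreducibles and reducibles. This is the last item in Proposition~\ref{prop:W_cod1_strata} and is what produces the $\bar m^u_s$ term appearing in the $(s,s)$-entry of $\check m$. Also, the ``multiplicity $1$'' statement you invoke is the lemma immediately following Lemma~\ref{lem:bound_multiplicities_0} (the unoriented version of \cite[Lemma~21.3.1]{KMbook2007}), not Lemma~\ref{lem:bound_multiplicities_0} itself.
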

\begin{proof}
	The arguments are the same as that of
	\cite[Proposition~25.3.4, Lemma~25.3.7]{KMbook2007}.
	In summary, one applies Stokes' theorem to compactified $(d+1)$-dimensional moduli spaces $\bar M_z([\fraka],\check W^*,\upiota_W,[\frakb])$.
	A sample identity to be proved is (see \cite[Lemma~25.3.6]{KMbook2007} for the full list of identities)
	\begin{equation}
	\label{eqn:sample_identity_m(uW)}
		m^o_o \del^o_o - \del^o_o m^o_o -m^u_o\bar\del^s_u\del^o_s
		-\del^u_o\bar\del^s_um^o_s
		= m^o_o(\delta \otimes 1).
	\end{equation}
These identities can be derived from the description of boundary contributions in Proposition~\ref{prop:W_cod1_strata}.
\end{proof}

By taking the limit over all open covers of $\boldsymbol{\mathcal B}^{\sigma}_{k,\loc}(\check W^*,\upiota_W)$ transverse to the moduli space, and identifying the \v{C}ech cohomology $\check{H}^d(\boldsymbol{\mathcal B}^{\sigma}_{k,\loc}(\check W^*,\upiota_W); \Ftwo)$ with $H(\boldsymbol{\mathcal B}^{\sigma}_{k,\loc}(\check W^*,\upiota_W); \Ftwo)$, we obtain
\begin{align*}
		\widecheck{\HMR}(u|\check W,\upiota_W) : H^d(\boldB^{\sigma}_{k,\loc}(\check W^*,\upiota_W); \Ftwo) \otimes
		\widecheck{\HMR}_j(\check Y_-, \upiota_-) 
		&\to \widecheck{\HMR}_{k-d}(\check Y_+, \upiota_+),\\
		\widehat{\HMR}(u|\check W,\upiota_W): H^d(\boldB^{\sigma}_{k,\loc}(\check W^*,\upiota_W); \Ftwo) \otimes
		\widehat{\HMR}_j(\check Y_-, \upiota_-) 
		&\to \widehat{\HMR}_{k-d}(\check Y_+, \upiota_+),\\
		\overline{\HMR}(u|\check W,\upiota_W) :H^d(\boldB^{\sigma}_{k,\loc}(\check W^*,\upiota_W); \Ftwo) \otimes
		\overline{\HMR}_j(\check Y_-, \upiota_-) 
		&\to \overline{\HMR}_{k-d}(\check Y_+, \upiota_+).
\end{align*}
The follow proposition ensures $\HMR^{\circ}(u|\check W, \upiota_W)$ is independent of the choices of metrics and perturbations.
\begin{prop}
	\label{prop:cob_map_indep_metric_pert}
	Let $g(0)$ and $g(1)$ be two $\upiota_W$-invariant metrics on $\check W$, isometric in a collar of the boundary to the same cylindrical metric.
	Let $\mathfrak p(0)$ and $\mathfrak p(1)$ be two perturbations on $(\check W,\upiota_W)$, constructed using the same perturbations on $(\check Y_{\pm},\upiota_{\pm})$, and chosen so that the corresponding moduli spaces are regular.
	Let $u$ be a \v{C}ech cocycle in $\boldB^{\sigma}_{k,\loc}(\check W^*,\upiota_W)$ as above.
	Suppose $\check m(0)$ and $\check m(1)$ are defined by the formulae above, using $(g(0),\mathfrak p(0))$ and $(g(1),\mathfrak p(1))$, respectively.
	Then for $d \le d_0$, there is a chain homotopy
	\begin{equation*}
		\check K: C^d(\mathcal U;\Ftwo) \otimes 
		\check C_{\bullet}(\check Y_-, \upiota_-) \to 
		\check C_{\bullet}(\check Y_+, \upiota_+),
	\end{equation*}
	satisfying the identity
	\begin{equation*}
		(-1)^d\check\del \check K(u \otimes \check\xi) = 
		-\check K(\delta u \otimes \check\xi) + 
		\check K(u \otimes \check\del \check\xi)
		+(-1)^d\check m(0)(u \otimes \check\xi)
		-(-1)^d\check m(1)(u \otimes \check\xi).
	\end{equation*}
\end{prop}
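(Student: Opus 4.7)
The plan is to follow the standard homotopy argument from \cite[Section~25.4]{KMbook2007}, transplanted to the real-bifold setting of this paper. Choose a smooth path $(g(t),\mathfrak p(t))_{t \in [0,1]}$ of $\upiota_W$-invariant bifold metrics and tame perturbations interpolating between $(g(0),\mathfrak p(0))$ and $(g(1),\mathfrak p(1))$, constant in a collar of $\partial \check W$ and inducing the fixed boundary data for all $t$. Setting $P = [0,1]$, this gives a family of perturbed Seiberg-Witten equations of the type set up in Section~\ref{subsec:mod_space_over_bifold_boundary}, with parametrized moduli spaces
\[
M_z([\mathfrak a],\check W^*,\upiota_W,[\mathfrak b])_P = \bigcup_{t \in P} \{t\} \times M_z([\mathfrak a],\check W^*,\upiota_W,[\mathfrak b])_t.
\]
By the family transversality proposition quoted in Section~\ref{subsec:mod_space_over_bifold_boundary}, I would perturb the family rel endpoints so that every such parametrized moduli space of dimension $\le d_0+1$ is regular, while the fibers over $t = 0,1$ retain the original regularity.

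Next I would define $\check K$, $\hat K$ and $\bar K$ by repeating Definition~\ref{defn:entry_m} verbatim, but with the compact moduli spaces on $\check W^*$ replaced by their parametrized counterparts over $P$. Concretely, one introduces parametrized matrix entries $k^o_o,k^o_s,k^u_o,k^u_s$ and $\bar k^{\bullet}_{\bullet}$ by evaluating \v{C}ech cocycles $u \in C^d(\mathcal U;\mathbb F_2)$, for open covers transverse to all compactified parametrized strata of dimension $\le d_0 + 1$, against $[M_z([\mathfrak a],\check W^*,\upiota_W,[\mathfrak b])_P]$ in the sense of Section~\ref{subsec:stokes}. A parametrized moduli space whose fiber has dimension $d-1$ now gives the correct total dimension $d$ to pair with a $d$-cocycle, producing a homotopy that shifts grading by $1$. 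Lemma~\ref{lem:exist_refine} guarantees a simultaneous refinement $\mathcal U$ that is transverse to both the original moduli spaces on $\check W^*$ (used for $\check m(0),\check m(1)$) and their parametrized versions.

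The chain homotopy identity then comes from applying the Stokes formula of Section~\ref{subsec:stokes} to the compactified parametrized moduli spaces of top dimension $d+1$. Proposition~\ref{prop:d1strata_X} enumerates the codimension-$1$ strata in $M_z^+([\mathfrak a],\check W^*,\upiota_W,[\mathfrak b])_P$: they are the fibers over $\partial P = \{0,1\}$, which contribute $(-1)^d(\check m(1) - \check m(0))(u \otimes \check\xi)$; two-factor breakings $\check M_- \times M_P$ and $M_P \times \check M_+$, which contribute $\check K(u \otimes \check\del \check\xi)$ and $\check\del\, \check K(u \otimes \check\xi)$; and three-factor breakings with a boundary-obstructed middle or endpoint, which contribute the cross terms in the matrix formula for $\check m$ and $\check\del$ via the codimension-$1$ $\delta$-structure recognized by Lemma~\ref{lem:bound_multiplicities_0}. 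The remaining contribution $-\check K(\delta u \otimes \check\xi)$ comes from $\langle \delta u, [M_P] \rangle$ as in the proof of Proposition~\ref{prop:identities_m(uW)}, e.g.\ equation~\eqref{eqn:sample_identity_m(uW)}. Summing these Stokes identities exactly as in \cite[Lemma~25.3.6]{KMbook2007} yields the stated formula.

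The main obstacle will be the same one that makes the chain map property of $\check m$ delicate in Proposition~\ref{prop:identities_m(uW)}: the reducible and boundary-obstructed strata in the parametrized setting, whose matrix-entry bookkeeping for $\bar k$, $k^u_s$, $k^u_o$ etc.\ must be arranged so that the three-factor $\delta$-structure contributions cancel precisely against the cross terms $\del^u_o(\check Y_+)\bar m^s_u$ and $\del^u_s(\check Y_+)\bar m^s_u$ already present in $\check m$. Once the codimension-$1$ analysis of Proposition~\ref{prop:d1strata_X} is in hand in the family case, this is purely a formal, if tedious, translation of the book's argument, and the analogous identities for $\hat K$ and $\bar K$ are proved the same way.
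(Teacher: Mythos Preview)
Your proposal is correct and follows essentially the same route as the paper: interpolate via $P=[0,1]$, build the homotopy $\check K$ from the parametrized moduli spaces $M_z([\mathfrak a],\check W^*,\upiota_W,[\mathfrak b])_P$ exactly as $\check m$ was built from the unparametrized ones, and derive the identity by enumerating codimension-$1$ strata (including the new $\partial P$ contributions) as in \cite[Proposition~25.3.8]{KMbook2007}. The paper's proof is only a sketch at the same level of detail, writing $\check K = \check m(P)$ and displaying one sample matrix-entry identity with the extra $m^o_o(\partial P)$ term.
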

\begin{proof}
	The proof in \cite[Proposition~25.3.8]{KMbook2007} applies with only notational changes to our setting.
	To sketch a proof, connect $(g(0),\frakp(0))$ to $(g(1),\frakp(1))$ via a path, parametrized by $P = [0,1]$. 
	Using parametrized moduli spaces $M_z([\fraka],\check W^*,\upiota_W,[\frakb])$ over $P$, 
	one defines a map $\check m(P)$, which satisfies the following chain-homotopy relation
	\[
		(-1)^d\check\del \check m(P)(u \otimes \check\xi) = 
		-\check m(P)(\delta u \otimes \check\xi) + 
		\check m(P)(u \otimes \check\del \check\xi)
		+(-1)^d\check m(0)(u \otimes \check\xi)
		-(-1)^d\check m(1)(u \otimes \check\xi).
	\] 
	The proposition follows by setting $\check K = \check m(P)$.
	The above equation can be broken into several identities, each of which can be proved in a similar manner as Proposition~\ref{prop:identities_m(uW)}.
	An example of such identities, analogous to Equation~\eqref{eqn:sample_identity_m(uW)}, is
	\[
	m^o_o(\del P) + m^o_o(P) \del^o_o - \del^o_o m^o_o(P) -m^u_o(P)\bar\del^s_u\del^o_s
	-\del^u_o\bar\del^s_um^o_s(P)
	= m^o_o(P)(\delta \otimes 1),
	\]
	where the brackets indicate the parameter spaces of families.
	This identity differs from~\eqref{eqn:sample_identity_m(uW)}
	by contributions from the boundary of $P$.
\end{proof}
\begin{prop}
\label{prop:trivialcob}
	If $\check W$ is the trivial cylindrical cobordism from $(\check Y,\upiota;g,\mathfrak q)$ to itself.
	Then $\HMR^{\circ}(\check W,\upiota_W)$ induces the identity map.
\qed
\end{prop}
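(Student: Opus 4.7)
The plan is to exploit translation symmetry of the trivial cobordism to identify every cobordism moduli space with a parametrized cylinder trajectory moduli space, and then read off each matrix entry directly.

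First, by Proposition~\ref{prop:cob_map_indep_metric_pert} I am free to choose any admissible metric and interior perturbation on $\check W = I \times \check Y$. I would take the product metric $dt^2 + g$ and arrange the cutoffs and interior perturbation $\frakp_0 = \frakq$ so that on the end-cylindrical completion $\check W^* = \reals \times \check Y$ the combined perturbation is the translation-invariant extension of $\hat{\frakq}$. Under this choice the moduli space $M_z([\fraka], \check W^*, [\frakb])$ is literally the parametrized cylinder moduli space $M_z([\fraka],[\frakb])$ used to define the differential, and is regular since $\frakq$ was assumed admissible.

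Next, I would invoke the free $\reals$-translation action on $M_z([\fraka], [\frakb])$. It acts freely away from translation-invariant configurations; on the $\tau$-blowup the latter are, by construction, the constant trajectories $\upgamma_{\fraka}$ at critical points, forcing $[\fraka] = [\frakb]$ and $z$ trivial. Hence any zero-dimensional $M_z([\fraka], [\frakb])$ is either a single point (when $[\fraka] = [\frakb]$ and $z$ is trivial) or empty, because a non-constant trajectory would sweep out a one-dimensional $\reals$-orbit inside an allegedly zero-dimensional space.

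Third, I would read off the matrix entries of Definition~\ref{defn:entry_m} against the unit class $1 \in H^0$: the diagonal entries $m^o_o$, $\bar m^s_s$, $\bar m^u_u$ are the identity on their respective summands (picking up the constant trajectory at each critical point of matching type), while all off-diagonal entries $m^o_s$, $m^u_o$, $m^u_s$, $\bar m^s_u$, $\bar m^u_s$ vanish, since no constant trajectory connects critical points of distinct irreducible/boundary-stable/boundary-unstable types. Substituting into the block-matrix formulas for $\check m$, $\hat m$, $\bar m$ collapses every correction term such as $-\del^u_o(\check Y_+)\bar m^s_u$ to zero, leaving the identity on each chain complex, hence the identity on homology.

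The main technical obstacle is verifying the vanishing of $\bar m^s_u$ in the boundary-obstructed case, where the dimension of $M_z([\fraka], \check W^*, [\frakb])$ is $\gr_z + 1$, so zero-dimensionality corresponds to $\gr_z = -1$ rather than $0$. Here the hypothesis that $[\fraka]$ is boundary-stable while $[\frakb]$ is boundary-unstable forces $[\fraka] \neq [\frakb]$, so no translation-invariant solution exists, and the free $\reals$-action on any non-constant reducible solution again rules out contributions to a zero-dimensional moduli. Once this point is carefully handled, the remaining entries are elementary and all three cobordism maps descend to the identity on $\HMR^{\circ}$.
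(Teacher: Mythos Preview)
Your proposal is correct and is exactly the standard argument (cf.\ \cite[Proposition~25.3.9]{KMbook2007} and the surrounding discussion). The paper itself gives no proof here: the statement is marked with a bare \qedsymbol, implicitly deferring to the identical argument in the Kronheimer--Mrowka framework on which the whole construction is modelled. So there is nothing to compare against beyond noting that your sketch fills in precisely what the paper omits.

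One small clarification on your first step: to make the perturbation on $\check W^* = \reals \times \check Y$ genuinely translation-invariant you want $\frakp_0 = 0$ and the cutoff $\beta \equiv 1$ across the entire interval (equivalently, just use $\hat{\frakq}$ on all of $\reals \times \check Y$), rather than ``$\frakp_0 = \frakq$'' with the bump function $\beta_0$. With that adjustment the identification of $M_z([\fraka], \check W^*, [\frakb])$ with the cylinder moduli space $M_z([\fraka], [\frakb])$ is literal, and regularity is inherited from admissibility of $\frakq$ as you say. The rest of your argument, including the boundary-obstructed bookkeeping for $\bar m^s_u$, is fine.
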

To state the composition law, let $(\check Y_0,\upiota_0), (\check Y_1,\upiota_1)$, and $(\check Y_2,\upiota_2)$ be real 3-bifolds.
Suppose $(\check W_{01},\upiota_{01}): (\check Y_0,\upiota_0) \to (\check Y_1,\upiota_1)$ and $(\check W_{12},\upiota_{12}): (\check Y_1,\upiota_1) \to (\check Y_2,\upiota_2)$ are two real bifold cobordisms.
Consider the composition corbordism
\[
	(\check W,\upiota_W) = 
	(\check W_{01},\upiota_{01}) \cup_{(\check Y_1,\upiota_1)} 
	(\check W_{12},\upiota_{12}).
\]
Assume $u_{01} \in H^{d_{01}}(\boldsymbol{\mathcal B}^{\sigma}(\check W_{01},\upiota_{W_{01}});\Ftwo)$ and  $u_{12} \in H^{d_{12}}(\boldsymbol{\mathcal B}^{\sigma}(\check W_{12},\upiota_{W_{12}});\Ftwo)$ are two cohomology classes.
The product $u = u_{01}u_{12} \in H^*(\boldsymbol{\mathcal B}^{\sigma}(\check W,\upiota_{W});\bbF_2)$ is defined
as follows.
Let $R_i$ be the restriction map, well-defined over a weak homotopy equivalence subset, 
\begin{equation*}
	R_i: \boldsymbol{\mathcal B}^{\sigma}(\check W,\upiota_W) \to \boldsymbol{\mathcal B}^{\sigma}(\check W_i,\upiota_{W_i}),
\end{equation*}
for $i=01, 12$.
Then the pullback $R_i^*: H^*(\boldsymbol{\mathcal B}^{\sigma}(\check W_i,\upiota_{W_i});\bbF_2) \to H^*(\boldsymbol{\mathcal B}^{\sigma}(\check W,\upiota_W);\bbF_2)$ is well-defined, and
we set
\begin{equation*}
	u_{01} u_{12} = R_{01}^*(u_{01}) \cup R_2^*(u_{12}).
\end{equation*}
The composition law in full generality is as follows.
\begin{prop}
\label{prop:compositionlaw}
	Given $(\check Y_0,\upiota_0), (\check Y_1,\upiota_1)$, and $(\check Y_2,\upiota_2)$ be real 3-bifolds with metrics and admissible perturbations.
	Let $(\check W,\upiota_W)$ be the composition of cobordisms $(\check W_{01}, \upiota_{W_{01}}): (\check Y_0,\upiota_0) \to (\check Y_1,\upiota_1)$ and  $(\check W_{12}, \upiota_{W_{12}}): (\check Y_1,\upiota_1) \to (\check Y_2,\upiota_2)$.
	Assume $u = u_{01}u_{12}$ is the product of cohomology classes $u_{01} \in H^{d_{01}}(\boldsymbol{\mathcal B}^{\sigma}(\check W_{01},\upiota_{W_{01}});\Ftwo)$ and  $u_{12} \in H^{d_{12}}(\boldsymbol{\mathcal B}^{\sigma}(\check W_{12},\upiota_{W_{12}});\Ftwo)$.
	Then
	\begin{equation*}
		\HMR^{\circ}(u|\check W, \upiota_W) 
	=
	\HMR^{\circ}(u_2|\check W_2,\upiota_{W_2}) \circ \HMR^{\circ}(u_1|\check W_1,\upiota_{W_1}).
	\end{equation*}
\end{prop}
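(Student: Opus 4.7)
The plan is to prove this by a neck-stretching argument, adapting \cite[Section~26.1]{KMbook2007} to the real bifold setting. Introduce a one-parameter family $(\check W^{(T)}, \upiota_W^{(T)})$ of cobordisms from $(\check Y_0,\upiota_0)$ to $(\check Y_2,\upiota_2)$, $T \in [0,\infty)$, obtained from $(\check W,\upiota_W)$ by inserting an isometric cylinder $[-T,T] \times \check Y_1$ along the common boundary $\check Y_1$. Equip each $\check W^{(T)}$ with a perturbation that agrees with the chosen admissible perturbation $\frakq_1$ on the inserted neck, and with $\frakp_{01}$, $\frakp_{12}$ on the $\check W_{01}$ and $\check W_{12}$ pieces respectively. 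By Proposition~\ref{prop:cob_map_indep_metric_pert}, the chain maps $\check m^{(T)}$, $\hat m^{(T)}$, $\bar m^{(T)}$ induced by these data descend to the same map on $\HMR^\circ$, independent of $T$. It therefore suffices to identify the $T \to \infty$ limit with the composition.

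First I would carry out the compactness-and-gluing analysis for moduli spaces on $\check W^{(T)}$ as $T \to \infty$, using precisely the same neck-stretching compactness package that underlies Theorem~\ref{thm:compactness_cylinder} and Proposition~\ref{prop:d1strata_X} (the real bifold analogues of \cite[Section~24--26]{KMbook2007}): a sequence of solutions $\upgamma^{(T_n)}$ on $\check W^{(T_n)}$ with $T_n\to\infty$ converges, after translations on the neck, to a broken trajectory consisting of a piece $[\upgamma_{01}]\in M_{z_{01}}([\fraka],\check W_{01}^{*},\upiota_{W_{01}},[\frakb])$, a piece $[\upgamma_{12}]\in M_{z_{12}}([\frakb],\check W_{12}^{*},\upiota_{W_{12}},[\frakc])$, and possibly additional unparametrized cylindrical trajectories on $(\check Y_1,\upiota_1)$ between them, sharing a common intermediate critical point $[\frakb]\in\mathfrak C(\check Y_1,\upiota_1)$. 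Conversely, a gluing construction produces, for $T\gg 0$, an interior-diffeomorphism between appropriate fibered products and an open subset of $M_z([\fraka],\check W^{(T),*},\upiota_W^{(T)},[\frakc])$.

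Next I would transfer this geometric decomposition to an algebraic identity at the chain level. Under the restriction maps $R_{01},R_{12}$, a glued trajectory is sent to its two constituent pieces, so for cohomology classes $u_{01}$, $u_{12}$ represented by \v{C}ech cocycles on covers transverse to the relevant moduli spaces, the evaluation of $u = R_{01}^*u_{01} \cup R_{12}^*u_{12}$ on $[M_z([\fraka],\check W^{(T),*},\upiota_W^{(T)},[\frakc])]$ factors as a product of evaluations of $u_{01}$ and $u_{12}$ on the respective factors. Summing over the intermediate $[\frakb]$ and over $(z_{01},z_{12})$ with $z_{12}\circ z_{01} = z$, the matrix entries (as in Definition~\ref{defn:entry_m}) of the $T=\infty$ limit of $\check m^{(T)}$, $\hat m^{(T)}$, $\bar m^{(T)}$ become exactly the matrix products defining $\check m(u_{12}|\check W_{12})\circ\check m(u_{01}|\check W_{01})$, and likewise for $\hat m$ and $\bar m$; combined with the $T$-independence from the first paragraph, this yields the proposition.

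The main obstacle will be tracking boundary-obstructed configurations through the gluing, since the off-diagonal terms in the matrix formulae for $\check m$ and $\hat m$ (the ones involving $\bar\del^s_u$ on $\check Y_{\pm}$ and the analogous $\bar m^s_u$ terms) arise precisely from broken trajectories whose middle component is boundary-obstructed. Near such pieces the compactified moduli spaces carry only a codimension-one $\delta$-structure rather than a manifold-with-boundary structure, so Stokes-type contributions have to be counted with the correct multiplicity via Lemma~\ref{lem:bound_multiplicities_0}. Verifying that this bookkeeping reproduces, for each choice of $(\circ,\text{stability type of }\fraka,\frakb,\frakc)$, exactly the matrix entries defining the composite operator is the technical heart of the argument; once this is established in one case, the remaining cases are formally analogous to \cite[Proposition~26.1.2]{KMbook2007}, with the adjective ``real bifold'' inserted throughout.
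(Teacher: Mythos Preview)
Your ingredients are right—neck-stretching, compactness and gluing along the $\check Y_1$ neck, boundary-obstructed bookkeeping—and you cite the correct source \cite[Section~26.1]{KMbook2007}, which is what the paper follows. But the logical structure in your opening paragraph is off.

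You propose first to invoke Proposition~\ref{prop:cob_map_indep_metric_pert} for $T$-independence, then separately to ``identify the $T\to\infty$ limit with the composition.'' There is no such limit to take: for each finite $T$ the chain map $\check m^{(T)}$ counts solutions on a single cobordism, and these do not literally factor into pairs for any $T$; at the homology level the map is already constant in $T$, so a limit gains nothing and you are back to the original statement. Invoking Proposition~\ref{prop:cob_map_indep_metric_pert} is a red herring—it is not used in this argument.

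What the paper does instead (following \cite{KMbook2007}) is to build a \emph{single} chain homotopy $\check K$ directly from the parametrized moduli space
\[
\calM_z([\fraka],[\frakc]) = \bigcup_{S \ge 0}\{S\}\times M_z([\fraka],\check W(S)^*,\upiota_W,[\frakc]),
\]
compactified over $[0,\infty]$ by adding broken trajectories fibrewise and a broken-metric stratum at $S=\infty$. One pairs the \emph{external} product $u_{01}\times u_{12}$ (a \v{C}ech cocycle on $\boldB^\sigma(\check W_{01})\times\boldB^\sigma(\check W_{12})$, pulled back by the restriction maps) against the coarse compactification $\bar\calM_z$ to define operators $K^o_o, K^o_s,\ldots$, and Stokes' theorem yields a schematic identity
\[
\check K\check\del - \check\del\check K - \check m\, c + \check m\,\check m\,\tau = \check K\,\underline{\delta},
\]
where $c$ is the internal cup product and $\underline{\delta}$ the \v{C}ech coboundary on the tensor complex. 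Here the fibre at $S=0$ contributes $\check m(u|\check W)$, the fibre at $S=\infty$ contributes the composition $\check m(u_{12}|\check W_{12})\circ\check m(u_{01}|\check W_{01})$, and trajectory-breaking on the $\check Y_0$ and $\check Y_2$ ends over $(0,\infty)$ produces the $\check K\check\del$ and $\check\del\check K$ terms. Your second and third paragraphs describe exactly the analytic inputs needed to verify this identity; the fix is to organize them around the construction of $\check K$, not around a limit of the maps $\check m^{(T)}$.
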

\begin{proof}
	We outline the proof in  \cite[Proposition~26.1.2]{KMbook2007}.
	Consider a $[0,\infty)$-family of metrics and perturbations as follows.
	For $S > 0$, we metrically insert an $\upiota_1$-invariant cylinder of length $S$ to obtain the cobordism:
	\[
		\check W(S) = \check W_{01} \cup_{Y_1} ([0,S] \times \check Y_1) \cup_{Y_1} \check W_{12}.
	\]
	The Seiberg-Witten equations will be perturbed near the collars of $\check W_{01}$ and $\check W_{12}$, using the given admissible perturbations $\frakq^i$ on $(\check Y_i,\upiota_i)$.
	Furthermore, we insert perturbations $\frakq^1$ across the cylinder $[0,S] \times \check Y_1$, independent of $t \in [0,S]$.
	Fix critical points $[\fraka],[\frakb]$ on $\check  Y_0$ and $\check Y_1$, respectively.
	Consider the parametrized moduli space over $[0,\infty)$
	\[
		\calM_z([\fraka],[\frakb]) = \bigcup_{S \in [0,\infty)} \{S\} \times M_z([\fraka],\check W^*,\upiota_W, [\frakb]).
	\]
	For suitable choices of perturbations $\frakp$ on $\check W_{01}$ and $\check W_{12}$, one can show that $\mathcal M_z([\fraka], [\frakb])$ is regular, and its boundary is the usual $M_z([\fraka], \check W, \upiota_W, [\frakb])$.
	 
	 We compactify $\calM_z([\fraka],[\frakb])$ to $\calM_z^+([\fraka],[\frakb])$ by adding broken trajectories fibrewise over $[0,\infty)$, and a space $M_z^+([\fraka], \check W(\infty)^*, \upiota_W,[\frakb])$ when the metric is broken. 
	 There is a coarser compactification $\bar{\mathcal M_z}([\fraka],[\frakb])$, defined as the image of 
	 \[
	 	r:\calM^+_z([\fraka],[\frakb]) \to
	 	[0,\infty] \times \boldB^{\sigma}_k(\check W_{01},\upiota_{W_{01}}) \times
	 	\boldB^{\sigma}_k(\check W_{12},\upiota_{W_{12}}).
	 \]
	 The map $r$ is defined by restrictions over $\check W_{01}$ and $\check W_{12}$, away from $[0,S] \times \check Y_1$
	 (see \cite[Proposition~26.1.4]{KMbook2007}).
	 Choose open covers $\calU_{01}$ and $\calU_{12}$ for $\boldB^{\sigma}_k(\check W_{01},\upiota_{W_{01}})$ and $\boldB^{\sigma}_k(\check W_{12},\upiota_{W_{12}})$, respectively.
	 By Lemma~\ref{lem:exist_refine}, there is an open cover $\calV$ of
	 \[
	 	[0,\infty] \times \boldB^{\sigma}_k(\check W_{01},\upiota_{W_{01}}) \times
	 	\boldB^{\sigma}_k(\check W_{12},\upiota_{W_{12}}),
	 \] 
	refining the cover $([0,\infty] \times \calU_{01} \times \calU_{12})$
	such that $\calV$ is transverse to all strata of $\mathcal M_z([\fraka],[\frakb])$.
	
	Assume $\dim \calM_z([\fraka],[\frakb]) = d-1$.
	Using pairing with the ``external product'' $u_{01} \times u_{12} \in  C^{d_{01} + d_{12}}(\calV;\bbF_2)$:
	 \[
	 	\langle u_{01} \times u_{12}, \calM_z([\fraka],[\frakb]) \rangle, 
	 \]
	 we define maps $K^o_o,K^o_s,K^u_o,K^u_s,\bar K^s_s, \bar K^u_u, \bar K^s_u, \bar K^u_s$ by varying the types of $(\fraka,\frakb)$, as in the the definition of $m^o_o$, $m^o_s$, etc.
	 There is a chain homotopy
	 \[
	 	\check K:C^{d_{01}} (\calU_{01};\bbF_2)\otimes C^{d_{12}}(\calU_{12};\bbF_2) \otimes
	 	\check C_{\bullet}(\check Y_0,\upiota_0) \to
	 	\check C_{\bullet}(\check Y_2,\upiota_2),
	 \]
	 which satisfies, schematically,
	 \begin{equation}
	 \label{eqn:double_comp_chain_hmtpy}
	 	\check K \check\del - \check \del \check K - \check m c +\check m \check m \tau = \check K\underline{\delta}. 
	 \end{equation}
	 Here, $\underline{\delta}$ is the \v{C}ech coboundary map on the tensor product complex $C^*(\calU_{01}) \times C^*(\calU_{12})$ and $\tau: C^*(\calU_{01}) \times C^*(\calU_{12}) \to C^*(\calU_{01}) \times C^*(\calU_{12})$ is the map that interchanges the factors.
	 The map $c$ is the ``internal product''
	 as in \cite[Equation~(26.9)]{KMbook2007}.
	 
	 The proof of Equation~\eqref{eqn:double_comp_chain_hmtpy} involves enumeration of various strata and Stokes' theorem.
	 In particular, the first term on the right hand side of~\eqref{eqn:double_comp_chain_hmtpy} comes from breaking of trajectories over $(0,\infty)$ and the second to last term comes from the fibre at $S = \infty$.
	 The last term arises from the fibre at $S=\infty$.
\end{proof}

\subsection{Monopole invariants of closed real 4-bifolds}
Let $(\check X,\upiota_X)$ be a closed real 4-bifold.
Consider the moduli space of Seiberg-Witten solutions in the blown-up configuration
\[
	M(\check X,\uptau_X) = \left\{[\upgamma] \in \calB^{\sigma}_k(\check X, \uptau_X) | \frakF^{\sigma}(\upgamma) = 0\right\},
\]
where
\[
	\frakF^{\sigma}(A,s,\phi) = \left(\frac12 \rho(F^+_A) - s^2(\phi\phi^*)_0, D^+_A\phi\right).
\]
As in the classical setup, we perturb the Seiberg-Witten operator by an imaginary-valued $2$-form $\omega$.
Writing $\omega^+$ for the self-dual part of $\omega$, define
\[
	\frakF^{\sigma}_{\omega} = \left(\frac12 \rho(F^+_A - 4\omega^+) - s^2(\phi\phi^*)_0, D^+_A\phi\right).
\]
\begin{lem}
\label{lem:residual_closed_manifold}
	There is a residual set of $(-\upiota^*)$-invariant $2$-forms in $L^2_{k-1}(\check X;i\Lambda^2(\check X))$ such that for each $\omega$,
	the section $\frakF^{\sigma}_{\omega}$ is transverse to zero and the corresponding moduli space $\tilde M(\check X,\uptau_X)$ is a smooth, compact manifold of dimension $d(\check X,\upiota_X)$.
	The space $M(\check X,\uptau_X)$ with $s \ge 0$ imposed is a smooth manifold possibly with boundary, and can be identified with the quotient of $\tilde M(\check X,\uptau_X)$ by $s \mapsto -s$.
	If $b^+_{-\upiota^*}(\check W) > 0$, then moduli space $M(\check X,\uptau_X)$ has empty boundary.
\end{lem}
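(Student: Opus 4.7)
The plan is to run the standard perturbed Seiberg-Witten transversality argument on the blown-up real bifold configuration space, following the blueprint of \cite[Chapter~27]{KMbook2007} with the adjective ``bifold'' and the $\uptau$-invariance constraint inserted throughout, as in \cite{ljk2022} and the cylinder analysis of Section~\ref{sec:analysis_of_sw_traj}. First, I would set up a separable Banach space $\calW$ of $L^2_{k-1}$ imaginary-valued $(-\upiota^*)$-invariant $2$-forms $\omega$ on $\check X$ and form the parametrized (universal) zero locus
\[
	\tilde{\mathcal M}^{\text{univ}}= \left\{([\upgamma],\omega) \in \tilde{\calB}^{\sigma}_k(\check X,\uptau_X)\times \calW: \frakF^{\sigma}_{\omega}(\upgamma)=0\right\}.
\]
I would show that the linearization $\calD_{(\upgamma,\omega)}\frakF^{\sigma}_{\omega}$, augmented by the variation in $\omega$, is surjective at every $([\upgamma],\omega)$; this is the usual computation — for irreducibles one varies $\omega^+$ to hit the curvature component and uses unique continuation for the Dirac operator to rule out obstructions on the spinor component, while on the blowup stratum $s=0$ one varies $\omega^+$ freely to surject onto $\su(S^+)$. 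Here the $\uptau$-invariance is the only new ingredient: one has $\Omega^+(\check X; i\reals)^{-\upiota^*}$ already spanning the $\uptau$-invariant part of $\su(S^+)$ pointwise, so the standard pointwise argument goes through verbatim. Sard-Smale applied to the projection $\tilde{\mathcal M}^{\text{univ}}\to\calW$ delivers the residual set.

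Next I would identify the dimension $d(\check X,\upiota_X)$ as the index of the linearized operator on the $\uptau$-invariant slice; this is the same linearization as the Fredholm operator $\calQ_\upgamma$ of~\eqref{eqn:Q_operator} but on the closed bifold, and its index is computed by the bifold index theorem applied to the $\uptau$-fixed subcomplex (equivalently, halving the $\upiota$-averaged virtual dimension on the underlying bifold). I would record the answer in terms of $c_1(\fraks_X)^2$ against the $(-\upiota^*)$-invariant intersection form and the orbifold signature/Euler contributions of the bifold singular set, but the explicit formula is not needed for the lemma statement.

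For the structure near $s=0$: the involution $\mathbf{i}:(A,s,\phi)\mapsto (A,-s,\phi)$ preserves $\frakF^{\sigma}_{\omega}$ and the gauge action, and its fixed locus in $\tilde{\calB}^{\sigma}_k$ consists of the reducible configurations $s=0$. Transversality of $\frakF^{\sigma}_{\omega}$ at such points combined with $\mathbf{i}$-equivariance makes $\tilde M(\check X,\uptau_X)$ into an $\mathbf{i}$-invariant smooth manifold, so $M(\check X,\uptau_X)=\tilde M/\mathbf{i}$ is a smooth manifold whose boundary is exactly the fixed locus $\{s=0\}$. Compactness of $\tilde M$ follows as in the closed Seiberg-Witten case: the Weitzenb\"ock identity bounds $\|\phi\|_{C^0}$ uniformly in terms of scalar curvature and $\omega$, then bootstrap gives $L^2_k$ bounds after gauge fixing, and the orbifold Uhlenbeck/elliptic regularity estimates on bifolds needed here are the closed analogues of those invoked in Theorem~\ref{thm:compactness_cylinder}.

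The hard part — and the only step where $b^+_{-\upiota^*}$ enters — is the final assertion that the boundary $M^{\red}(\check X,\uptau_X)$ is empty for generic $\omega$ when $b^+_{-\upiota^*}(\check X)>0$. A reducible solution is a $\uptau$-invariant \spinc connection $A$ with self-dual curvature equal to $4\omega^+$ in the prescribed cohomology class $\pi c_1(\fraks_X)$; the set of admissible $\omega^+$ lies in an affine subspace of the real vector space $\calH^+_{g,-\upiota^*}$ of $(-\upiota^*)$-invariant self-dual harmonic $2$-forms (codimension equal to the rank of the image of $c_1$). When $b^+_{-\upiota^*}(\check X) = \dim \calH^+_{g,-\upiota^*}>0$, the complement of this affine subspace is open and dense in $\calW$, so perturbations avoiding reducibles form a residual set; intersecting with the earlier residual set gives the desired $\omega$'s. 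I would then note that in this case $M(\check X,\uptau_X)=\tilde M(\check X,\uptau_X)/\mathbf{i}$ is already a closed manifold because the $\mathbf{i}$-action is free away from $s=0$, completing the proof.
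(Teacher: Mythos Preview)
Your proposal is correct and follows essentially the same strategy as the paper, which likewise adapts \cite[Lemma~27.1.1]{KMbook2007} to the real bifold setting. The only cosmetic difference is packaging: the paper first shows that the locus $\tilde Z\subset\tilde{\calB}^{\sigma}_k(\check X,\uptau_X)$ of solutions to $D_A^+\phi=0$ is a Hilbert submanifold (this is where the unique continuation step you mention is absorbed), and then observes that the moduli space is the fibre of the Fredholm map $\varpi:\tilde Z\to L^2_{k-1}(\check X;i\su(S^+))^{\uptau}$, $(A,s,\phi)\mapsto \tfrac12\rho(F^+_{A^t})-s^2(\phi\phi^*)_0$, over $2\rho(\omega^+)$, so Sard--Smale gives the residual set directly as regular values. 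Your universal moduli space formulation with the projection to $\calW$ is equivalent and the reducible analysis is identical.
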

\begin{proof}
	To sketch a proof, let $\tilde Z \subset \tilde{\calB}^{\sigma}_k(\check X,\uptau_X)$ be the locus of the solutions to the Dirac equation $D_A^+\phi = 0$.
	This is a Hilbert submanifold as the linearization of the Dirac operator is surjective.
	The solutions to the Seiberg-Witten equations $M(\check X,\uptau_X)$ are the fibres over $2\rho(\omega^+)$ of the map
	\[
		\varpi: \tilde{Z} \to L^2_{k-1}(\check X;i\su(S^+))^{\uptau}
	\]
	given by 
	\[
		(A,s,\phi) \mapsto \frac{1}{2}\rho(F^+_{A^t}) - s^2(\phi\phi^*)_0.
	\]
	This is a Fredholm map of index for which we denote as $d(\check X,\upiota_X)$. So there is a residual set of $\omega^+$ for which $M(\check X,\uptau_X)$ is regular.
	The last statement follows because the boundary is nonempty, or equivalent a reducible solution exists only if
	\[
		2\int_{\check X} \omega \wedge \kappa =
		-(2\pi i c_1(S^+) \cup [\kappa])[\check X],
	\]
	which is a codimension-$b^+_{-\upiota^*}(\check W)$ condition.
	This is the analogue of \cite[Lemma~27.1.1]{KMbook2007}.
\end{proof}
\begin{prop}
	Suppose $(\check X,\upiota_X)$ is the real double cover of a foam with $1$-set $(\Sigma,s_{\Sigma})$ in $S^4$ such that $\tilde \Sigma^{\sfc}$ is orientable.
	Given a real \spinc structure $(\fraks_X,\uptau_X)$, the formal dimension $d(\check X,\uptau_X)$ of the Seiberg-Witten moduli space $M(\check X, \uptau_X)$ is given by
	\[
	\frac12b_1(\Sigma^{\sfr}) - \frac14 [\Sigma^{\sfr}]\cdot [\Sigma^{\sfr}] + \frac{c_1(S^+)^2-\sigma(X)}{8}+\frac18\langle c_1(S^+),\tilde \Sigma^{\sfc}\rangle + \frac{1}{32}[\tilde \Sigma^{\sfc}] \cdot [\tilde \Sigma^{\sfc}],
	\]
	where $[\Sigma^{\sfr}]\cdot [\Sigma^{\sfr}]$ is the euler number of $\Sigma^{\sfr}$ in $S^4$ and $[\tilde \Sigma^{\sfc}] \cdot [\tilde \Sigma^{\sfc}]$ is the self-intersection of $\tilde \Sigma^{\sfc}$ in $X$.
\end{prop}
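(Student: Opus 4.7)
The plan is to identify $d(\check X,\uptau_X)$ with the Fredholm index of the linearized and gauge-fixed Seiberg-Witten operator on the $\uptau_X$-invariant configuration space, and then to split this index into a Dirac contribution and a de Rham contribution. At an irreducible solution, the relevant operator is the $\uptau_X$-invariant extended Hessian $\calQ_\upgamma$ (analogous to~\eqref{eqn:Q_operator}, here for the $4$-dimensional closed case), acting between $\uptau_X$-invariant spinors and $(-\upiota^*)$-invariant imaginary forms. Its principal symbol has no cross-terms between the spinor and form parts, so
\[
d(\check X,\uptau_X) \;=\; \ind_\bbR\bigl(D^+_A\bigr)^{\uptau_X} \;-\; \bigl( b^0_{-\upiota^*}(\check X) - b^1_{-\upiota^*}(\check X) + b^+_{-\upiota^*}(\check X) \bigr),
\]
reducing the problem to two bifold index calculations.

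For the Dirac contribution, since $\uptau_X$ is antilinear and involutive, the real index of $D^+_A$ on $\uptau_X$-invariant spinors agrees with the complex index of the underlying bifold Dirac operator on $\check X$. I would compute the latter using the Kawasaki orbifold index theorem: the smooth top-dimensional stratum contributes the usual $(c_1(S^+)^2 - \sigma(X))/8$, while the bifold locus $\tilde\Sigma^{\sfc}$, endowed with the cone-angle-$\pi$ structure of Section~\ref{subsec:bifolds_orbi}, contributes additional integrals that evaluate to $\tfrac18\langle c_1(S^+),\tilde\Sigma^{\sfc}\rangle + \tfrac{1}{32}[\tilde\Sigma^{\sfc}]\cdot[\tilde\Sigma^{\sfc}]$. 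This is the \spinc analogue of the $\SO(3)$ orbifold index computation in \cite{KMweb}, in the bifold \spinc setting of \cite{ChenWM2006scob,LeBrun2015_edges}; the orientability hypothesis on $\tilde\Sigma^{\sfc}$ is what makes the self-intersection an integer and the local Kawasaki density computable directly from the normal disc fibration.

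For the de Rham Euler characteristic, I would relate the $(-\upiota^*)$-invariant Betti numbers of $\check X$ to the topology of the branching locus $\Sigma^{\sfr}$ using the branched double cover $\check X \to S^4$. A Smith-theoretic decomposition of $H^*(\check X;\bbR)$ into $(\pm\upiota^*)$-eigenspaces, together with Hirzebruch's signature formula for double covers (which yields $\sigma(\check X) = -\tfrac12[\Sigma^{\sfr}]\cdot[\Sigma^{\sfr}]$), will identify the $(-\upiota^*)$-parts of $H^1$ and $H^+$ of $\check X$ purely in terms of $\Sigma^{\sfr}$, producing
\[
b^0_{-\upiota^*}(\check X) - b^1_{-\upiota^*}(\check X) + b^+_{-\upiota^*}(\check X) = -\tfrac12 b_1(\Sigma^{\sfr}) + \tfrac14 [\Sigma^{\sfr}]\cdot[\Sigma^{\sfr}].
\]
Subtracting this Euler characteristic from the Dirac index yields the claimed formula, with a sanity check coming from the case $\Sigma^{\sfr} = \emptyset$, where $\check X$ is two disjoint copies of a bifold and the expression reduces to the ordinary bifold Seiberg-Witten dimension on one copy.

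The \textbf{main obstacle} will be carrying out the Kawasaki and Smith identifications carefully at the tetrahedral points of $\Sigma$, where $\Sigma^{\sfr}$ and $\tilde\Sigma^{\sfc}$ meet and the local model becomes the $V_4$-quotient of Section~\ref{sec:real_bifolds} rather than the simpler cone-angle-$\pi$ model along a single facet. At those isolated points both the orbifold Dirac density and the local Smith contribution to $b_{-\upiota^*}^*$ require separate evaluation, and one has to verify that their sum contributes zero beyond what is already accounted for by the integrals over the strata of $\Sigma^{\sfr}$ and $\tilde\Sigma^{\sfc}$.
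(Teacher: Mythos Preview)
Your overall architecture matches the paper's: split the formal dimension into the $\uptau_X$-invariant Dirac index and the $(-\upiota^*)$-invariant de Rham contribution, and for the latter use the branched-cover identification of $b^+_{-\upiota^*}(X)$ with the topology of $\Sigma^{\sfr}$ (the paper cites \cite{HsiangSzczarbar1971} for exactly this).

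The substantive difference is in the Dirac step. You propose a direct Kawasaki orbifold-index computation along the cone-angle-$\pi$ stratum. The paper instead takes an example-driven route: from the general Atiyah--Segal--Singer Lefschetz shape (plus excision to reduce to global quotients) it knows the bifold Dirac index must have the form
\[
\frac{c_1(S^+)^2-\sigma(X)}{4} + m_0 + m_1\langle c_1(S^+),\tilde\Sigma^{\sfc}\rangle + m_2[\tilde\Sigma^{\sfc}]^2 + m_3\chi(\tilde\Sigma^{\sfc})
\]
for universal constants $m_i$, and then pins these down by evaluating on explicit bifold pairs---genus-$g$ surfaces in $S^4$ (positive scalar curvature on the double cover forces index zero, giving $m_0=m_3=0$) and degree-$2$ and degree-$4$ plane curves in $\mathbb{CP}^2$ (where LeBrun's observation \cite{LeBrun2015_edges} identifies the canonical-\spinc index with $b^0+b^+-b^1$). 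Seams are handled separately by passing to a finite cover where the local model becomes $(S^1\times S^3, S^1\times\Theta)$ and again invoking positive scalar curvature. Your Kawasaki route would yield all coefficients in one stroke but carries the burden of the local density computation at the singular strata; the paper's method trades that for a handful of concrete index evaluations on model bifolds.

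Your anticipated obstacle at tetrahedral points---where the $V_4$ local model of Section~\ref{sec:real_bifolds} appears---is legitimate and is \emph{not} addressed explicitly in the paper's proof (only seams are treated). The excision-and-local-model strategy the paper uses for seams should in principle extend, but you are right that this requires a separate check.
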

\begin{proof}
	The formal dimension of Seiberg-Witten moduli space is the sum of the indices of the following two operators
	\begin{align*}
		D^+_A:L^2_k(\check X;S^+)^{\uptau_X} &\to L^2_{k-1}(\check X;S^-)^{\uptau_X}\\
		d^* \oplus d^+: L^2_k(\check X;iT^*\check X)^{-\upiota_X^*} &\to
		L^2_{k-1}(\check X;i\reals \oplus i\Lambda^+)^{-\upiota_X^*}.
	\end{align*}
	First, we drop the real involution and derive the index of the bifold Seiberg-Witten operator.
	The bifold index of $d^* \oplus d^+$ is simply $\dim H^1(\check X;\reals) - \dim H^0(\check X;\reals) - \dim H^+(\check X;\reals)$.
	It follows that the index of $d^* \oplus d^+$ is $(b^1 - b^0 - b^+)$ of the underlying $4$-manifold $X$. 
	
	To compute the index of the Dirac operator,
	assume for now there is no seam singularity.
	By the excision principle, it is enough to check the formula for quotient bifolds.
	By the $\bbZ_2$-equivariant Atiyah-Segal-Singer Lefshetz formula~\cite[Theorem~6.16]{BGV1992HeatkernelDirac}, the invariant index of a global quotient bifold has the following shape:
	\[
		\frac{c_1(S^+)^2-\sigma(X)}{4}+m_0+ m_1\langle c_1(S^+),\tilde \Sigma^{\sfc}\rangle + m_2[\tilde \Sigma^{\sfc}] \cdot [\tilde \Sigma^{\sfc}]
		+m_3 \chi(\tilde \Sigma^{\sfc}).
	\]
	The coefficients $m_i$ can be computed from examples.
	
	Let $\Sigma_g$ be the standard genus-$g$ surface in $S^4$.
	The invariant index of the $\spinc$ Dirac operator over the the double cover $\sharp_g (S^2 \times S^2)$ of $(S^4,\Sigma_g)$ is zero because the existence of invariant positive scalar curvature metric.
	So $m_0 = m_3 = 0$.
	Let $X = \mathbb{CP}^2$ and $\tilde \Sigma$ be a degree-$2$ rational curve.
	For this holomorphic pair, there is a canonical \spinc structure, such that $c_1(S^+) = c_1(X) - [\tilde \Sigma]/2 = 2H$ where $H$ is the hyperplane class.
	The bifold index of Dirac operator for holomorphic pairs with canonical \spinc structures is equal to $(b^0+b^+-b^1)$, as observed by LeBrun~\cite[Proposition~2.4]{LeBrun2015_edges}.
	As a result, $2 = 3/4 + 4m_1 + 4m_2$.
	When $\Sigma$ is instead a degree-$4$ curve in the plane, the same argument gives $2 = 0 + 4m_1 + 16m_2$.
	This is enough to determine all $m_i$:
	\[
		\ind(D_A) = \frac{c_1(S^+)^2-\sigma(X)}{4}+\frac14\langle c_1(S^+),\tilde \Sigma^{\sfc}\rangle + \frac{1}{16}[\tilde \Sigma^{\sfc}] \cdot [\tilde \Sigma^{\sfc}].
	\]
	
	To deal with seams, we consider the local model around a seam which is a foam in $S^1 \times B^3$.
	Its double is a foam in $S^1 \times S^3$, possibly with an order-two or three monodromy along the $S^1$-factor. 
	In a finite cover, the foam is the standard product foam $(S^1 \times S^3, S^1 \times \text{Theta graph})$.
	We can equip this bifold with a positive scalar curvature metric, from which it follows that $\ind(D_A) = 0$. 
	So the formula in the seam case checks out.
	
	Back to the case with involution, let $(\check X,\upiota_X)$ be the double branched cover of $S^4$ along $\Sigma^{\sfr}$. 
	Then $b^0_{-\upiota^*}(X) = 0$, while $b^1_{-\upiota^*}(X) = b^1(X)$ and $
		b^+_{-\upiota^*}(X) = b^+(X)$.
	Furthermore, $b^1(X)=0$ and by~\cite{HsiangSzczarbar1971}, the formula for $b^+(X)$ involves the genus and self-intersection of $\Sigma^{\sfr}$:
	\[
		b^+(X) = \frac12b_1(\Sigma^{\sfr}) - \frac14 [\Sigma^{\sfr}]\cdot [\Sigma^{\sfr}].
	\]
	Since $\uptau$ acts anti-linearly, the invariant part of the Dirac index is simply half of the complex linear one:
	\[
		\ind(D_A)^{\uptau_X} = \frac{c_1(S^+)^2-\sigma(X)}{8}+\frac18\langle c_1(S^+),\tilde \Sigma^{\sfc}\rangle + \frac{1}{32}[\tilde \Sigma^{\sfc}] \cdot [\tilde \Sigma^{\sfc}].
	\]
	In summary, the index of the real bifold Seiberg-Witten operator is 
	\[
		\frac12b_1(\Sigma^{\sfr}) - \frac14 [\Sigma^{\sfr}]\cdot [\Sigma^{\sfr}] + \frac{c_1(S^+)^2-\sigma(X)}{8}+\frac18\langle c_1(S^+),\tilde \Sigma^{\sfc}\rangle + \frac{1}{32}[\tilde \Sigma^{\sfc}] \cdot [\tilde \Sigma^{\sfc}]. \qedhere
	\]
\end{proof}

Let $P$ be a manifold (possibly with boundary) and $g^p$ be a family of bifold $\upiota_W$-invariant metrics on $\check W$ parametrized by $P$, containing the same isometric copy of cylindrical regions near $Y_{\pm}$.
Suppose $\omega_p$ is a family of perturbing $2$-forms.
We have the family moduli space
\[
	M(\check X,\uptau_X)_P \subset P \times \calB^{\sigma}_k(\check X,\uptau_X).
\]
A family analogue of Lemma~\ref{lem:residual_closed_manifold} shows that the family moduli space is a manifold for a residual set of choices (cf. \cite[Lemma~27.1.4]{KMbook2007}).
The following is the parametrized version of Lemma~\ref{lem:residual_closed_manifold}
\begin{prop}
\label{prop:family_2form_pert}
	Suppose $b^+_{-\upiota^*}(\check W) > \dim P$.
	There exists a family of compactly supported $2$-forms $\omega_p$ such that there exists a family of  2-forms $\omega^p$ such that
	\[
		4\int_{W^*}\omega_p \wedge \kappa_p \ne
		(-2\pi i)\langle c_1(\fraks_X), [\kappa_p] \rangle
	\]
	for all $p \in P$.
	If $\omega_p$ is previously chosen for $q \in \del P$, then $\omega_p$ can be chosen to agree with the boundary.
	Moreover, there exists $T \ge T_0$ such that for all $T \ge T_0$, there are no reducible solutions in the parametrized moduli space $M_z([\fraka], \check W(T)^*,\uptau_W, [\frakb])_P$. \qed 
\end{prop}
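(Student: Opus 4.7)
The plan is to model the argument on \cite[Proposition~27.1.2]{KMbook2007}, adapting it to the real bifold setting and to the $T$-dependent family. The key observation is that reducible solutions, whether living on $\check W$ or on the stretched $\check W(T)^*$, are controlled entirely by a cohomological condition, so avoiding them reduces to a finite-dimensional transversality problem in $H^2_+(-\upiota^*)$.

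First, I would characterize reducibles cohomologically. A reducible solution $(A,0)\in M_z([\fraka],\check W(T)^*,\uptau_W,[\frakb])_P$ with perturbation $\omega_p$ satisfies $F^+_{A^t}=4\omega_p^+$. Pairing with any closed $(-\upiota^*)$-invariant self-dual $2$-form $\kappa_p$ whose class lies in the image of $H^2_c(\check W(T)^*;i\reals)^{-\upiota^*}\to H^2(\check W(T)^*;i\reals)^{-\upiota^*}$ and using Stokes yields exactly the equation
\[
4\int_{\check W^*}\omega_p\wedge\kappa_p=(-2\pi i)\langle c_1(\fraks_X),[\kappa_p]\rangle,
\]
whose negation is the statement of the proposition. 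So it suffices to construct a family $\omega_p$ whose image under the linear map
\[
\Pi\colon L^2_{k-1}(\check W;i\Lambda^2)^{-\upiota^*}_{cpt}\longrightarrow H^+_{-\upiota^*}(\check W^*;i\reals),\qquad \omega\mapsto [\omega^+]_{harm},
\]
avoids the single affine class $\calZ\subset H^+_{-\upiota^*}(\check W^*;i\reals)$ determined by $c_1(\fraks_X)$.

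Next I would set up parametrized transversality. The map $\Pi$ is surjective because any harmonic $(-\upiota^*)$-invariant self-dual form can be represented by a compactly supported form after subtracting an exact form. Consider the evaluation $P\to H^+_{-\upiota^*}$ sending $p\mapsto \Pi(\omega_p)$. Avoiding the point $\calZ$ is a codimension-$b^+_{-\upiota^*}(\check W)$ condition on a smooth family over $P$, so by the hypothesis $\dim P<b^+_{-\upiota^*}(\check W)$ a generic family achieves this by standard Sard--Smale together with the observation that smooth sections of the trivial bundle $P\times H^+_{-\upiota^*}$ can be perturbed freely. The boundary compatibility follows by a standard relative construction: extend the given family $\{\omega_q\}_{q\in\del P}$ to a collar of $\del P$ using a smooth cutoff, then apply the previous generic-perturbation step on the complement of a smaller collar, so that the final family agrees with the prescribed data on $\del P$.

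Finally, for the $T\ge T_0$ statement, the point is that the cohomology $H^+_{-\upiota^*}(\check W(T)^*;i\reals)$ is canonically identified (as $T$ varies) with $H^+_{-\upiota^*}(\check W^*;i\reals)$ by a homotopy equivalence, and harmonic representatives on $\check W(T)^*$ converge in $L^2_{loc}$ to harmonic representatives on $\check W^*$ away from the neck as $T\to\infty$. One arranges the family $\omega_p$ to be supported in a fixed compact subset of $\check W^*$ disjoint from all inserted cylindrical regions, so that $\int_{\check W^*}\omega_p\wedge\kappa_p$ is continuous in $T$. The bad locus is the preimage of a single point under a continuous map $P\times[T_0,\infty)\to H^+_{-\upiota^*}$, and shrinks to the (already avoided) bad locus at $T=\infty$; a compactness argument then gives a uniform $T_0$ beyond which the inequality persists. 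The main obstacle I anticipate is managing the uniformity in $T$ of the harmonic projection in the noncompact $\check W(T)^*$ geometry; here one invokes the excision-type estimate for Laplacians on long cylinders (as in \cite[Chapter~26]{KMbook2007}) to control the difference between the harmonic part on $\check W(T)^*$ and that on $\check W^*$ by $O(e^{-cT})$, which is what makes the fixed-compact-support choice of $\omega_p$ effective for all sufficiently large $T$.
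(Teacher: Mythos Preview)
Your sketch is correct and is precisely the standard argument the paper is relying on: note that the paper gives no proof of this proposition at all (it ends the statement with \qedsymbol\ and moves on), treating it as a routine adaptation of the corresponding results in \cite{KMbook2007}. Your cohomological characterization of reducibles, the codimension count $b^+_{-\upiota^*}(\check W)>\dim P$ for generic avoidance, the relative extension from $\partial P$, and the compactness argument for uniform $T_0$ are exactly what is needed; the only minor remark is that the most relevant reference is closer to \cite[Proposition~27.2.4]{KMbook2007} (reducibles on stretched cobordisms) combined with \cite[Lemma~27.1.4]{KMbook2007} (family version) rather than Proposition~27.1.2.
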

A corollary to the above proposition is that the following monopole invariant is well-defined, i.e. independent of metrics and perturbations.
\begin{defn}
	\label{defn:monopole_invariant}
	Let $(\check X,\upiota_X)$ be a closed, connected, oriented real $4$-bifold with $b^+_{-\upiota}(\check X) > 1$.
	The \emph{monopole invariant} $\frakm(-|\check X, \uptau_X)$ of $(\check X, \upiota_X)$ for the real \spinc structure $(\fraks_X,\uptau_X)$ is the map
	\[
	\frakm(-|\check X, \uptau_X):
	H^*(\calB^{\sigma}(\check X, \uptau_X)) \to \bbZ_2,
	\]
	defined by evaluation on Seiberg-Witten moduli spaces 
	\[\frakm(u|\check X, \uptau_X) = \langle u, [M(\check X, \uptau_X)]\rangle.\]
\end{defn}

\subsection{Monopole invariants as mixed invariants}
The monopole invariant can be reformulated as a map $\overrightarrow{\HMR}_{\bullet}$ involving two flavours of the monopole Floer homology:
\[
\overrightarrow{\HMR}_{\bullet}(u|\check W, \upiota_W):	\widehat{\HMR}_{\bullet}(S^3 \sqcup S^3, \upiota) 
\to
\widecheck{\HMR}_{\bullet}(S^3\sqcup S^3, \upiota),
\]
where $\upiota$ is the swapping involution.
This invariant counts irreducible Seiberg-Witten solutions on the cobordism and generalizes to cobordisms that are not obtained from puncturing closed real $4$-bifolds.

We start by analyzing reducible solutions on cobordisms.
Suppose $(\check W,\upiota_W):(\check Y_-,\upiota_-) \to (\check Y_+, \upiota_+)$ is a cobordism.
Let $(\check W^*,\upiota_W)$ be the corresponding bifold with cylindrical ends.
Given metrics on $W$ and $Y_{\pm}$,
let $I^2(\check W^*) \subset H^2(\check W^*;\reals)$ be the image of compactly supported cohomology.
There is a non-degenerate pairing $\langle -, - \rangle$ on $I^2(\check W^*)$ induced by integration.
Denote the maximal positive-definite subspace by $I^+(\check W^*)$.  
Let $\calH(W^*,g)$ be the closed square-integrable $2$-forms and $\calH^+(W^*,g)$ the subspace consisting of self-dual square integrable $2$-forms.
The following lemma (\cite[Lemma~27.2.1]{KMbook2007}) summarizes the results for the reducible solutions to the curvature equation $F^+_A = \omega^+$.
\begin{lem}
	\label{lem:red_on_cylindend}
	Let $L$ be a complex bifold line bundle on $\check W^*$ whose Chern class restricts to torsion classes on cylindrical ends.
	Let $A$ be a connection on $L$ with square integrable curvature. 
	Let $\kappa$ be a square-integrable closed, self-dual $2$-form on $\check W^*$.
	Then
	\[
	\int_{\check W^*} F_A^+ \wedge \kappa
	= 
	-2\pi \langle c_1(L),\kappa\rangle.
	\]
	If $b^+(\check W) > 0$, then there exists a compactly supported imaginary-valued $2$-form $\omega$ on $\check W^*$ such that there exists no connection $A$ with $L^2$ curvature in any line bundle $L \to \check W^*$ satisfying $F_A^+ = \omega^+$. \hfill \qedsymbol
\end{lem}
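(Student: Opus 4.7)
The proof has two nearly independent parts, and each one is a bifold adaptation of a standard argument in \cite[Section~27.2]{KMbook2007}.

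For the integral identity, the plan is to choose a reference bifold $\spinc$-compatible connection $A_0$ on $L$ which is flat on each cylindrical end; this is possible because the restriction of $c_1(L)$ to each $\check Y_\alpha$ is torsion, so there is a bifold flat connection on each $L|_{\check Y_\alpha}$, and $A_0$ can be arranged to be pulled back from this data on the cylindrical region $[T_0,\infty) \times \check Y_\alpha$. Writing $A = A_0 + a$ for a bifold $1$-form $a$, the square-integrability of $F_A$ forces $da \in L^2$, and since $F_{A_0}=0$ on the ends one gets $a \in L^2_{1,\loc}$ with $da$ globally $L^2$. Pick a sequence of bifold cutoffs $\chi_T$ equal to $1$ on a compact exhaustion $\check W_T$ and supported in $\check W_{T+1}$, with $|d\chi_T|$ uniformly bounded. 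Since $\chi_T\, a$ has compact support and $d\kappa=0$, Stokes' theorem gives
\[
\int_{\check W^*} \chi_T\, da \wedge \kappa \;=\; -\int_{\check W^*} d\chi_T \wedge a \wedge \kappa,
\]
and the right-hand side is bounded by $\|a\|_{L^2(\text{supp}\,d\chi_T)} \|\kappa\|_{L^2(\text{supp}\,d\chi_T)}$, which tends to zero as $T \to \infty$ by square integrability. Hence $\int F_A \wedge \kappa = \int F_{A_0} \wedge \kappa$, and the latter equals $-2\pi\langle c_1(L),\kappa\rangle$ by de Rham representation of the bifold first Chern class (Section~\ref{subsec:cohom_line_bundles}). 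Finally, $F_A \wedge \kappa = F_A^+ \wedge \kappa$ because $\kappa$ is self-dual and the wedge of an anti-self-dual and a self-dual $2$-form vanishes pointwise.

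For the non-existence part, the plan is a Baire category argument. The bifold line bundles on $\check W^*$ whose Chern classes restrict to torsion classes on each end form a countable set: by Baldridge's classification (Theorem~\ref{thm:Bal_homogeneous}), they are parametrized by the isotropy data along $\Sing^{\sfc}$ together with a class in $H^2(\check W;\bbZ)$, and the torsion-on-ends condition cuts this down to a finite-index subgroup which is still countable. For each such $L$ and each $L^2$ self-dual harmonic $2$-form $\kappa$ on $\check W^*$, the first part forces any $A$ with $L^2$ curvature solving $F_A^+ = \omega^+$ to satisfy
\[
\int_{\check W^*} \omega \wedge \kappa \;=\; -2\pi \langle c_1(L),\kappa\rangle.
\]
Running $\kappa$ over a basis of $\calH^+(\check W^*,g)$, this is a system of $\dim \calH^+(\check W^*,g)= b^+(\check W)$ affine equations on $\omega$, so the bad set $\calA_L$ of perturbations for which \emph{some} $A$ on $L$ could exist is a closed affine subspace of codimension $b^+(\check W)$ in the space of $2$-forms.

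The remaining task is to exhibit $\omega$ that is compactly supported and avoids $\bigcup_L \calA_L$. The key observation is that the pairing map $\omega \mapsto \bigl(\int \omega \wedge \kappa\bigr)_{\kappa}$ from the space of compactly supported bifold $2$-forms to $\calH^+(\check W^*,g)^*$ is surjective; this follows from the non-degeneracy of the pairing $I^+(\check W^*) \times \calH^+(\check W^*,g)\to \reals$ together with the fact that every class in $I^2(\check W^*)$ is represented by a compactly supported form. Consequently, each $\calA_L$ is a proper closed affine subspace in the compactly supported $2$-forms whenever $b^+(\check W) \ge 1$, and the countable union $\bigcup_L \calA_L$ is meager. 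Any $\omega$ in the complement has the desired property. The main obstacle, and the one requiring care, is ensuring the Baire argument is carried out in a suitable (complete metrizable) space of compactly supported forms while simultaneously respecting the bifold structure at the $V_4$-points and the $-\upiota^*$-invariance if the analogue is needed for the real setting; for the statement as written only the bifold version is needed.
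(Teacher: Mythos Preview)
Your proposal is essentially what the paper does: the paper gives no proof at all, simply citing \cite[Lemma~27.2.1]{KMbook2007} and ending with a \qedsymbol, and your sketch reproduces the standard argument from \cite[Section~27.2]{KMbook2007} in the bifold setting.

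One small point in your cutoff argument deserves care. You bound the boundary term by $\|a\|_{L^2(\operatorname{supp} d\chi_T)}\,\|\kappa\|_{L^2(\operatorname{supp} d\chi_T)}$ and assert this tends to zero ``by square integrability''. Square integrability of $\kappa$ alone gives $\|\kappa\|_{L^2([T,T+1]\times \check Y)} \to 0$, but there is no a~priori reason $\|a\|_{L^2([T,T+1]\times \check Y)}$ stays bounded: having $da\in L^2$ on a cylinder only controls the growth of $a$ to be at most $O(\sqrt{T})$. The argument is rescued by the fact that an $L^2$ harmonic $2$-form on a cylindrical-end manifold decays exponentially on the ends (APS theory), so the product still vanishes in the limit; you should invoke this rather than ``square integrability''. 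With that fix, both parts are correct and match the reference.
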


Choose admissible perturbations $\frakq_{\pm}$ on $(\check Y_{\pm},\uptau_{\pm})$.
Let $\hat{\frakp}$ be the corresponding perturbation supported on collar as in Section~\ref{subsec:mod_space_over_bifold_boundary}. 
We further perturb the Seiberg-Witten operator by an $(-\upiota^*)$-invariant form $\omega$ supported in the interior of $\check W$.
Denote the new perturbed operator by $\frakF^{\sigma}_{\hat{\frakq},\omega}$.
Furthermore,
suppose there are cylindrical region $I \times \check Y_{\pm}$ disjoint from the support of both $\omega$ and $\hat{\frakp}$.
Let $g_T$ be a real bifold metric on $W$ obtained by increasing the cylinder by $T > 0$.
\begin{prop}
	\label{prop:no_red_cylindend_closed}
	Suppose $b^+_{-\upiota^*}(\check W) > 0$ and $\omega \in \calH^+_{-\upiota^*}(\check W^*,g)$ satisfies
	\[
	4\int_{\check W^*} \omega \wedge \kappa \ne
	(-2\pi i)\langle c_1(\fraks_X),[\kappa]\rangle.
	\]
	for at least one closed, self-dual, square-integrable 2-form $\kappa$ on $\check W^*$.
	Then there exists $T_0 > 0$ such that for all $T \ge T_0$, there are no reducible solutions in the moduli space $M_z([\fraka], \check W(T)^*,\uptau_W, [\frakb])$ defined by $\frakF^{\sigma}_{\hat{\frakp},\omega} = 0$.
\end{prop}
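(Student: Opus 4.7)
The plan is to argue by contradiction. Suppose no such $T_0$ exists; then there is a sequence $T_n \to \infty$ together with reducible solutions $[\upgamma_n] = [(A_n,0,\phi_n)] \in M_z([\fraka], \check W(T_n)^*, \uptau_W, [\frakb])$ to $\frakF^{\sigma}_{\hat\frakp,\omega} = 0$. I will extract a subsequential limit on the fixed cylindrical-end bifold $\check W^*$ and then derive a numerical contradiction with the hypothesis via Lemma~\ref{lem:red_on_cylindend}.

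First I would reduce to the curvature equation. Since $s_n \equiv 0$, the equation $\frakF^{\sigma}_{\hat\frakp,\omega}(A_n,0,\phi_n) = 0$ collapses on the interior of $\check W$ to
\[
\tfrac{1}{2}\rho(F^+_{A_n^t} - 4\omega^+) = 0,
\]
while on the fixed collars of $\check Y_\pm$ the curvature of $A_n^t$ is pinned by the admissible perturbations $\frakq_\pm$ coming from the reducible critical points $[\fraka],[\frakb]$. In particular, the deviation $F^+_{A_n^t} - 4\omega^+$ is uniformly bounded in $L^2$, with support controlled by the fixed perturbed regions on the ends.

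Second, I would pass to a limit on $\check W^*$. After applying suitable gauge transformations in $\calG_{k+1}(\check W(T_n)^*,\upiota_W)$, and using the standard real-bifold compactness package from Section~\ref{subsec:compactification} (in the form adapted to reducible trajectories, imitating the reducible analysis of \cite[Chapter~27]{KMbook2007}), a subsequence of the $A_n$'s converges in $C^\infty_{\loc}$ to a $\uptau_W$-invariant \spinc connection $A_\infty$ on $\check W^*$ of the same homotopy class $z$, asymptotic on the two ends to (gauge representatives of) the reducible parts of $[\fraka]$ and $[\frakb]$. Because the reducible critical points have prescribed curvature determined by $\frakq_\pm$, and $\omega^+$ is square-integrable, the form $F^+_{A_\infty^t}$ is square-integrable on $\check W^*$, so Lemma~\ref{lem:red_on_cylindend} applies to the determinant line bundle, whose bifold Chern class equals $c_1(\fraks_X)$.

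Third, pair the limiting identity $F^+_{A_\infty^t} = 4\omega^+$ against the closed self-dual $L^2$ form $\kappa \in \calH^+_{-\upiota^*}(\check W^*, g)$ supplied by the hypothesis. On one hand, Lemma~\ref{lem:red_on_cylindend} yields
\[
\int_{\check W^*} F^+_{A_\infty^t} \wedge \kappa = -2\pi i \,\langle c_1(\fraks_X), [\kappa]\rangle,
\]
while on the other hand the integral equals $4\int_{\check W^*} \omega \wedge \kappa$. Equating the two gives
\[
4\int_{\check W^*} \omega \wedge \kappa = -2\pi i \,\langle c_1(\fraks_X),[\kappa]\rangle,
\]
contradicting the standing hypothesis on $\omega$. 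Hence some $T_0$ must work.

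The main obstacle is the compactness step in the second paragraph: one has to rule out that the only limits are broken trajectories whose middle factors slide off to $\pm\infty$ and whose interior piece carries only a degenerate fragment of the class $z$. The uniform $L^2$ bound on $F^+_{A_n^t} - 4\omega^+$, combined with the fact that the reducible critical points on the two ends are fixed and nondegenerate (admissibility of $\frakq_\pm$), is exactly what prevents this: any bubbling or sliding would force curvature concentration inconsistent with the uniform bound. Preservation of $\uptau_W$-invariance through gauge fixing is automatic since we restrict to $\calG_{k+1}(\check W(T_n)^*,\upiota_W)$ throughout, and the topological data (the real \spinc structure and the homotopy class $z$) are preserved under $C^\infty_{\loc}$ convergence.
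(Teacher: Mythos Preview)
Your approach is essentially the same as the paper's: argue by contradiction, extract a limiting reducible on $\check W^*$ as $T_n\to\infty$, and derive a contradiction with Lemma~\ref{lem:red_on_cylindend} via the pairing with $\kappa$. One small point to tighten: Lemma~\ref{lem:red_on_cylindend} requires the \emph{full} curvature $F_{A_\infty^t}$ to be $L^2$, not just its self-dual part, and the paper obtains this from the topological energy bounds in the compactness argument of \cite[Proposition~26.1.4]{KMbook2007} rather than from the pointwise observation that $F^+=4\omega^+$; your justification in the second paragraph should invoke those energy estimates explicitly.
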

\begin{proof}
	The idea is that as if there were reducibles as $T \to \infty$, then the connection part converges to one on $\check W^*$ satisfying $F^+_A = 4\omega^+$.
	By
	the energy arguments in the compactness proof in \cite[Proposition~26.1.4]{KMbook2007}, the limiting connection has $L^2$ curvature which contradicts Lemma~\ref{lem:red_on_cylindend}.
	See \cite[Proposition~27.2.4]{KMbook2007}.
\end{proof}
Let $\calU$ be an open cover of $ \boldsymbol{\mathcal B}^{\sigma}(\check W,\upiota_W)$  transverse to all moduli spaces of dimension $d \le d_0$. 
\begin{defn} 
	We let
	\[
		\overrightarrow{m}: C^d(\calU;\bbF_2) \otimes \hat C_{\bullet}(\check Y_-,\upiota_-) \to \check C_{\bullet}(\check Y_+,\upiota_+)
	\]
	for $d\le d_0$, be the map
	\[
		\overrightarrow{m} = 
	\begin{bmatrix}
			m^o_o & m^u_o\\
			m^o_s & m^u_s
	\end{bmatrix}.
	\]
	Each entry $m^*_*$ is defined by counting solutions on the cobordism $(\check  W,\upiota_W)$ as in Definition~\ref{defn:entry_m}.
\end{defn}
\begin{prop}
	For any $\xi$,
	\[
		(-1)^d\check\del(\check Y_+)\overrightarrow{m}(u \otimes \check\xi) = -\overrightarrow{m}(\delta \otimes \check\xi) +\overrightarrow{m}(u \otimes \check\del(\check Y_-)\check\xi).
	\]
	Therefore $\overrightarrow{m}$ descends to a homology-level map
	\[
		\overrightarrow{m}: H^d(\calU;\bbF_2) \otimes 
		\widehat{\HMR}_{\bullet}(\check Y_-,\upiota_-) \to 
		\widecheck{\HMR}_{\bullet}(\check Y_+,\upiota_+).
	\]
\end{prop}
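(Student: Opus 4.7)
The plan is to follow the same Stokes-theorem template as Proposition~\ref{prop:identities_m(uW)}, decomposing the chain-level identity into component equations among the matrix entries of $\overrightarrow{m}$ and the boundary operators $\del^*_*, \bar\del^*_*$, and verifying each by enumerating the codimension-$1$ boundary of a $1$-dimensional compactified moduli space on $(\check W,\upiota_W)$.

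The essential preliminary step, and the feature that distinguishes $\overrightarrow{m}$ from $\check m$, is to invoke Proposition~\ref{prop:no_red_cylindend_closed}: once one works in the regime where $b^+_{-\upiota^*}(\check W)>0$, one can choose an $(-\upiota^*)$-invariant $2$-form $\omega$ on $\check W^*$ and stretch the cylindrical collar length $T\ge T_0$ so that no moduli space $M_z([\fraka],\check W(T)^*,\upiota_W,[\frakb])$ contains a reducible solution. With this choice, the reducible stratum $M_z^{\red}$ listed in Proposition~\ref{prop:W_cod1_strata} simply drops out of the codimension-$1$ stratification of $\bar M_z([\fraka],\check W^*,\upiota_W,[\frakb])$, which is precisely why $\overrightarrow{m}$ can be built from the direct counts $m^o_o, m^o_s, m^u_o, m^u_s$ without the ``$\bar m$-corrections'' that appear in $\check m$ and $\hat m$.

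With reducibles on the cobordism excluded, I would fix an open cover $\calU$ of $\boldB^{\sigma}(\check W,\upiota_W)$ transverse to all compactifications $\bar M_z^+$ of dimension at most $d+1$ (using Lemma~\ref{lem:exist_refine}), and for each critical pair $([\fraka],[\frakb])$ with $\gr_z([\fraka],\check W,[\frakb])=d+1$, pair $u$ with the $(d+1)$-dimensional space $\bar M_z$ and collect the boundary multiplicities of its $d$-dimensional strata via Lemma~\ref{lem:bound_multiplicities_0}. The two-factor strata $\check M_{-1}\times M_0$ and $M_0\times \check M_1$ contribute the ``uncorrected'' terms of the form $m^*_*\,\del^*_*$, $\del^*_*\,m^*_*$, and $\bar\del^s_s\,m^o_s$; the three-factor strata $\check M_{-1}\times M_0\times \check M_1$ with boundary-obstructed middle factor, via the codimension-$1$ $\delta$-structure of Definition~\ref{defn:delta2str}, contribute the corrected terms of the shape $\del^u_o\bar\del^s_u\,m^o_s$, $\del^u_s\bar\del^s_u\,m^o_s$, $m^u_o\,\bar\del^s_u\del^o_s$, and $m^u_s\,\bar\del^s_u\del^o_s$ that reproduce the reducible-trajectory entries of $\check\del(\check Y_+)$ and $\hat\del(\check Y_-)$. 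Summing entry by entry yields the matrix identity; the passage to homology and the inductive limit over refinements of $\calU$ is then routine, and invariance under metric and perturbation follows from the chain-homotopy machinery of Proposition~\ref{prop:cob_map_indep_metric_pert}.

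The main obstacle is purely combinatorial: correctly matching each three-factor boundary-obstructed stratum to the unique matrix entry in $\check\del\,\overrightarrow{m}$ or $\overrightarrow{m}\,\hat\del$ containing the corresponding $\bar\del^s_u$-correction, so that the Stokes-theorem totals line up. The underlying analytic inputs (regularity of parametrized moduli spaces, existence of $\delta$-structures along boundary-obstructed strata, and compactness of $\bar M_z^+$) are all in place from Sections~\ref{subsec:mod_space_over_bifold_boundary}--\ref{subsec:stokes}, and transfer from \cite{KMbook2007} without modification.
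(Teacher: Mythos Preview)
Your proposal is correct and follows essentially the same approach as the paper: both reduce to the identities of Proposition~\ref{prop:identities_m(uW)} with the observation that, since the $2$-form perturbation of Proposition~\ref{prop:no_red_cylindend_closed} eliminates reducibles on the cobordism, all the $\bar m^s_s, \bar m^s_u, \bar m^u_s, \bar m^u_u$ terms vanish. The paper states this in one sentence; you have simply unpacked the Stokes-theorem enumeration more explicitly.
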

\begin{proof}
	The identities to be proved are the same as the ones that appear in the proof of Proposition~\ref{prop:identities_m(uW)}, where the full list can be found in
	\cite[Lemma~25.3.6]{KMbook2007}, except that all terms $\bar m^s_s, \bar m^s_u, \bar m^u_s, \bar m^u_u$ are zero.
\end{proof}
\begin{defn}
	Assume $b^+_{-\upiota}(\check W) \ge 2$.
	Let $u$ be an element of $H^d(\boldsymbol{\calB}^{\sigma}(\check W, \upiota_W);\bbF_2)$.
	Suppose there is a bifold metric $g$ and a  perturbation $2$-form that satisfy the hypothesis of Proposition~\ref{prop:no_red_cylindend_closed}.
	Let $T_0$ be as in the conclusion of Proposition~\ref{prop:no_red_cylindend_closed}. 
	We define
	\[
		\overrightarrow{\HMR_{\bullet}}(u|\check W,\upiota_W)_{g,\omega}:
		\widehat{\HMR}_{\bullet}(\check Y_-,\upiota_-) \to 
		\widecheck{\HMR}_{\bullet}(\check Y_+,\upiota_+).
	\]
	as the operator
	\[
		\overrightarrow{m}(u\otimes -),
	\]
	where $\overrightarrow{m}$ is computed using a metric $g(T)$ for $T \ge T_0$.
\end{defn}
The following proposition is based on \cite[Proposition~27.3.4]{KMbook2007} and can be proved the same way.
\begin{prop}
	If $b^+_{-\upiota^*}(W) \ge 2$, then the map $\overrightarrow{\HMR_{\bullet}}(u|\check W,\upiota_W)_{g,\omega}$ does not depend on $g$ and $\omega$.
\end{prop}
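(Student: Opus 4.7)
The plan is to mimic the chain-homotopy argument from the proof of Proposition~\ref{prop:cob_map_indep_metric_pert}, but work with only the irreducible strata by exploiting the extra unit in $b^+_{-\upiota^*}(\check W)$. First, I would connect two admissible pairs $(g_0,\omega_0)$ and $(g_1,\omega_1)$ by a smooth path $\{(g_p,\omega_p)\}_{p\in P}$ with $P=[0,1]$, keeping the cylindrical-end data (the admissible perturbations $\frakq_{\pm}$ on $(\check Y_{\pm},\upiota_{\pm})$) fixed throughout. The hypothesis $b^+_{-\upiota^*}(\check W) \ge 2 > \dim P$ is exactly what is needed to invoke Proposition~\ref{prop:family_2form_pert}: after a perturbation of the path relative to its endpoints, we may arrange that for every $p\in P$ there is a closed self-dual square-integrable $\kappa_p$ with
\[
	4\int_{\check W^*} \omega_p \wedge \kappa_p \neq (-2\pi i)\langle c_1(\fraks_X), [\kappa_p]\rangle,
\]
and hence, by the parametrized analogue of Proposition~\ref{prop:no_red_cylindend_closed}, there is a $T_0$ such that for $T \ge T_0$ no reducible solutions occur in any parametrized moduli space $M_z([\fraka], \check W(T)^*, \uptau_W, [\frakb])_P$.

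Next, I would count points (with \v{C}ech-cocycle weights, as in Definition~\ref{defn:entry_m}) in these parametrized moduli spaces to define a chain-level homotopy
\[
	\overrightarrow{K} : C^d(\calU;\bbF_2) \otimes \hat C_{\bullet}(\check Y_-,\upiota_-) \to \check C_{\bullet}(\check Y_+,\upiota_+),
\]
assembled out of four entries $K^o_o, K^o_s, K^u_o, K^u_s$ that mirror the block structure of $\overrightarrow{m}$. Since no reducible solutions appear on $(\check W^*,\upiota_W)$ throughout the family, the $\bar K^{*}_{*}$-contributions all vanish, and the identity to be proved reduces to
\[
	(-1)^d\,\check\del(\check Y_+)\,\overrightarrow{K}(u\otimes \hat\xi) + \overrightarrow{K}(\delta u\otimes \hat\xi) - \overrightarrow{K}(u\otimes \hat\del(\check Y_-)\hat\xi) = \overrightarrow{m}_1(u\otimes \hat\xi) - \overrightarrow{m}_0(u\otimes \hat\xi),
\]
exhibiting $\overrightarrow{m}_0$ and $\overrightarrow{m}_1$ as chain-homotopic maps at the \v{C}ech--Floer chain level.

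This identity is derived by applying the Stokes-type formalism of Section~\ref{subsec:stokes} to the compactified $1$-dimensional strata of the parametrized moduli spaces, and reading off each type of codimension-one boundary from Proposition~\ref{prop:d1strata_X}: breaking at the outgoing end yields the $\check\del$ term, breaking at the incoming end yields the $\hat\del$ term, refinement of the \v{C}ech cover yields the $\delta u$ term, and the fibres over $p=0,1$ yield $\overrightarrow{m}_0$ and $\overrightarrow{m}_1$ respectively. Passing to the limit over open covers transverse to all moduli spaces of dimension $\le d_0$ promotes $\overrightarrow K$ to a homology-level chain homotopy, and hence $\overrightarrow{\HMR}_{\bullet}(u|\check W,\upiota_W)_{g_0,\omega_0}$ and $\overrightarrow{\HMR}_{\bullet}(u|\check W,\upiota_W)_{g_1,\omega_1}$ agree. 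Independence under different admissible cylindrical-end perturbations $\frakq_{\pm}$ can then be obtained by the usual continuation trick, composing the cobordism with a trivial cylinder carrying varying perturbations and invoking Proposition~\ref{prop:trivialcob} together with Proposition~\ref{prop:compositionlaw}.

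The main obstacle I expect is the simultaneous family-transversality-and-no-reducibles step: one must arrange regularity of every stratum of the parametrized moduli space, transversality of the \v{C}ech cover to these strata, and absence of reducibles along the whole path, all subject to the constraint that the data at the endpoints $p=0,1$ is fixed. The condition $b^+_{-\upiota^*}(\check W) \ge 2$ is precisely what gives the reducible locus enough codimension in the parametrized picture (analogous to the classical $b^+\ge 2$ bound); once this is in place the remainder is a direct translation of the Kronheimer--Mrowka residual-set scheme of \cite[Section~24]{KMbook2007} into the real bifold setting of Section~\ref{subsec:mod_space_over_bifold_boundary}.
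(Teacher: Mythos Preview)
Your proposal is correct and follows essentially the same route as the paper: connect the two data by a one-parameter family $P=[0,1]$, use $b^+_{-\upiota^*}(\check W)\ge 2>\dim P$ together with Proposition~\ref{prop:family_2form_pert} to rule out reducibles along the whole family, and then build the chain homotopy $\overrightarrow{K}$ from the four parametrized entries $m^o_o(P),m^o_s(P),m^u_o(P),m^u_s(P)$ exactly as you describe. Your final paragraph on varying the admissible boundary perturbations $\frakq_{\pm}$ is extra---the statement only asserts independence of $g$ and $\omega$ with $\frakq_{\pm}$ fixed---but is a reasonable addendum.
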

\begin{proof}
Given any pair $(g_0,\omega_0)$ and $(g_1,\omega_1)$, there is a $P = [0,1]$ family of metrics and perturbations for which there is no reducibles, by Proposition~\ref{prop:family_2form_pert}.
Define the chain homotopy operator $\overrightarrow{K}:C^d(\calU;\bbF_2) \otimes \hat C_{\bullet}(\check Y_-,\upiota_-) \to \check C_{\bullet}(\check Y_+,\upiota_+)$ using parametrized moduli space $M([\fraka],\check W,\upiota_W,[\frakb])$ as in Proposition~\ref{prop:cob_map_indep_metric_pert}:
	\[
		\overrightarrow{K} = \begin{bmatrix}
			m^o_o(P) & m^u_o(P)\\
			m^o_s(P) & m^u_s(P)
		\end{bmatrix}
	\]
Indeed, $\overrightarrow{K}$ satisfies
\[
	\check\del \overrightarrow{K}(u \otimes \hat \xi) =
	-\overrightarrow{K}(\delta u \otimes \hat \xi) + 
	+\overrightarrow{K}(u \otimes \hat \del \hat \xi) 
	+ \overrightarrow{m}(u \otimes \hat\xi) -
	\overrightarrow{m}(u \otimes \hat\xi). 
	\qedhere
\]
\end{proof}
Finally, the closed real $4$-bifold invariant can be recovered from $\overrightarrow{\HMR_{\bullet}}(u|\check W,\upiota_W)$, via
the following counterpart of \cite[Proposition~27.4.1]{KMbook2007}.
Note however we have not developed Floer cohomology nor duality in this article.
The ordinary version is explained in \cite[Section~22.5]{KMbook2007}.
\begin{prop}
	\label{prop:closedinv_arrowmap}
	Let $(\check X,\upiota_X)$ be a closed, oriented real $4$-bifold with $b^+_{-\upiota_X}(\check X, \upiota-_X > 1$.
	Let $(\check W,\upiota_W)$ be the cobordism of $(S^3 \sqcup S^3,\upiota)$ to $(S^3 \sqcup S^3,\upiota)$ obtained by two pairs of balls from $X$.
	Let $1$ be the standard generator of $\widehat{\HMR}_{\bullet}(S^3 \sqcup S^3, \upiota)$ and $\check 1 \in \widecheck{\HMR^{\bullet}}(S^3 \sqcup S^3, \upiota) \cong \bbF_2[[U]]$ the generator for the Floer homology group.
	 Then
	 \[
	 	\frakm(u|X) = \langle \overrightarrow{\HMR_{\bullet}}(u|\check W,\upiota_W)(1),\check 1 \rangle,
	 \]
	where the angle brackets denote the $\bbF_2$-valued pairing between $\widehat{\HMR}_{\bullet}(S^3 \sqcup S^3, \upiota)$  and $\check 1 \in \widecheck{\HMR}_{\bullet}(S^3 \sqcup S^3, \upiota)$.
	\hfill \qedsymbol
\end{prop}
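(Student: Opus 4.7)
The strategy follows Kronheimer--Mrowka's argument for the ordinary monopole mixed invariant (Proposition~27.4.1 of \cite{KMbook2007}), adapted to the real bifold setting. The key conceptual point is that punching out the pairs of balls to create $(\check W,\upiota_W)$ corresponds exactly to the mixed invariant construction underlying $\overrightarrow{\HMR_\bullet}$.

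The plan is to first identify the Floer homologies of the boundary. By Section~\ref{subsec:case_real_loci}, real invariants on a disconnected bifold under the swapping involution reduce to ordinary bifold invariants on a single component. Hence $\widehat{\HMR}_\bullet(S^3\sqcup S^3,\upiota)\cong \widehat{\HM}_\bullet(S^3)$ and $\widecheck{\HMR}_\bullet(S^3\sqcup S^3,\upiota)\cong \widecheck{\HM}_\bullet(S^3)\cong \bbF_2[[U]]$, with canonical generators $1$ and $\check 1$. Next, decompose $\check X$ metrically as
\[
\check X \;=\; \check B_- \cup_{S^3\sqcup S^3} \check W \cup_{S^3\sqcup S^3} \check B_+,
\]
where $\check B_\pm$ are the two pairs of bifold balls removed (each pair swapped by $\upiota_X$), and stretch the neck regions by a parameter $T\to\infty$ as in the proof of Proposition~\ref{prop:compositionlaw}.

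The next step is to track solutions in the stretched limit. Since $b^+_{-\upiota_X}(\check X)>1$, the perturbation scheme in Proposition~\ref{prop:family_2form_pert} yields only irreducible solutions on $\check X$, and by Proposition~\ref{prop:no_red_cylindend_closed} no reducibles appear on the middle piece $\check W$ for $T$ large. On each cap $\check B_\pm$, however, all solutions are reducible: $H^1(S^3\sqcup S^3;\reals)^{-\upiota^*}=0$, and the real \spinc structure on each $S^3$-pair has a unique reducible critical point (the bottom generator of $\widecheck{\HM}(S^3)$ or top generator of $\widehat{\HM}(S^3)$, respectively). I would then invoke the gluing framework of Section~\ref{subsec:gluing}, together with the broken-trajectory compactification from Section~\ref{subsec:compactification}, to show that as $T\to\infty$ the count $\frakm(u|\check X,\uptau_X)$ factors as a sum over boundary critical points $[\fraka],[\frakb]$ on $S^3\sqcup S^3$ of contributions from $\check B_-$, $\check W$, and $\check B_+$.

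Finally, I would match these contributions to the algebraic pairing. The cap $\check B_-$, viewed as a cobordism from the empty set to $(S^3\sqcup S^3,\upiota)$, produces precisely the generator $1\in \widehat{\HMR}_\bullet(S^3\sqcup S^3,\upiota)$, paralleling the identification of $\widehat{\HM}_\bullet(S^3)$ by its cap in \cite[Section~22.5]{KMbook2007}. Dually, the cap $\check B_+$ contributes to a pairing with $\check 1\in \widecheck{\HMR}_\bullet(S^3\sqcup S^3,\upiota)$. The middle cobordism $\check W$ contributes $\overrightarrow{\HMR_\bullet}(u|\check W,\upiota_W)$ by definition, since irreducible solutions on $\check W$ are precisely what is counted by $\overrightarrow m$. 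Assembling the three factors via the composition law yields the claimed identity. The main technical obstacle will be the last identification: showing that the cap contributions are exactly the stated generators, which requires a careful verification that the gluing of the unique reducible on $\check B_-$ with an irreducible $\check W$-solution is transversely cut out with multiplicity one, together with a duality-style argument on the $\check B_+$ side despite the absence of a developed Floer cohomology and pairing theory in the present paper (this duality must be set up by hand, mirroring \cite[Section~22.5]{KMbook2007} in the real bifold category).
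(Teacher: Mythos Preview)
Your proposal is correct and follows exactly the approach the paper indicates: the paper gives no detailed proof but simply states this as the counterpart of \cite[Proposition~27.4.1]{KMbook2007} and points to \cite[Section~22.5]{KMbook2007} for the relevant duality, which is precisely what you outline. Your identification of the main technical wrinkle---that Floer cohomology and the pairing have not been developed in this paper and must be set up by hand---matches the paper's own caveat verbatim.
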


\subsection{Foam cobordisms, evaluations, and module structures}
\label{subsec:foamcob_eval_module}
Given a cobordism $(\Sigma,s_{\Sigma})$ between webs $(K_{\pm},s_{\pm})$ with $1$-sets
\[
	(\Sigma,s_{\Sigma}): (K_-,s_-) \to (K_+,s_+),
\]
the real cover $(\check W,\upiota_W)$ along $\Sigma^{\sfr}$ is a cobordism from the real covers of $(K_-,s_-)$ to $(K_+,s_+)$. 
Let $u$ be a cohomology class in $H^*(\boldsymbol{\mathcal B}^{\sigma}(\check W,\upiota_{W}))$. 
The \emph{cobordism map} $\HMR^{\circ}(u|\Sigma,s)$ is defined as the corresponding cobordism maps on the covers:
\[
	 \HMR^{\circ}(u|\Sigma,s) = 
	 \HMR^{\circ}(u|\check W, \upiota_W):
	\HMR^{\circ}_{\bullet}(K_-,s_-) \to \HMR^{\circ}_{\bullet}(K_-,s_-).
\]
If $u$ is not specified, then
$\HMR^{\circ}(\Sigma,s_{\Sigma})$ is defined to be $\HMR^{\circ}(1|\Sigma,s_{\Sigma})$.
Recall that a \emph{dot} is a point $\delta \in \Sigma$ lying on an open facet.
Given a $1$-set $s$ on $\Sigma$, either $\delta$ lies on a $\sfr$-face or a $\sfc$-face.
In the first case, we have the  evaluation map at $\delta$ 
\begin{equation*}
	ev_{\delta}: \mathcal G(\check W,\upiota_W) \to \{\pm 1\}, 
	\quad
	g \mapsto g(\delta),
\end{equation*}
which induces a cohomology class $\upsilon_{\delta} \in H^1(\boldsymbol{\mathcal B}^{\sigma}(W,\upiota_{W}); \bbF_2)$. Here, $\delta$ is viewed as a point in the preimage of $\delta$ under $\check W \to ([0,1] \times S^3, \Sigma^{\sfr})$.
The class $\upsilon_{\delta}$ is the first Stifel-Whitney class of a real line bundle $L_{\delta} \to \calB^{\sigma}(W,\upiota_W)$.
In the case when $\delta$ lies on a $\sfc$-face, we simply declare $\upsilon_{\delta} = 0$.
Furthermore, point $p$ on $\check W \setminus \tilde{\Sigma}$ defines an evaluation map
\[
ev_{p}:\calG(\check W,\upiota_W) \to \U(1),
\]
inducing a cohomology class $U \in H^2(\boldsymbol{\calB}^{\sigma}(\check W,\upiota_W);\bbF_2)$.
The class is independent of the point $p$, and well-defined even when $\Sigma$ is empty.
We extend the definition of $\HMR^{\circ}(\Sigma)$ to foams with dots as follows.
\begin{defn}
	\label{defn:umap}
	Let $\Sigma: K_- \to K_+$ be a foam cobordism and $\delta_1,\dots,\delta_n$ be dots.
	Suppose $s_{\Sigma}$ is a $1$-set.
	The \emph{dotted cobordism map} is defined as
	\[
		\HMR^{\circ}(\Sigma,s|\delta_1,\dots,\delta_n)= 
		\HMR^{\circ}(\upsilon_{\delta_1}, \dots, \upsilon_{\delta_n}|\Sigma,s)=
		\HMR^{\circ}(\upsilon_{\delta_1}, \dots, \upsilon_{\delta_n}|\check W,\upiota_{W}).
	\]
	Analogously, supposing $b^+_{-\upiota}(W) > 1$, we define $\overrightarrow{\HMR}(\Sigma,s_{\Sigma})$ as the corresponding $\overrightarrow{\HMR}$ map on the real double cover:
	\[
		\overrightarrow{\HMR}(\Sigma,s_{\Sigma}\big|\delta_1,\dots,\delta_n) = 	\overrightarrow{\HMR}(\upsilon_{\delta_1},\dots,\upsilon_{\delta_n}\big|\check W,\upiota_{W}):
		\widehat{\HMR}_{\bullet}(K_-,s_-) \to 
		\widecheck{\HMR}_{\bullet}(K_+,s_+).
	\]
\end{defn}
\begin{defn}
	\label{defn:foam_monopole_invariant}
	Let $\Sigma$ be a closed foam dotted with $(\delta_1,\dots,\delta_n)$.
	Assume $s_{\Sigma}$ is a $1$-set of ${\Sigma}$  such that 	
	\[
	\frac12b_1(\Sigma^{\sfr}) - \frac14 [\Sigma^{\sfr}]\cdot [\Sigma^{\sfr}]> 1.
	\]
	Let $(\check X,\upiota_X)$ be its real double cover and $(\fraks_{\Sigma},\uptau_{\Sigma})$ be a real bifold \spinc structure on $(X,\upiota_X)$.
	The \emph{monopole invariant}  is
	\[\frakm(\Sigma, \uptau_{\Sigma}|\delta_1,\dots,\delta_n)
	=
	\frakm(\upsilon_{\delta_1},\dots,\upsilon_{\delta_n}|\check X,\uptau_{\Sigma}).\]
\end{defn}
\begin{defn}
	Let $(\Sigma,s_{\Sigma})$ be a closed dotted foam with $1$-set
	such that 	$
	\frac12b_1(\Sigma^{\sfr}) - \frac14 [\Sigma^{\sfr}]\cdot [\Sigma^{\sfr}]> 1$. 
	The \emph{foam evaluation} of $\Sigma$ \emph{with respect to $s_{\Sigma}$} is the map
\[
\overrightarrow{\HMR}(\Sigma,s_{\Sigma}):
\widehat{\HMR}_{\bullet}(\emptyset) \to 
\widecheck{\HMR}_{\bullet}(\emptyset).
\]
\end{defn}
The foam evaluation of $(\Sigma,s_{\Sigma})$  recovers the monopole invariant via Proposition~\ref{prop:closedinv_arrowmap}.
Notice that the real double cover of the empty web $\emptyset \subset S^3$ is precisely the disjoint union of two $S^3$.
Operators $\HMR^{\circ}_{\bullet}(K)$ can be defined by applying dotted cobordism maps to the cylinder $[0,1] \times K$.
\begin{defn}
Let $K$ be a web $K$ and $\delta$ be a dot on $K \setminus \Ver(K)$. 
Let $s$ be the pullback $1$-set to the cylinder foam.
The operator $
	\upsilon_{\delta}:\HMR^{\circ}_{\bullet}(K) \to \HMR^{\circ}_{\bullet}(K)$	is defined as, for each $1$-set $s$, the dotted cobordism map
	\[
	\HMR^{\circ}(\upsilon_{\delta}|[0,1] \times K,s)=\HMR^{\circ}([0,1] \times K,s|\delta),
	\]
	where $\delta$ is understood as $\{\epsilon\} \times \delta$ for some small $\epsilon > 0$.
	We define the degree-$2$ operator $U: \HMR^{\circ}_{\bullet}(K) \to \HMR^{\circ}_{\bullet}(K)$ as
\[
 \HMR^{\circ}(U|[0,1] \times K, s).
\]
\end{defn}
By continuity, the class $\upsilon_{\delta}$ depends only on the component of $K^{\sfr}$.
\begin{proof}[Proof of Proposition~\ref{prop:powers_upsilon}]
For any pair of dots $\delta,\delta'$ both on $\sfr$-edges, the difference $\upsilon_{\delta} - \upsilon_{\delta'}$ is a mod-2 cohomology class of the Picard torus $\bbT$ of $\check W$. 
Thus their squares are independent of $\delta$, and 
\[
\upsilon_{\delta}^2 = \upsilon_{\delta'}^2 = U.
\]
The rest of the relations stated in the introduce are also straightforward.
Suppose $\delta_1,\delta_2,\delta_3$ are $3$ dots on $K$ that lie on three distinct edges that meet at a common vertex.
Then exactly two $\delta_i$'s lie on the  same component of $K^r$, and the third is zero.
So modulo two
\[
\upsilon_{\delta_1} + \upsilon_{\delta_2} + \upsilon_{\delta_3} = 0.
\]
The identity
\begin{equation}
\label{eqn:sum_of_double}
\upsilon_{\delta_1}\upsilon_{\delta_2} + \upsilon_{\delta_2}\upsilon_{\delta_3} + \upsilon_{\delta_3}\upsilon_{\delta_1} = U
\end{equation}
holds because exactly one of the summand is nonzero and a square. Finally, the triple product contains a zero factor, so
\[
\upsilon_{\delta_1}\upsilon_{\delta_2}\upsilon_{\delta_3}=0. \qedhere
\]
\end{proof}
\begin{rem}
	Equation~\eqref{eqn:sum_of_double} is formally similar to \cite[Prop.~5.7]{KMdefweb2019}. 
	In  \cite[Eq.~(15)]{KMweb} and  \cite[Eq.~(2)]{Khovanov2004sl3} are also counterparts, except the right hand sides are zero.
\end{rem}

\section{The framed monopole web homology}
\label{sec:framed} 
\subsection{Based webs and foams}
A \emph{based} web $(K,p)$ is a web $K$ with a base point $p \in K$. 
A \emph{based $1$-set} $s$ of a based web $(K,p)$ is a $1$-set of $K$ such that $p$ lies on an edge not belonging to $s$.
A \emph{based} foam cobordism from $(K_-,p)$ to $(K_+,p)$  is a foam cobordism $\Sigma \subset [0,1] \times S^3$ containing $\{p\} \times [0,1]$.
A \emph{based $1$-set} $s_{\Sigma}$ is a $1$-set of $\Sigma$ such that $\{p\} \times [0,1]$ lies in facet not belonging to $s_{\Sigma}$.

\subsection{The chain complex as a mapping cone}
The functor $\widetilde{\HMR}_{\bullet}(K,p)$ is defined on the category of based webs. 
The following construction is a counterpart of  \cite{Bloom2011}, which in turn is an analogue of $\widehat{\textit{HF}}$ in Heegaard Floer theory \cite{OzSz2004}.
See also~\cite{ljk2024SSKh} for the case of empty $1$-sets.
\begin{defn}
Let $(K,p)$ be a based web and $s$ be a based $1$-set.
Then the \emph{framed monopole web homology} $\widetilde{\HMR}_{\bullet}(K,p,s)$ is the homology of the following chain complex
\[
\widetilde C(K,p,s) =
\check C(K,s) \oplus \check C(K,s),
\quad
\widetilde\del
= \begin{bmatrix}
	\check\del & 0\\
	\check m(\upsilon_p|[0,1] \times K) & \check\del
\end{bmatrix},
\]
which is the mapping cone of $\check m(\upsilon_p|[0,1] \times K):\check C(K,s) \to \check C(K,s)$.
It fits into the following long exact sequence:
\[\begin{tikzcd}
	\cdots \ar[r] & \widetilde{\HMR}_{\bullet}(K,s) \ar[r] & \widecheck{\HMR}_{\bullet}(K,s) \ar[r,"\upsilon_p"] & \widecheck{\HMR}_{\bullet}(K,s) \ar[r] & \cdots.
\end{tikzcd}\]
We write $\widetilde{\HMR}_{\bullet}(K,p)$ as the direct sum of $\widetilde{\HMR}_{\bullet}(K,p,s)$  over all based $1$-sets $s$ of $(K,p)$.
\end{defn}

To describe the cobordism maps of  $\widetilde{\HMR}_{\bullet}$,
let us reformulate the definition of $\upsilon_p$ using cobordisms maps with additional unknot ends.
Let $\Sigma:(K_-,p_-) \to (K_+,p_+)$ be a based foam cobordism, containing $[0,1] \times p_-$.
Choose a based $1$-set $s_{\Sigma}$ of $\Sigma$ and let $s_{\pm}$ be the restriction of $1$-sets of $K_{\pm}$.
Here $p_-,p_+$ are identified with $0 \times p_-$ and  $1 \times p_-$, respectively.
Let $p_o = \frac14 \times p_-$ and let $\Sigma^{o}$ be the result of removing a small $2$-disc in $\Sigma$ centred at $p_o$.
The foam $\Sigma^{o}$ is a subset of $[0,1] \times S^3 \setminus B^4_o$, where $B^4_o$ is a small $4$-ball centred at $p_o$, intersecting $\Sigma$ at a small $2$-disc.
 It is equipped with an induced $1$-set $s_{\Sigma^o}$ and an additional boundary $(S^3, \U)$ viewed as an unknot contained in a small $S^3$.
 
In what follows, we will work over the real double covers.
Let $(\check W,\upiota_W)$, $(\check W^o,\upiota_W)$, $(\check Y_-,\upiota_-)$, $(\check Y_+,\upiota_+)$ be the real double branched covers of $(\Sigma,s_{\Sigma})$, $(\Sigma^o,s_{\Sigma^o})$,$(K_-,s_-)$, $(K_+,s_+)$, respectively.
Denote by $(\check S^3_o,\upiota_o)$ the boundary of $(\check B^4_o,\upiota_o)$, where the latter is the real cover of $(B^4_o,B^4_o \cap \Sigma ) \cong (B^4,B^2)$.
Let $(\check W^{o*},\upiota_W)$ be the real bifold adjoined with three cylindrical ends:
\[
\big((-\infty,0) \times \check Y_-,  \upiota_-
\big), 
\ 
\big((1,+\infty) \times \check Y_+ ,\upiota_+
\big), \
\big((0,\infty) \times S^3_o,  \upiota_o
\big).\] 
This is a slightly more general type of cobordism than what we have previously considered, because of the additional $(S^3_o,\upiota_o)$ end.
The ordinary monopole analogue of this setup is explained in~\cite[Chapter~3, Section~3]{LinFran2018}, initially due to~\cite{Bloom2011}.

Over the new cobordism $(\check W^{o*},\upiota_W)$, choose bifold metrics, real \spinc structures, and perturbations so that the chain complex of $(S^3_o,\upiota_o)$ is generated by reducible critical points $\{[\frakc_i]: i \in \bbZ\}$.
(This can be achieved as in~\cite[Section~14.1]{ljk2022}.)
Here,  $[\frakc_i]$ is boundary-stable whenever $i \ge 0$, and $\langle \upsilon_{p}, M(B^4_o, [\frakc_i]) \rangle$ is nonzero if and only if $i = -1$.
On the chain level, the dotted cobordism map $\check m(\upsilon_{p}|\check W, \upiota_W)$ is chain homotopic to the following map:
\[
	\begin{bmatrix}
		m^{uo}_o([\frakc_{-1}]\otimes \cdot) &
		m^{uu}_o([\frakc_{-1}] \otimes \bar\del^s_u(\cdot)) +
		\del^u_o\bar m_u^{us}([\frakc_{-1}] \otimes \cdot)\\
		m^{uo}_s([\frakc_{-1}] \otimes \cdot) &
		\bar m^{uu}_s([\frakc_{-1}] \otimes \bar\del^{s}_u(\cdot)) +\del^u_s\bar m_s^{us}([\frakc_{-1}] \otimes \cdot)
	\end{bmatrix}.
\]
The chain homotopy can be defined analogously as \cite[Proposition~13.11]{ljk2022}, using the fact that there are even number of isolated trajectories between $\frakc_i$ and $\frakc_{i-1}$.

The definition of $\widetilde{\HMR}(\Sigma)$ involves a one-parameter family of metrics and  a chain homotopy $\check H$ using the associated family moduli space.
Let $\check W_{--,++}$ be the compact punctured cobordism in the previous step.
Let $\check Y_{--}$ and $\check Y_{++}$ the boundary bifolds previously known as $\check Y_-$ and $\check Y_+$, respectively.
Every bifold involved in the discussion is equipped with a real involution, induced from the initial $\check W$.
All operations will be $\upiota_W$-invariant.

Let $\check Y_{-+}$ be a hypersurface of $\check W_{--,++}$ that is a ``pushed in'' copy of $\check Y_{--}$; that is, we take an inner copy of $\check Y_{--}$, choose an invariant path $\gamma$ to the punctured ball $B^4_o$ and a neighbourhood of $\gamma$ that contains $B^4_o$.
Let $\check Y_{-+}$ be the connected sum of inner collar and the boundary of the neighbourhood.
Let $\check W_{--,-+}$ be the subbifold bounded by $\check Y_{--}$ and $\check Y_{-+}$. In particular, the punctured balls are contained in  $\check W_{--,-+}$.
Similarly, we define the pushed in copy of $\check Y_{-+}$ of $\check Y_{++}$, and denote by $\check W_{+-,++}$ the cobordism between them.
Furthermore, let $\check W_{--,+-}$ be the region bounded by $\check Y_{--}$ and $\check Y_{+-}$, and let $\check W_{-+,++}$ bounded by $\check Y_{-+,++}$.
The cobordism with hypersurfaces is illustrated in Figure~\ref{fig:Yplusminus}.
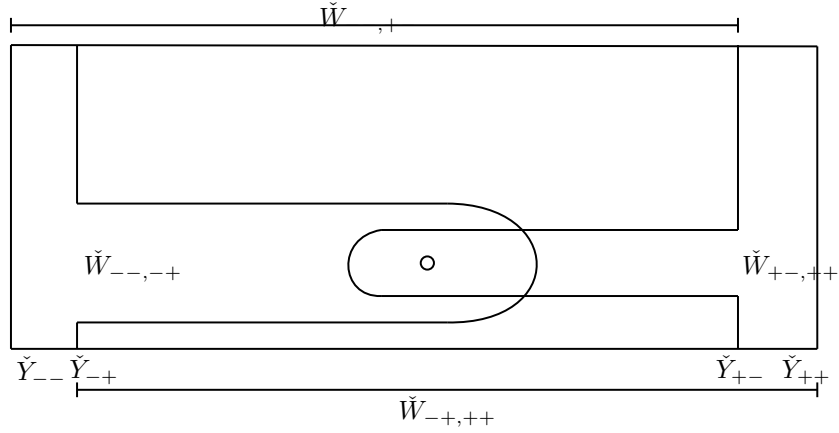
\begin{figure}[ht]
	\centering
	\tikzset{every picture/.style={line width=0.75pt}} 
	\begin{tikzpicture}[x=0.5pt,y=0.5pt,yscale=-1,xscale=1]
		
		\draw    (20,37) -- (630,38) ;
		\draw    (20,37) -- (20,267) ;
		\draw    (20,267) -- (630,267) ;
		\draw    (630,38) -- (630,267) ;
		\draw    (70,37) -- (70,157) ;
		\draw   (330,202) .. controls (330,199.24) and (332.24,197) .. (335,197) .. controls (337.76,197) and (340,199.24) .. (340,202) .. controls (340,204.76) and (337.76,207) .. (335,207) .. controls (332.24,207) and (330,204.76) .. (330,202) -- cycle ;
		\draw    (70,157) -- (350,157) ;
		\draw    (70,247) -- (350,247) ;
		\draw    (70,247) -- (70,267) ;
		\draw    (570,37) -- (570,177) ;
		\draw    (300,177) -- (570,177) ;
		\draw    (300,227) -- (570,227) ;
		\draw    (570,227) -- (570,267) ;
		\draw    (350,157) .. controls (439.33,157.33) and (441.33,248.67) .. (350,247) ;
		\draw    (300,177) .. controls (265.33,182.67) and (268.67,228) .. (300,227) ;
		\draw    (20,22) -- (570,22) ;
		\draw [shift={(570,22)}, rotate = 180] [color={rgb, 255:red, 0; green, 0; blue, 0 }  ][line width=0.75]    (0,5.59) -- (0,-5.59)   ;
		\draw [shift={(20,22)}, rotate = 180] [color={rgb, 255:red, 0; green, 0; blue, 0 }  ][line width=0.75]    (0,5.59) -- (0,-5.59)   ;
		\draw    (70,298) -- (630,298) ;
		\draw [shift={(630,298)}, rotate = 180] [color={rgb, 255:red, 0; green, 0; blue, 0 }  ][line width=0.75]    (0,5.59) -- (0,-5.59)   ;
		\draw [shift={(70,298)}, rotate = 180] [color={rgb, 255:red, 0; green, 0; blue, 0 }  ][line width=0.75]    (0,5.59) -- (0,-5.59)   ;
		
		\draw (22,270.4) node [anchor=north west][inner sep=0.75pt]    {$\check Y_{--}$};
		\draw (62,269.4) node [anchor=north west][inner sep=0.75pt]    {$\check Y_{-+}$};
		\draw (551,269.4) node [anchor=north west][inner sep=0.75pt]    {$\check Y_{+-}$};
		\draw (601,269.4) node [anchor=north west][inner sep=0.75pt]    {$\check Y_{++}$};
		\draw (251,2.4) node [anchor=north west][inner sep=0.75pt]    {$\check W_{--,+-}$};
		\draw (311,300.4) node [anchor=north west][inner sep=0.75pt]    {$\check W_{-+,++}$};
		\draw (73,190.4) node [anchor=north west][inner sep=0.75pt]    {$\check W_{--,-+}$};
		\draw (571,190.4) node [anchor=north west][inner sep=0.75pt]    {$\check W_{+-,++}$};
	\end{tikzpicture}
	\caption{Hypersurfaces in a (punctured) cobordism.}
	\label{fig:Yplusminus}
\end{figure}
Now, $\check Y_{-+}$ and $\check Y_{+-}$ intersect at an $S^2$, whose neighbourhood is modelled on
$S^2  \times (-\epsilon,+\epsilon) \times (-\epsilon,+\epsilon)$.
Let $P = [-\infty, +\infty]$ be a $1$-parameter family of metrics, that are stretched along $\check Y_{-+}$ on $[-\infty,0]$ and along $\check Y_{+-}$ on $[0,\infty]$, respectively.
Given critical points $[\fraka]$ and $[\frakb]$, define a  family moduli space
\[
	M([\fraka], \check W^*, \upiota_W,[\frakb])_P =
	\bigcup_{p \in P}\bigcup_z 
	M_z([\fraka], [\frakc_{-1}], \check W^{**}, \upiota_W,[\frakb]).
\]
Counting zero-dimensional moduli spaces of the form $M([\fraka], \check W^*, \upiota_{W},[\frakb])$ and inserting different types of critical points, we obtain eight operators $H^{u*}_*([\frakc_{-1}] \otimes \cdot)$.
These operators in turn can be used to define entries of a homotopy 
\[
\check H(\upsilon_p|\check W_{--,++},\iota_{W}): \check C_{\bullet}(\check Y_-,\upiota_-) \to \check C_{\bullet}(\check Y_+,\upiota_+),
\]
via the formula (cf. \cite[pg. 44]{Bloom2011}):
\begin{align*}
\check H(\upsilon_p|\check W_{--,++},\iota_{W})&=
	\begin{bmatrix}
		H^{uo}_o(\frakc_{-2}\otimes \ \cdot) &
		H^{uu}_o(\frakc_{-2} \otimes \delbar^s_u(\cdot)) +\del^u_o \bar H^{us}_u(\frakc_{-2} \otimes \ \cdot)
		\\
		H^{uo}_s(\frakc_{-2}\otimes \ \cdot) &
		H^{us}_s(\frakc_{-2}\otimes \ \cdot)+H^{uu}_s(\frakc_{-2} \otimes \ \delbar^s_u(\cdot)) +\del^u_s \bar H^{us}_u(\frakc_{-2} \otimes \ \cdot)
	\end{bmatrix}\\
	&\ + 
	\begin{bmatrix}
		0 &
		m^u_{o,--,+-} \bar m^{us}_{s,+-,++}(\frakc_{-2} \otimes \ \cdot)
		+m^u_{o,--,-+} \bar m^{us}_{s,+-,++0}(\frakc_{-2} \otimes \ \cdot) \\
		0 &
		m^{u}_{o,--,+-} \bar m^{us}_{s,+-,++}(\frakc_{-2} \otimes \ \cdot)
		+m^u_{o,--,-+} \bar m^{us}_{s,+-,++}(\frakc_{-2} \otimes \ \cdot)
	\end{bmatrix}.
\end{align*}
\begin{defn}
Let $(\Sigma,s_{\Sigma}):(K_-,p_-,s_-) \to (K_+,p_+,s_+)$ be a based foam cobordism.
Let $(\check W,\upiota_W)$ be its real double cover as above.
Over the chain level, $\widetilde{\HMR}(\Sigma,s_{\Sigma})= \widetilde{\HMR}(\check W,\upiota_W)$ is defined as
	\[
	\widetilde m(\check W,\upiota_W) = 
	\begin{bmatrix}
		\check m(\check W_{--,+-},\upiota_W) & 0\\
		\check H(\upsilon_p|\check W_{--,++},\upiota_W) &
		\check m(\check W_{-+,++},\upiota_W)
	\end{bmatrix}.
	\]
\end{defn}
\begin{rem}
	The  proofs of well-definedness of re-interpreted $\upsilon_p$ and $\tilde m(\Sigma,s_{\Sigma})$ as chain maps are completely analogous to \cite[Remark~8.3]{Bloom2011}. In particular, they can be extracted from the proof of \cite[Lemma~4.8-9]{ljk2024SSKh}.
\end{rem}
\section{Adjunction and Excision}
\label{sec:adj_excis}
The adjunction inequality in three dimension can be generalized to bifolds.
\begin{prop}
\label{prop:adj_orb}
	Let $\check F$ be an smoothly embedded, oriented genus-$g$ bifold surface in $\check Y$, having $n$ bifold points.
	Suppose
	\[
	\deg(K_{\check F})= -\chi(\check F) = 2g - 2 + \frac{n}{2} > 0.
	\]
	If $\mathfrak s$ is a bifold \spinc structure satisfying
	\[
	\left|\langle c_1(\mathfrak s), [\check F]\right| > 2g - 2 + \frac{n}{2},
	\]
	then exists an orbifold metric on $
	\check Y$ for which the unperturbed Seiberg-Witten equations admit no bifold solutions. 
\end{prop}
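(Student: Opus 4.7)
The plan is to adapt the analytical proof of the three-dimensional adjunction inequality (cf.~\cite[Chapter~40]{KMbook2007}) to the bifold setting, relying on the bifold Weitzenb\"ock identity, the product structure of a bifold neck metric along $\check F$, and the orbifold Gauss-Bonnet formula.

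First I would choose, for each $T>0$, a bifold metric $\check g_T$ on $\check Y$ that contains an isometric product neck $(-T,T)\times \check F$ with metric $dt^2+g_{\check F}$, for some fixed bifold metric $g_{\check F}$ on $\check F$. Since $\check F$ is smoothly embedded as a suborbifold, a bifold tubular neighbourhood exists by passing to local uniformizing charts, and on such a neck the bifold scalar curvature equals $2K_{\check F}$ pointwise away from the bifold points, where $K_{\check F}$ is the Gaussian curvature of $\check F$.

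Next, for any solution $(B,\Psi)$ of the unperturbed bifold Seiberg-Witten equations, the Weitzenb\"ock identity $D_B^2=\nabla_B^*\nabla_B+\mathrm{scal}/4+\rho(F_{B^t})/2$ holds in each local uniformizing chart, and the standard maximum-principle argument there yields the pointwise estimate $|\Psi|^2\leq \max(0,-\mathrm{scal})$. Combined with the curvature equation $\tfrac12 *F_{B^t}=\rho^{-1}((\Psi\Psi^*)_0)$ this bounds $|F_{B^t}|$ on each slice $\{t\}\times \check F$ by a constant multiple of $\max(0,-K_{\check F})$. Integrating $iF_{B^t}$ over $\check F$ gives $2\pi\langle c_1(\mathfrak s),[\check F]\rangle$ (the integral is the bifold integral, which is well-defined because $F_{B^t}$ is a bifold $2$-form and the bifold points are codimension two). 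The bifold Gauss-Bonnet theorem contributes
\[
	\int_{\check F} K_{\check F}\,dA_{\check F} = 2\pi\chi(\check F)=2\pi\Bigl(2-2g-\tfrac{n}{2}\Bigr),
\]
which is strictly negative by the hypothesis $-\chi(\check F)>0$. Feeding this into the integrated estimate produces an inequality of the form $|\langle c_1(\mathfrak s),[\check F]\rangle|\leq -\chi(\check F)=2g-2+\tfrac{n}{2}$, contradicting the assumption on $\mathfrak s$ and hence forcing non-existence of solutions.

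The main obstacle is obtaining the \emph{sharp} constant in the final inequality: the direct pointwise bound from Weitzenb\"ock only gives $|F_{B^t}|$ controlled by $-\mathrm{scal}$, which loses a factor and leaves a gap. Following Kronheimer and Mrowka, I would close this gap by the adiabatic/stretching argument: pass to the limit $T\to\infty$ along a hypothetical sequence of solutions, extract a translation-invariant limit on $\mathbb R\times\check F$, and interpret it (after choosing a compatible almost complex structure on $\check F$) as a bifold vortex whose degree is pinned down by orbifold Riemann-Roch on the $2$-bifold $\check F$. Verifying that compactness, the removal of singularities at the $n$ bifold points, and the orbifold Riemann-Roch degree bound all go through in this bifold setting is the bulk of the work; the remaining ingredients (bifold Weitzenb\"ock, local analytic estimates, orbifold Gauss-Bonnet) reduce to their smooth counterparts in local uniformizing charts.
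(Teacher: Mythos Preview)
Your outline is correct and follows the same neck-stretching strategy as the paper, which simply cites \cite[Proposition~40.1.1]{KMbook2007} and notes the one bifold-specific input. The simplification you miss is to equip $\check F$ from the outset with a bifold metric of \emph{constant} scalar curvature; by orbifold Gauss--Bonnet (with volume normalized to $1$) this value is $\scal_{\check F}=-8\pi\bigl(g-1+\tfrac{n}{4}\bigr)$. With this choice, once you stretch and extract a translation-invariant limiting solution on $\mathbb{R}\times\check F$, the Weitzenb\"ock bound $|\Psi|^2\le\max(0,-\scal)$ is saturated by the constant $-\scal_{\check F}$, and integrating the curvature equation over a slice immediately yields the sharp inequality $|\langle c_1(\mathfrak s),[\check F]\rangle|\le 2g-2+\tfrac{n}{2}$. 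There is then no need for the vortex interpretation or orbifold Riemann--Roch you propose; that route would also succeed, but it imports more machinery than the problem requires. The ``removal of singularities'' you flag is likewise a non-issue: bifold solutions are by definition smooth in local uniformizing charts, so the elliptic estimates and compactness from \cite{KMbook2007} carry over verbatim.
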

\begin{prop}		
\label{prop:adj_orb_real}
	Suppose $(\check Y,\upiota)$ is a real bifold, and that $\check F = \check F_1 \sqcup \check F_2$ is a disjoint union of two identical genus-$g$ bifold surfaces with bifold points. Suppose $\upiota$ exchanges $\check F_{i}$ and
	\[
	\deg(K_{F_1})= -\chi(\check F_1) = 2g - 2 + \frac{n}{2} > 0.
	\]
	If $(\fraks,\uptau)$ is a real bifold \spinc structure satisfying
	\[
	\left|\langle c_1(\mathfrak s), [\check F_1]\right| = \left|\langle c_1(\mathfrak s), [\check F_2]\right| > 2g - 2 + \frac{n}{2},
	\]
	then there exists an $\upiota$-invariant bifold metric on $\check Y$ having no unperturbed Seiberg-Witten solutions.
\end{prop}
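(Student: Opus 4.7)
The plan is to reduce Proposition~\ref{prop:adj_orb_real} to the non-real bifold adjunction Proposition~\ref{prop:adj_orb}, via two observations: (a) one can construct an $\upiota$-invariant bifold metric on $\check Y$ that coincides, near $\check F_1$, with the metric used in the proof of Proposition~\ref{prop:adj_orb}; and (b) every real Seiberg-Witten solution on $(\check Y,\upiota,\fraks,\uptau)$ is, by forgetting the real structure $\uptau$, an ordinary bifold Seiberg-Witten solution on $(\check Y,\fraks)$, because $\calC(\check Y,\uptau)$ is a $\uptau$-fixed subset of $\underline{\calC}(\check Y,\fraks)$ and the unperturbed Seiberg-Witten operator is $\uptau$-equivariant.

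For (a), since $\check F_1$ and $\check F_2$ are disjoint compact bifold surfaces and $\upiota$ is an involutive isometry exchanging them, I will pick a tubular bifold neighborhood $N_1$ of $\check F_1$ so small that $N_1 \cap \upiota(N_1) = \emptyset$. On $N_1$, I install the orbifold Kähler-cone metric used in the proof of Proposition~\ref{prop:adj_orb}; I then transport it to $N_2 := \upiota(N_1)$ via $\upiota$, extend by an arbitrary bifold metric on $\check Y \setminus(\overline{N_1}\cup \overline{N_2})$, and finally average over $\upiota$ on the complement to ensure full $\upiota$-invariance. The resulting bifold metric $\check g$ is $\upiota$-invariant and its restriction to $N_1$ is unchanged, so the curvature hypotheses used by Proposition~\ref{prop:adj_orb} are satisfied at $\check F_1$. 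Since the hypothesis $|\langle c_1(\fraks),[\check F_1]\rangle| > 2g-2+n/2$ is exactly the numerical input required by Proposition~\ref{prop:adj_orb} applied to $\check F_1 \subset (\check Y, \check g)$, that proposition rules out all ordinary bifold solutions to the unperturbed equations for $(\check Y, \check g, \fraks)$; by observation (b), it follows a fortiori that the real unperturbed equations have no solutions, which is the desired conclusion. (The companion inequality at $\check F_2$ is automatic from $\upiota^* c_1(\fraks) = -c_1(\fraks)$ and the invariance $\upiota_*[\check F_1] = [\check F_2]$, and it serves only as a sanity check that the symmetric construction could equally well be based at $\check F_2$.)

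The main obstacle I anticipate is verifying that the Kähler-cone construction underlying Proposition~\ref{prop:adj_orb} can be localized to an arbitrarily small tubular bifold neighborhood of $\check F_1$ without degrading the scalar-curvature estimate that drives the Weitzenböck argument. This should follow from the fact that the relevant Kähler geometry depends only on the first-order data along $\check F_1$ (together with a chosen complex structure on the normal bundle), so shrinking the neighborhood is harmless; the only care needed is that the cut-off to a general bifold metric on the complement does not reintroduce negative scalar curvature where a spinor could concentrate, which can be ensured by stretching a cylindrical collar between the Kähler region and the transition region in the standard way.
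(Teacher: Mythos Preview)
Your reduction via observations (a) and (b) is correct and is exactly the paper's approach: the paper simply says Proposition~\ref{prop:adj_orb_real} ``is proved the same way [as Proposition~\ref{prop:adj_orb}], while keeping the neck-stretching process equivariant,'' which is precisely your construction of an $\upiota$-symmetric metric followed by the inclusion $\calC(\check Y,\uptau)\subset\underline{\calC}(\check Y,\fraks)$. One caveat on terminology: the metric underlying Proposition~\ref{prop:adj_orb} is not a K\"ahler-cone metric but the product metric $dt^2+h$ on a long cylinder $[-T,T]\times\check F_1$ with $h$ of constant negative scalar curvature (this is the standard argument of \cite[Proposition~40.1.1]{KMbook2007}), so your ``main obstacle'' dissolves---the tubular neighborhood $N_1$ is topologically small enough to be disjoint from $\upiota(N_1)$ but is metrically stretched, and there is no K\"ahler data to localize in dimension three.
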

\begin{proof}[Proof of Proposition~\ref{prop:adj_orb} and~\ref{prop:adj_orb_real}]
	The proof to the first statement is identical to the standard neck-stretching argument in \cite[Proposition~40.1.1]{KMbook2007}.
	In particular, we used the fact $\check\Sigma$ admits a bifold metric with volume $1$ and constant scalar curvature
	\[
	\scal_{\Sigma}=-8\pi\left(g-1+\frac{n}{4}\right).
	\]
	The second proposition is proved the same way, while keeping the neck-stretching process equivariant.
\end{proof}
Suppose $(\check Y,\upiota)$ is a real bifold and $\check F$ be a \emph{sub-bifold disjoint from $\iota(\check F)$}.
Suppose $F$ is closed, connected, oriented, and satisfies
\[
	\chi(\check F) <  0.
\]
If $\check F$ is connected, let $\calS(\check Y|F)$ be the set of isomorphisms classes of real bifold \spinc structures  on $(\check Y,\upiota)$ satisfying
\[
	\langle c_1(\fraks), [\check F]\rangle = -\chi(\check F) =
	2\genus(F) - 2 + \#(\Sing^{\sfc} \cap F).
\]
If $\check F$ contains multiple components, we set $\calS (\check Y|F)$ to be the intersection of $\calS$ of all components.

\begin{defn}
In view of the adjunction inequality, we introduce the subgroup
\[
	\HMR_{\bullet}(\check Y,\upiota|
	F) = 
	\bigoplus_{(\fraks,\uptau) \in \calS(\check Y|F)}
	\HMR_{\bullet}(\check Y, \upiota;\fraks,\uptau).
\]
Let $(K,s)$ be a web with a $1$-set, and $(\check Y,\upiota)$ be its real double cover.
Suppose $F \subset S^3$ is a connected surface disjoint from $K^{\sfr}$ and $\widetilde F_1 \subset Y$ is a component of the preimage of $F$.
Let
\[
	\HMR_{\bullet}(K,s|\widetilde  F_1) = 
	\bigoplus_{(\fraks,\uptau) \in \calS(\check Y|\widetilde F_1)}
	\HMR_{\bullet}(\check Y|\widetilde  F_1;\fraks,\uptau).
\]
\end{defn}
Given a cobordism $(\check W,\upiota_W): (\check Y_-,\upiota_-) \to (\check Y_+,\upiota_+)$, we define $\calS(\check W|F_W)$ to be the set containing all \spinc structures achieving adjunction equality on cobordism $\check W$ at $\check F_W$.
Suppose $\check F_{\pm}$ is a bifold surface on $\check Y_{\pm}$, and $\check F_W$ contains $\check F_- \cup \check F_+$ on the boundary.
By restricting to real bifold \spinc structures in  $\calS(\check W|F_W)$ we define the corresponding cobordism map
\[
	\HMR(\check W,\upiota_W| F_W):\HMR_{\bullet}(\check Y_-, \upiota_-|F_-) \to \HMR_{\bullet}(\check Y_+,\upiota_+|F_+).
\]
Let $(\Sigma,s_{\Sigma}):(K_-,s_{-}) \to (K_+,s_+)$ be a cobordism. 
Denote the corresponding real double covers by$\check W:\check Y_- \to \check Y_+$.
Assume $F_{\Sigma}$ is a bifold surface in $[0,1] \times S^3$ whose boundary is contained in $\Sigma^{\sfr}$.
Suppose $\widetilde F_{\Sigma} \subset \check W$ is a bifold surface in the preimage of $F_{\Sigma}$ that contains $\widetilde F_{\pm}$ on the boundary, where $\widetilde F_{\pm}$ is a component of the preimage of $F_{\pm}$.
We define
\[
\HMR(\Sigma|\widetilde F_{\Sigma}):\HMR_{\bullet}(K_-,s_-|\widetilde F_-) \to \HMR_{\bullet}(K_+,s_+|\widetilde F_+)
\]
to be $\HMR(\check W| \widetilde F_{\Sigma})$.

Before moving on to the next subsection, let us consider the case when the separating surface admits positive curvature.
The following proposition is similar in spirit to \cite[Proposition~40.1.3]{KMbook2007} for $\HM_{\bullet}$ with certain local coefficient.
\begin{prop}
\label{prop:adj_pos}
	Let $(\check Y,\upiota)$ be a real $3$-bifold and $\check \Sigma$ be a sub-bifold surface.
	Suppose $\check \Sigma = \check\Sigma_1 \sqcup \check\Sigma_2$ where $\check\Sigma_1 \cong \check\Sigma_2$ is a bifold $2$-sphere and $\upiota$ swaps $\check\Sigma_i$'s.
	Suppose the number of bifold points on $\check\Sigma_1$ is either $1$ or $3$.
	Then
	\[
		\HMR_{\bullet}(\check Y, \upiota) = 0.
	\]
\end{prop}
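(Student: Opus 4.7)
The plan is to use the positive orbifold Euler characteristic of $\check\Sigma_1$ to endow $\check\Sigma$ with an $\upiota$-invariant positive scalar curvature bifold metric, and then to perform a neck-stretching argument along $\check\Sigma \subset \check Y$ to suppress all irreducible Seiberg-Witten critical points. The final step, deducing the vanishing of the reduced group $\HMR_\bullet$ from this suppression, is where the main subtlety lies.

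First, since $n \in \{1, 3\}$, one has $\chi^{\mathrm{orb}}(\check\Sigma_1) = 2 - n/2 > 0$, so by orbifold uniformization $\check\Sigma_1$ admits a bifold metric of constant positive scalar curvature. Pulling back by the swap involution gives an $\upiota$-invariant PSC bifold metric on $\check\Sigma = \check\Sigma_1\sqcup\check\Sigma_2$. Following the neck-stretching construction used in Proposition~\ref{prop:adj_orb_real}, one builds a family of $\upiota$-invariant bifold metrics $\check g_T$ on $\check Y$ each containing an isometric copy of $[-T,T]\times\check\Sigma$ with this PSC metric.

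Second, I would show that for $T$ sufficiently large, every unperturbed Seiberg-Witten critical point on $(\check Y,\check g_T)$ is reducible. The Weitzenb\"{o}ck identity for the bifold Dirac operator, combined with the maximum principle applied to the Seiberg-Witten equations, forces the spinor $\Psi$ of any three-dimensional solution to vanish wherever the scalar curvature is strictly positive, hence on the entire neck. By the unique continuation principle (see Section~\ref{sec:analysis_of_sw_traj}, inherited from \cite{KMbook2007}), $\Psi$ then vanishes on all of $\check Y$, so the solution is reducible. An admissible tame perturbation $\frakq\in\calP$ of sufficiently small norm supported away from the neck preserves this conclusion, so the irreducible chain group $C^o$ is empty for such a perturbation.

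The main obstacle is to pass from $C^o = 0$ to $\HMR_\bullet(\check Y,\upiota) = 0$, since the reducible bar complex can still be non-trivial and one must check that $j_*:\widecheck{\HMR}_\bullet\to\widehat{\HMR}_\bullet$ vanishes on homology. My plan here is to apply the cobordism formalism of Section~\ref{sec:functoriality} to the self-cobordism $[0,1]\times\check Y$ with an inserted PSC cylinder $[-L,L]\times\check\Sigma$: by Proposition~\ref{prop:trivialcob} this cobordism induces the identity on $\HMR_\bullet$. On the other hand, as $L\to\infty$, an argument analogous to Proposition~\ref{prop:no_red_cylindend_closed} forces the irreducible entries $m^o_o, m^o_s, m^u_o, m^u_s$ of the cobordism map to vanish, because their defining moduli spaces would otherwise contain a finite-energy irreducible Seiberg-Witten solution on an arbitrarily long PSC cylinder, which the Weitzenb\"{o}ck estimate forbids. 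The $\check m$ and $\hat m$ maps then reduce to their purely reducible parts, which factor through $\overline{\HMR}_\bullet$ via $p_*$ and $i_*$; by the long exact sequence, the image of the induced map in $\ker(p_*) = \mathrm{im}(j_*) = \HMR_\bullet$ is trivial. Thus the identity on $\HMR_\bullet$ coincides with the zero map, forcing $\HMR_\bullet(\check Y,\upiota) = 0$.
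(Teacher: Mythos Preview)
Your step 2 contains a genuine error. The Weitzenb\"ock identity combined with the maximum principle does \emph{not} force the spinor to vanish pointwise wherever the scalar curvature is positive. On the closed manifold $\check Y$ the inequality $\Delta|\Psi|^2 + \tfrac{s}{2}|\Psi|^2 + |\Psi|^4 \le 0$, evaluated at the global maximum of $|\Psi|^2$, yields only $|\Psi|^2_{\max} \le \max(-s,0)/2$; when $s>0$ holds only on the neck, the maximum may lie off the neck and nothing prevents $\Psi$ from being nonzero there. Your unique-continuation step therefore has no valid premise, and you cannot deduce $C^o=0$ in this way.

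More fundamentally, you have not used the hypothesis that the number of bifold points is \emph{odd} beyond the positive-curvature condition (which also holds for $0$ or $2$ points), and you are thereby missing the decisive topological input. The paper's proof runs the neck-stretching argument to extract from any family of solutions $(B_T,\Psi_T)$ a limiting three-dimensional solution $(B,\Psi)$ on $\reals\times\check\Sigma$; positive scalar curvature forces $\Psi=0$ \emph{on this limit} (via $\Delta|\Psi|^2\le -|\Psi|^2$ on the cylinder, not on the original $\check Y$), and the curvature equation then gives $F_B=0$. The punchline is that because $\check\Sigma_1$ has an odd number of bifold points, the determinant of every bifold spin\textsuperscript{c} structure has nontrivial isotropy at each such point, so $\langle c_1(\fraks),[\check\Sigma_1]\rangle$ is a nonzero half-integer. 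No flat connection can exist on $\reals\times\check\Sigma$, contradicting $F_B=0$. Hence there are \emph{no} Seiberg--Witten solutions at all on $(\check Y,g_T)$ for $T$ large --- neither irreducible nor reducible --- and the chain complex is zero. Your elaborate cobordism manoeuvre in step 3 is then unnecessary.
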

\begin{proof}
	The type of neck stretching argument goes back to \cite{KM1994thom}. 
	For each $T > 0$, we 
	choose an $\upiota$-invariant bifold metric on $\check Y$ containing an isometric copy  of $[-T,T] \times \check\Sigma$, equipped with the product metric $dt^2 + h_{\Sigma}$, such that $h_{\Sigma}$ has constant positive scalar curvature.
	Assume also $h_T$ is independent of $T$ on complement $\check Y_0$ of the neck region.
	Suppose for a real \spinc structure $(\fraks,\uptau)$, we have 
	\[
		\HMR_{\bullet}(\check Y,\uptau) \ne 0.
	\]
	By metric independence of $\HMR$, for every $T > 0$, there is a $3$-dimensional Seiberg-Witten solution.
	Let $(A_T,\Phi_T)$ be a family of 4d Seiberg-Witten solutions  by pulling back the 3d solutions to
	$\check X = S^1 \times \check Y$. 
	Since the scalar curvature of $h_T$ is uniformly bounded across $T$, there is a uniform $C^{0}$ bound below on the spinor $\Phi_T$.
	Moreover, the energy on the neck is bounded by a $T$-independent constant:
	\[
	\calE^{top}_{S^1 \times [-T,T] \times \check \Sigma} (A_T,\Phi_T) = 0-\calE^{top}_{S^1 \times \check Y_0}(A_T,\Phi_T)
	\le -\frac14\int_{S^1 \times \check Y_0}(|\Phi|^2+(s/2))^2 + \int_{S^1 \times \check Y_0} \frac{s^2}{16}
	\]
	One can extract a converging subsequence of $(A_T,\Phi_T)$ to obtain a 3d solution $(B,\Psi)$ on $\reals \times \check\Sigma$ using the basic compactness property \cite[Theorem~5.1.1]{KMbook2007}.
	But the spinor $\Psi$ can be shown to be zero by using the differential inequality $\Delta |\Psi|^2 \le -|\Psi|^2$, see \cite[Equation~(4.22)]{KMbook2007} and \cite[Theorem~5.1.1]{KMbook2007}.
	We have a contradiction, as the equation implies $F_{B} = 0$, while $c_1(\fraks)$ must evaluate to $\pm 1/2$ over $\check\Sigma_1$ .
\end{proof}
\begin{defn}
Let $K$ be a web.
An \emph{embedded bridge} $e$ is an edge of $K$ for which there exists an embedded $2$-sphere that meets $K$ exactly at a single point on $e$.	
\end{defn}
Since $K^{\sfr}$ consists of cycles, an embedded bridge must be coloured $\sfc$ for all $1$-sets.
A straightforward application of Proposition~\ref{prop:adj_pos} is the following.
\begin{thm}
\label{thm:vanish_bridge}
If a web $K$ has an embedded bridge, then $\HMR_{\bullet}(K) = 0$.\qed 
\end{thm}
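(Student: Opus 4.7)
The plan is to reduce the theorem to Proposition~\ref{prop:adj_pos} by showing that, for every $1$-set $s$, the real double cover of $(S^3,K,s)$ contains a pair of bifold $2$-spheres swapped by the involution, each carrying exactly one bifold point. The vanishing theorem then applies.

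First I would observe that the embedded bridge $e$ must belong to $K^{\sfc}$ under every $1$-set $s$: by the defining condition of a $1$-set, each vertex has exactly two $\sfr$-edges, so the real locus $K^{\sfr}$ is a disjoint union of closed $1$-manifolds with no boundary. An edge whose removal disconnects $K$ into two pieces after intersecting with an embedded $S^2$ at a single point cannot lie on such a cycle. Hence $e \subset K^{\sfc}$ and, in particular, $e$ lifts to (a component of) the complex singular locus $\tilde K^{\sfc}$ in the real cover $\check Y$.

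Next, I would analyze how the embedded sphere $S$ lifts to $\check Y = (Y, \tilde K^{\sfc})$. Since $S$ meets $K$ only at a single point of $e \subset K^{\sfc}$, it is disjoint from the branch locus $K^{\sfr}$. Therefore the branched double cover $Y \to S^3$ restricts to a \emph{trivial} double cover over $S$, and the preimage of $S$ in $\check Y$ is a disjoint union $\tilde S_1 \sqcup \tilde S_2$ of two copies of $S^2$, interchanged by the covering involution $\upiota$. The transverse intersection point $S \cap e$ lifts to one bifold point on each $\tilde S_i$, so each $\tilde S_i$ is an embedded bifold $2$-sphere in $\check Y$ containing exactly one point of $\tilde K^{\sfc}$.

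We are now in exactly the situation of Proposition~\ref{prop:adj_pos}, with $\check \Sigma_1 = \tilde S_1$, $\check \Sigma_2 = \tilde S_2$, and one bifold point on each. That proposition gives $\HMR_\bullet(\check Y, \upiota) = 0$ for the real double cover associated to $(K,s)$. Since this holds for \emph{every} $1$-set $s$ of $K$, summing over the direct sum decomposition
\[
\HMR_\bullet(K) = \bigoplus_{s \in \{\text{$1$-sets of }K\}} \HMR_\bullet(K,s)
\]
yields $\HMR_\bullet(K) = 0$. The only non-routine step is the argument in the first paragraph, identifying $e$ as a complex edge; everything else is an immediate topological observation followed by an invocation of the earlier adjunction vanishing.
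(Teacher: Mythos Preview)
Your proof is correct and follows exactly the approach the paper intends: the paper states the theorem as an immediate application of Proposition~\ref{prop:adj_pos}, preceded only by the one-line remark that ``since $K^{\sfr}$ consists of cycles, an embedded bridge must be coloured $\sfc$ for all $1$-sets.'' You have simply spelled out the details of that application. The only minor comment is that your justification for $e \in K^{\sfc}$ is phrased a bit loosely; the cleanest way to say it is that if $e$ lay on a circle component of $K^{\sfr}$, the sphere $S$ would meet that circle transversally in a single point, contradicting the vanishing of mod-$2$ intersection numbers in $S^3$.
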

The case of a $2$-sphere with $3$ bifold points in Proposition~\ref{prop:adj_pos} translates to the following corollary.
\begin{cor}
\label{cor:vanish_3bridges}
Let $(K,s)$	be a web with $1$-set.
Suppose there is an embedded two sphere whose intersection with $K$ consists of exactly $3$ points, all lying on the same $\sfc$-edge. Then $\HMR_{\bullet}(K,s) = 0$.
\qed 
\end{cor}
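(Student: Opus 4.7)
The plan is to reduce this directly to Proposition~\ref{prop:adj_pos}, which already covers the case of a $2$-sphere with $3$ bifold points. The key point is to identify the right sub-bifold in the real double cover of $(K,s)$ given the hypothesis about the embedded sphere meeting $K$ at three points on a common $\sfc$-edge.

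First, I would let $(\check Y,\upiota)$ be the real double cover of $(K,s)$, so that by definition $\HMR_{\bullet}(K,s)$ is the real bifold monopole Floer homology of $(\check Y,\upiota)$. Let $S \subset S^3$ be the given embedded $2$-sphere with $|S \cap K| = 3$, all three intersection points lying in the interior of a single $\sfc$-edge $e$. Since $e \subset K^{\sfc}$, the sphere $S$ is disjoint from the real locus $K^{\sfr}$, which is the branch locus of the cover $Y \to S^3$. Consequently, the preimage of $S$ in $Y$ splits as a disjoint union $\widetilde S = \widetilde S_1 \sqcup \widetilde S_2$ of two embedded $2$-spheres, swapped by $\upiota$.

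Next, I would verify that each $\widetilde S_i$ is a bifold $2$-sphere with exactly three bifold points. Since the three points $S \cap K$ lie in the interior of $e$ (away from the branch locus), their preimages in $Y$ consist of three pairs of points of $\tilde K^{\sfc}$, distributed one point to each $\widetilde S_i$ per pair. Thus $\widetilde S_i \cap \tilde K^{\sfc}$ consists of exactly $3$ points, making $\check\Sigma_i := (\widetilde S_i, \widetilde S_i \cap \tilde K^{\sfc})$ a sub-bifold $2$-sphere with three bifold points, and $\upiota$ swaps the $\check\Sigma_i$.

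The configuration $\check\Sigma = \check\Sigma_1 \sqcup \check\Sigma_2$ in $(\check Y,\upiota)$ satisfies the hypotheses of Proposition~\ref{prop:adj_pos} (the bifold sphere with three bifold points case). Applying that proposition yields $\HMR_{\bullet}(\check Y,\upiota) = 0$, which is precisely $\HMR_{\bullet}(K,s) = 0$. No delicate analysis is needed beyond the topological identification of $\widetilde S$ in step one; the analytic work has already been carried out in the proof of Proposition~\ref{prop:adj_pos} via the neck-stretching argument along the $\upiota$-invariant positive scalar curvature metric on $\check\Sigma$.
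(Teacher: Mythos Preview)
Your proof is correct and takes essentially the same approach as the paper: the corollary is stated immediately after Proposition~\ref{prop:adj_pos} as the instance where the bifold sphere has three bifold points, and you have simply spelled out the topological identification (that the preimage of $S$ in the real double cover is a pair of swapped bifold $2$-spheres with three bifold points each) that the paper leaves implicit.
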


\subsection{The product bifold $S^1 \times \check \Sigma$}
For $3$-manifolds with $S^1$-actions, the Seiberg-Witten solutions naturally reduce to the vortex equations.
The presentation here follows Mrowka--Ozsv\'{a}th--Yu~\cite{MOY}.

Let $\check \Sigma$ be a bifold Riemann surface and $\Sing^{\sfc}$ be the subset of bifold points.
The canonical \spinc bundle $W_c$ over  $\check \Sigma$ is the rank-$2$ hermitian bundle
$
\C \oplus K_{\check\Sigma}^{-1},
$
where $K_{\check\Sigma}$ is the orbifold canonical bundle and $\C$ is trivial line bundle.
Any other \spinc bundle is a tensor product of $W_c$ with an orbifold line bundle $L$:
\[
L \oplus (L \otimes K_{\check \Sigma}^{-1}).
\]
Let $n = \# \Sing^{\sfc}$ be the number of bifold points on $\check \Sigma$.
As in Section~\ref{subsec:cohom_line_bundles}, around each singular point $x_i$, we define a bifold line bundle $E_i$ locally as a quotient of $\C \times D$:
\[
	(\lambda, z) \mapsto (-\lambda,-z),
\]
glued to the trivial bundle by the transition function $z$.
Any bifold line bundle $L$ can be written as a tensor product 
\[
	|L| \otimes E^{-\beta_1} \otimes \cdots \otimes E^{-\beta_n}.
\]
The line bundle $|L|$ is a smooth bundle over the underlying surface, its degree $\deg|L|$ is the \emph{background Chern number}.

By \cite[Proposition~2.12]{MOY} the Picard group of bifold line bundles over $\check \Sigma$ is isomorphic to
the subgroup of $\frac12\bbZ \times (\bbZ/2)^{n}$, consisting of elements $(c, \beta_1,\dots, \beta_n) \in \frac12\bbZ \times (\bbZ/2)^{n}$ satisfying
\[	
	c \equiv \sum_{i=1}^n \frac{\beta_i}{2} \mod \bbZ.
\]

\begin{defn}
Given a bifold \spinc structure $L \otimes W_c$. 
Let $B_0$ be a unitary bifold connection on $L$.
Suppose
$\alpha_0$ and $\beta_0$ are bifold sections of $L$ and $L \otimes K_{\check \Sigma}^{-1}$, respectively.
	The \emph{K\"{a}hler vortex equations} for the triple $(B_0,\alpha_0,\beta_0)$ are 
	\begin{align*}
		2F_{B_0} - F_{K_{\Sigma}} &= i(|\alpha_0|^2-|\beta_0|^2)\vol_{\check \Sigma},\\
		\delbar_{B_0} \alpha  &= 0,\\
		\delbar_{B_0}^* \beta  &= 0,\\
		\alpha_0 = 0  &\text{ or } \beta_0 = 0.
	\end{align*}
We denote by $\mathcal M^+_v(L)$ the moduli space of bifold solutions to the K\"{a}hler vortex equations, whose $\beta_0$-factors are zero.
Similarly, let $\mathcal M^-_v(L)$ be moduli spaces consisting triples of the form $(B_0,0,\beta_0)$.
There is a natural isomorphism $\mathcal M^+_v(L) \cong \calM_v^-(K_{\Sigma}\otimes L^{-1})$.
\end{defn}
It was known since Taubes~\cite{Taubes1980vortex} that the moduli spaces of vortices can be interpreted as spaces of divisors on the Riemann surface.
An orbifold version of the correspondence was written down in \cite{MOY}:
\begin{thm}[\cite{MOY},Theorem~5]
Let $L$ be an orbifold line bundle of background Chern number $e$. 
The moduli spaces of vortices $\mathcal M^+_v(L)$ is empty if 
	\[
	\deg(L) > \frac12 \deg(K_{\check \Sigma}),
	\]
	and it is naturally diffeomorphic to the $e$-th symmetric product of $\Sigma$, if 
	\[
	\pushQED{\qed}
	\deg(L) < \frac12 \deg(K_{\check \Sigma}).\qedhere
	\pushQED{\qed}
	\]
\end{thm}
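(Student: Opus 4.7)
The plan is to prove this as the bifold analogue of Taubes' classical theorem on vortices on a Riemann surface, splitting into the two cases.

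For the emptiness statement, I would integrate the first K\"ahler vortex equation $2F_{B_0} - F_{K_{\check\Sigma}} = i|\alpha_0|^2 \vol_{\check\Sigma}$ over $\check\Sigma$. With the convention $c_1(L) = [iF_{B_0}/2\pi]$, the left-hand side integrates to $2\pi i(\deg K_{\check\Sigma} - 2\deg L)$ and the right-hand side to $i\|\alpha_0\|_{L^2}^2 \ge 0$. This yields
\[
    \|\alpha_0\|_{L^2(\check\Sigma)}^2 = 2\pi(\deg K_{\check\Sigma} - 2\deg L),
\]
so the hypothesis $\deg(L) > \frac12\deg(K_{\check\Sigma})$ forces a negative right-hand side, incompatible with any solution. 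Combined with the defining condition $\beta_0 \equiv 0$ of $\mathcal{M}_v^+$, no solution can exist, and $\mathcal{M}_v^+(L) = \emptyset$.

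For the identification with $\text{Sym}^e(\Sigma)$ when $\deg L < \frac12 \deg K_{\check\Sigma}$, I would construct a bijection in two steps. Given a vortex $(B_0,\alpha_0)$, the equation $\bar\partial_{B_0}\alpha_0 = 0$ makes $\alpha_0$ holomorphic for the holomorphic structure on $L$ determined by $B_0$. Tensoring $\alpha_0$ against canonical holomorphic sections of $E_1^{\beta_1} \otimes \cdots \otimes E_n^{\beta_n}$ with minimal vanishing at each bifold point produces a genuine holomorphic section $\tilde\alpha_0$ of the smooth line bundle $|L|$ on $\Sigma$, whose zero divisor lies in $\text{Sym}^e(\Sigma)$. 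This gives a natural map $\mathcal{M}_v^+(L) \to \text{Sym}^e(\Sigma)$. To invert, given $D \in \text{Sym}^e(\Sigma)$ I would fix a smooth Hermitian reference $h_{ref}$ on $L$ and a holomorphic section $\alpha^{ref}$ whose associated divisor (after the above tensoring) is $D$, then seek a new metric $h = e^{-2u}h_{ref}$ for which the vortex equation holds. Writing everything in terms of $u$ reduces the vortex equation to the Kazdan-Warner type PDE
\[
    2\Delta_{\check\Sigma} u = c_{h_{ref}} - e^{-2u}|\alpha^{ref}|_{h_{ref}}^2,
\]
where $c_{h_{ref}}$ is an explicit smooth term. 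The integrated solvability condition $\int c_{h_{ref}}\, \vol > 0$ translates precisely to $\deg L < \frac12\deg K_{\check\Sigma}$.

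The main obstacle is the analytic step of solving the Kazdan-Warner equation on the bifold $\check\Sigma$. I would minimise the associated variational functional over orbifold Sobolev spaces, which requires the Moser-Trudinger inequality on $\check\Sigma$; both ingredients can be derived by pulling back to local $\mathbb Z_2$-uniformising covers around bifold points and invoking their smooth counterparts equivariantly. Uniqueness up to bifold gauge and smoothness of the solution then follow from the orbifold maximum principle and elliptic regularity. The remaining pieces (continuity of the divisor map, injectivity of the inverse) adapt directly from the smooth Taubes theory; the key point is that at bifold points the relevant equation is regular once one works in a local uniformising chart where $\alpha^{ref}$ has a standard holomorphic form.
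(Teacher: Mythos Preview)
The paper does not give its own proof of this theorem: it is quoted from \cite{MOY} (their Theorem~5) and closed with a \qedsymbol\ in the statement itself. So there is nothing to compare against beyond the original Mrowka--Ozsv\'ath--Yu argument.

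Your outline is the standard Taubes argument adapted to the bifold setting, and it is essentially what \cite{MOY} do. The integration step for emptiness is correct with the paper's curvature convention. For the identification with $\text{Sym}^e(\Sigma)$, your desingularisation via the local bundles $E_i^{\beta_i}$ matches the decomposition $L = |L| \otimes E_1^{-\beta_1} \otimes \cdots \otimes E_n^{-\beta_n}$ used in the paper, and the Kazdan--Warner reduction is the right mechanism. One point worth tightening: you should also confirm that the divisor map lands in $\text{Sym}^e(\Sigma)$ and not in a symmetric product of the orbifold, i.e.\ that the zero locus of $\alpha_0$ avoids the bifold points generically and that any vanishing at a bifold point is absorbed by the $E_i$ factors so the residual divisor on the underlying $\Sigma$ has degree exactly the background Chern number $e$. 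This is handled in \cite{MOY} but is the one place where the orbifold version genuinely differs from the smooth case.
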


\begin{defn}
A \emph{real bifold Riemann surface} is a pair $(\check \Sigma, \upiota_{\Sigma})$ of a bifold Riemann surface $\check \Sigma=(\Sigma, \Sing^{\sfc})$ and
an anti-holomorphic involution $\upiota_{\Sigma}: \check\Sigma \to \check\Sigma$ that preserves the bifold structure.	
\end{defn}

\begin{thm}
	Let $(\check\Sigma,\upiota)$ be a real bifold Riemann surface.
	Let $L$ be a real orbifold line bundle of background Chern number $e$.
	Then the moduli spaces of vortices $\mathcal M^+_v(L)$ is empty if $
	\deg(L) > \frac12 \deg(K_{\check \Sigma})$.
	Furthermore, $M^+_v(L)$ is naturally diffeomorphic to the fixed point locus $(\text{Sym}^e(\Sigma))^{\upiota}$ of the $\upiota$-action on $\text{Sym}^e(\Sigma)$, if 
	\[
	\pushQED{\qed}
	\deg(L) < \frac12 \deg(K_{\Sigma}). \qedhere 
	\pushQED{\qed}
	\]
\end{thm}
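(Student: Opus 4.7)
The plan is to upgrade the non-real MOY correspondence to an equivariant statement. First I would dispose of the emptiness regime: a real vortex is in particular a vortex, so the MOY theorem already asserts $\mathcal{M}^+_v(L) = \emptyset$ whenever $\deg(L) > \tfrac{1}{2}\deg(K_{\check\Sigma})$, and a fortiori its real subspace is empty.

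For the non-empty regime, the idea is that both sides of the MOY diffeomorphism $\mathcal{M}^+_v(L) \cong \mathrm{Sym}^e(\Sigma)$ carry natural $\mathbb{Z}/2$-actions and that the diffeomorphism is equivariant; the theorem then follows by taking fixed points. Concretely, the unique real structure $\uptau^L$ on $L$ (Proposition~\ref{prop:line_admit_real}) induces an involution on the configuration space via $(B_0,\alpha_0,0) \mapsto ((\uptau^L)^* B_0, \uptau^L \alpha_0, 0)$. Because $\upiota$ is an anti-holomorphic isometry of $\check\Sigma$, this involution preserves the K\"{a}hler vortex equations (the moment-map equation is invariant because both $F_{B_0}$ and $|\alpha_0|^2 \vol_{\check\Sigma}$ change sign the same way under $\upiota^*$, and $\delbar_{B_0}\alpha_0 = 0$ is mapped to its conjugate by the anti-linearity of $\uptau^L$) and descends to an involution $\uptau^L_*$ on $\mathcal{M}^+_v(L)$. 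The forward map in MOY's correspondence, $(B_0,\alpha_0,0) \mapsto \mathrm{div}(\alpha_0)$, is tautologically equivariant because any anti-linear lift of $\upiota$ sends the zero locus of a section to its $\upiota$-image.

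The next thing to verify is that the real vortex moduli space --- gauge equivalence classes of $\uptau^L$-fixed solutions under the real gauge group --- coincides with $\mathrm{Fix}(\uptau^L_*)$ inside $\mathcal{M}^+_v(L)$. For irreducible vortices ($\alpha_0 \not\equiv 0$) the gauge stabilizer is trivial, so for a $\uptau^L_*$-fixed orbit there is a unique $u \in \mathcal{G}$ with $\uptau^L_*(B_0,\alpha_0,0) = u \cdot (B_0,\alpha_0,0)$; squaring and using $(\uptau^L)^2 = \mathrm{Id}$ gives the cocycle identity $u \cdot \upiota^*(\bar u) = 1$, and a standard square-root argument in $\mathcal{G} = \mathrm{Map}(\check\Sigma, S^1)$ produces an honestly $\uptau^L$-fixed representative.

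The hard part will be smoothness of the identification at fixed divisors that meet the bifold locus $\mathrm{Sing}^{\sfc}$ or lie on the $\upiota$-fixed locus in $\Sigma$, where $\mathrm{Sym}^e(\Sigma)^{\upiota}$ stratifies into components of varying dimension and the correspondence becomes delicate. I plan to handle this by running an equivariant version of the implicit function argument in MOY's proof: the linearized vortex operator at a real solution is a $\uptau^L$-equivariant Fredholm operator, so its $\uptau^L$-invariant kernel and cokernel compute the tangent space to the real moduli space, and matching this against the tangent space to the corresponding stratum of $\mathrm{Sym}^e(\Sigma)^{\upiota}$ at the associated divisor (via the explicit linearization at $\alpha_0 = e^u \alpha_D$ in a Kazdan--Warner gauge, which can be arranged to be $\upiota$-invariant because $D$ is) completes the proof.
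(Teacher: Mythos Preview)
The paper gives no proof of this statement: it places a \qedsymbol{} directly inside the displayed inequality, treating the result as an immediate equivariant consequence of the preceding MOY theorem. Your proposal supplies exactly the argument that \qedsymbol{} is standing in for --- checking that the MOY divisor map is $\bbZ/2$-equivariant and then passing to fixed points --- and is correct in outline.

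Two small caveats. First, your appeal to Proposition~\ref{prop:line_admit_real} for uniqueness of the real structure on $L$ is off target: that proposition is stated for line bundles over real $3$-bifolds arising as double covers of webs in $S^3$, not over real bifold Riemann surfaces. The analogous $2$-dimensional statement holds and the arguments of Section~\ref{subsec:realbifoldstr} adapt, but it is not literally what you cite. Second, the cocycle/square-root step you sketch to identify the real moduli space with $\mathrm{Fix}(\uptau^L_*)$ can in principle be obstructed by $H^1(\bbZ/2; H^1(\Sigma;\bbZ))$ for the action $[u]\mapsto -\upiota^*[u]$; the cleaner route is the one you already allude to in your final paragraph, namely to start from an $\upiota$-fixed divisor $D$, build a real holomorphic section of $L$ with that divisor, and invoke uniqueness of the solution to the Kazdan--Warner equation to force the resulting metric (hence connection) to be $\upiota$-invariant. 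That bypasses the cocycle issue entirely and gives the inverse to the divisor map on fixed loci directly.
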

It is well-known that Seiberg-Witten solutions on a product $S^1 \times \check \Sigma$ are circle-invariant.
The particular version of the result we need is the following Proposition, which be derived by, for example, adapting the arguments in \cite[Proposition~3.1]{MunozWang2005SWS1S}.
\begin{prop}
	Let $\check Y = S^1 \times \check \Sigma$. Assume $\fraks$ is a bifold \spinc structure on $\check Y$ such that is isomorphic to the pullback of a \spinc structure $L \oplus (L \otimes K^{-1}_{\check\Sigma})$ on $\check \Sigma$.
	Then the moduli space of unperturbed Seiberg-Witten solutions $\calM_{sw}(\check Y,\fraks)$ is isomorphic to $\mathcal M^+_v(L)$.
	\qed 
\end{prop}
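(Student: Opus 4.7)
The strategy is to adapt Mu\~{n}oz--Wang's reduction of Seiberg-Witten equations on $S^1 \times \Sigma$ to vortex equations on $\Sigma$, combined with the orbifold K\"{a}hler vortex analysis of~\cite{MOY}.

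First I would construct the map $\calM_v^+(L) \to \calM_{sw}(\check Y,\fraks)$ by pulling back a K\"{a}hler vortex $(B_0,\alpha_0,0)$ along the projection $\pi: S^1 \times \check\Sigma \to \check\Sigma$. The $3$-dimensional Dirac operator on the pullback spinor bundle decouples, on $S^1$-invariant sections, into $\delbar_{B_0} \oplus \delbar_{B_0}^*$ acting on $L$ and $L\otimes K_{\check\Sigma}^{-1}$; the $3$-dimensional curvature equation reduces, via the identification $iT^*\check Y = iT^*S^1 \oplus iT^*\check\Sigma$ and the standard Clifford model, to the first K\"{a}hler vortex equation; and the constraint $\alpha_0 = 0$ or $\beta_0 = 0$ is preserved. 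Gauge invariance is immediate since $S^1$-invariant gauge transformations on $\check Y$ correspond to bifold gauge transformations on $\check\Sigma$.

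For surjectivity, I would view a solution $(B,\Psi)$ on $\check Y$ as a translation-invariant $4$-dimensional Seiberg-Witten solution on the K\"{a}hler $4$-bifold $\check X := \C^* \times \check\Sigma$ (identifying $\reals \times S^1 \cong \C^*$). Under the canonical K\"{a}hler decomposition $S^+ = \pi^*L \oplus \pi^*(L \otimes K_{\check\Sigma}^{-1})$, the $4$-dimensional Seiberg-Witten equations decouple into $\delbar_A\alpha = 0$, $\delbar_A^*\beta = 0$, a moment-map curvature equation relating $F_A^{1,1}\wedge \omega_{\check X}$ to $|\alpha|^2 - |\beta|^2$, and the pointwise constraint $\alpha \otimes \bar\beta = 0$. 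The classical K\"{a}hler Seiberg-Witten argument (Weitzenb\"{o}ck plus maximum principle, or the integration-by-parts identity $\int |\delbar_A \alpha|^2 + |\delbar_A \beta|^2 = \int |D_A^+\Phi|^2 - \int F_A^{0,2}\cdot\ldots$) then forces $\alpha \equiv 0$ or $\beta \equiv 0$. Assuming $\beta \equiv 0$, I would Fourier expand $\alpha$ in the $S^1$-direction; the holomorphicity $\delbar_A \alpha = 0$ combined with the moment-map equation shows only the $S^1$-constant mode survives once the $S^1$-harmonic component of $B$ is absorbed by a gauge transformation, producing an $S^1$-invariant solution pulled back from a K\"{a}hler vortex on $\check\Sigma$.

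The main obstacles I anticipate are two-fold. First, verifying the bifold K\"{a}hler decomposition $S^+ = \pi^*L \oplus \pi^*(L \otimes K_{\check\Sigma}^{-1})$ at the $\bbZ_2$-singular locus $\C^* \times \Sing^{\sfc}$: this requires checking in a local uniformizing chart that the Clifford action in Definition~\ref{defn:bifoldspinc4} is compatible with the product complex structure and that the canonical involution $\tau$ preserves the decomposition. Second, promoting the K\"{a}hler Witten dichotomy to the bifold setting: the Weitzenb\"{o}ck identities and the K\"{a}hler identity $D_A^2 = 2(\delbar_A\delbar_A^* + \delbar_A^*\delbar_A)$ must be justified as identities of bifold operators, which ultimately reduces to the smoothness of sections in orbifold charts. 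Once these points are settled, the bijection with $\calM^+_v(L)$ (rather than both $\calM^{\pm}_v(L)$) is accounted for by the natural identification $\calM^+_v(L) \cong \calM^-_v(K_{\check\Sigma} \otimes L^{-1})$ under replacing $L$ by $L \otimes K_{\check\Sigma}^{-1}$ in the $\spinc$ bundle.
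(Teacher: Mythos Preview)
Your approach is exactly what the paper intends: it gives no proof beyond the remark that the result ``can be derived by, for example, adapting the arguments in \cite[Proposition~3.1]{MunozWang2005SWS1S},'' and your sketch is precisely that adaptation to the bifold setting.

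One small point: your final sentence does not quite resolve the $\calM_v^+$ versus $\calM_v^{\pm}$ issue. The identification $\calM_v^+(L) \cong \calM_v^-(K_{\check\Sigma}\otimes L^{-1})$ relates vortex moduli for \emph{different} $\spinc$ structures; for the fixed $\fraks$ in the statement the Witten dichotomy actually gives $\calM_{sw}(\check Y,\fraks) \cong \calM_v^+(L) \sqcup \calM_v^-(L)$. The reason one can write just $\calM_v^+(L)$ is that, away from the half-canonical degree, the MOY theorem forces one of the two pieces to be empty (and in the paper's applications $L$ is trivial, so $\calM_v^-(L) = \emptyset$). This is a minor clarification of the statement rather than a defect in your argument.
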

Switching notations, let $\check F$ be a bifold Riemann surface.
For a \spinc structure on $S^1 \times \check F$ to admit a Seiberg-Witten solution, it is necessarily that $\langle c_1(\fraks), [S^1 \times \gamma] \rangle = 0$, for a curve $\gamma$ in $\Sigma$, by the adjunction inequality.
Based on the description above, we can compute the Floer homologies for the product \spinc structure, satisfying the adjunction \emph{equality}.
There are two cases, depending on whether the real locus is nonempty.
\begin{lem}
\label{lem:HMR_adj_equality_swap}
Suppose $\check F$ be a bifold Riemann surface	$\chi(\check F) < 0$, such that $\check F = \check F_1 \sqcup \check F_2$, where each $\check F_i$ is identified with a connected bifold $\check F_o$.
Let $\check Y_i$ be the product $\check Y = S^1 \times \check F$ and $\check Y = \check Y_1 \sqcup \check Y_2$ be the disjoint union.
Let $\upiota$ be the swapping involution, i.e. it exchanges the two $\check Y_i$'s and acts by identity under identification $\check Y_i \cong S^1 \times \check F_o$.
Then
\[
	\HMR_{\bullet}(\check Y|F_1) =
	\bbF_2,
\]
\end{lem}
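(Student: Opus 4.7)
The plan is to reduce the computation to the Seiberg--Witten (equivalently K\"ahler vortex) equations on the quotient bifold $\check Y_o = S^1 \times \check F_o$ and show that the only contribution is a single transversely cut-out irreducible solution, with no reducibles and no differentials.

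First I would invoke Section~\ref{subsec:case_real_loci}: because $\upiota$ is the swapping involution, the real bifold \spinc structures on $(\check Y,\upiota)$ are in natural bijection with bifold \spinc structures on $\check Y_o$, and the real bifold configuration space, gauge group, and hence moduli of critical points and trajectories on $(\check Y,\upiota)$ coincide with their ordinary bifold counterparts on $\check Y_o$. Consequently $\HMR_{\bullet}(\check Y,\upiota;\fraks,\uptau)$ is computed by exactly the same data as the ordinary bifold monopole Floer homology of $(S^1\times\check F_o,\fraks_o)$, where $\fraks_o$ is the restriction of $\fraks$ to $\check Y_1\cong\check Y_o$.

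Next, the constraint defining $\calS(\check Y|F_1)$ translates, under the bijection above, to $\langle c_1(\fraks_o),[F_o]\rangle = -\chi(\check F_o)$. Since the set of bifold \spinc structures on $S^1\times\check F_o$ is a torsor over $\Pic(\check Y_o)$ (Theorem~\ref{thm:Bal_homogeneous}) and spinors on $S^1\times\check F_o$ decompose as $L\oplus(L\otimes K_{\check F_o}^{-1})$ (Section~\ref{sec:adj_excis}), this equality singles out precisely the canonical bifold \spinc structure, corresponding to the trivial line bundle $L=\underline{\C}$. In particular $\deg L = 0 < \tfrac12\deg K_{\check F_o}$ (as $\chi(\check F_o)<0$), and $c_1(\fraks_o)$ is non-torsion, so by Theorem~\ref{thm:transv_crit} a sufficiently small admissible perturbation $\frakq$ can be chosen with \emph{no} reducible critical points.

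The heart of the argument is then the identification of the unperturbed three-dimensional Seiberg--Witten solutions on $S^1\times\check F_o$ with the K\"ahler vortex moduli space $\calM_v^+(L)$, the bifold version of the Mrowka--Ozsv\'ath--Yu reduction cited in Section~\ref{sec:adj_excis}. For $L=\underline{\C}$ this moduli space is $\mathrm{Sym}^0(\check F_o)=\mathrm{pt}$, giving a single irreducible critical point. The main technical issue is to verify that this unique solution is nondegenerate in the bifold $\sigma$-blown-up sense of Theorem~\ref{thm:transv_crit}, and persists under a small admissible perturbation; this follows from the standard $S^1$-invariant computation of the extended Hessian on a K\"ahler surface, which diagonalizes into $\delbar_L$ and $\delbar_L^*$ blocks whose kernels are zero when $\deg L = 0$, together with a perturbation argument of the same flavour as in the proof of Theorem~\ref{thm:transv_cylinder}.

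Finally I would assemble the pieces. With one irreducible critical point and no reducibles, $C^o\cong\bbF_2$ and $C^s=C^u=0$, so $\check C=\hat C\cong\bbF_2$ and $\bar C=0$. The differentials vanish for trivial reasons (there is no other generator), and the map $j_*$ is an isomorphism. Therefore $\HMR_{\bullet}(\check Y,\upiota|F_1)$, defined as the image of $j_*$, is $\bbF_2$, as claimed. The only real obstacle is the transversality/nondegeneracy step on the bifold $S^1\times\check F_o$; once that is in hand, the rest is bookkeeping with the identifications of Section~\ref{subsec:case_real_loci}.
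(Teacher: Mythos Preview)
Your approach is essentially the paper's own (very terse) argument, carried out in more detail: reduce via Section~\ref{subsec:case_real_loci} to ordinary bifold monopoles on $S^1\times\check F_o$, observe that the relevant \spinc structure is non-torsion so there are no reducibles, and invoke the vortex description to see a single nondegenerate irreducible solution.

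One small correction: the constraint $\langle c_1(\fraks_o),[\check F_o]\rangle = -\chi(\check F_o) = \deg K_{\check F_o}$ does not pick out $L=\underline{\C}$. For $\fraks_o$ with spinor bundle $L\oplus(L\otimes K^{-1})$ one has $\langle c_1(\fraks_o),[\check F_o]\rangle = 2\deg L - \deg K$, so the equality forces $\deg L = \deg K$, i.e.\ the \emph{anti}-canonical choice. The corresponding moduli space is $\calM_v^-(L)\cong\calM_v^+(K\otimes L^{-1})\cong\mathrm{Sym}^0=\mathrm{pt}$, so your point-count survives. You should also say explicitly why \spinc structures on $S^1\times\check F_o$ that are \emph{not} pulled back from $\check F_o$ contribute nothing (the paper handles this just before the lemma via adjunction on tori $S^1\times\gamma$); your sentence ``this equality singles out precisely the canonical bifold \spinc structure'' elides this step.
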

\begin{proof}
	There exists only one compatible real structure, namely the identity lift, see Section~\ref{subsec:case_real_loci}.
	Since the \spinc structure is non-torsion, there is no reducible generator to $\HMR$.
	Furthermore, there exists a unique irreducible generator of $\HMR_{\bullet}(\check Y|F_1)$, namely the canonical solution to the vortex equations over $\check F_1$.
\end{proof}
\begin{lem}
Let $(\check F, \upiota_F)$ be a connected real bifold Riemann surface such that $2\text{genus}(F) - 2 + \#\Sing^{\sfc} > 0$.
Let $\check Y = S^1 \times \check Y$ and $\upiota = \upiota_{S^1} \times \upiota_F$ be a product involution such that $\upiota_{S^1}$ is the reflection along two points on $S^1$.
Let $\fraks_0$ be the pullback of the canonical \spinc structure on $\check F$.
Then
\[
	\HMR_{\bullet}(\check Y|\check F) =\bigoplus_{\uptau}\HMR_{\bullet}(\check Y;\fraks_0, \uptau) 
	=
	\bigoplus_{\uptau} \bbF_2,
\]
where the direct sum is taken over compatible lifts $\uptau$.
In particular, if $Y/\upiota = S^3$, then the lift is unique and we have
\[
\HMR_{\bullet}(\check Y|F) =
 \bbF_2.
\]
\end{lem}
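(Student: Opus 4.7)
The plan is to mirror the preceding lemma (the swapping-involution case), but with the modification that here $\upiota$ preserves each $S^1$-orbit. So we compute the moduli of $\uptau$-invariant Seiberg-Witten solutions inside a single copy of $S^1\times\check F$, rather than identifying pairs across components. Throughout, fix an admissible perturbation $\frakq\in\calP(\check Y,\uptau)$.

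First I would dispense with reducibles: since $\langle c_1(\fraks_0),[\check F]\rangle = -\chi(\check F) = 2g-2+n/2 > 0$, the real \spinc structure $(\fraks_0,\uptau)$ is non-torsion, so admissibility forbids reducible critical points and $\check C = C^o$. Next I would invoke the ambient (non-real) reduction already proved: every Seiberg-Witten solution on $S^1\times\check F$ with \spinc structure $\fraks_0$ is $S^1$-invariant in gauge and reduces to a vortex on $\check F$. Because $\deg\calO = 0 < \tfrac12\deg K_{\check F}$, the vortex moduli space is $\calM^+_v(\calO)\cong\mathrm{Sym}^0(\check F) = \{\mathrm{pt}\}$, yielding a unique gauge-equivalence class $[\upgamma_*]$ of irreducible critical points represented by the canonical vortex $(B_0,1,0)$.

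Next I would show that for each compatible real structure $\uptau$ on $\fraks_0$, the class $[\upgamma_*]$ is $\uptau$-invariant and therefore represents a critical point in $\calB^\sigma(\check Y,\uptau)$: this is automatic by uniqueness, since $\uptau^*[\upgamma_*]$ is also a Seiberg-Witten solution with the same \spinc structure and the moduli space of such solutions contains only $[\upgamma_*]$. To conclude that this is the only generator of $C^o(\check Y,\uptau)$, I would argue in reverse that every $\uptau$-invariant critical point on $\check Y$ in particular solves the bifold Seiberg-Witten equations on $\check Y$, hence descends to a point in $\calM_v^+(\calO)$, which has a single element.

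Finally, I would verify that $[\upgamma_*]$ is non-degenerate as a critical point of $(\grad\pertL)^\sigma$ on $\calC^\sigma(\check Y,\uptau)$: the Hessian of the canonical vortex on the ambient bifold configuration space has trivial kernel (the standard vortex non-degeneracy, coupled with the transverse Dirac direction being non-resonant for the canonical \spinc structure), and restricting to the $\uptau$-invariant subspace preserves this. After a sufficiently small $\upiota$-invariant admissible perturbation if needed, the chain complex $C^o(\check Y;\fraks_0,\uptau)$ consists of a single generator with vanishing differential, so $\HMR_\bullet(\check Y;\fraks_0,\uptau)=\bbF_2$. Summing over compatible $\uptau$ gives the stated formula; for the $Y/\upiota=S^3$ case, one notes that $\Pic(\check Y,-\upiota)$ acts transitively on compatible real structures and a direct check using Corollary~\ref{cor:bifold_spinc_S3} and Proposition~\ref{prop:line_admit_real} shows that only one equivalence class of $\uptau$ compatible with $\fraks_0$ exists, reducing the sum to a single $\bbF_2$ summand. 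The hard part I expect is step four, namely verifying non-degeneracy of the canonical vortex inside the $\uptau$-invariant slice (and thereby justifying that no auxiliary perturbation is required), since this requires analyzing the real part of the vortex Hessian directly in the orbifold setting.
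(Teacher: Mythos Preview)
The paper does not supply a proof for this lemma; it is stated immediately after the swapping-involution lemma and left unproved, presumably because the argument is meant to parallel that three-sentence proof verbatim. Your proposal does exactly this: non-torsion $\fraks_0$ rules out reducibles, the $S^1$-invariance of Seiberg--Witten solutions on $S^1\times\check F$ reduces to the vortex equation on $\check F$ with background degree zero, the vortex moduli space $\calM^+_v(\calO)\cong\mathrm{Sym}^0(\Sigma)$ is a single point, and a single non-degenerate irreducible generator yields $\bbF_2$. So your approach is correct and matches what the paper intends.

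Your treatment is in fact more careful than the paper's sketch. Two remarks. First, the inference ``$\uptau$ fixes the gauge orbit $[\upgamma_*]$ by uniqueness, therefore $[\upgamma_*]$ lifts to $\calB^\sigma(\check Y,\uptau)$'' hides a group-cohomology obstruction in general; it is cleaner here to note that the explicit canonical solution (trivial connection on $\underline\C$, constant section $1$) is visibly $\uptau$-invariant for the natural real structure on $\underline\C\oplus K_{\check F}^{-1}$ induced by the anti-holomorphic $\upiota_F$ --- or, equivalently, to invoke the real vortex theorem stated just above, which identifies the real moduli with $(\mathrm{Sym}^0\Sigma)^{\upiota}=\{\mathrm{pt}\}$ directly. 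Second, your concern about non-degeneracy is well-placed but resolvable: the canonical vortex is already a transversely cut-out point of the ordinary vortex moduli, so the extended Hessian has no kernel in the full configuration space, and restriction to the $\uptau$-fixed subspace cannot create one.
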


\subsection{An excision theorem}
Let us prove a version of Floer's excision theorem, following \cite[Section~3]{KM2010excision}.

Let $(\check W,\upiota_W): (\check Y_-,\upiota_-) \to (\check Y_+,\upiota_+)$ be a real cobordism.
Assume $\check F_{\pm}$ is a connected subbifold surface in $\check Y_{\pm}$ disjoint from the real locus such that
\[
	\chi(\check F_{\pm}) < 0.
\]
Suppose $\check W$ contains a subbifold
\[
	\check Z = \check G \times S^1
\]
where $\check G$ is a disjoint union of two identical bifold surfaces $\check G_1, \check G_2$ identified with $\check G_o$.
Assume the restriction of $\upiota_W$ on $\check Z$ is the swapping involution in Lemma~\ref{lem:HMR_adj_equality_swap}.
To obtain a new cobordism $\check W^{\dag}$, we first cut along $\check Z$ to obtain a real bifold $
\check W'$. 
The new boundary components is a union of two $\check G \times S^1$, each preserved by the induced involution $\upiota_{W'}$.

Let $\check W^{\dag}$ be result of attaching two copies of $\check G \times D^2$ to $\check W'$. 
Consider the following two collections of surfaces on the cobordisms
\[
	F_{\check W} = (\check F_- \cup \check F_+ \cup \check G_1) \subset W, \quad
	F_{\check W^{\dag}} = (\check F_- \cup \check F_+ \cup \check G_1) \subset W^{\dag}.
\]
\begin{prop}
\label{prop:excision_prepare}
The maps $\HMR(\check W|\check F_W)$ and $\HMR(\check W^{\dag}|\check F_{W^{\dag}})$ are the same.
\end{prop}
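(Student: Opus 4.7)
The plan is to apply the Floer excision strategy of \cite{KM2010excision}, adapted to the real bifold Seiberg--Witten framework of this paper. The essential input is Lemma~\ref{lem:HMR_adj_equality_swap}: for real \spinc structures on $\check Z = \check G \times S^1$ satisfying the adjunction equality on $\check G_1$, the translation-invariant Seiberg--Witten moduli space on $\reals \times \check Z$ consists of a single non-degenerate canonical vortex, and solutions on the cap $\check G \times D^2$ with vortex boundary asymptotic similarly form a single point (the unique holomorphic extension over the disc via the K\"{a}hler vortex description). Consequently, if $\check W$ and $\check W^{\dag}$ are stretched so that their Seiberg--Witten moduli spaces factor through the moduli space on the common cut bifold $\check W'$ together with a unique replacement piece (cylindrical or disc), the induced counts must agree.

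Concretely, I will insert an $\upiota_W$-invariant isometric cylinder $[-T,T] \times \check Z$ along $\check Z \subset \check W$ to produce metrics $g_T$, and insert collars $[-T,0] \times \check Z$ between $\check W'$ and each cap $\check G \times D^2$ in $\check W^{\dag}$ to produce $g_T^{\dag}$. By Proposition~\ref{prop:cob_map_indep_metric_pert}, both cobordism maps are independent of $T$. For any real \spinc structure in $\calS(\check W|\check F_W)$, the restriction to $\check Z$ forces adjunction equality on $\check G_1$, so by the vortex identification preceding Lemma~\ref{lem:HMR_adj_equality_swap}, any finite-energy $\reals$-translation-invariant solution on $\reals \times \check Z$ is the canonical vortex. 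Combining the compactification from Theorem~\ref{thm:compactness_cylinder} and Section~\ref{subsec:mod_space_over_bifold_boundary} with the gluing framework of Section~\ref{subsec:gluing}, for sufficiently large $T$ the Seiberg--Witten moduli space on $(\check W, g_T)$ is in canonical bijection with the Seiberg--Witten moduli space on $\check W'$ (equipped with two extra cylindrical ends carrying the canonical vortex asymptotic), the bijection being induced by gluing to translation-invariant vortex configurations on the stretched neck.

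An entirely parallel argument for $(\check W^{\dag}, g_T^{\dag})$ uses the bifold vortex analysis of the cap $\check G \times D^2$: the K\"{a}hler vortex equations on the disc with the canonical boundary asymptotic admit a unique solution, so gluing identifies the $\check W^{\dag}$-moduli space with the same set of solutions on $\check W'$. Comparing the matrix entries in Definition~\ref{defn:entry_m} under these two bijections yields the desired equality of cobordism maps, since the cohomology classes being paired (here $u = 1$) are pulled back from $\check W'$ on both sides. The main obstacle will be to verify the gluing estimates and the $\delta$-structures of Definition~\ref{defn:delta2str} uniformly in the bifold, $\upiota_W$-equivariant setting; however, this reduces to the standard gluing analysis of Section~\ref{subsec:gluing} together with the non-degeneracy of the canonical vortex and the $\upiota_W$-invariance of the cylindrical metrics and perturbations, all of which are already built into the setup.
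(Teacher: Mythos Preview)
Your approach is genuinely different from the paper's. You propose a direct analytic neck-stretching argument: stretch along $\check Z$, identify the limiting moduli spaces on $\check W$ and $\check W^{\dag}$ with the same moduli space on $\check W'$ (with vortex asymptotics), and conclude by comparing gluing data. The paper instead gives a purely formal TQFT argument: it views $\check W'$ as a cobordism $\check Y_- \cup \check Z \to \check Y_+ \cup \check Z$, so that $\check W$ and $\check W^{\dag}$ are obtained by composing with $[0,1]\times\check Z$ and $(D^2\sqcup D^2)\times\check G$ respectively (as cobordisms $\check Z \to \check Z$). By the composition law (Proposition~\ref{prop:compositionlaw}) it suffices to show both of these induce the identity on $\HMR_{\bullet}(\check Z|\check G_1)\cong\bbF_2$. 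The cylinder is trivially the identity; for the cap, the paper removes an interior copy of $\check G$ to express the cylinder map as a composite through the cap map, so the cap map is forced to be the identity on a rank-one module. No moduli-space analysis on the cap is ever performed.

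Your route should work, but the most substantive step---``the K\"{a}hler vortex equations on the disc with the canonical boundary asymptotic admit a unique solution''---is asserted rather than proved, and is precisely the fact the paper sidesteps with the algebraic trick above. You would also need to argue that the gluing bijection respects the full chain-level structure of Definition~\ref{defn:entry_m} (including the boundary-obstructed cross-terms in $\check m$, $\hat m$), not merely the zero-dimensional counts; the paper avoids this entirely by invoking the already-proved composition law, which packages all of that gluing once and for all. In short, your argument reproves a special case of Proposition~\ref{prop:compositionlaw} by hand, while the paper leverages it.
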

\begin{proof}
	This the analogue of \cite[Proposition~2.5]{KM2010excision}.
	The key is that $\HMR_{\bullet}(\check Z, \upiota_{Z}) \cong \Ftwo$ which follows from Lemma~\ref{lem:HMR_adj_equality_swap}.
	Let us recall the argument in \cite{KM2010excision}.
	We view the real bifold $\check W'$ as a cobordism from $\check Y_- \cup \check Z$ to $\check Y_+ \cup \check Z$.
	By gluing $[0,1] \times \check Z$ and $(D^2 \sqcup D^2) \times G$, we obtain $W$ and $W^{\dag}$, respectively.
	It therefore suffices to prove that $\check W_1 := [0,1] \times \check Z$ and $(D^2 \sqcup D^2) \times G$, as cobordism from $\check Z$ to $
	\check Z$, induce the same map $\HMR_{\bullet}(\check Z|\check G_1) \to \HMR_{\bullet}(\check Z|\check G_1)$.
	
	Indeed, the first map is the identity, and to see the second is also the identity, it remains to show $\HMR_{\bullet}(D^2 \times G) = 1$. 
	We remove a copy of $\check G$ from the interior (recall $\check G$ is a disjoint union of two surfaces). The resulting real bifold is a cobordism $W_2: \check Z \sqcup \check Z \to \check Z$. The induced map $\HMR_{\bullet}(\check Z|\check G_1) \otimes \HMR_{\bullet}(\check Z|\check G_1) \to \HMR_{\bullet}(\check Z|\check G_1)$ recovers $\HMR_{\bullet}(\check W_1)$ upon evaluating $\HMR_{\bullet}(D^2 \times \check G)$. 
	So $\HMR_{\bullet}(\check W_1)$ is an isomorphism, and $\HMR_{\bullet}(D^2 \times \check G) = 1$.
\end{proof}
The setup for Floer's excision is as follows. 
In the remaining part of this section, we will suppress the ``check'' symbol, that signified bifolds as opposed to manifolds, in order to avoid potential confusion with the ``tilde''.	
 
Suppose $(Y,\upiota)$ is a closed real $3$-bifold, such that $Y/\upiota$ has either one or two components.
Let $Y_-, Y_+$ be the preimage of $ Y \to Y/\upiota$ of two components of $Y/\upiota$.
Assume $\Sigma_-$ and $\Sigma_+$ are two isomorphic real subbifold surfaces of $Y$, having no real points.
Write $\Sigma = \Sigma_- \cup \Sigma_+$.
In particular, $\Sigma_{\pm}$ is a disjoint union $\Sigma_{\pm,1} \sqcup \Sigma_{\pm,2}$.
If $Y/\upiota$ is connected, we assume $ \Sigma_{\pm}$ represent independent homology classes.
If $Y/\upiota$ is disconnected, assume each component $\Sigma_{\pm,i}$ is nonseparating.
We fix an orientation-preserving diffeomorphism $h: \Sigma_- \to \Sigma_+$ such that $h(\Sigma_{-,i}) = \Sigma_{+,i}$.
Cutting $Y$ along $\Sigma$, we obtain a real bifold $Y'$ with four pairs of boundary components (eight connected surfaces in total):
\[
	\del Y' = \Sigma_- \cup (- \Sigma_-) \cup  \Sigma_+ \cup (-\Sigma_+).
\]
If $Y/\upiota$ has two components, then so does $Y'$.
Let $Y' = Y'_- \cup Y'_+$ and let $\widetilde Y$ be the real bifold obtained by gluing boundary components $\Sigma_-$ to $-\Sigma_+$ and $\Sigma_+$ to $-\Sigma_-$.
Unlike the setting in \cite{KM2010excision}, the resulting bifold $\widetilde Y$ is connected if and only if $Y$ is.

Denote the image of $\Sigma_- = -\Sigma_+$ by $\widetilde \Sigma_-$ and the image of $\Sigma_+ = -\Sigma_-$ by $\widetilde \Sigma_+$.
Also, we denote $\Sigma_i$ by $\Sigma_{-,i} \cup \Sigma_{+,i}$. 
Finally, we write $\widetilde \Sigma = \widetilde \Sigma_- \cup \tilde \Sigma_+$ and $\widetilde \Sigma_i = \widetilde \Sigma_{-,i} \cup \widetilde \Sigma_{+,i}$.

\begin{thm}
\label{thm:excision}
	Suppose $\widetilde Y$ and $\widetilde F$ are obtained from above, and assume $\chi_{orb}(\check \Sigma_-) = \chi_{orb}(\check \Sigma_+) < 0$.
	Then there is an isomorphism
	\[
		\HMR_{\bullet}(Y|\Sigma) =
		\HMR_{\bullet}(\widetilde Y|\widetilde \Sigma).
	\]
\end{thm}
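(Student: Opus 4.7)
The plan is to adapt Kronheimer--Mrowka's proof of Floer excision for ordinary monopole Floer homology \cite{KM2010excision} to the real-bifold setting, using Proposition~\ref{prop:excision_prepare} as the key preparation step. I would construct an explicit cobordism $(W,\upiota_W):(Y,\upiota) \to (\widetilde Y, \widetilde \upiota)$ realizing the excision, a symmetric inverse cobordism $(W^{op},\upiota_{W^{op}})$, and then show using Proposition~\ref{prop:excision_prepare} and the composition law (Proposition~\ref{prop:compositionlaw}) that the compositions $W^{op} \circ W$ and $W \circ W^{op}$ induce the identity on the appropriate adjunction-equality summands.

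\emph{Step 1 (Construction of the excision cobordism).} Let $Y' = Y \setminus \nu(\Sigma)$ denote $Y$ cut open along $\Sigma_- \cup \Sigma_+$, with boundary consisting of four pairs of surfaces, equipped with the restricted involution. Form the product $[0,1] \times Y'$ and attach along the four ends $[0,1] \times \Sigma_{\pm,i}$ a four-dimensional pair-of-pants cobordism of the form $T \times \Sigma_o$, where $T$ is a 2-dimensional surface with four boundary circles and $\Sigma_o$ is the common model for $\Sigma_{\pm,i}$. The surface $T$ is arranged so that the identifications at $t=0$ recover the original gluing $Y$ (matching $\Sigma_-$ to $-\Sigma_-$ and $\Sigma_+$ to $-\Sigma_+$), while the identifications at $t=1$ produce $\widetilde Y$ (matching $\Sigma_-$ to $-\Sigma_+$ and $\Sigma_+$ to $-\Sigma_-$). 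Using the diffeomorphism $h$ and the symmetry exchanging $\Sigma_-$ with $\Sigma_+$, each piece can be constructed equivariantly under $\upiota$, producing an involution $\upiota_W$ on $W$ whose restrictions to the boundaries agree with $\upiota$ and $\widetilde \upiota$.

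\emph{Step 2 (Reduction via the preparation proposition).} Construct $(W^{op},\upiota_{W^{op}})$ by reversing the roles of $t=0$ and $t=1$. The composition $W^{op} \circ W: Y \to Y$ then contains closed sub-bifold regions of the form $S^1 \times G \cup_\partial (D^2 \sqcup D^2) \times G$ with the swapping involution, glued to a cylindrical piece $[0,1] \times Y$. Proposition~\ref{prop:excision_prepare} says precisely that each such closed region can be replaced by a product, so that the induced map $\HMR(W^{op} \circ W|\Sigma)$ equals the identity on $\HMR_{\bullet}(Y|\Sigma)$. By Proposition~\ref{prop:compositionlaw},
\[
\HMR(W^{op}|\widetilde \Sigma) \circ \HMR(W|\Sigma) = \mathrm{Id}_{\HMR_{\bullet}(Y|\Sigma)},
\]
and the symmetric argument gives the other composition. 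The adjunction constraints are enforced throughout by restricting to real \spinc structures in $\calS(W|F_W)$ and using Propositions~\ref{prop:adj_orb} and~\ref{prop:adj_orb_real} to discard higher $c_1$-evaluations. The vortex computation of Lemma~\ref{lem:HMR_adj_equality_swap} guarantees that only the expected unique generators on the neck contribute.

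\emph{Main obstacle.} The most delicate issue is constructing the cobordism $W$ together with $\upiota_W$ equivariantly, especially in the case where $Y/\upiota$ is connected and the real involution permutes components of the preimage of $F$. The pair-of-pants surface $T$ and its thickening $T \times \Sigma_o$ must carry a compatible involution that simultaneously restricts to $\upiota$ on the $t=0$ boundary and to $\widetilde \upiota$ on the $t=1$ boundary, which forces $T$ itself to carry a specific reflection symmetry tied to the identification $h$. Related to this, one must verify that the closed regions produced in $W^{op} \circ W$ are exactly of the form covered by Proposition~\ref{prop:excision_prepare} (product real bifolds with the swapping involution), rather than products with a reflection involution fixing points of $\Sigma_o$; this is true provided $T$ is chosen so that its involution exchanges pairs of boundary circles freely. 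Once this equivariant geometric input is in place, the remainder of the argument is essentially bookkeeping, and the bijection between $\calS(Y|\Sigma)$ and $\calS(\widetilde Y|\widetilde \Sigma)$ can be read off from the common cut-open bifold $Y'$, under which both $Y$ and $\widetilde Y$ share the same restricted real \spinc data.
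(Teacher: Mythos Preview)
Your proposal follows essentially the same strategy as the paper: construct mutually inverse excision cobordisms and use Proposition~\ref{prop:excision_prepare} to show that both compositions induce the identity on the adjunction-equality summands. One clarification: the glued-in piece is $\Sigma_- \times U$ where $U$ is a \emph{saddle} (a square with four boundary \emph{arcs}, not a surface with four boundary circles), and the involution $\upiota_W$ acts by swapping the two sheets $\Sigma_{-,1}, \Sigma_{-,2}$ of the fiber while fixing the saddle base---this resolves your equivariance concern directly and ensures that the closed hypersurface $\Sigma_- \times k$ appearing in the composite cobordism carries exactly the swapping involution required to invoke Proposition~\ref{prop:excision_prepare}.
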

\begin{proof}
	The proof strategy goes back to Floer, although our setup are different from \cite{BraamDonaldson1995FloerMem} and \cite{KM2010excision}.
	First, we have a cobordism $(W,\upiota_W):\widetilde Y \to Y$.
	Regardless whether $Y/\upiota$ is connected, there exists a codimension-0 real subbifold $\mathbf U$ of $W$ that admits a projection map $\pi: \mathbf U \to U$ onto a saddle $U$, whose fibre is $\Sigma_-$.
\begin{figure}[h!]
	\centering
	\includegraphics[width=0.35\linewidth]{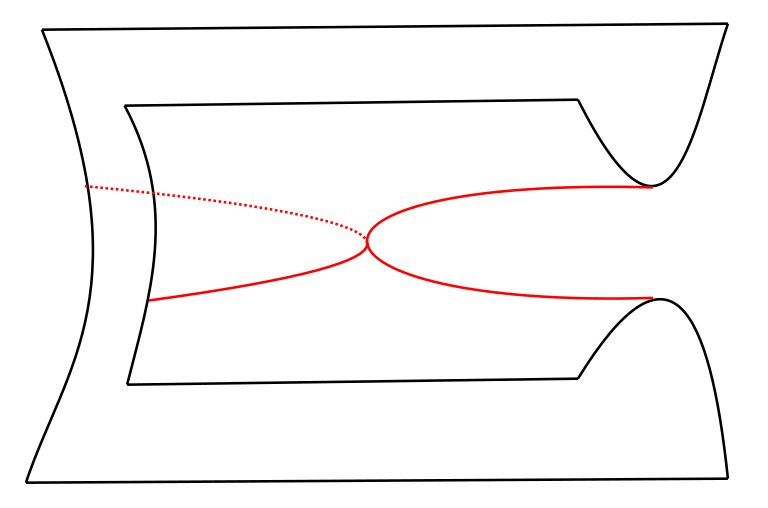}
	\caption{The saddle surface $U$.}
	\label{fig:saddle}
\end{figure}

See Figure~\ref{fig:doublesaddle} for a schematic picture of $W$ when $Y/\upiota$ is disconnected, where each of the blue and red curves correspond to $(\text{curve} \times \Sigma_{-,1})$.
The involution $\upiota_{{W}}:{W} \to {W}$ interchanges the blue and the red regions. 

\begin{figure}[h!]
	\centering
	\includegraphics[width=0.6\linewidth]{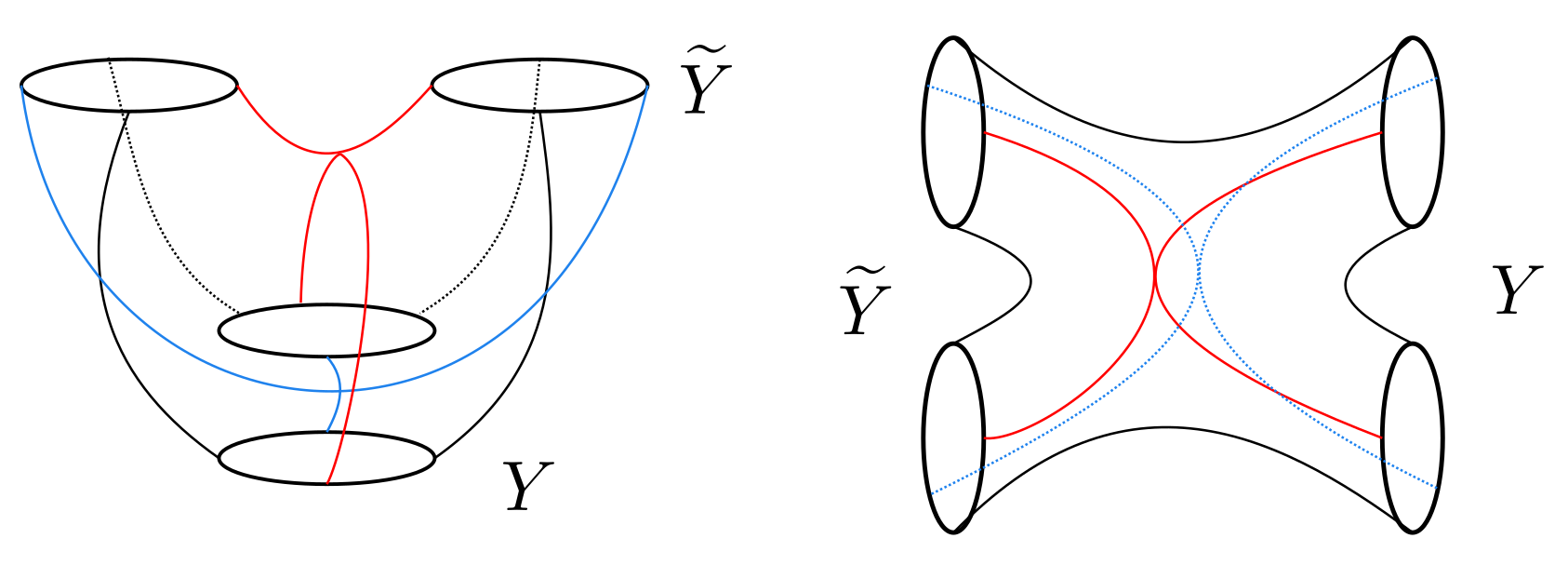}
	\caption{Two schematic views of the cobordism ${W}$. (Drawn as doubles of saddles.)}
	\label{fig:doublesaddle}
\end{figure}
	Similarly, we have a cobordism $\bar W$ that goes the other way.
	It suffices to prove the following two cobordism maps are mutual inverses
	\begin{align*}
		\HMR(W,\upiota_W):
		\HMR_{\bullet}(\widetilde Y, \upiota_{\widetilde Y}|\widetilde \Sigma_1) 
		&\to
		\HMR_{\bullet}( Y, \upiota_{Y}| \Sigma_1), \\
		\HMR(\bar W,\upiota_{\bar W}):
		\HMR_{\bullet}( Y, \upiota_{ Y}| \Sigma_1) 
		&\to
		\HMR_{\bullet}(\widetilde Y, \upiota_{\widetilde Y}|\widetilde \Sigma_1).
	\end{align*}
	
	Let $X$ be the composition of $W$ and $\bar W$, as a real cobordism from $\widetilde Y$ to $\tilde Y$. 
	Since $\Sigma_1$ and $\widetilde\Sigma_1$ are homologous in $X$, the map induced by $X$ factors through $\HMR_{\bullet}(Y,\upiota_Y|\Sigma_1)$.
	Moreover, $X$ contains a codimension-0 real sub-bifold (with corners) $\mathbf V \cong \Sigma_- \times V$ that maps onto $V$ by $\pi: \mathbf V \to V$, where $V$ is the union of two saddles as Figure~\ref{fig:composite_saddles}.
	\begin{figure}[h]
	\centering
	\includegraphics[width=0.37\linewidth]{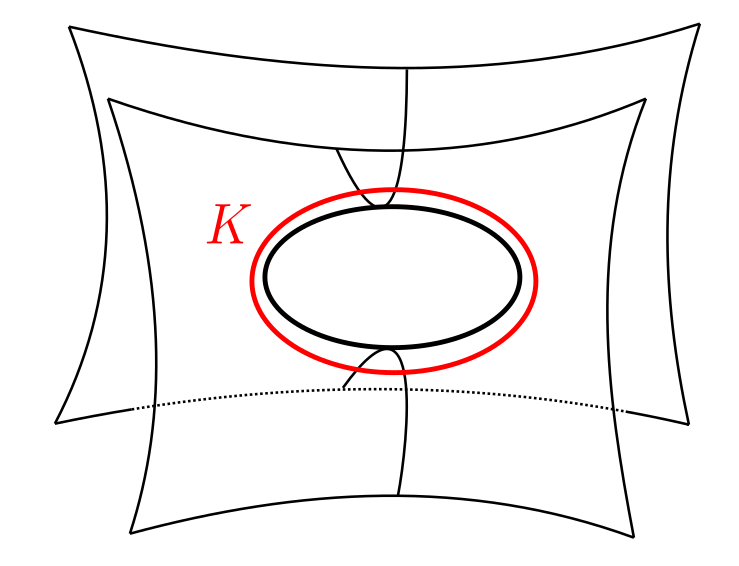}
	\caption{The surface $V$ with curve $k$.}
	\label{fig:composite_saddles}
	\end{figure}
	\begin{figure}[h]
	\centering
	\includegraphics[width=0.5\linewidth]{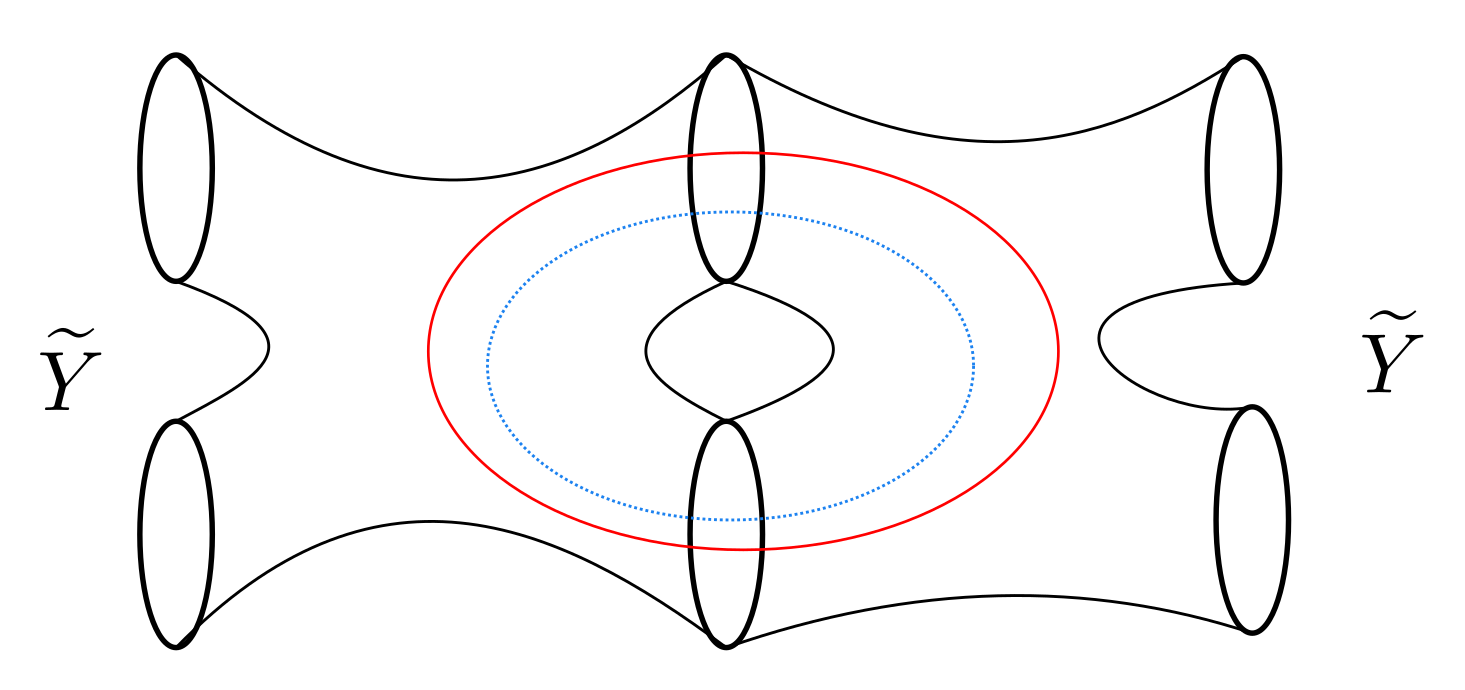}
	\caption{Schematic picture of the composition $X$ of two cobordisms.}
	\end{figure}
	In particular, there is a closed curve $k$ in $V$, whose preimage $\mathbf K = \pi^{-1}(k)$ is, equivariantly with respect to the real involution, isomorphic to
	\[
		\Sigma_- \times k.
	\]
	Cutting along $\mathbf K$, we obtain a real bifold with boundary $X'$.
	Let $X^*:\widetilde Y \to \widetilde Y$ be the cobordism constructed by gluing two copies of $\Sigma_- \times D^2$, along $\del D^2 \cong k$.
	However, $X^*$ is nothing but the product cobordism, so $\HMR(X^*,\upiota_{X^*})$ is simply the identity map.
	In addition, $X^*$ and $X$ falls into the scenario of Proposition~\ref{prop:excision_prepare}, so we deduce also that 
	\[
		\HMR(X,\upiota_X)  = \HMR(W,\upiota_W) \circ \HMR(\widetilde W,\upiota_{\widetilde W}) = \Id.
	\]
	The opposite composite $W \circ \bar W$ is similar.
	We will consider a map from a codimension-0 real subbifold onto a new surface $\widetilde U$, cut along the preimage of a curve $\widetilde k \subset \widetilde U$, and apply Proposition~\ref{prop:excision_prepare}.
\end{proof}

\subsection{Application to mapping tori}
\label{subsec:mapping_tori}
As an illustration of the excision theorem, consider the following class of webs built from braids.
Let $\gamma$ be an $n$-strand braid such that $n$ is odd and $n \ge 5$.
Suppose $\gamma$ is embedded in the set
\[
	\{(x,y,z) \in \reals^3: x^2 + y^2 \le 1, 0 \le z \le 1\}
\]
in way that the boundary points lie on the unit circle $U_1, U_0$ in the $z = 1$ and $z=0$ plane, respectively.
Let $K_{\gamma}$ be the union of $\gamma$ and $U_1,U_0$, thought of as a web in $\reals^3 \cup \infty = S^3$.
Let $s_{\gamma}$ be the $1$-set such that $U_1,U_0$ are coloured $\sfr$ and the braid is coloured $\sfc$.
The goal is to show that $\HMR_{\bullet}(K_{\gamma}, s_{\gamma})$ is nonzero at the top first Chern class. We make the statement precise as follows.

The real double cover $Y_{\gamma}$ of $s_{\gamma}$ is $(S^1 \times S^2, \tilde K^{\sfc})$ such that $\widetilde K^{\sfc}$ is an $n$-braid.
Let $\Sigma_{\gamma}$ be a $2$-sphere in $S^3$ that intersects $K_{\gamma}$ at five points. 
Assume that $\Sigma_{\gamma}$ bounds a 3-ball $B^3$ where $(B^3, B^3 \cap K_{\gamma})$ looks like the trivial $n$-ball.
This can be achieved, for example, by lowering $S^2$ below the $z = \epsilon$ plane for $\epsilon > 0$ small.
Let $\widetilde{\Sigma}_+$ be the preimage of $\Sigma $ in $Y$, and label the components as $\widetilde{\Sigma}_{+,1},\widetilde{\Sigma}_{+,2}$, respectively.
\begin{prop}
	\label{prop:excision_app}
$\HMR_{\bullet}(K_{\gamma},s_{\gamma}|\widetilde\Sigma_{+,1}) = \bbF_2$.
\end{prop}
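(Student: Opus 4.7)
The plan is to prove Proposition~\ref{prop:excision_app} by combining the excision theorem (Theorem~\ref{thm:excision}) with the product-bifold computation of Lemma~\ref{lem:HMR_adj_equality_swap}, reducing the Floer homology of the braid-web to a setting where the swapping-involution answer $\bbF_2$ applies.

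First, I would set up the excision data by picking a second 2-sphere $\Sigma'_\gamma \subset S^3$ parallel to $\Sigma_\gamma$ but positioned on the opposite side of the braided region, intersecting $K_\gamma$ transversely in $n$ points on the strands and bounding a $3$-ball whose intersection with $K_\gamma$ is another trivial $n$-tangle disjoint from $U_0 \cup U_1$. Together, $\Sigma_\gamma$ and $\Sigma'_\gamma$ partition $S^3$ into three regions: two endcap balls (each containing one of $U_0, U_1$ together with a trivial tangle cap) and a middle shell containing all the nontrivial braiding of $\gamma$. Lifting both spheres to $Y_\gamma$ yields real subbifold surfaces $\widetilde\Sigma_+, \widetilde\Sigma_-$, each consisting of two disjoint $2$-spheres with $n$ bifold points, swapped by $\upiota$ and disjoint from the real locus $\widetilde K^{\sfr}$. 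Each component has orbifold Euler characteristic $2 - n/2 < 0$.

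Next, I would apply Theorem~\ref{thm:excision} with $\Sigma = \widetilde\Sigma_- \cup \widetilde\Sigma_+$ and the $\upiota$-swap diffeomorphism $h: \widetilde\Sigma_- \to \widetilde\Sigma_+$. The reglued bifold $\widetilde Y_\gamma$ has the following structure: the swap exchanges how the two sheets of the double-covered middle shell connect to the endcap double covers $\widetilde B^\pm$. Before excision, the branch loci $\widetilde U_0, \widetilde U_1 \subset \widetilde B^\pm$ knit the two sheets together into one connected bifold containing $n$ braid-strand circles; after the swap, the connectivity decouples, and $\widetilde Y_\gamma$ becomes isomorphic (as a real bifold) to the disjoint union of two product bifolds $(S^1 \times \widetilde\Sigma_{+,1}) \sqcup (S^1 \times \widetilde\Sigma_{+,2})$ with $\upiota$ acting as the swapping involution that interchanges them. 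This is precisely the setup of Lemma~\ref{lem:HMR_adj_equality_swap}, which yields $\HMR_{\bullet}(\widetilde Y_\gamma|\widetilde\Sigma_{+,1}) = \bbF_2$. The isomorphism of Theorem~\ref{thm:excision} then transfers this to $\HMR_{\bullet}(K_\gamma, s_\gamma|\widetilde\Sigma_{+,1}) = \bbF_2$.

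The main obstacle is verifying rigorously that the cut-and-swap yields the claimed disjoint-union product structure on $\widetilde Y_\gamma$. This requires tracking each of the $n$ lifted strand circles through the swap: before excision, each strand lift is a single circle visiting both sheets of the middle shell by passing through the branch loci in $\widetilde B^\pm$; after the swap, these reroute through the interchanged endcap connections into two disjoint $n$-component links, one in each product factor $S^1 \times \widetilde\Sigma_{+,i}$. A secondary technical point is to confirm that the homological compatibility required by Theorem~\ref{thm:excision} is satisfied, since both $[\widetilde\Sigma_\pm]$ are null in the smooth homology $H_2(S^1 \times S^2; \bbZ)$ (each component bounds a lifted ball); the relevant independence operates at the bifold level, via the isotropy-data contributions of $\fraks$ at the $n$ bifold points meeting $\widetilde\Sigma_{\pm,1}$, and one must also check that $\calS(\check Y_\gamma|\widetilde\Sigma) = \calS(\check Y_\gamma|\widetilde\Sigma_{+,1})$ so that summing over the excision-compatible real \spinc structures recovers the desired invariant.
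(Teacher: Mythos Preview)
Your approach has a genuine gap: the claimed outcome of the excision swap is impossible, and the hypotheses of Theorem~\ref{thm:excision} are not met. The covering involution $\upiota$ on $Y_\gamma$ has nonempty fixed set $\widetilde K^{\sfr}=\widetilde U_0\cup\widetilde U_1$; since excision only cuts and reglues, this real locus must persist in $\widetilde Y_\gamma$, whereas a swapping involution has no fixed points. Concretely, the four lifted spheres are fiber spheres in $Y_\gamma\cong S^1\times S^2$, cutting it into four pieces: two $\upiota$-invariant endcap pieces $P_0,P_2$ (containing $\widetilde U_0,\widetilde U_1$) and two braid-shell pieces $P_1,P_3$ swapped by $\upiota$. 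The nontrivial $\upiota$-equivariant regluing closes each of $P_1,P_3$ onto itself and glues $P_0$ to $P_2$, yielding \emph{three} components: $\widehat{P_1}\sqcup\widehat{P_3}$ (swapped by $\upiota$) together with an $\upiota$-invariant piece $\widehat{P_0\cup P_2}\cong Y_{\Id}$ carrying the real locus. Each $\widehat{P_i}$ is a mapping torus of a braid on $S^2_n$, not a product, so Lemma~\ref{lem:HMR_adj_equality_swap} does not apply. This three-component outcome is also the symptom of a failed hypothesis: $Y_\gamma/\upiota=S^3$ is connected, so Theorem~\ref{thm:excision} requires $[\Sigma_-],[\Sigma_+]$ to be independent in $H_2(Y_\gamma)\cong\bbZ$, but every $\widetilde\Sigma_{\pm,i}$ is a fiber sphere representing the generator. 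Your assertion that each component ``bounds a lifted ball'' is incorrect --- the lift of a ball containing $U_i$ is $S^2\times I$, whose boundary is the \emph{pair} $\widetilde\Sigma_{\pm,1}\sqcup\widetilde\Sigma_{\pm,2}$.

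The paper's argument avoids all of this with an inverse-braid trick. One forms $Y=Y_\gamma\sqcup Y_{\gamma^{-1}}$, so that $Y/\upiota$ has two components and the nonseparating hypothesis is available, with $\Sigma_+\subset Y_\gamma$ and $\Sigma_-\subset Y_{\gamma^{-1}}$. Excision then produces $\widetilde Y=Y_{\Id}\sqcup Y_{\gamma\gamma^{-1}}\cong Y_{\Id}\sqcup Y_{\Id}$, whose $\HMR_\bullet(\,\cdot\,|\widetilde\Sigma)$ is $\bbF_2$ by the product computation. The K\"unneth factorization on the original side then gives $\HMR_\bullet(Y_\gamma|\widetilde\Sigma_{+,1})\otimes\HMR_\bullet(Y_{\gamma^{-1}}|\widetilde\Sigma_{-,1})\cong\bbF_2$, forcing each factor to be $\bbF_2$.
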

\begin{proof}
	Let $\gamma'$ be the inverse braid to $\gamma$.
	Let $Y = Y_{\gamma} \sqcup Y_{\gamma'}$ be the disjoint union of the two real double covers of $\gamma,\gamma'$.
	In particular, this is a union of a mapping torus and its inverse.
	Let $\Sigma_{-} = \Sigma_{-,1} \cup \Sigma_{-,2}$ be the preimage of $\Sigma_{\gamma'}$ in $Y_{\gamma'}$. 
	We label $\Sigma_{-,i}$, $i = 1,2$ so that the bifold $\widetilde Y$ from cutting and regluing as in the excision theorem result in the two real covers
	\[
		Y_{\Id} \sqcup Y_{\gamma \circ \gamma'}
	\]
	of the trivial $n$-braid $\Id$, and $\gamma \circ \gamma'$ which is isotopic to the trivial $n$-braid. 
	By Lemma~\ref{lem:HMR_adj_equality_swap}, $\HMR_{\bullet}(Y_{\gamma \circ \gamma'}|\widetilde\Sigma_{+,1}) \cong \HMR_{\bullet}(Y_{\Id}|\widetilde\Sigma_{-,1}) = \bbF_2$.
	By Theorem~\ref{thm:excision},  $\HMR_{\bullet}(Y_{\gamma}|\Sigma_{-,1}) \otimes \HMR_{\bullet}(Y_{\gamma'}|\Sigma_{+,1})$ is rank-$1$.
\end{proof}

\section{Examples}
\label{sec:example}
\begin{exmp}[Unknot]
\label{exmp:unknot}
Let $\U_1$ be the unknot and $p \in \U_1$ be a basepoint.
There are two $1$-sets for the unknot $\U_1$: either $\U_1$ is coloured by $\sfr$ or $\sfc$.
Let $s$  be the unique based $1$-set such that $\U_1$ is $\sfr$. As in \cite[Prop.~14.4]{ljk2022}, there is a unique \spinc structure $\uptau$ and we have:
\[
	\widetilde{\HMR}_*(\U_1,p,s)=\bbF_2,\quad 
	\widecheck{\HMR}_*(\U_1,s,\uptau) = \bbF_2[\upsilon^{-1},\upsilon]/\bbF_2[\upsilon], \quad
	\overline{\HMR}_*(\U_1,s,\uptau) = \bbF_2[\upsilon^{-1},\upsilon], \quad
	\widehat{\HMR}_*(\U_1,s,\uptau) = \bbF_2[\upsilon],
\] 
where $\deg \upsilon = -1$. 

Suppose $s'$ is the $1$-set where $\U_1$ is coloured $\sfc$.
The real double cover $\check Y$ is the disjoint union of $(S^3,\U_1)$ where $\upiota$ is the swapping involution.
The computation is essentially a bifold version of the computation of ordinary monopole Floer homology of $(S^3, \U_1)$.
Choose a positive scalar curvature (PSC) bifold metric on one copy of two $(S^3, \U_1)$ and pull it back to the second copy by $\upiota$.
There are two choices of bifold \spinc structures by Corollary~\ref{cor:bifold_spinc_S3} and the real structure is simply the identity map to the conjugate \spinc structure on the second copy.
By the usual Weitzenb\"{o}ck argument, there exists a unique reducible Seiberg-Witten solution $(B,0)$.
Let $\frakq$ be a small admissible perturbation such that spectrum of $D_{B,\frakq}$ is simple. 
The rest of the calculation is the same as \cite[Section~22.7]{KMbook2007}.
The critical points can be labelled by $\{\fraka_i: i \in \bbZ\}$ corresponding to eigenvectors of the perturbed operators.
For $i \ge 0$, the critical point $\fraka_i$ is boundary stable and otherwise boundary-unstable.
Moreover, $\gr([\fraka_i],[\fraka_{i-1}]) = 2$ unless $i=0$ where $\gr([\fraka_0],[\fraka_{-1}]) = 1$. For parity reasons, all differentials vanish, and 
\[
	\widecheck{\HMR}_*(\U_1,s',\uptau) = \bbF_2[U^{-1},U]/\bbF_2[U], \quad
	\overline{\HMR}_*(\U_1,s',\uptau) = \bbF_2[U^{-1},U], \quad
	\widehat{\HMR}_*(\U_1,s',\uptau) = \bbF_2[U],
\]
where $\deg U = -2$. 
Since $s'$ is not a based $1$-set, we have 
\[
	\widetilde{\HMR}_*(\U_1,p) = 	\widetilde{\HMR}_*(\U_1,p,s) = \bbF_2.
\]
\end{exmp}
\begin{exmp}[Unlinks]
	Let $\U_n$ be the $n$-component unlink, and choose a basepoint $p$.
	Suppose $s_k$ is a $1$-set containing $k$ $\sfr$-circles.
	The real double cover $\check Y$ has underlying $3$-manifold $\sharp_{k-1}(S^1 \times S^2)$ and can be equipped with an $\upiota$-invariant bifold metric having positive scalar curvature.
	Assume $(\fraks, \uptau)$ is the unique torsion real $\spinc$ structure.
	Note that a non-torsion $\spinc$ structure gives trivial Floer homology because of positive scalar curvature.
	The web Floer homology can be computed the same way as \cite[Proposition~14.3]{ljk2022}:
	\[
	\widehat{\HMR}_*(\U_n,s_k,\uptau) = \frac{\bbF_2[\upsilon_1,\dots,\upsilon_n]}{\upsilon_i^2=\upsilon_j^2}
	\cong \Lambda[x_1,\dots,x_{k-1}] \otimes \bbF_2[\upsilon].
	\]
	where $\deg(x_i)=-1$ and $\Lambda[x_1,\dots,x_k]$ is the exterior algebra over $\bbF_2$. The two other flavours are analogous.
	As for the framed version, suppose $s_k$ is a based $1$-set. 
	Then  $k \ge 1$ and the circle containing $p$ is coloured $\sfr$.
	We have
	\[
		\widetilde{\HMR}_*(\U_n,p,s_k) = \bbF_2^{2^{k-1} \times 2^{n-k}} = \bbF_2^{2^{n-1}},
	\]
	where $2^{k-1}$ corresponds to the rank of $\Lambda[x_1,\dots,x_{k-1}]$.
	Notice for a given $1$-set having $k$ $\sfr$-edges, there are $2^{n-k}$ choices of bifold \spinc structures corresponding to $2^{n-k}$  isotropy types of the $\sfc$-circles.
	Notice also the lifts of the $\sfc$-circles are null-homologous on the real covers.
	There are $\binom{n-1}{k-1}$ choices of $s_k$.
	The total rank is therefore
	\[
		\dim \widetilde{\HMR}_*(\U_n,p) = \sum_{k=1}^n \binom{n-1}{k-1} 2^{n-1} = 4^{n-1}.
	\]
	
	For comparison, the number of Tait colourings of $\U_n$ is $3^n$.
	The discrepancy between these  quantities is due to isotropy types.
	Given a based $1$-set $s$, we choose representatives of the $2^{n-k}$ real bifold \spinc structures that can be obtained by changing the isotropy types on the $\sfc$-circles.
	Let $T(s)$ be the set of representatives.
	Then
	\[
	\sum_{s \in \{\text{based 1-sets}\}}\sum_{\uptau \in T(s)}\dim \widetilde{\HMR}_*(\U_n,p,s,\uptau) = 
	 \sum_{k=1}^n \binom{n-1}{k-1} 2^{k-1}=3^{n-1}.
	\]
\end{exmp}
\begin{figure*}[h!]
	\centering
	\begin{subfigure}[t]{0.5\textwidth}
		\centering
		\includegraphics[height=1.2in]{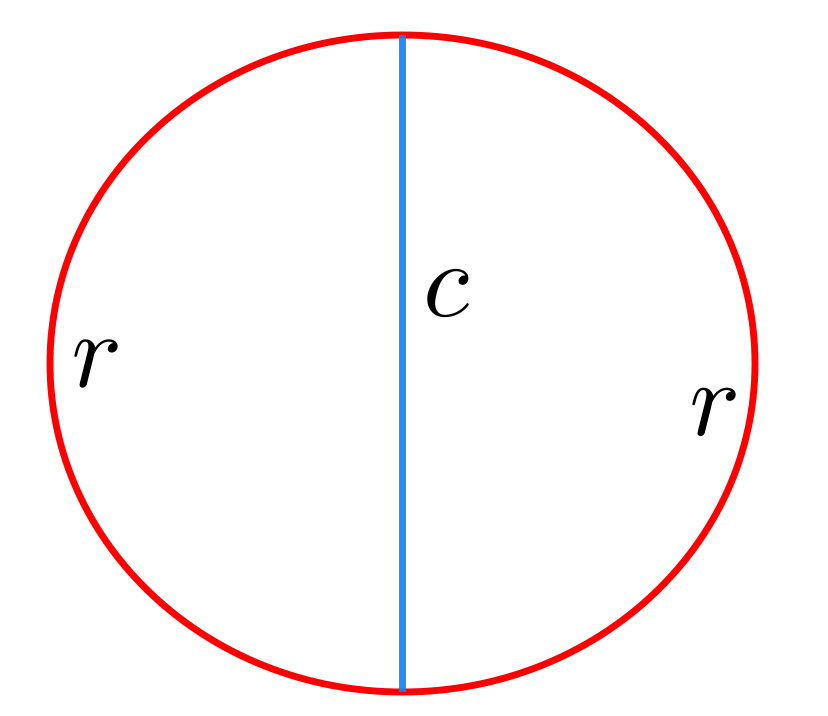}
		\caption{The theta graph}
		\label{fig:theta}
	\end{subfigure}%
	~ 
	\begin{subfigure}[t]{0.5\textwidth}
		\centering
		\includegraphics[height=1.2in]{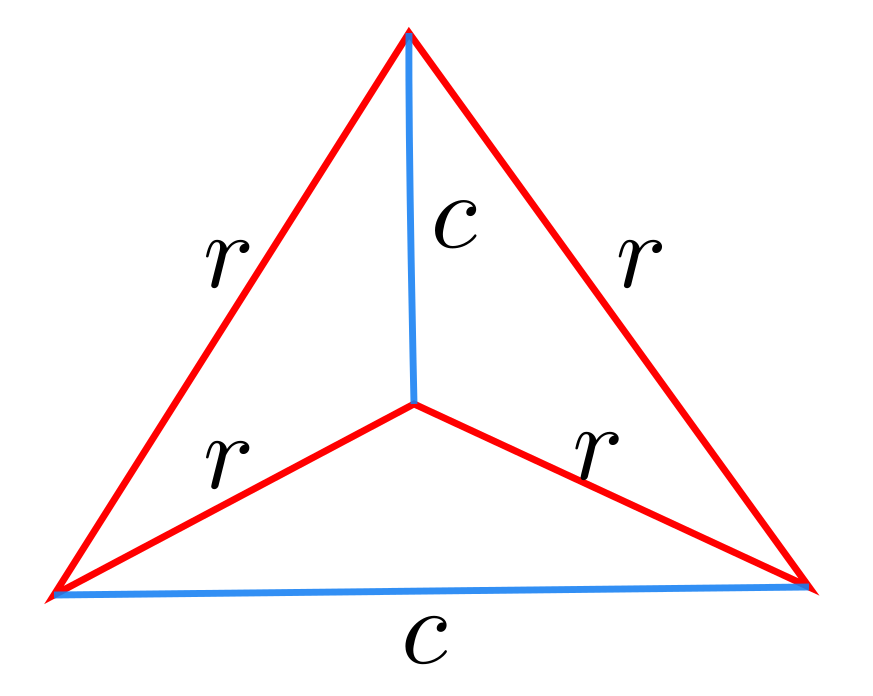}
		\caption{The tetrahedron graph.}
	\end{subfigure}
	\caption{Examples of $1$-sets.}
\end{figure*}
\begin{exmp}
	[Theta graph] 
	Let $\Theta$ be the theta graph and $p \in \Theta$ be a basepoint.
	The bifold $(S^3,\Theta)$ is the global quotient of  the $V_4$ on $(\reals^3 \cup \infty)$ as diagonal matrices in $\SO(3)$ whose entries are $\{\pm 1\}$.
	A $1$-set is equivalent to a choice of an edge, of which there are three possibilities (see Figure~\ref{fig:theta}).
	The three cases are isomorphic.
	Note $\Theta$ has 6 Tait colourings.
	
	The real double cover $\check Y$ is diffeomorphic to $(S^3, \U_1)$.
	(More concretely, $\tilde \Theta^{\sfr}$ can be thought as the $x_3$-axis in $\reals^3$, and $\U_1$ is the union of the positive $x_1$-axis and $x_2$-axis, together with $\infty$.)
	Equip $(S^3,\U_1)$ with an $\upiota$-invariant bifold metric of positive scalar curvature. 
	(It suffices to take the round metric using the model above.)
	There are two bifold \spinc structures over $(S^3,\U_1)$ corresponding to two isotropy data.
	For a given bifold \spinc structure $\fraks$, there is a unique real structure $\uptau$.
	
	Assume the spectrum of the Dirac operator is simple, and there are no irreducible critical points.
	The computation of $\HMR^{\circ}$ is the same as $\HMR^{\circ}(\Theta^{\sfr})$ without a bifold singularity.
	The chain complex for $\HMR^{\circ}(\Theta,s,\uptau)$ is generated by $\{\fraka_i\}$, such that $\gr([\fraka_i],[\fraka_{i-1}]) = 1$ and $\gr([\fraka_0],[\fraka_{-1}]) = 0$.
	Similar to~\cite[Proposition~14.4]{ljk2022},
\[
	\widecheck{\HMR}_*(\Theta,s,\uptau) = \bbF_2[\upsilon^{-1},\upsilon]/\bbF_2[\upsilon], \quad
	\overline{\HMR}_*(\Theta,s,\uptau) = \bbF_2[\upsilon^{-1},\upsilon], \quad
	\widehat{\HMR}_*(\Theta,s,\uptau) = \bbF_2[\upsilon],
\]
where $\deg \upsilon = -1$.
Hence $\widetilde{\HMR}_*(\Theta,p,s,\uptau)=\bbF_2$.
There is a unique based $1$-set  and two relevant real bifold \spinc structures. 
We conclude
\[
	\widetilde{\HMR}_*(\Theta,p) = \bbF_2 \oplus  \bbF_2 .
\]
\end{exmp}

\begin{exmp}
	[Theta graph $\sqcup$ based unknot] 
	Let $(\Theta',p')$  be the disjoint union of $\Theta$ with an unknot $\U_1$ with a basepoint $p' \in \U_1$.
	Let $s$ be a based $1$-set (there are $3$ in total).
	The real double cover $\check Y$ is diffeomorphic to $S^1 \times S^2$, whose bifold locus is a great circle in one of the $S^2$ slice.
	By the arguments above,
	\[
		\widecheck{\HMR}_*(\Theta', s) \cong \widecheck{\HMR}_*(\U_1  \sqcup \U_1, s) 
		\cong \frac{\bbF_2[x]}{x^2} \otimes \frac{\bbF_2[\upsilon^{-1},\upsilon]}{\bbF_2[\upsilon]}.
	\]
	Due to the three choices of based $1$-sets and two choices of isotropy types  for each $1$-set, $\widetilde{\HMR}_*(\Theta', p') \cong \bbF_2^{12}$.
	Finally, if for each based $1$-set we choose only one real bifold \spinc structure, then the resulting direct sum of web homology groups has rank $6$, agreeing with the number of Tait colourings.
\end{exmp}
\begin{exmp}
	[Tetrahedron] 
	Let $T$ be the $1$-skeleton of the tetrahedron.
	There are three isomorphic $1$-sets, related by a rotation symmetry of the plane.
	Given such a $1$-set $s$, the real cover $\check Y$ is $(S^3,\U_2)$, equipped with an involution which fixes an unknot that meets each component of $\U_2$ at one point.
There again exists an invariant PSC metric and 
\[
	\widehat{\HMR}_*(T,s) = \bbF_2[\upsilon]^{\oplus 2},
\]
where each bifold isotropy type contributes one $\bbF_2[\upsilon]$-tower.
\hfill$\diamond$
\end{exmp}
Let the \emph{$n$-prism} $L_n \subset \reals^3$ be the planar web with consisting of two cocentric $n$-gons and $n$ straight line segments connecting the inner and outer vertices.
Alternatively, $L_n$ is the web $K_{\Id_n}$ in Section~\ref{subsec:mapping_tori} when $\Id_n$ is the trivial $n$-braid.
The set $s_n = s_{\Id_n}$ of $n$ line segments is a $1$-set (coloured blue in Figure~\ref{fig:sampleL2}-\ref{fig:sampleL5}).
The corresponding real cover $\check Y_n$ is $S^1 \times S^2_n$, where $S^2_n$ is the $2$-sphere with $n$ bifold points.
The covering involution $\upiota:\check Y_n \to \check Y_n$ exchanges the two hemispheres bounded by the the great circle, and acts by $-1$ on $S^1 \cong \reals/\bbZ$.
Let us consider the prism webs from $n = 1$ to $5$.
\begin{exmp}[$n = 1$]
\label{exmp:L1}
This example contains an embedded bridge.
Theorem~\ref{thm:vanish_bridge} implies
	\[
	\HMR_*(L_1) = 0.
	\]
\end{exmp}
\begin{exmp}[$n = 2$]
There are three $1$-sets.
\begin{figure}[h!]
	\centering
	\includegraphics[height=0.13\textheight]{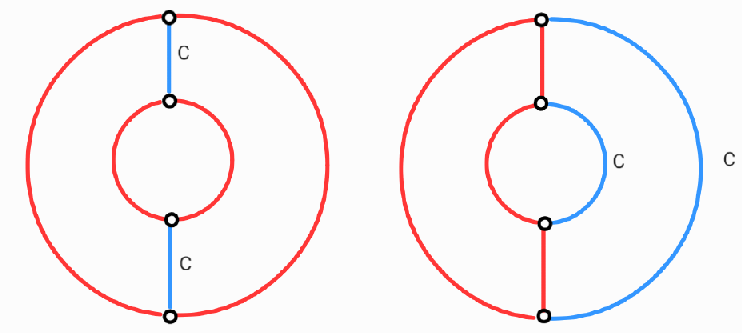}
	\caption{Sample $1$-sets of $L_2$.}
	\label{fig:sampleL2}
\end{figure}
For the $1$-set $s_2$, it suffices to consider the reducible solutions.
There is a unique torsion real orbifold spin\textsuperscript{c} structure on $\check Y_2$.
There is an $S^1$-family of flat $\U(1)$-connections with holonomy $-1$ around each of the singular strand, preserved by $\upiota$. 
The flat connections are parametrized by the holonomy along the $S^1$ factor, and arguing with the $\upiota$-invariant PSC metric as in \cite[Section~22.7]{KMbook2007},
\[
	\widehat{\HMR}_{*}(L_2,s_2) = \bbF_2[\upsilon] \oplus \bbF_2[\upsilon]\langle -1 \rangle.
\]
There are two more $1$-sets, both topologically the same as an unknot with two arcs attached, as in Figure~\ref{fig:sampleL2}.
Choose one such $1$-set $s'$.
The real cover is $S^3$ singular along the 2-component unlink $\U_2$.
Since there exists a PSC bifold metric on $(S^3,\U_2)$,
\[
	\widehat{\HMR}_*(L_2,s') = \bbF_2[\upsilon]^{\oplus 2},
\]
where each $\bbF[\upsilon]$-tower comes from a choice of isotropy of bifold \spinc structure.
\end{exmp}
\begin{exmp}[$n = 3$]
	\begin{figure}[h!]
		\centering
		\includegraphics[height=0.13\textheight]{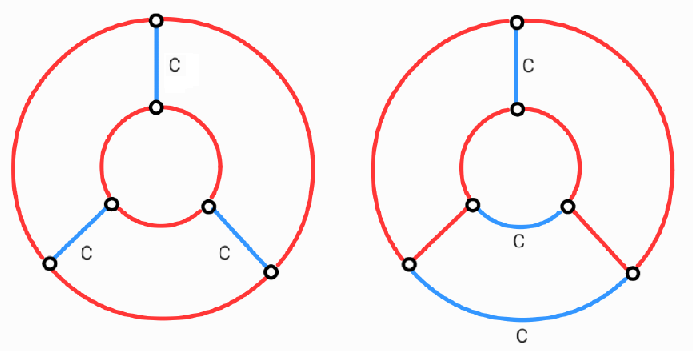}
		\caption{Sample $1$-sets of $L_3$.}
		\label{fig:sampleL3}
	\end{figure}
	For the $1$-set $s_3$, Corollary~\ref{cor:vanish_3bridges} gives
	\[
	\HMR_*(\check Y_3,s_3) = 0.
	\]
	There are three $1$-sets that are topologically a single real circle with $3$ $\sfc$-arcs attached, as in Figure~\ref{fig:sampleL3}.
	Each of these $1$-sets $s'$ contributes two $\bbF[\upsilon]$-towers. We obtain
	\[
		\widehat{\HMR}_*(L_3) = \bbF_2[\upsilon]^{\oplus 6}.
	\]
Consider $L_3 \sqcup \U_1$ with a basepoint $p' \in \U_1$. 
Every $1$-set of $L_3$ corresponds to a based $1$-set of $L_3 \sqcup \U_1$.
We have $	\widehat{\HMR}(L_3 \sqcup \U_1,s') = \bbF_2[x]/(x^2) \otimes \bbF_2[\upsilon]^{\oplus 6}$.
There are $6$ Tait colourings of $L_3$, and 
\[
		\widetilde{\HMR}_*(L_3 \sqcup \U_1, p') = \bbF_2^{\oplus 12}.
	\]
\end{exmp}
\begin{exmp}[$n = 4$]
		\begin{figure}[h!]
		\centering
		\includegraphics[height=0.13\textheight]{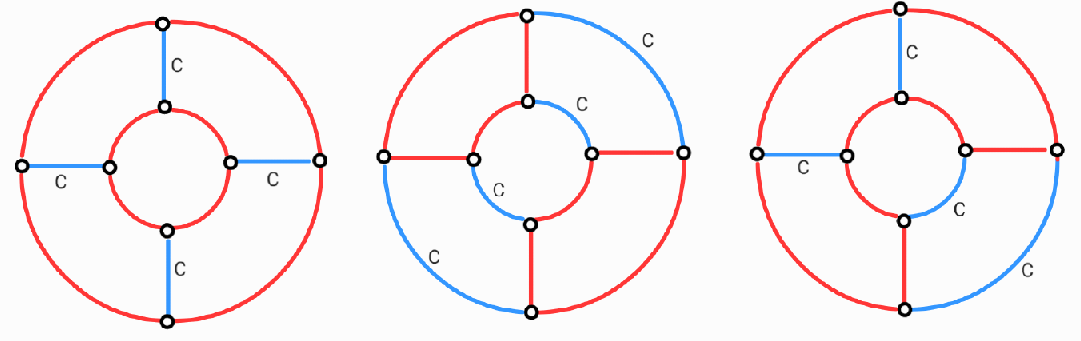}
		\caption{Sample $1$-sets of $L_4$.}
		\label{fig:sampleL4}
	\end{figure}
	The $1$-set $s_4$ is a borderline case.
	Since $S^2_4$ admits a flat metric, the unperturbed Seiberg-Witten equation admits only reducible solutions.
Consider the unique torsion \spinc structure. 
The orbifold flat connections is parametrized by $S^1$ whenever $n$ is even.
The situation is similar to the three-torus in \cite[Section~37]{KMbook2007}.
Our case is in fact simpler as the bifold flat connections have nontrivial holonomy around the $4$ points and the
twisted Dirac operator has trivial kernel.
We again have
\[
\widehat{\HMR}_{*}(L_4, s_4)= \mathbb F_2[\upsilon] \oplus \mathbb F_2[\upsilon]\langle -1\rangle.
\]
There are two more $1$-sets isomorphic to $s_4$, and four $1$-sets isomorphic to a real circle with $4$ $\sfc$-arcs attached as in Figure~\ref{fig:sampleL4}.
Hence
\[
	\widehat{\HMR}_*(L_4) = \left(\mathbb F_2[\upsilon] \oplus \mathbb F_2[\upsilon]\langle -1\rangle\right)^{\oplus 3} 
	\oplus \bbF_2[\upsilon]^{\oplus 4 \times 2}.
\]
\end{exmp}
\begin{exmp}[$n = 5$]
\label{exmp:L5}
	\begin{figure}[h!]
	\centering
	\includegraphics[height=0.13\textheight]{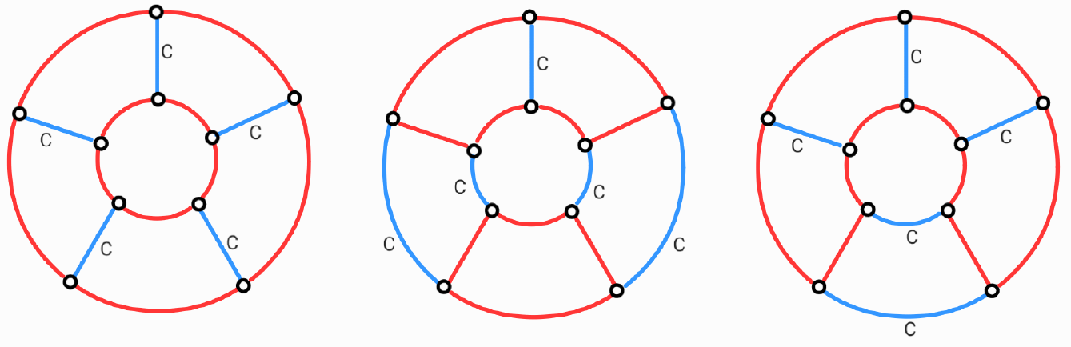}
	\caption{Sample $1$-sets of $L_5$.}
	\label{fig:sampleL5}
\end{figure}
	For $n \ge 5$,  the bifold $S^2_n$ admits a metric of negative curvature.
By Proposition~\ref{prop:excision_app}, 
	\[
	\HMR_*(L_5,s_5) = \mathbb F_2 \oplus \mathbb F_2,
	\]
where each $\bbF$ summand arises from an irreducible generator.
Figure~\ref{fig:sampleL5} illustrates $s_5$ and two other kinds of $1$-sets for $L_5$.
There exist $5$ more $1$-sets isomorphic to a $\sfr$-circle with 5 $\sfc$-arcs.
The last family of $1$-sets are those isomorphic to two real circles with 4 $\sfc$-arcs between, where one of the circles has another $\sfc$-arc attached.
In other words, the complement of this arc is isomorphic to $s_4$.
\end{exmp}

\begin{exmp}
	[Handcuff]
	Let the handcuff be a 2-component unlink joined by a standard arc.
	This is in fact the prism $L_1$ above, and
	\[
		\HMR^{\circ}_*(\text{Handcuff}) = 0.
	\]
\end{exmp}
\begin{figure}[h!]
	\centering
	\includegraphics[width=0.4\linewidth]{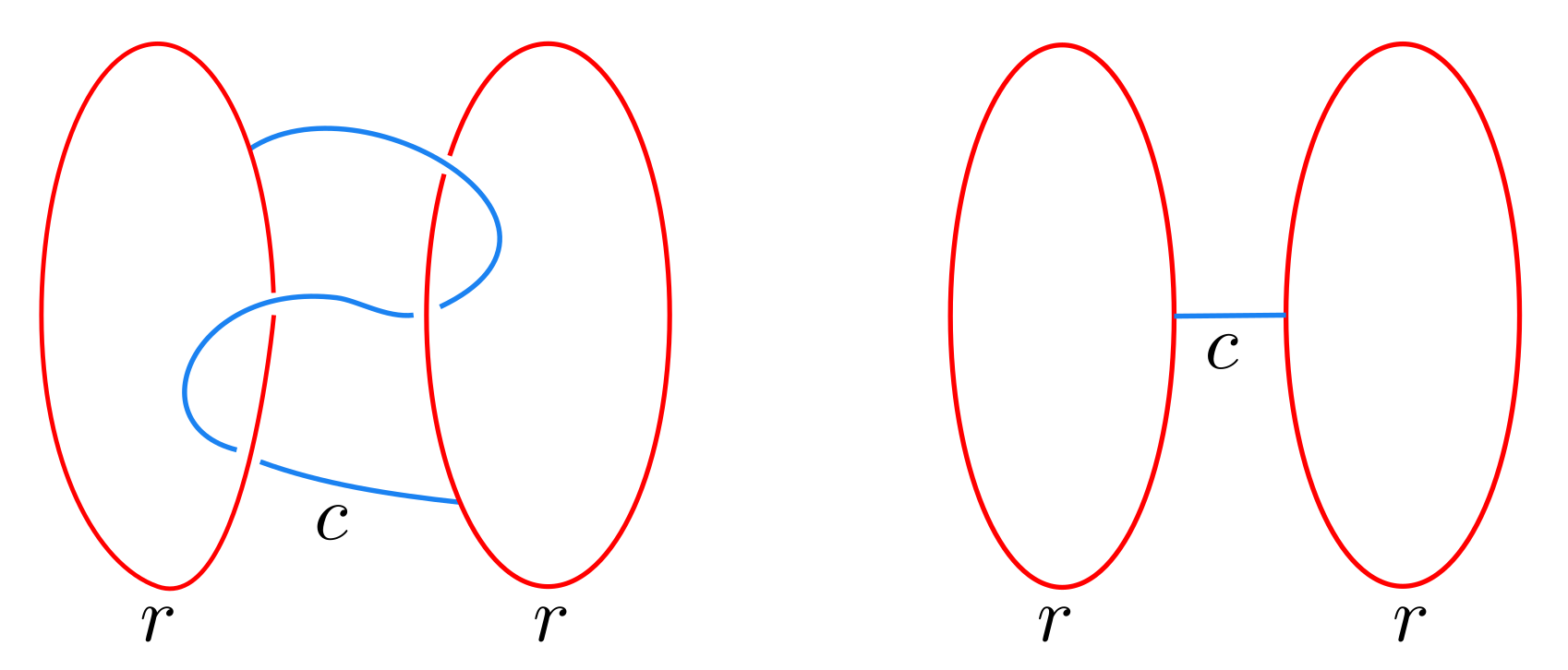}
	\caption{The handcuff and a twisted handcuff.}
	\label{fig:handcuffs}
\end{figure}
The examples so far are planar. 
Let us consider some genuinely spatial examples.
\begin{exmp}
	[Twisted handcuff]
	As in Figure~\ref{fig:handcuffs}, a twisted handcuff is the union of a $2$-component unlink and an arc joining the components in a way that the arc intersects the separating $2$-sphere of the components $3$ times.
	By Corollary~\ref{cor:vanish_3bridges},
	\[
		\HMR_*^{\circ}(\text{Twisted handcuff}) = 0.
	\]
\end{exmp}

\begin{figure}[h!]
	\centering
	\includegraphics[width=0.2\linewidth]{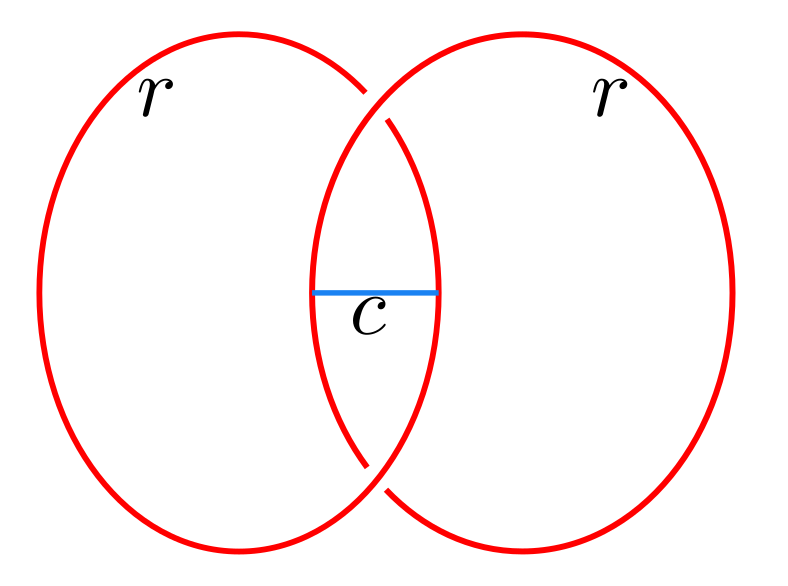}
	\caption{The Hopf handcuff.}
	\label{fig:hopfhandcuff}
\end{figure}
\begin{exmp}
	[Hopf handcuff]
Let $H$ be the web obtained by joining the two components of the Hopf link via a small arc.
The real double cover is $\mathbb{RP}^3$, and an $\upiota$-invariant PSC bifold metric can be obtained by pulling back a PSC bifold metric on $S^3$ via the covering $\mathbb{RP}^3 \to S^3$, singular along an unknot.
The lift $\tilde H^{\sfc}$ is the generator of the fundamental group of $\mathbb{RP}^3$. 
There are $4$ torsion real bifold \spinc structures, each admitting a unique reducible generator, so
\[
\widehat{\HMR}_*(H) = \bbF_2[\upsilon]^{\oplus 4}.
\]
\end{exmp}
	\begin{figure}[h!]
	\centering
	\includegraphics[width=0.27\linewidth]{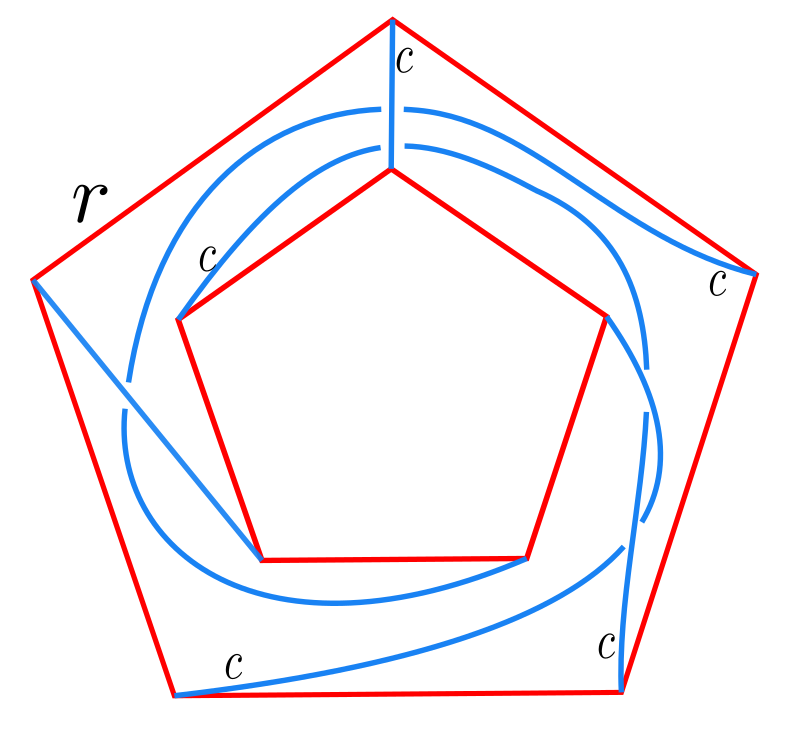}
	\caption{An emebedding of the Petersen graph.}
	\label{fig:peterson}
\end{figure}
\begin{exmp}[Petersen]
Let $P$ be the Petersen graph embedded in $S^3$ as in Figure~\ref{fig:peterson}.
This is a special case of the construction in Section~\ref{subsec:mapping_tori}.
So by the adjunction inequality and Proposition~\ref{prop:excision_app}, 
\[
\HMR_*(P,s_{\gamma}) = \bbF_2 \oplus \bbF_2,
\] 
where $s_{\gamma}$ is the $1$-set consisting of the five arcs connecting the two pentagons.
Note that $P$ is not $3$-colourable and non-planar.
\end{exmp}

\section{Remarks}
\label{sec:remarks}
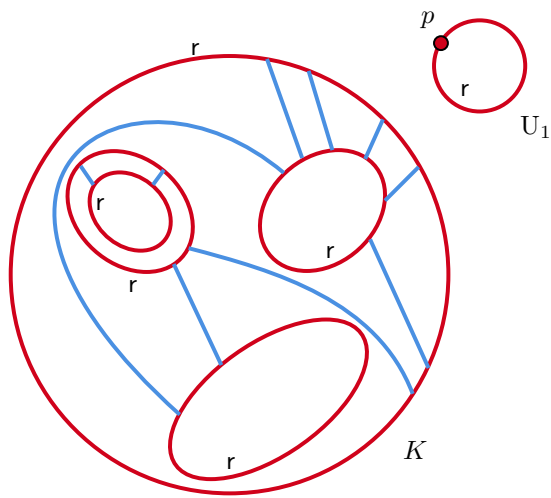
\begin{figure}[htp]
\centering
\tikzset{every picture/.style={line width=0.75pt}} 
\begin{tikzpicture}[x=0.75pt,y=0.75pt,yscale=-1,xscale=1]
	\draw  [color={rgb, 255:red, 208; green, 2; blue, 27 }  ,draw opacity=1 ][line width=1.5]  (43.5,141.88) .. controls (43.5,80.92) and (92.92,31.5) .. (153.88,31.5) .. controls (214.83,31.5) and (264.25,80.92) .. (264.25,141.88) .. controls (264.25,202.83) and (214.83,252.25) .. (153.88,252.25) .. controls (92.92,252.25) and (43.5,202.83) .. (43.5,141.88) -- cycle ;
	\draw  [color={rgb, 255:red, 208; green, 2; blue, 27 }  ,draw opacity=1 ][line width=1.5]  (73.26,110) .. controls (68.72,93.16) and (78.7,79.5) .. (95.54,79.5) .. controls (112.39,79.5) and (129.72,93.16) .. (134.26,110) .. controls (138.8,126.84) and (128.82,140.5) .. (111.98,140.5) .. controls (95.13,140.5) and (77.8,126.84) .. (73.26,110) -- cycle ;
	\draw  [color={rgb, 255:red, 208; green, 2; blue, 27 }  ,draw opacity=1 ][line width=1.5]  (170.26,109.5) .. controls (174.68,92.66) and (191.92,79) .. (208.77,79) .. controls (225.61,79) and (235.68,92.66) .. (231.26,109.5) .. controls (226.84,126.34) and (209.6,140) .. (192.75,140) .. controls (175.91,140) and (165.83,126.34) .. (170.26,109.5) -- cycle ;
	\draw  [color={rgb, 255:red, 208; green, 2; blue, 27 }  ,draw opacity=1 ][line width=1.5]  (133.22,204.78) .. controls (149.25,182.54) and (180.29,164.5) .. (202.54,164.5) .. controls (224.79,164.5) and (229.83,182.54) .. (213.79,204.78) .. controls (197.76,227.03) and (166.72,245.07) .. (144.47,245.07) .. controls (122.22,245.07) and (117.18,227.03) .. (133.22,204.78) -- cycle ;
	\draw  [color={rgb, 255:red, 208; green, 2; blue, 27 }  ,draw opacity=1 ][line width=1.5]  (84.01,110) .. controls (81.07,99.09) and (87.53,90.25) .. (98.44,90.25) .. controls (109.35,90.25) and (120.57,99.09) .. (123.51,110) .. controls (126.45,120.91) and (119.99,129.75) .. (109.08,129.75) .. controls (98.17,129.75) and (86.95,120.91) .. (84.01,110) -- cycle ;
	\draw [color={rgb, 255:red, 74; green, 144; blue, 226 }  ,draw opacity=1 ][line width=1.5]    (224.75,124) -- (254.25,189) ;
	\draw [color={rgb, 255:red, 74; green, 144; blue, 226 }  ,draw opacity=1 ][line width=1.5]    (125.75,136.5) -- (149.75,187.5) ;
	\draw [color={rgb, 255:red, 74; green, 144; blue, 226 }  ,draw opacity=1 ][line width=1.5]    (78.75,87) -- (85.25,96.5) ;
	\draw [color={rgb, 255:red, 74; green, 144; blue, 226 }  ,draw opacity=1 ][line width=1.5]    (128.75,212.5) .. controls (-5.25,92) and (103.75,25) .. (181.25,90.5) ;
	\draw [color={rgb, 255:red, 74; green, 144; blue, 226 }  ,draw opacity=1 ][line width=1.5]    (133.25,128.5) .. controls (177.5,140) and (228.25,155) .. (246.25,202) ;
	\draw [color={rgb, 255:red, 74; green, 144; blue, 226 }  ,draw opacity=1 ][line width=1.5]    (115.25,96.5) -- (120.75,89) ;
	\draw [color={rgb, 255:red, 74; green, 144; blue, 226 }  ,draw opacity=1 ][line width=1.5]    (172.75,33) -- (190.5,83) ;
	\draw [color={rgb, 255:red, 74; green, 144; blue, 226 }  ,draw opacity=1 ][line width=1.5]    (193.75,39) -- (205.75,79) ;
	\draw [color={rgb, 255:red, 74; green, 144; blue, 226 }  ,draw opacity=1 ][line width=1.5]    (231.25,63.5) -- (222.25,83.5) ;
	\draw [color={rgb, 255:red, 74; green, 144; blue, 226 }  ,draw opacity=1 ][line width=1.5]    (249.25,87.5) -- (232.25,104.5) ;
	\draw  [color={rgb, 255:red, 208; green, 2; blue, 27 }  ,draw opacity=1 ][line width=1.5]  (257,36.5) .. controls (257,23.8) and (267.3,13.5) .. (280,13.5) .. controls (292.7,13.5) and (303,23.8) .. (303,36.5) .. controls (303,49.2) and (292.7,59.5) .. (280,59.5) .. controls (267.3,59.5) and (257,49.2) .. (257,36.5) -- cycle ;
	\draw  [fill={rgb, 255:red, 208; green, 2; blue, 27 }  ,fill opacity=1 ] (257,25) .. controls (257,23.07) and (258.57,21.5) .. (260.5,21.5) .. controls (262.43,21.5) and (264,23.07) .. (264,25) .. controls (264,26.93) and (262.43,28.5) .. (260.5,28.5) .. controls (258.57,28.5) and (257,26.93) .. (257,25) -- cycle ;
	
	\draw (101.25,143.4) node [anchor=north west][inner sep=0.75pt]    {$\sfr$};
	\draw (84.75,101.9) node [anchor=north west][inner sep=0.75pt]    {$\sfr$};
	\draw (200.75,125.9) node [anchor=north west][inner sep=0.75pt]    {$\sfr$};
	\draw (150.75,232.4) node [anchor=north west][inner sep=0.75pt]    {$\sfr$};
	\draw (132.75,23.4) node [anchor=north west][inner sep=0.75pt]    {$\sfr$};
	\draw (240,224.4) node [anchor=north west][inner sep=0.75pt]    {$K$};
	\draw (299,60.4) node [anchor=north west][inner sep=0.75pt]    {$\text{U}_{1}$};
	\draw (249,7.4) node [anchor=north west][inner sep=0.75pt]    {$p$};
	\draw (268.75,44.4) node [anchor=north west][inner sep=0.75pt]    {$\sfr$};
\end{tikzpicture}
\caption{\label{fig:planar_web}
An even $1$-set of $K$ with $\sfr$-cycles labelled.
The top-right corner is an unknot $\U_1$ based at $p$.
}

\end{figure}
\subsection{Tait colourings}
A \emph{Tait colouring} of $K$ is a function from $\Edg(K)$ to a 3-element set of ``colours'' such that the colours of the three edges adjacent to any vertex are distinct.
It is known~\cite{Tait1884} that the existence of Tait colourings for planar trivalent graphs is equivalent to the four-colouring theorem.
Let $\Tait(K)$ he the number of Tait colourings of $K$.

Recall that a \emph{$1$-set} for a web $K$ is a colouring of $\Edg(K)$ by a 2-element set $\{\sfr, \sfc\}$ such that around each vertex there are exactly two edges coloured by $\sfr$ and one edge coloured by $\sfc$.
A $1$-set $s$ is \emph{even} if the homology class $[K^{\sfc}]$ is zero in $H_1(\reals^3,K^{\sfr};\bbF_2)$.
Equivalently, every $\sfr$-cycle has an even number of endpoints of $\sfc$-arcs.
Given a $1$-set $s$, denote by $n(s)$ the number of cycles in the complementary $2$-set.
We have the following fact about number of Tait colourings and $1$-sets:
\begin{lem}
	\[
	\pushQED{\qed} 
	\Tait(K) = \sum_{s \in \{\text{even $1$-sets}\}} 2^{n(s)}. \qedhere
	\pushQED{\qed} 
	\]
\end{lem}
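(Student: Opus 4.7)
The strategy is a direct counting bijection between Tait colourings of $K$ and the pairs consisting of an even $1$-set together with a proper $2$-colouring of its real locus. Fix colour names $\{0,1,2\}$. Given any Tait colouring $\tau: \Edg(K) \to \{0,1,2\}$, define a $1$-set $s(\tau)$ by declaring $\sfc = \tau^{-1}(0)$ and $\sfr = \tau^{-1}(\{1,2\})$; the Tait condition (three distinct colours at each vertex) exactly guarantees that each vertex has one $\sfc$-edge and two $\sfr$-edges. Conversely, a Tait colouring is uniquely determined by the pair $(s(\tau), \tau|_{K^{\sfr}})$, where $\tau|_{K^{\sfr}}$ is a proper $2$-colouring of the $1$-manifold $K^{\sfr}$ (i.e., the two $\sfr$-edges at each vertex must receive different labels in $\{1,2\}$).

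Next I would identify which $1$-sets actually appear as $s(\tau)$. Recall that $K^{\sfr}$ is a disjoint union of cycles, and a union of cycles admits a proper $2$-edge-colouring if and only if every component has an even number of edges. Thus I need to check: a $1$-set $s$ is even (in the sense of the definition preceding the lemma) if and only if every component of $K^{\sfr}$ contains an even number of edges. For any cycle $C \subset K^{\sfr}$, the number of edges of $C$ equals the number of vertices of $K$ lying on $C$, because $C$ is a simple closed curve made of $\sfr$-edges joined at trivalent vertices of $K$. At each such vertex, the unique $\sfc$-edge contributes exactly one $\sfc$-arc endpoint to $C$. Hence the number of edges of $C$ equals the number of $\sfc$-arc endpoints on $C$, and the two parity conditions coincide — this matches the characterization given in the lemma in terms of $[K^{\sfc}]$ vanishing in $H_1(\reals^3, K^{\sfr};\bbF_2)$.

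Finally, for an even $1$-set $s$, the number of proper $2$-colourings of $K^{\sfr}$ is $2^{n(s)}$, since each of the $n(s)$ even cycles independently admits exactly two proper $2$-colourings by $\{1,2\}$. Summing over all even $1$-sets gives
\[
\Tait(K) \;=\; \sum_{s \in \{\text{even $1$-sets}\}} 2^{n(s)},
\]
as required. There is no real obstacle here; the only point requiring care is the parity equivalence in the second paragraph, which is a purely combinatorial observation that follows from trivalence of $K$ at its vertices.
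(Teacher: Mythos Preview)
The paper states this lemma without proof (it is marked with a \qed\ and treated as a known combinatorial fact), so there is no argument to compare against. Your bijection between Tait colourings and pairs (even $1$-set, proper $2$-colouring of $K^{\sfr}$) is correct and is the natural proof.

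One small point to tighten: your claim that ``the number of edges of $C$ equals the number of vertices of $K$ lying on $C$'' presumes $C$ contains at least one vertex of $K$. A component of $K^{\sfr}$ can be a vertex-free circle (a knot component of $K$ coloured $\sfr$), which has one edge and zero vertices, so your parity criterion ``even number of edges'' does not literally apply there. This does not affect the result: such a circle has zero $\sfc$-endpoints (hence is automatically even) and, having no vertices, imposes no adjacency constraint, so it always admits exactly two $2$-colourings and contributes a factor of $2$ on both sides. With that case handled separately, the argument is complete.
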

\begin{lem}
\label{lem:red_even}
	Let $K$ be a planar web.
	Then $\HMR^{\circ}_{*}(K,s)$ admits a reducible generator if and only if $s$ is even.
	Moreover, let $s$ be an even $1$-set and $Y = \dbcv_2(S^3, K^{\sfr})$ be the real cover of $(K,s)$.
	Then $b^1(Y) = n(s)-1$.
\end{lem}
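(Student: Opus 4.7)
I first handle the Betti number statement. Because $s$ is a $1$-set, each vertex of $K$ has exactly two incident $\sfr$-edges, so $K^{\sfr}$ is a $2$-valent subgraph---a disjoint union of $n(s)$ embedded cycles. Planarity of $K$ puts these cycles in a plane inside $\bbR^3 \subset S^3$, so they form an unlink. It is well known that the double branched cover of $S^3$ along an $m$-component unlink is diffeomorphic to $\#^{m-1}(S^1 \times S^2)$; assuming $n(s) \ge 1$ so that $Y$ is connected, this yields $b^1(Y) = n(s) - 1$.

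Turning to the reducible-vs-even equivalence, a reducible Floer generator arises from a real bifold \spinc structure $(\mathfrak{s}, \uptau)$ on $(\check Y, \upiota)$ with $c_1(\mathfrak{s}) \in H^2(\check Y; \bbR)$ torsion, so that some $\upiota$-invariant flat \spinc connection exists. Using the description of bifold line bundles in Section~\ref{subsec:cohom_line_bundles} and the observation that the determinant of any bifold spinor bundle has constant nontrivial isotropy along every component of $\tilde K^{\sfc}$, one computes
\[
c_1(\mathfrak{s}) = c_1^{\mathrm{sm}} + \tfrac{1}{2}\sum_i \operatorname{PD}[\tilde K_i^{\sfc}] \in H^2(Y; \mathbb{Q}),
\]
where twisting $\mathfrak{s}$ by a bifold line bundle $L$ with isotropy $\delta \in \{0,1\}^N$ shifts $c_1^{\mathrm{sm}}$ by $2c_1^{\mathrm{sm}}(L)$ and the isotropy contribution by $\sum \delta_i \operatorname{PD}[\tilde K_i^{\sfc}]$. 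Reducing mod $2$ eliminates the $\delta$-dependence, and after absorbing the even smooth shifts we find that $0$ lies in the image of $c_1$ on real bifold \spinc structures iff
\[
\sum_i [\tilde K_i^{\sfc}] = 0 \in H_1(Y; \bbF_2).
\]
The $\upiota$-equivariance constraint from Proposition~\ref{prop:line_admit_real} is automatic since the total sum $\sum_i [\tilde K_i^{\sfc}]$ is $\upiota$-invariant.

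The last step is to match this homological condition on $\tilde K^{\sfc} \subset Y$ with the evenness of $s$. Each $\sfc$-edge $e$ lifts to a single circle $\tilde K_e^{\sfc}$ in $Y$, formed by joining its two arc-lifts at the branch points above the endpoints of $e$. For $K^{\sfr}$ an $n(s)$-component unlink, $H_1(Y; \bbF_2) \cong \bbF_2^{n(s)-1}$ admits a basis $\{e_j\}_{j=2}^{n(s)}$, where $e_j$ is represented by the lift of a straight arc from a distinguished cycle $C_1$ to $C_j$. In this basis, by explicit arc concatenation (or a Mayer--Vietoris calculation on the cover), $[\tilde K_e^{\sfc}] = e_{j(e)} + e_{k(e)}$ when $e$ connects $C_{j(e)}$ and $C_{k(e)}$, with the convention $e_1 := 0$. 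Hence
\[
\sum_e [\tilde K_e^{\sfc}] = \sum_{j \ne 1} a_j\, e_j,
\]
where $a_j$ is the number of $\sfc$-edge endpoints on $C_j$. This sum vanishes iff $a_j$ is even for all $j \ne 1$; combined with the tautological parity identity $\sum_j a_j = 2|\{\sfc\text{-edges}\}| \equiv 0 \pmod 2$, this is equivalent to every $a_j$ being even, i.e.\ to $s$ being even.

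The main obstacle is justifying the two concrete formulas: the decomposition of $c_1(\mathfrak{s})$ into smooth and isotropy parts with the correct shift behavior under tensor products, and the identification $[\tilde K_e^{\sfc}] = e_{j(e)} + e_{k(e)}$ for lifted arcs in the branched cover of an unlink. The former requires careful bookkeeping of fractional Chern classes in bifold cohomology (using the local structure of the $E_i$'s from Section~\ref{subsec:cohom_line_bundles}), while the latter is a geometric computation that interacts cleanly with the planar structure of $K$.
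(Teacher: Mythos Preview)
Your argument is correct and follows the same two-step route as the paper: identify $Y\cong \#^{n(s)-1}(S^1\times S^2)$ from the fact that $K^{\sfr}$ is a planar unlink, then reduce the existence of a torsion real bifold \spinc structure to the condition $[\tilde K^{\sfc}]=0$ in $H_1(Y;\bbF_2)$ and match this with evenness. The paper's own proof is extremely terse---it simply asserts both equivalences without justification---so your Chern-class decomposition $c_1(\mathfrak s)=c_1^{\mathrm{sm}}+\tfrac12\sum_i \mathrm{PD}[\tilde K_i^{\sfc}]$ and your explicit basis computation in $H_1(Y;\bbF_2)$ are genuine expansions rather than a different method.

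One small omission: your lift formula $[\tilde K_e^{\sfc}]=e_{j(e)}+e_{k(e)}$ presumes each $\sfc$-edge is an arc with both endpoints on $K^{\sfr}$. A web may have vertex-free circle components coloured $\sfc$; such a circle contributes nothing to either side of the equivalence (it has no endpoints on any $C_j$, and since it lies in the plane it bounds a disc in $S^3$ that can be pushed off $K^{\sfr}$, so both lifts are null-homologous in $Y$). Mention this case and your argument is complete.
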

\begin{proof}
	Observe that $K$ consists of a union of unknotted cycles $K^{\sfr}$ and arcs $K^{\sfc}$ (cf. Figure~\ref{fig:planar_web}).
	Hence the real cover $Y$ has underlying $3$-manifold $\#_{n-1}(S^1 \times S^2)$ having $b^1=n(s)-1$.	
Furthermore, $s$ is even if and only if the lift $\tilde K^{\sfc}$ in $Y $ is zero in $H_1(Y;\bbF_2)$.
	The bifold $(\check Y, \tilde K^{\sfc})$ admits a torsion \spinc structure if and only if $\tilde K^{\sfc}$ is null-homologous mod 2.
\end{proof}
Adding an unknot $\U_1$ has the effect of increasing $b^1$ of the real cover by $1$.
We have $H^*(\dbcv_2(S^3, K^{\sfr} \sqcup \U_1)) \cong \Lambda[x_1,\dots,x_n]$, which has rank $2^{n(s)}$.  
In the case when $K$ is the n-component unlink, theta graph, or tetrahedral graph, we have
\begin{equation}
	\label{eq:M_lowerbound}
	\dim \widetilde{\HMR}_*(K\sqcup \U_1, p) \ge \Tait(K).
\end{equation}
We expect this lower bound of $\dim \widetilde{\HMR}_*(K \sqcup \U_1)$ to hold for planar web $K$ and $p \in \U_1$.
This would follow from an exact triangle in $\widetilde{\HMR}_*$ (cf. Section~\ref{sec:exact_triangle}).
It is known that $\dim J^{\sharp}(K) \ge \Tait(K)$~\cite{KMdefweb2019}.

The bound~\eqref{eq:M_lowerbound} is by no means sharp even in the case of $n$-component unlinks.
There is already a discrepancy between the rank of $\widetilde{\HMR}(K)$ and $\Tait(K)$ due to the choices of bifold \spinc structures.
By restricting to a suitable subset of real bifold \spinc structures, we expect a version of $\HMR$ whose rank is equal to the Tait colourings of the planar unlinks, and at least $\Tait(K)$ in general.

The Seiberg-Witten version of Kronheimer and Mrowka's program is to construct a monopole Floer homology whose rank agrees with $\Tait(K)$ and has non-vanishing properties for planar bridgeless webs.
We will investigate further non-vanishing properties of $\HMR$.

We make an observation about $\widetilde{\hmr}_{*}$ ~\cite{ljk2024SSKh}:
\begin{prop}
	\label{prop:hmrKr}
	Let $K$ be a planar web and $s$ be an even $1$-set.
	Then $\dim \widetilde{\hmr}_*(K^{\sfr} \sqcup \U_1)=2^{n(s)}.$
\end{prop}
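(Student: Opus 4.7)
The plan is to reduce the proposition to a computation on a standard unlink. First, I would observe that $K^{\sfr} \sqcup \U_1$ is topologically an unlink. Since every vertex of $K$ meets exactly two $\sfr$-edges, the real locus $K^{\sfr}$ is a closed $1$-manifold, whose number of components is precisely $n(s)$ by definition. Planarity of $K$ means we may take $K \subset \reals^2 \subset \reals^3 \subset S^3$, so $K^{\sfr}$ is a disjoint collection of Jordan curves in the plane. Any such configuration of planar Jordan curves is isotopic in $S^3$ to the standard $n(s)$-component unlink $\U_{n(s)}$, so after disjointly adjoining $\U_1$ (placed at $\infty$) we have $K^{\sfr} \sqcup \U_1 \cong \U_{n(s)+1}$ as links. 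Since $\widetilde{\hmr}_*$ is a link invariant, the proposition reduces to the claim $\dim \widetilde{\hmr}_*(\U_{m}) = 2^{m-1}$ for $m=n(s)+1$.

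This unlink calculation is the $\widetilde{\hmr}$-analogue of the corresponding computation of $\widetilde{\HMR}_*$ on unlinks carried out in the Examples section, and it appears in \cite{ljk2024SSKh}. To recover it in the current framework, I would choose an invariant PSC bifold metric on the real double cover of $\U_m$ (which is $\#_{m-1}(S^1\times S^2)$ with the appropriate swapping involution and has no bifold singularities), so that for the unique torsion real \spinc structure every critical point is reducible. By the Leray--Hirsch calculation in Section~\ref{sec:analysis_of_sw_traj}, the cohomology ring of $\calB^{\sigma}$ is $\Lambda^*(H_1(\#_{m-1}(S^1\times S^2);\bbZ)^{-\upiota^*}) \otimes \bbF_2[\upsilon]$, of rank $2^{m-1}$ in each $\upsilon$-degree, so after a small spectral perturbation of the Dirac operator $\widecheck{\hmr}_*(\U_m)$ is a single $\upsilon$-tower tensored with this exterior algebra. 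A basepoint-evaluation argument as in Example~\ref{exmp:unknot} identifies $\upsilon_p$ with multiplication by $\upsilon$, and the mapping cone complex defining $\widetilde{\hmr}_*$ then has total dimension $2^{m-1}$.

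The topological reduction in step one is straightforward; the main technical point is the reducibility of the chain complex together with the explicit formula $\upsilon_p = \upsilon$, both of which follow from the PSC geometry and the construction in Section~\ref{subsec:foamcob_eval_module}. The principal obstacle to a self-contained proof is therefore bookkeeping the real \spinc structures on $\#_{m-1}(S^1\times S^2)$ compatible with the swapping involution and verifying that only the torsion one contributes — but in the planar-unlink case this is handled exactly as in the corresponding $\widetilde{\HMR}_*$ examples above, so no genuinely new input beyond \cite{ljk2024SSKh} is needed.
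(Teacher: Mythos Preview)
Your approach is essentially the paper's: observe that planarity forces $K^{\sfr}$ to be an $n(s)$-component unlink, so $K^{\sfr}\sqcup\U_1\cong\U_{n(s)+1}$, and then invoke the known computation $\dim\widetilde{\hmr}_*(\U_m)=2^{m-1}$ from \cite{ljk2024SSKh}. One terminological slip: the involution on the branched double cover $\#_{m-1}(S^1\times S^2)$ is the covering involution with nonempty fixed locus (the preimage of the link), not a ``swapping involution'' in the sense of Section~\ref{subsec:case_real_loci}; with that correction your sketch of the unlink computation goes through.
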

\begin{proof}
	Since $K^{\sfr}$ is an $n$-component unlink, this proposition follows from $\widetilde{\hmr}_*(K^{\sfr} \sqcup \U_1)=2^{n(s)}$, since the homology of the $n$-dimensional torus which has rank $2^{n}$.
	This equality is noted in \cite[Example~2.4]{ljk2024SSKh}.
\end{proof}

\subsection{Relations in monopole web homologies}
Kronheimer and Mrowka \cite{KMdefweb2019} defined a deformation $J^{\sharp}(K;\Gamma)$ of $J^{\sharp}(K)$, whose rank $\Rank_{R}J^{\sharp}(K;\Gamma)$ is equal to the number of Tait colourings, whenever $K$ is planar.
Here, $\Gamma$ is a local coefficient system over a ring $R$, and $\Rank_{R}J^{\sharp}(K;\Gamma)$ can be shown be equal to $\Rank_{R}J^{\sharp}(K;\Gamma')$ by introducing another local system $\Gamma'$ from inverting an operator $P$.
Similar to $\HMR$, the homology $J^{\sharp}(K;\Gamma')$ admits a direct sum decomposition over $1$-sets.
\[
	J^{\sharp}(K;\Gamma') = \bigoplus_{s \in \{1-\text{sets}\}} J^{\sharp}(K,s;\Gamma')
\]
($J^{\sharp}(K,s;\Gamma')$ was denoted as $V(K,s)$ in~\cite[Section~5.1]{KMdefweb2019}.)
By contrast, neither $J^{\sharp}$ nor $J^{\sharp}(\cdot,\Gamma)$ admits such a decomposition.
In light of Proposition~\ref{prop:hmrKr}, $J^{\sharp}(K;\Gamma')$ is analogous to $\bigoplus_{\{\text{1-sets}\}} \widetilde{\hmr}(K^{\sfr} \sqcup \U_1)$.

\begin{figure*}[ht!]
	\centering
	\begin{subfigure}[t]{0.4\textwidth}
		\centering
		\includegraphics[height=1.2in]{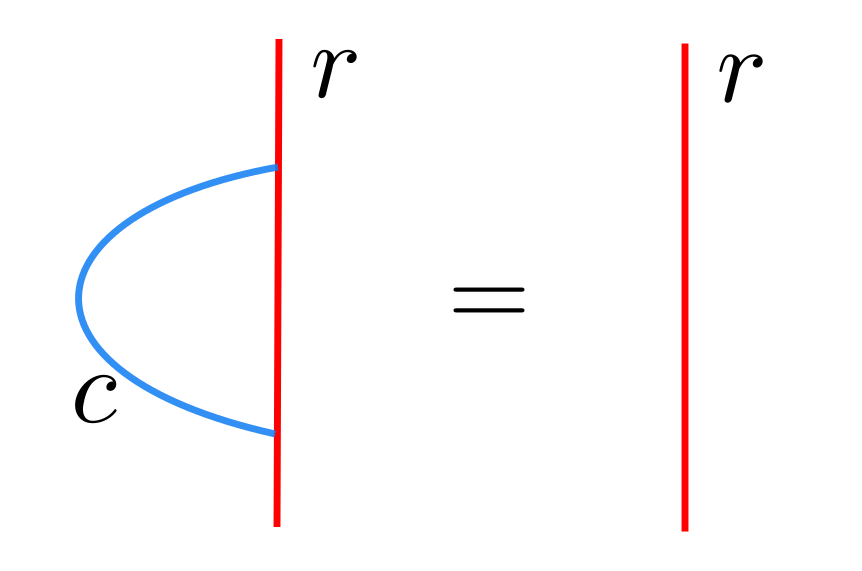}
		\caption{Removing a bigon.}
		\label{fig:remove_blue}
	\end{subfigure}%
	~ 
	\begin{subfigure}[t]{0.37\textwidth}
		\centering
		\includegraphics[height=1.2in]{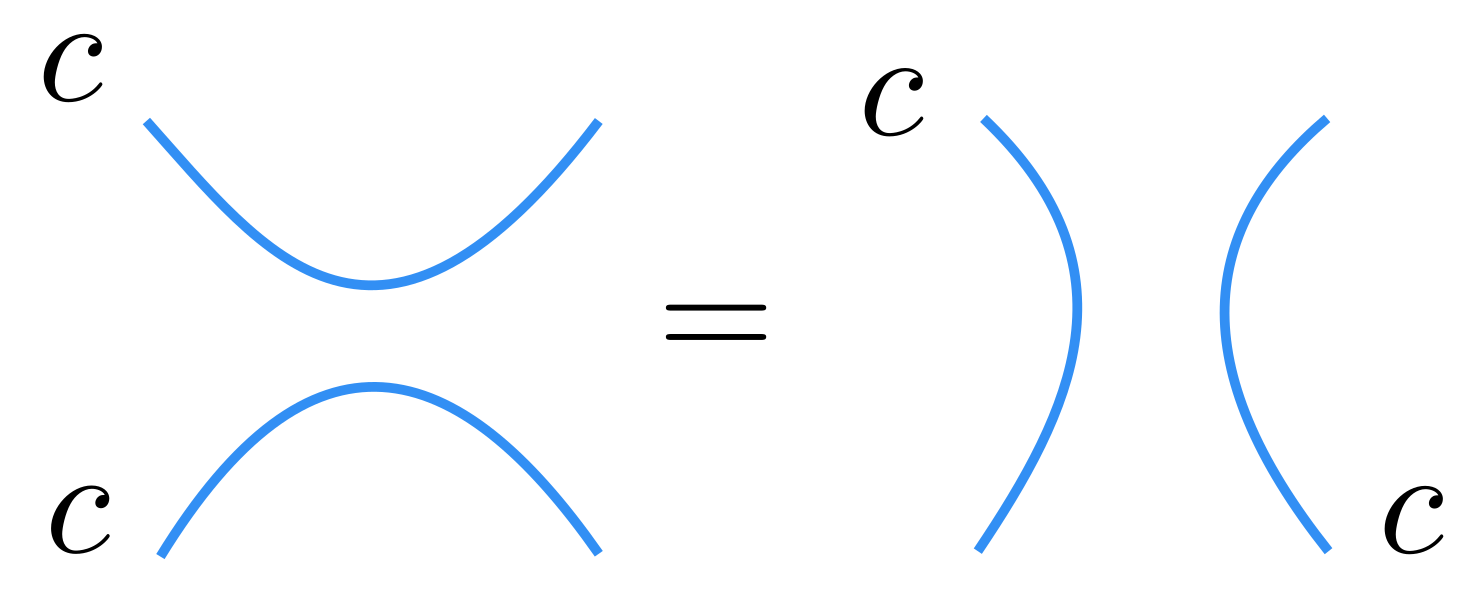}
		\caption{Rotating $\sfc$-arcs.}
		\label{fig:rotate_blue}
	\end{subfigure}
	\caption{Two relations.}
	\label{fig:two_relations}
\end{figure*} 
The relations illustrated in Figure~\ref{fig:two_relations} are satisfied by 
$J^{\sharp}(K,s;\Gamma')$~\cite[Corollary~5.15]{KMdefweb2019}.
The two relations state that two webs with $1$-sets that differ locally as in Figure~\ref{fig:remove_blue} or Figure~\ref{fig:rotate_blue} have isomorphic $J^{\sharp}(K,s;\Gamma')$.
Again, we used the convention that an edge $e$ is coloured $\sfc$ if $e \in s$ and coloured $\sfr$ if $e \not\in s$. 

The versions of the relations without $1$-sets
were observed earlier in $\mathfrak{sl}(3)$-homology~\cite[Figure~8]{Khovanov2004sl3} and also in $J^{\sharp}$~\cite[Proposition~6.5]{KMweb}.
However, the bigon removal relation is not satisfied for $\HMR$.
Indeed, with the addition of a $\sfc$-arc on the left of Figure~\ref{fig:remove_blue}, there are twice as many bifold \spinc structures for isotropy reasons.

\subsection{Exact triangles}
\label{sec:exact_triangle}
Kronheimer and Mrowka proved an exact triangle in $J^{\sharp}$ \cite{KMwebtriangle}, where the usual unoriented skein triple has additional $V_4$-singularities as in Figure~\ref{fig:web_triangle}.
\begin{figure}[h!]
	\centering
	\includegraphics[width=0.5\linewidth]{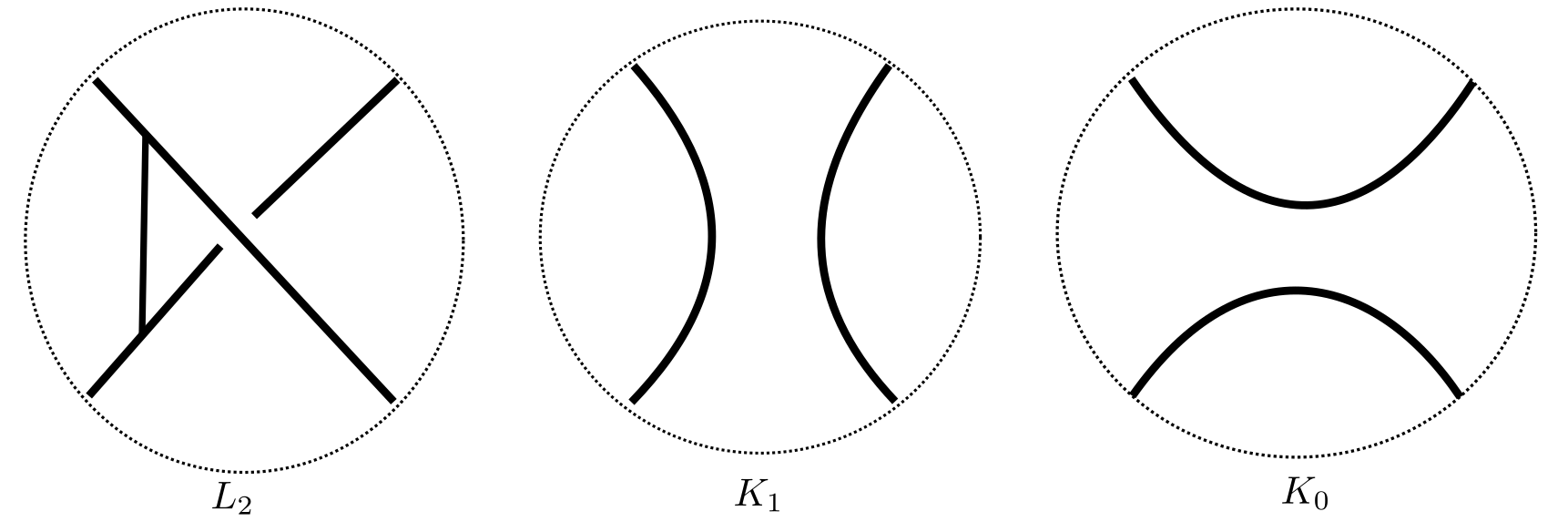}
	\caption{An exact triangle in $J^{\sharp}$}
	\label{fig:web_triangle}
\end{figure}
The $1$-set decomposition of $J^{\sharp}(K;\Gamma')$ is compatible with the exact triangle~\cite[Proposition~3.3]{KMdefweb2019}:
\[
	\cdots \to 
	J^{\sharp}(L_2,s_2;\Gamma') \to
	J^{\sharp}(K_1,s_1;\Gamma') \to 
	J^{\sharp}(K_0,s_0;\Gamma') \to 
	\cdots.
\]
One hopes that there exists an exact triangle in $\HMR$ and the triangle decomposes over the $1$-sets too, as the definition has $1$-sets built in.
However, the naive form of exact triangle shown below would fail if the two relations in Figure~\ref{fig:two_relations} hold.
\[
	``\cdots \to 
	\HMR^{\circ}(L_2,s_2) \to
	\HMR^{\circ}(K_1,s_1) \to 
	\HMR^{\circ}(K_0,s_0) \to 
	\cdots''.
\]
Indeed, start with the $5$-prism $L_5$ with the $1$-set $s_5$ as in Section~\ref{sec:example}.
The exact triangle implies that a pair of arcs bridging pentagons can be changed to a pair of arcs that starts and ends at the same pentagons.
Moreover, any arc that start and end at the same $\sfr$-edge can be removed without changing the homology. 
Successive applications of the exact triangle shows that $\HMR_{\bullet}(L_5,s)$ is isomorphic to $\HMR_{\bullet}(\text{Handcuff}) = 0$, which is a contradiction to the non-triviality of $\HMR_{\bullet}(L_5,s_5)$.
In the case with no trivalent points and when every edge is coloured by $\sfr$, an (unoriented skein) exact triangle was verified in \cite{ljk2023triangle}.

\bibliographystyle{alpha}
\bibliography{sw_webs.bib}

\begin{thebibliography}{MnW05}

\bibitem[AH89]{AH4colour}
Kenneth Appel and Wolfgang Haken.
\newblock {\em Every planar map is four colorable}, volume~98 of {\em
  Contemporary Mathematics}.
\newblock American Mathematical Society, Providence, RI, 1989.
\newblock With the collaboration of J. Koch.

\bibitem[Bal01]{Baldridge2001}
Scott Baldridge.
\newblock Seiberg-{W}itten invariants of 4-manifolds with free circle actions.
\newblock {\em Commun. Contemp. Math.}, 3(3):341--353, 2001.

\bibitem[BD95]{BraamDonaldson1995FloerMem}
P.~J. Braam and S.~K. Donaldson.
\newblock Floer's work on instanton homology, knots and surgery.
\newblock In {\em The {F}loer memorial volume}, volume 133 of {\em Progr.
  Math.}, pages 195--256. Birkh\"{a}user, Basel, 1995.

\bibitem[BGV92]{BGV1992HeatkernelDirac}
Nicole Berline, Ezra Getzler, and Mich\`ele Vergne.
\newblock {\em Heat kernels and {D}irac operators}, volume 298 of {\em
  Grundlehren der mathematischen Wissenschaften [Fundamental Principles of
  Mathematical Sciences]}.
\newblock Springer-Verlag, Berlin, 1992.

\bibitem[Blo11]{Bloom2011}
Jonathan~M. Bloom.
\newblock A link surgery spectral sequence in monopole {F}loer homology.
\newblock {\em Adv. Math.}, 226(4):3216--3281, 2011.

\bibitem[Che05]{ChenWM2005Jcurves_in_4orbifolds}
Weimin Chen.
\newblock Pseudoholomorphic curves in four-orbifolds and some applications.
\newblock In {\em Geometry and topology of manifolds}, volume~47 of {\em Fields
  Inst. Commun.}, pages 11--37. Amer. Math. Soc., Providence, RI, 2005.

\bibitem[Che06]{ChenWM2006scob}
Weimin Chen.
\newblock Smooth {$s$}-cobordisms of elliptic 3-manifolds.
\newblock {\em J. Differential Geom.}, 73(3):413--490, 2006.

\bibitem[Che12]{ChenWM2012SW3orbifolds}
Weimin Chen.
\newblock Seiberg-{W}itten invariants of 3-orbifolds and non-{K}\"{a}hler
  surfaces.
\newblock {\em J. G\"{o}kova Geom. Topol. GGT}, 6:1--27, 2012.

\bibitem[Che20]{ChenWM2020symp_4orbifolds}
Weimin Chen.
\newblock On a class of symplectic 4-orbifolds with vanishing canonical class.
\newblock {\em J. G\"{o}kova Geom. Topol. GGT}, 14:55--90, 2020.

\bibitem[Dae]{DaemiPrivateComm}
Aliakbar Daemi.
\newblock Personal communication.

\bibitem[Dae14]{DaemiThesis}
Aliakbar Daemi.
\newblock {\em Symmetric {S}paces and {K}not {I}nvariants from {G}auge
  {T}heory}.
\newblock ProQuest LLC, Ann Arbor, MI, 2014.
\newblock Thesis (Ph.D.)--Harvard University.

\bibitem[Dae15]{Daemi20165}
Aliakbar Daemi.
\newblock {Abelian Gauge Theory, Knots and Odd Khovanov Homology}.
\newblock {\em arXiv Preprint 1508.07650}, 2015.

\bibitem[HS71]{HsiangSzczarbar1971}
W.~C. Hsiang and R.~H. Szczarba.
\newblock On embedding surfaces in four-manifolds.
\newblock In {\em Algebraic topology ({P}roc. {S}ympos. {P}ure {M}ath., {V}ol.
  {XXII}, {U}niv. {W}isconsin, {M}adison, {W}is., 1970)}, volume Vol. XXII of
  {\em Proc. Sympos. Pure Math.}, pages 97--103. Amer. Math. Soc., Providence,
  RI, 1971.

\bibitem[Kat22]{Kato2022}
Yuya Kato.
\newblock Nonsmoothable actions of {$\mathbb{Z}_2 \times \mathbb{Z}_2$} on spin
  four-manifolds.
\newblock {\em Topology and its Applications}, 307:107868, 2022.

\bibitem[Kho04]{Khovanov2004sl3}
Mikhail Khovanov.
\newblock sl(3) link homology.
\newblock {\em Algebr. Geom. Topol.}, 4:1045--1081, 2004.

\bibitem[KM94]{KM1994thom}
P.~B. Kronheimer and T.~S. Mrowka.
\newblock The genus of embedded surfaces in the projective plane.
\newblock {\em Math. Res. Lett.}, 1(6):797--808, 1994.

\bibitem[KM07]{KMbook2007}
P.~B. Kronheimer and T.~S. Mrowka.
\newblock {\em Monopoles and three-manifolds}, volume~10 of {\em New
  Mathematical Monographs}.
\newblock Cambridge University Press, Cambridge, 2007.

\bibitem[KM10]{KM2010excision}
Peter Kronheimer and Tomasz Mrowka.
\newblock Knots, sutures, and excision.
\newblock {\em J. Differential Geom.}, 84(2):301--364, 2010.

\bibitem[KM16]{KMwebtriangle}
P.~B. Kronheimer and T.~S. Mrowka.
\newblock Exact triangles for {$SO(3)$} instanton homology of webs.
\newblock {\em J. Topol.}, 9(3):774--796, 2016.

\bibitem[KM19a]{KMweb}
P.~B. Kronheimer and T.~S. Mrowka.
\newblock Tait colorings, and an instanton homology for webs and foams.
\newblock {\em J. Eur. Math. Soc. (JEMS)}, 21(1):55--119, 2019.

\bibitem[KM19b]{KMdefweb2019}
Peter~B. Kronheimer and Tomasz~S. Mrowka.
\newblock A deformation of instanton homology for webs.
\newblock {\em Geom. Topol.}, 23(3):1491--1547, 2019.

\bibitem[KMT21]{KMT2021}
Hokuto Konno, Jin Miyazawa, and Masaki Taniguchi.
\newblock Involutions, knots, and {F}loer {K}-theory.
\newblock {\em arXiv Preprint 2110.09258}, 2021.

\bibitem[KMT24]{KMT2023}
Hokuto Konno, Jin Miyazawa, and Masaki Taniguchi.
\newblock Involutions, links, and {F}loer cohomologies.
\newblock {\em Journal of Topology}, 17(2):e12340, 2024.

\bibitem[KR07]{KhovanovRozansky2007LGfoam}
Mikhail Khovanov and Lev Rozansky.
\newblock Topological {L}andau-{G}inzburg models on the world-sheet foam.
\newblock {\em Adv. Theor. Math. Phys.}, 11(2):233--259, 2007.

\bibitem[Kui65]{KUIPER196519}
Nicolaas~H. Kuiper.
\newblock The homotopy type of the unitary group of hilbert space.
\newblock {\em Topology}, 3(1):19--30, 1965.

\bibitem[Kup96]{Kuperberg1996Web}
Greg Kuperberg.
\newblock Spiders for rank {$2$} {L}ie algebras.
\newblock {\em Comm. Math. Phys.}, 180(1):109--151, 1996.

\bibitem[LeB15]{LeBrun2015_edges}
Claude LeBrun.
\newblock Edges, orbifolds, and {S}eiberg-{W}itten theory.
\newblock {\em J. Math. Soc. Japan}, 67(3):979--1021, 2015.

\bibitem[Li22]{ljk2022}
Jiakai Li.
\newblock Monopole {F}loer homology and real structures.
\newblock {\em arXiv Preprint 2211.10768}, 2022.

\bibitem[Li23]{ljk2023triangle}
Jiakai Li.
\newblock Real monopole {F}loer homology and skein exact triangles.
\newblock {\em arXiv Preprint 2304.01742}, 2023.

\bibitem[Li24]{ljk2024SSKh}
Jiakai Li.
\newblock Real monopoles and a spectral sequence from {K}hovanov homology.
\newblock {\em arXiv Preprint 2406.00152}, 2024.

\bibitem[Lin18]{LinFran2018}
Francesco Lin.
\newblock A {M}orse-{B}ott approach to monopole {F}loer homology and the
  triangulation conjecture.
\newblock {\em Mem. Amer. Math. Soc.}, 255(1221):v+162, 2018.

\bibitem[MnW05]{MunozWang2005SWS1S}
Vicente Mu\~{n}oz and Bai-Ling Wang.
\newblock Seiberg-{W}itten-{F}loer homology of a surface times a circle for
  non-torsion {${\rm spin}^{\Bbb C}$} structures.
\newblock {\em Math. Nachr.}, 278(7-8):844--863, 2005.

\bibitem[MOY97]{MOY}
Tomasz Mrowka, Peter Ozsv\'{a}th, and Baozhen Yu.
\newblock Seiberg-{W}itten monopoles on {S}eifert fibered spaces.
\newblock {\em Comm. Anal. Geom.}, 5(4):685--791, 1997.

\bibitem[Nak15]{NNakamura2015}
Nobuhiro Nakamura.
\newblock {$\mathrm{Pin}^{-} (2)$-monopole invariants}.
\newblock {\em Journal of Differential Geometry}, 101(3):507 -- 549, 2015.

\bibitem[OS04]{OzSz2004}
Peter Ozsv\'{a}th and Zolt\'{a}n Szab\'{o}.
\newblock Holomorphic disks and three-manifold invariants: properties and
  applications.
\newblock {\em Ann. of Math. (2)}, 159(3):1159--1245, 2004.

\bibitem[Tai84]{Tait1884}
P.~G. Tait.
\newblock I{V}. {L}istingś topologie.
\newblock {\em The London, Edinburgh, and Dublin Philosophical Magazine and
  Journal of Science}, 17(103):30--46, 1884.

\bibitem[Tau80]{Taubes1980vortex}
Clifford~Henry Taubes.
\newblock Arbitrary {$N$}-vortex solutions to the first order
  {G}inzburg-{L}andau equations.
\newblock {\em Comm. Math. Phys.}, 72(3):277--292, 1980.

\bibitem[TW09]{TianWang2009}
Gang Tian and Shuguang Wang.
\newblock Orientability and real {S}eiberg-{W}itten invariants.
\newblock {\em Internat. J. Math.}, 20(5):573--604, 2009.

\end{thebibliography}
\Addresses

\end{document}